


\documentclass[%
noccpublish,
  multilingual,german,english]{cc}

\usepackage{tilde}
\usepackage{maps}
\usepackage{mathell}
\usepackage{set}
\usepackage{color}
\usepackage{pst-node, pst-tree}


\newcommand{\degOne}{{\color{red}l}}
\newcommand{\degTwo}{{\color{red}k}}
\renewcommand{\degOne}{{\color{red}k}}
\renewcommand{\degTwo}{{\color{red}\kappa}}
\renewcommand{\degOne}{k}
\renewcommand{\degTwo}{\kappa}
\newcommand{\loglog}{\log \! \log}
\newcommand{\C}{\mathbb{C}}
\newcommand{\F}{\mathbb{F}}

\newcommand{\nodiv}{\nmid}

\newcommand{\softO}{\mathcal{O}^{\sim}}
\renewcommand{\softO}{{O}^{\sim}}

\newcommand{\dg}{d}
\renewcommand{\dg}{\text{\textcolor{blue}{$d$}}}
\renewcommand{\dg}{n}

\newcommand{\parTwo}{{\color{red}CC}}
\renewcommand{\parTwo}{a}
\makeatletter
\newcounter{enumabc}
\def\theenumabc{\@alph\c@enumabc}
\def\labelenumabc{\theenumabc.}
\def\p@enumabc{.}
\newenvironment{enumabc}{%
  \begin{list}{\labelenumabc}{%
      \usecounter{enumabc}%
      \def\makelabel##1{\hss\llap{##1}}%
    }%
  }{%
  \end{list}
}
\newcounter{enumEsub}
\def\theenumEsub{\ensuremath{E_{\@arabic\c@enumEsub}}}
\def\labelenumEsub{\theenumEsub:}
\def\p@enumEsub{}
\newenvironment{enumEsub}{%
  \begin{list}{\labelenumEsub}{%
      \usecounter{enumEsub}%
      \def\makelabel##1{\hss\llap{##1}}%
    }%
  }{%
  \end{list}
}
\makeatother

\title{Counting decomposable\\univariate polynomials}

\author{Joachim von~zur~Gathen\\
  B-IT\\
  Universit\"at Bonn\\
  D-53113 Bonn\\
  \email{gathen@bit.uni-bonn.de}\\
  \homepage{http://cosec.bit.uni-bonn.de/}
}

\begin{abstract}
  A univariate polynomial $f$ over a field is \emph{decomposable}
  if it is the composition $f=g \circ h$ of two polynomials $g$ and $h$ 
whose degree is at least $2$. We
  determine an approximation to the number of decomposables over a
  finite field. 
The tame case, where the field characteristic $p$ does not divide the degree $n$ of $f$,
is reasonably well understood, and we obtain exponentially decreasing relative error bounds.
The wild case, where $p$ divides $n$, is more challenging and our error bounds are weaker.
\end{abstract}

\begin{keywords}
computer algebra, polynomial decomposition, multivariate polynomials,
finite fields, combinatorics on polynomials
\end{keywords}

\begin{subject}
  AMS classification: 68W30, 11T06, 12E10.
\end{subject}

\begin{document}


\section{Introduction}

It is intuitively clear that the decomposable polynomials form a small
minority among all polynomials (univariate over a field). The goal in
this work is to give a quantitative version of this intuition.

Our question has two facets: in the \emph{geometric} view, we want to
determine the dimension of the algebraic set of decomposable
polynomials, say over an algebraically closed field. The
\emph{combinatorial} task is to approximate the number of
decomposables over a finite field, together with a good relative error
bound.

The first task is easy. For the second task, one readily obtains an
upper bound. The challenge then is to find an essentially matching
lower bound. \Citet{gat90c,gat90d} introduced the notion of
\emph{tame} for the case where the field characteristic does not
divide the degree of the left component, and \emph{wild} for the
complementary case. (\cite{sch00c}, \S ~1.5, uses \emph{tame} in a
different sense.) Algorithmically, the tame case is well understood
since the breakthrough result of \cite{kozlan86}; see also
\cite*{gatkoz87}; \cite{kozlan89}; \cite*{kozlan96}; \cite{gutsev06},
and the survey articles of \cite{gat02c} and
\cite{gutkoz03}\nocite{grakal03} with further references. This leads
to good estimates of the number of decomposable polynomials, provided
that we can also apply a central tool in this area, namely Ritt's
Second Theorem. This provision is satisfied if the square of the
smallest prime divisor $l$ of the degree $n$ does not divide $n$.

In the wild case, the methods from the literature do not yield a
satisfactory lower bound. We present in \ref{sec:usd} a decomposition
``algorithm'' which fails on some inputs but works on sufficiently
many ones.  The algorithm is a centerpiece of this paper and yields
lower bounds on the number of decomposable polynomials in the wild
case.

An important tool for estimating the number of ``collisions'', where
different pairs of components yield the same composition, is Ritt's
Second Theorem.  Ritt worked with $F=\mathbb{C}$ and used analytic
methods. Subsequently, his approach was replaced by algebraic methods,
in the work of \cite{lev42} and \cite{dorwha74}, and \cite{sch82c}
presented an elementary but long and involved argument. Thus Ritt's
Second Theorem was also shown to hold in positive characteristic
$p$. The original versions of this required $p>~deg(g\circ
h)$. \cite{zan93} reduced this to the milder and more natural
requirement $g'(g^{*})'\neq 0$. His proof works over an algebraic
closed field, and Schinzel's \citeyear{sch00c} monograph adapts it to
finite fields. In \ref{sec:collcomp}, we provide a precise
quantitative version of this Theorem, by determining exactly the
number of such collisions in the tame case, assuming that $p \nmid
n/l$. This is based on a unique normal form for the polynomials
occurring in the Theorem.  Furthermore, we give (less precise)
substitutes in those cases where the Theorem is not applicable.

A uniqueness property in Ritt's Second Theorem is not obvious, and
indeed \cite{beang00} are puzzled by its absence. On their page 128,
they write, translated to the present notation, ``Now these rules
are a little less transparent, and a little less independent, than may
appear at first sight. First, we note that [the First Case], which is
stated in its conventional form, is rather loosely defined, for the
$k$ and $w$ are not uniquely determined by the form $x^{k}w(x^{l})$;
for instance, if $w(0)=0$, we can equally well write this expression
in the form $x^{k+l}\tilde w(x^{l})$, where $\tilde w = w/x$. Next,
$T_{2}(x,1)=x^{2}-2$ differs by a linear component from $x^{2}$, so
that in some circumstances it is possible to apply [the Second Case]
to $T_{2}(x,1)$, then [a linear composition], and then (on what is
essentially the same factor) [the Second Case]. These observations
perhaps show why it is difficult to use Ritt's result.'' These
well-motivated concerns are settled by the result of the present paper.

\ref{sec:cdup} presents the resulting estimates in the tame case.
\ref{sec:cgdp} puts together all our bounds in the general case,
resulting in a veritable jungle of case distinctions. It is not clear
whether this is the nature of the problem or an artifact of our
approach. The following is proved at the very end of the paper and
provides a pr\'ecis of our results---by necessity less precise than the
individual bounds, in particular when $q\leq 4$ or $\dg$ is (close to)
$l^{2}$. The basic statement is that $\alpha_{\dg}$ is an
approximation to the number of decomposable polynomials of degree
$\dg$, with relative error bounds of varying quality.
\begin{namedtheorem*}{Main Theorem}\label{cor:Fq}
  Let $\mathbb{F}_{q}$ be a finite field with $q$ elements and
  characteristic $p$, let $l$ be the smallest prime divisor of the
  composite integer $\dg \geq 2$, $D_n$ the set of decomposable
  polynomials in $\F_q[x]$ of degree $n$, and
  \begin{align*}
    \alpha_{\dg} =
    \begin{cases}
      2q^{l+\dg/l}(1-q^{-1})
      & \text{if } \dg \neq l^{2}, \\
      q^{2l}(1-q^{-1})
      & \text{if } \dg = l^{2}.\\
    \end{cases}
  \end{align*}
  Then the following hold.
  \begin{enumerate}
  \item\label{cor:Fq-6} $ q^{2\sqrt{\dg}}/2 \leq \alpha_{\dg}
    <2q^{\dg/2+2}.  $
  \item\label{cor:Fq-1} $ \alpha_{\dg}/2 \leq \#D_{\dg} \leq
    \alpha_{\dg}(1+q^{-\dg/3l^{2}}) < 2\alpha_{\dg} < 4q^{\dg/2+2}$.
  \item\label{cor:Fq-2} If $n\neq p^{2}$ and $q> 5$, then $\#D_{\dg} \geq
    (3-2q^{-1})\alpha_{\dg}/4 \geq q^{2\sqrt{\dg}}/2$.
  \item\label{cor:Fq-4} Unless $p=l$ and $p$ divides $\dg$ exactly
    twice, we have $\#D_{\dg} \geq \alpha_{\dg}(1-2q^{-1})$.
  \item\label{cor:Fq-5} If $p \nmid \dg$, then $| \#D_{\dg}-
    \alpha_{\dg} | \leq \alpha_{\dg}\cdot q^{-\dg/3l^{2}}$.
  \end{enumerate}
\end{namedtheorem*}
The upper and lower bounds in \short\ref{cor:Fq-1} and
\short\ref{cor:Fq-5} differ by a factor of $1 + \epsilon$, with
$\epsilon$ exponentially decreasing in the input size $n\log q$, in
the tame case and for growing $\dg/3l^{2}$. When the field
characteristic is the smallest prime divisor of $\dg$ and divides
$\dg$ exactly twice, then we have a factor of about $2$, provided that
the condition in \short\ref{cor:Fq-2} is satisfied.  In all other
cases, the factor is $1+O(q^{-1})$ over $\mathbb{F}_{q}$. It remains
a challenge whether these gaps can be reduced.

\cite{gie88} was the first to consider our counting problem. He showed
that the decomposable polynomials form an exponentially small fraction
of all univariate polynomials. My interest, dating back to the
supervision of this thesis, was rekindled by a study of similar (but
multivariate) counting problems \citep{gat08-incl-gat07} and during a
visit to Pierre D\`ebes' group at Lille, where I received a preliminary
version of \cite*{boddeb09}. Multivariate decomposable polynomials are
counted in \cite{gat08b}.

We use the methods from \cite{gat08-incl-gat07}, where the
corresponding counting task was solved for reducible, squareful,
relatively irreducible, and singular bivariate
polynomials. \Citet*{gatvio09} extends those results to multivariate
polynomials. Recently, \cite{ziemue08} found interesting characterizations of
complete decompositions, where all components are indecomposable.

\section{Decompositions}\label{secDec}

A nonzero polynomial $f\in F[x]$ over a field $F$ is \emph{monic} if
its leading coefficient $~lc(f)$ equals $1$.  We call $f$
\emph{original} if its graph contains the origin, that is, $f(0)=0$.
\begin{definition}
  \label{defComp}
  For $g, h \in F[x]$,
  $$
  f = g \circ h = g(h) \in F[x]
  $$
  is their \emph{composition}.  If $\deg g, \deg h \geq 2$, then
  $(g,h)$ is a \emph{decomposition} of $f$. A polynomial $f \in F[x]$
  is \emph{decomposable} if there exist such $g$ and $h$, otherwise
  $f$ is \emph{indecomposable}. The decomposition $(g,h)$ is
  \emph{normal} if $h$ is monic and original.
\end{definition}
\begin{remark}
  \label{invariance1}
  Multiplication by a unit or addition of a constant does not change
  decomposability, since
  $$
  f = g \circ h \Longleftrightarrow a f+b = (a g+b) \circ h
  $$
  for all $f$, $g$, $h$ as above and $a,b \in F$ with $a\neq 0$.  In
  other words, the set of decomposable polynomials is invariant under
  this action of $F^{\times} \times F$ on $F[x]$.

  Furthermore, any decomposition $(g,h)$ can be normalized by this
  action, by taking $a = ~lc (h)^{-1} \in F^{\times}$, $b=-a \cdot
  h(0) \in F$, $g^{*} = g((x-b)a^{-1}) \in F[x]$, and $h^{*} = ah+b$.
  Then $g\circ h = g^{*} \circ h^{*}$ and $(g^{*}, h^{*})$ is normal.
\end{remark}

We fix some notation for the remainder of this paper. For $\dg \geq
0$, we write
$$
P_{\dg}= \{f \in F [x] \colon \deg f \leq \dg\}
$$
for the vector space of polynomials of degree at most $\dg$, of
dimension $n+1$.  Furthermore, we consider the subsets
\begin{align*}
  P_{\dg}^{=}  & = \{f \in P_{\dg} \colon \deg f = \dg \}, \\
  P^{0}_{\dg} & = \{f \in P_{\dg}^{=} \colon f\text{ monic and
    original}\}.
\end{align*}

Over an infinite field, the first of these is the Zariski-open subset
$ P_{\dg} \smallsetminus P_{\dg-1}$ of $P_{\dg}$, and thus
irreducible, taking $P_{-1} = \{0\}$. The second one is obtained by
further imposing one equation and working modulo multiplication by
units, so that
\begin{align*}
  \dim P_{\dg}^{=}&=n+1,\\
  \dim P^{0}_{\dg}& = n-1,
\end{align*}
with $P^{0}_{0}= \varnothing$. For any divisor $e$ of $\dg$, we have
the normal composition map
$$
\map[\gamma_{\dg,e}] {P_{e}^{=} \times P^{0}_{\dg/e}} {P_{\dg}^{=}}
{(g,h)} {g \circ h,}
$$
corresponding to \ref{defComp}, and set
\begin{equation}\label{eq:Zan}
  D_{\dg,e}= ~im \gamma_{\dg,e}.
\end{equation}

The set $D_{\dg}$ of all decomposable polynomials in $P_{\dg}^{=}$
satisfies
\begin{equation}\label{substack}
  D_{\dg}= \bigcup_{\substack{e\mid \dg\\1<e<\dg}} D_{\dg,e}.
\end{equation}
In particular, $D_{\dg} = \varnothing$ if $\dg$ is prime. We also let
$I_{\dg}=P^{=}_{\dg} \smallsetminus D_{\dg}$ be the set of
indecomposable polynomials. Over a finite field $\mathbb{F}_{q}$ with
$q$ elements, we have
\begin{align*}
  \#P_{\dg}^{=}  &= q^{\dg+1}(1-q^{-1}),\\
  \#P_{\dg}^{0} &= q^{\dg-1},\\
  \#D_{\dg,e}  &\leq q^{e+n/e}(1-q^{-1}).
\end{align*}
\begin{remark}
  \label{invariant}
  By \ref{invariance1}, over an algebraically closed field, the
  codimension of $D_{\dg}$ in $P_{\dg}^{=}$ equals that of $D_{\dg}
  \cap P_{\dg}^{0}$ in $P_{\dg}^{0}$.  The same holds for $I_{\dg}$,
  and over a finite field for the corresponding fractions:
  $$
  \frac{\#D_{\dg}}{\#P_{\dg}^{=}} = \frac{\#(D_{\dg} \cap
    P_{\dg}^{0})}{\#P_{\dg}^{0}}.
  $$
\end{remark}
\begin{example}\label{ex:sl}
  We look at normal decompositions $(g,h)$ of univariate quartic
  polynomials $f$, so that $\dg=4$. By \ref{invariance1}, we may
  assume $f\in P_{4}^{0}$, and then also $g$ is monic with constant
  coefficient 0.  Thus the general case is
  $$
  (x^{2}+ax) \circ (x^{2}+bx) = x^{4} +ux^{3} + vx^{2} + wx \in F[x],
  $$
  with $a,b,u,v,w \in F$.  We find that with $a=2w/u$ and $b=u/2$
  (assuming $2u \neq 0$), the cubic and linear coefficients match, and
  the whole decomposition does if and only if
  $$u^{3} -4uv + 8w =0.$$
  This is a defining equation for the hypersurface of decomposable
  polynomials in $P_{4}^{0}$ (if $~char F \neq 2$). Translating back
  to $P_{4}^{=}$, we have
  $$
  \dim D_{4} = 4 < 5 = \dim P_{4}^{=}.
  $$ 
  This example is also in \cite{barzip76,barzip85}.
\end{example}

\section{Equal-degree collisions}\label{sec:usd}

A decomposition $(g,h)$ of $f=g\circ h$ over a field of characteristic
$p$ is called \emph{tame} if $p\nmid \deg g$, and \emph{wild}
otherwise, in analogy with ramification indices. The polynomial $f$
itself is \emph{tame} if $p\nmid \deg f$, and \emph{wild} otherwise. The tame case is well understood, both theoretically and
algorithmically. The wild case is more difficult and less well
understood; there are polynomials with superpolynomially many
``inequivalent'' decompositions \citep{gie88}.

For $u,v\in F[x]$ and $j\in\mathbb{N}$, we write
$$
u=v+O(x^{j})
$$
if $\deg (u-v)\leq j$.  We start with two facts from the literature
concerning the injectivity of the composition map. When $p\mid\dg$, a
polynomial $f=x^{\dg}+f_{i}x^{i}+O(x^{i-1})$ with $f_{i}\neq 0$ is
called \emph{simple} if $p\nmid i$ or $i<\dg-p$.
\begin{fact}\label{cor:inj}
  Let $F$ be a field of characteristic $p$, and $e$ a divisor of $\dg
  \geq 2$.
  \begin{enumerate}
  \item \label{cor:inj-1} If $p$ does not divide $e$, then
    $\gamma_{\dg,e} $ is injective, and
    $$
    \#D_{\dg,e}= q^{e+\dg/e}(1-q^{-1}).
    $$ 
  \item \label{cor:inj-3} If $p$ divides $\dg$ exactly $d$ times and
    $f\in F[x]$ is simple, then $f$ has at most $s < 2p^{d} \leq 2
    \dg$ normal decompositions, where $s=(p^{d+1}-1)/(p-1) = 1 + p +
    \cdots + p^{d}$.
  \end{enumerate}
\end{fact}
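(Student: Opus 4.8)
The plan is to handle \ref{cor:inj-1} by a short self-contained computation, and to derive \ref{cor:inj-3} from the wild-case analysis of \cite{gat90d} (part \ref{cor:inj-1} is the classical tame-uniqueness fact going back to \cite{gat90c} and \cite{kozlan86}).

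\textbf{Part \ref{cor:inj-1}.} First I would prove that in a normal decomposition with $p\nmid e$ the second component is determined by the composite. Suppose $\gamma_{n,e}(g_{1},h_{1})=\gamma_{n,e}(g_{2},h_{2})=f$ and set $m=n/e$. Since the $h_{i}$ are monic, $\mathrm{lc}(f)=\mathrm{lc}(g_{i})=:\ell$ for both $i$, so after replacing $g_{i}$ by $\ell^{-1}g_{i}$ and $f$ by $\ell^{-1}f$ I may assume $f$, $g_{1}$, $g_{2}$ monic. Writing $g_{i}=x^{e}+\sum_{k<e}g_{i,k}x^{k}$, each term $g_{i,k}h_{i}^{k}$ has degree $km\le(e-1)m=n-m$, so $f$ and $h_{i}^{e}$ have the same coefficients in all degrees $>n-m$; hence $\deg(h_{1}^{e}-h_{2}^{e})\le n-m$. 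Now if $h_{1}\neq h_{2}$, write $h_{1}-h_{2}=cx^{\kappa}+O(x^{\kappa-1})$ with $c\neq 0$; since both $h_{i}$ are original, $1\le\kappa\le m-1$. From $h_{1}^{e}-h_{2}^{e}=(h_{1}-h_{2})\sum_{0\le j<e}h_{1}^{j}h_{2}^{e-1-j}$ and $p\nmid e$, the sum has leading coefficient $e\neq0$, hence degree exactly $(e-1)m$, so $\deg(h_{1}^{e}-h_{2}^{e})=\kappa+(e-1)m>n-m$, a contradiction. Thus $h_{1}=h_{2}=:h$, and as $\deg h=m\ge1$ the map $g\mapsto g\circ h$ is injective on $F[x]$, forcing $g_{1}=g_{2}$. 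So $\gamma_{n,e}$ is injective, and over $\F_{q}$ I get
\[
\#D_{n,e}=\#\bigl(P_{e}^{=}\times P^{0}_{n/e}\bigr)=q^{e+1}(1-q^{-1})\cdot q^{n/e-1}=q^{e+n/e}(1-q^{-1}).
\]

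\textbf{Part \ref{cor:inj-3}.} Here the plan is to peel off wildness one factor of $p$ at a time. In any normal decomposition $f=g\circ h$ with $\deg g=e$, $\deg h=m$, $n=em$, the same degree bookkeeping shows the top $m$ coefficients of $f$ equal those of $h^{e}$; expanding $h=x^{m}+\sum_{1\le j\le m-1}h_{j}x^{j}$, the coefficient of $x^{n-t}$ in $h^{e}$ (for $1\le t<m$) is $e\,h_{m-t}$ plus an expression in the previously-determined $h_{j}$'s. The simplicity hypothesis — the highest subleading coefficient $f_{i}$ of $f$ is nonzero with $p\nmid i$ or $i<n-p$ — is exactly what prevents the relevant second-coefficient (Wronskian-type) quantity from degenerating, so that after removing one degree-$p$ Frobenius-type layer the remaining data is pinned down up to a controlled ambiguity. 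Unrolling the resulting recursion over the $d$ factors of $p$ in $n$, with at most $p^{j}$ new possibilities at level $j$, yields $s=1+p+\cdots+p^{d}=(p^{d+1}-1)/(p-1)$; finally $s<p^{d}\sum_{k\ge0}p^{-k}=p^{d}\cdot p/(p-1)\le 2p^{d}$, and $p^{d}\le n$ since $p^{d}\mid n$.

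The main obstacle is making the bookkeeping of part \ref{cor:inj-3} rigorous: one must show that the simplicity of $f$ propagates to non-degeneracy of the components produced at each division by $p$, and that distinct, incomparable decompositions cannot each consume a full extra factor of $p$. This is precisely the delicate part of the wild-case theory in \cite{gat90d} (see also the survey \cite{gat02c}), which I would invoke rather than reprove here. Part \ref{cor:inj-1}, by contrast, is elementary once one observes that the top $m$ coefficients of a normal composite see only $h^{e}$.
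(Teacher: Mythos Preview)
Your proposal is correct. The paper's own ``proof'' of this Fact is purely a pair of citations: part \ref{cor:inj-1} is attributed to \cite{gat90c} and part \ref{cor:inj-3} to (the proof of) Corollary~3.6 of \cite{gat90d}, with no argument reproduced. Your treatment of \ref{cor:inj-1} therefore goes further than the paper by giving a clean self-contained argument; the key observation that the top $m$ coefficients of $f$ coincide with those of $h^{e}$, combined with the factorization $h_{1}^{e}-h_{2}^{e}=(h_{1}-h_{2})\sum h_{1}^{j}h_{2}^{e-1-j}$ and $p\nmid e$, is exactly the standard route and is carried out correctly. For \ref{cor:inj-3} you end up in the same place as the paper---deferring the substance to \cite{gat90d}---with the added value of a heuristic for why the bound $1+p+\cdots+p^{d}$ arises; your caveat that the propagation of simplicity through the layers is the genuinely delicate point is apt, and matches why the paper simply cites rather than reproves.
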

\begin{proof}
  The uniqueness in \ref{cor:inj-1} is well-known, see e.g.,
  \cite{gat90c} and the references therein.  \ref{cor:inj-3} follows
  from \cite{gat90d}, where the above notion of a simple polynomial is
  defined, and (the proof of) Corollary 3.6 of that paper shows that
  there are at most $s $ such decompositions of $f$.
\end{proof}
The paper cited for \ref{cor:inj-3} also gives an algorithm to decide
decomposability and, in that case, to compute all such decompositions.
This only applies to ``simple'' polynomials, and no nontrivial general
upper bound on the number of decompositions seems to be known.

\ref{algoWd} below uses a similar approach. On the one hand, it
applies to more restricted inputs. On the other hand, it is faster
(roughly, $n^{2}$ vs. $n^{4}$), more transparent and hence easier to
analyze, and yields a lower bound on the number of decomposables at
fixed component degrees.

In \ref{sec:cdup}, we find an upper bound $\alpha_{\dg}$ on
$\#D_{\dg}$, up to some small relative error. When the exact size of
the error term is not a concern, then this is quite easy. Furthermore,
\ref{cor:inj} immediately yields a lower bound of $\alpha_{\dg}/2$ if
$p$ is not the smallest prime divisor $l$ of $n$, and of about
$\alpha_{\dg}/4\dg$ in general, since ``most'' polynomials are simple.

Our goal in this paper is to improve these estimates. For this
purpose, we have to address the uniqueness (or lack thereof) of normal
compositions
\begin{equation}\label{eq:circ}
  g \circ h = g^{*}\circ h^{*}
\end{equation}
in two situations.  We call $\{(g,h),(g^{*},h^{*})\}$ satisfying
\ref{eq:circ} with $h \neq h^{*}$ an \emph{equal-degree collision} if
$\deg g = \deg g^{*}$ (and hence $\deg h = \deg h^{*}$), and a
\emph{distinct-degree collision} if $\deg g = \deg h^{*} \neq \deg h$
(and hence $\deg h = \deg g^{*}$). The present section deals with
equal-degree collisions, and \ref{sec:collcomp} with distinct-degree
collisions.

By \ref{cor:inj-1}, there are no equal-degree collisions when $p \nmid
\deg g$. In the more interesting case $p \mid \deg g$, collisions are
well-known to exist; \ref{ex:coll} exhibits all collisions over
$\mathbb{F}_{3}$ at degree $9$. Our goal, then, is to show that there
are few of them, so that the decomposable polynomials are still
numerous.  \ref{algoWd} provides a constructive proof of this. For
many, but not all, $(g,h)$ it reconstructs $(g,h)$ from $g \circ
h$. To quantify the benefit provided by the algorithm, we rely on a
result by Antonia \cite{blu04a}.

Distinct-degree collisions are classically taken care of by Ritt's
Second Theorem. Some versions put a restriction on $p$ that would make
our task difficult, but Umberto \cite{zan93} has cut this restriction
down to the bare minimum. The additional common restriction that $\gcd
(\deg g, \deg h)= 1$ has essentially been removed by
\cite{tor88a}, but only if $p$ does not divide the degree. If, in
addition, the composition is wild, then a look at derivatives provides
a reasonable bound. It is useful to single out a special case of wild
compositions.  
\begin{definition}\label{rem:coll}
  We call \emph{Frobenius composition} any $f \in F[x^{p}]$, since
  then $f=x^{p} \circ h^{*}$ for some $h^{*} \in P^{=}_{\dg/p}$, and
  any decomposition $(g,h)$ of $f = g \circ h$ is a \emph{Frobenius
    decomposition}. A \emph{Frobenius collision} is the following
  example of a collision \ref{eq:circ}. For any integer $j$, we denote
  by $\varphi_{j} \colon F \longrightarrow F$ the $j$th power of the
  Frobenius automorphism over a field $F$ of characteristic $p$, with
  $\varphi_{j}(a)= a^{p^{j}}$ for all $a \in F$, and extend it to an
  $\mathbb{F}_{p}$-linear isomorphism $\varphi_{j} \colon F[x]
  \longrightarrow F[x]$ with $\varphi_{j}(x)=x$.  Then if $h \in
  F[x]$, we have
  \begin{equation}\label{eq:frob}
    x^{p^{j}} \circ h = \varphi_{j} (h) \circ x^{p^{j}}.
  \end{equation}
\end{definition}

Thus any Frobenius composition except $x^{p^{2}}$ is the result of a
collision. Over $F = \mathbb{F}_{q}$, there are $q^{p^{j}-1}-1$ many
$h \in P^{0}_{p^{j}}$ with $h \neq x^{p^{j}}$ and for $m \neq p^{j}$,
this produces $q^{m-1}$ collisions with $h \in P_{m}^{0}$. By
composing with a linear function, we obtain
$q^{p^{j}+1}(1-q^{-1})(1-q^{-p^{j}+1})$ and $q^{m+1}(1-q^{-1})$
Frobenius collisions for $m=p^{j}$ and $m\neq p^{j} $, respectively.
This example is noted in \cite{sch82c}, Section I.5, page 39.

The Frobenius compositions from \ref{rem:coll} are easily described
and counted. It is useful to separate them from the others. If $p \mid
\dg$ and $l$ is a proper divisor of $\dg$, we set
\begin{align}\label{al:DD}
  \begin{aligned}
    D_{\dg}^{\varphi} & = D_{\dg} \cap F[x^{p}],\\
    D_{\dg}^{+}  &= D_{\dg} \smallsetminus D_{\dg}^{\varphi},\\
    D_{\dg,l}^{+}  &= D_{\dg,l} \cap D_{\dg}^{+},
  \end{aligned}
\end{align}
so that $D_{\dg}^{\varphi}$ comprises exactly the Frobenius
compositions of degree $\dg$.

\Citet{gat90d} presents an algorithm for certain ``wild''
decompositions $f= g \circ h$ with
$$
\deg f = \dg = \degOne \cdot m = \deg g \cdot \deg h
$$
and $p\mid \degOne$. It first makes coefficient comparisons to compute
$h$, and then a Taylor expansion to find $g$. We now take a simplified
version of that method. It does not work for all inputs, but for
sufficiently many for our counting purpose. In general, decomposing a
polynomial can be done by solving the corresponding system of
equations in the coefficients of the unknown components, say, using
Gr\"obner bases.

To fix some notation, we have integers
\begin{equation}\label{eq:int}
  d \geq 1,\;
  r=p^{d},\;
  \degOne = ar,\;
  m \geq 2,\;
  \dg = \degOne m,\;
  \degTwo\text{ with } 0 \leq  \degTwo < \degOne
  \text{ and }
  p \nmid a\degTwo,
\end{equation}
and polynomials
\begin{equation}\label{eq:ghf}
  \begin{aligned}
    g  &= x^{\degOne} + \sum_{1 \leq i \leq \degTwo}g_{i}x^{i},\\
    h & = \sum_{1 \leq i \leq m} h_{i}x^{i},\\
    f & = g \circ h = h^{\degOne}+ \sum_{1 \leq i \leq
      \degTwo}g_{i}h^{i},
  \end{aligned}
\end{equation}
with $h_{m}=1$, $h_{m-1} \neq 0$, and either $g_{\degTwo} \neq 0$ or
$g=x^{\degOne}$; the latter case corresponds to $\degTwo=0$. The idea
is to compute $h_{i}$ for $i=m-1$, $m-2$, $\ldots$, $1$ by comparing
the known coefficients of $f$ to the unknown ones of $h^{\degOne}$ and
$g_{\degTwo}h^{\degTwo}$. Special situations arise when the latter two
polynomials both contribute to a coefficient.  We denote by
$$
h^{(i)} = \sum_{i < b < m}h_{b}x^{b}
$$
the top part of $h$, so that $h^{(m-1)}=0$. Furthermore, we write
$~coeff (v,j)$ for the coefficient of $x^{j}$ in a polynomial $v$, and
$$
c_{i,j}(v) = ~coeff (v\circ (h-h^{(i)}),j).
$$
Thus $c_{m-1,j} (x^{\degOne}) = ~coeff (h^{\degOne},j)$, and in
particular, we have $c_{m-1,j}(g) = f_{j}$ for all $j$. To illustrate
the usage of these $c_{ij}$, we consider $E_{1}$ below. At some point
in the algorithm, we have determined
$g_{\degTwo},h_{m},\dots,h_{i+1}$. The appropriate $c_{ij}$ exhibits
$h_{i}$ in a simple fashion, meaning that we can compute it from
$f_{j}$ and $h^{(i)}$. Lastly we define the rational number
\begin{equation}\label{eq:ratio}
  i_{0}= m(\frac{\degTwo-a}{r-1}-a+1)= \frac{\degTwo m-\dg}{r-1}+m;
\end{equation}
thus $i_{0}<m$, and $i_{0}$ is an integer if and only if
$$
r-1 \mid (\degTwo-a)m.
$$
\begin{lemma}\label{lem:int}
  For $ 1 \leq i \leq m$ and $0 \leq j \leq \dg$, we have the
  following.
  \begin{enumEsub}
  \item\label{lem:int-1} If $i < m$, then
    \begin{equation}\label{eq:m-2}
      c_{i,(\degTwo-1)m+i}  (g_{\degTwo}x^{\degTwo})= \degTwo g_{\degTwo}h_{i},
    \end{equation}
    and $c_{m-1,\degTwo m}(g_{\degTwo}x^{\degTwo})= g_{\degTwo}$.
  \item\label{lem:int-2} If $i < m$, then
    \begin{equation}\label{eq:m-1}
      c_{i, \dg-r(m-i)} (x^{\degOne})= ah^{r}_{i}.
    \end{equation}
    If $r \nmid j$, then $~coeff(h^{\degOne},j)=0$.
  \item\label{lem:int-3} If $i_{0} \in \mathbb{N}$, then
    \begin{equation}\label{eq:m-4}
      c_{i_{0},(\degTwo-1)m+i_{0}}
      (x^{\degOne}+g_{\degTwo}x^{\degTwo})= ah_{i_{0}}^{r}+ \degTwo
      g_{\degTwo}h_{i_{0}}. 
    \end{equation}
  \item\label{lem:int-4} If $m=r$ and $\degTwo=\degOne -1$, then
    \begin{equation}\label{eq:m-3}
      \begin{aligned}
        c_{m-1,\degTwo m}(x^{\degOne}+g_{\degTwo}x^{\degTwo})&= ah_{m-1}^{r}+g_{\degTwo},\\
        c_{m-1,\degTwo m-1}(x^{\degOne}+g_{\degTwo}x^{\degTwo})&=
        -g_{\degTwo}h_{m-1}.
      \end{aligned}
    \end{equation}
  \end{enumEsub}
\end{lemma}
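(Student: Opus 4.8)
The plan is to prove all four parts of the lemma by direct coefficient bookkeeping in compositions of the shape $v\circ(h-h^{(i)})$, using two elementary facts throughout. First, $v\mapsto v\circ w$ is additive in $v$, so $c_{i,j}$ is additive in its argument; concretely $c_{i,j}(x^{\degOne}+g_{\degTwo}x^{\degTwo})=c_{i,j}(x^{\degOne})+c_{i,j}(g_{\degTwo}x^{\degTwo})$, so once \ref{lem:int-1} and \ref{lem:int-2} are in hand, \ref{lem:int-3} and \ref{lem:int-4} are additions of already-known pieces. Second, by \ref{eq:int} we have $r=p^{d}$ and $\degOne=ar$ with $p\nmid a$, so over a field of characteristic $p$ the Frobenius identity $(\sum_{k}u_{k}x^{k})^{r}=\sum_{k}u_{k}^{r}x^{kr}$ lets me write
\[
  (h-h^{(i)})^{\degOne}=\bigl((h-h^{(i)})^{r}\bigr)^{a}=\Bigl(x^{mr}+\textstyle\sum_{1\le k\le i}h_{k}^{r}x^{kr}\Bigr)^{a},
\]
so in particular $h^{\degOne}=(h^{r})^{a}$ has support contained in the multiples of $r$, which is the last sentence of \ref{lem:int-2}. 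I also record that $h^{(m-1)}=0$, hence $h-h^{(m-1)}=h$ and $c_{m-1,j}(v)=~coeff(v\circ h,j)$.

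For \ref{lem:int-1} I would expand $(h-h^{(i)})^{\degTwo}=(x^{m}+t)^{\degTwo}$ with $t=h_{i}x^{i}+\cdots+h_{1}x$ of degree at most $i$: any summand of the binomial expansion using two or more factors from $t$ has degree at most $(\degTwo-2)m+2i$, which is strictly below $(\degTwo-1)m+i$ precisely because $i<m$, so the coefficient of $x^{(\degTwo-1)m+i}$ comes only from $\degTwo(x^{m})^{\degTwo-1}t$ and equals $\degTwo h_{i}$; multiplying by $g_{\degTwo}$ gives \ref{eq:m-2}, and $c_{m-1,\degTwo m}(g_{\degTwo}x^{\degTwo})=g_{\degTwo}$ is simply the leading coefficient of $g_{\degTwo}h^{\degTwo}$. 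For the first part of \ref{lem:int-2} I would run the same argument on $\bigl(x^{mr}+\sum_{k\le i}h_{k}^{r}x^{kr}\bigr)^{a}$: a summand with two or more ``low'' factors has degree at most $r((a-2)m+2i)<r((a-1)m+i)=\dg-r(m-i)$, again because $i<m$, so the coefficient of $x^{\dg-r(m-i)}$ comes only from $a\,x^{(a-1)mr}\cdot h_{i}^{r}x^{ir}$ and equals $ah_{i}^{r}$, which is \ref{eq:m-1}.

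Claim \ref{lem:int-3} then follows by additivity once one checks that the two target exponents agree: from \ref{eq:ratio} one has $(r-1)(i_{0}-m)=\degTwo m-\dg$, i.e.\ $\dg-r(m-i_{0})=(\degTwo-1)m+i_{0}$, so \ref{lem:int-1} (at $i=i_{0}<m$) and \ref{lem:int-2} apply at the same exponent and sum to $ah_{i_{0}}^{r}+\degTwo g_{\degTwo}h_{i_{0}}$. For \ref{lem:int-4}, where $i=m-1$ and $h-h^{(m-1)}=h$, I would use $m=r$ and $\degTwo=\degOne-1=ar-1$, so $\degTwo m=\dg-r$ and $\degTwo m-1\equiv-1\pmod r$. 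The first line of \ref{eq:m-3} combines $~coeff(h^{\degOne},\degTwo m)=ah_{m-1}^{r}$ (this is \ref{lem:int-2} at $i=m-1$, since $\dg-r(m-(m-1))=\dg-r=\degTwo m$) with the leading coefficient $g_{\degTwo}$ of $g_{\degTwo}h^{\degTwo}$. For the second line, $~coeff(h^{\degOne},\degTwo m-1)=0$ because $\degTwo m-1$ is not a multiple of $r$ (and $r\ge 2$), while the subleading coefficient of $h^{\degTwo}=(x^{m}+h_{m-1}x^{m-1}+\cdots)^{\degTwo}$ is $\degTwo h_{m-1}$; finally $\degTwo=ar-1\equiv-1\pmod p$ turns this into $-h_{m-1}$, so the coefficient of $x^{\degTwo m-1}$ in $h^{\degOne}+g_{\degTwo}h^{\degTwo}$ is $-g_{\degTwo}h_{m-1}$.

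There is no deep obstacle here; the entire proof is bookkeeping, and the places needing care are: (i) matching, at each invocation, the exponent at which $c_{i,j}$ is evaluated with the exponent the expansion actually produces — most delicately the identity $\dg-r(m-i_{0})=(\degTwo-1)m+i_{0}$ extracted from \ref{eq:ratio}; (ii) verifying in every binomial or multinomial expansion that no higher cross term leaks into the coefficient under study, which is exactly where the hypotheses $i<m$ (for \ref{lem:int-1} and \ref{lem:int-2}) and $m=r$, $\degTwo=\degOne-1$ (for \ref{lem:int-4}) get used; and (iii) the two reductions $r\nmid\degTwo m-1$ and $\degTwo\equiv-1\pmod p$ at the end of \ref{lem:int-4}, both resting on $p\mid r$ and $p\nmid a$ from \ref{eq:int}.
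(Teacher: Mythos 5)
Your proof is correct and follows essentially the same route as the paper's own: direct coefficient bookkeeping in $(h-h^{(i)})^{\degOne}$ and $(h-h^{(i)})^{\degTwo}$, using additivity of $c_{i,j}$ in its argument and the Frobenius identity for the $r$th power, with $E_{3}$ and $E_{4}$ obtained by assembling $E_{1}$ and $E_{2}$ at the right exponents. The one small difference is the order of operations in $E_{2}$: you take $((h-h^{(i)})^{r})^{a}$, applying Frobenius first and then a clean $a$-fold multinomial expansion, whereas the paper computes $((h-h^{(i)})^{a})^{r}$ and at the end invokes $a^{r}=a$ in $\mathbb{F}_{p}$; your ordering avoids that last simplification, which is marginally tidier, but the arguments are otherwise the same.
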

\begin{proof}
  For $E_{1}$, we have to consider
  $$ g_{\kappa}(x^{m}+h_{i}x^{i}+O(x^{i-1}))^{\kappa}=g_{\kappa}x^{\kappa a}+\kappa^{'}g_{\kappa} h_{i}x^{(\kappa-1)m+i}+O(x^{(\kappa-1)m+i-1}),
  $$
  furthermore
  \begin{align*}
    c_{i,(\degTwo-1)m+i}(g_{\degTwo}x^{\degTwo}) &= g_{\degTwo}\cdot
    \degTwo
    h_{i},\\
    c_{m,\degTwo m}(g_{\degTwo}x^{\degTwo})
    &=~coeff(g_{\degTwo}h^{\degTwo},\degTwo m)= g_{\degTwo},
  \end{align*}
  and $E_{1}$ follows.  For $E_{2}$, we have
  $$
  h^{a} = x^{am}+ ah_{m-1}x^{am-1}+ O(x^{am-2}).
  $$
  When $i < m$, then in the coefficient of $x^{(a-1)m+i}$, we have the
  contribution $ah_{i}$, which comes from taking in the expansion of
  $h^{a}$ the factor $x^{m}$ exactly $a-1$ times and the factor
  $h_{i}x^{i}$ exactly once; there are $a$ ways to make these choices.
  The largest degree to which a summand $h_{j}x^{j}$ contributes in
  $h^{a}$ is $(a-1)m+j$, so that those with $j < i$ do not appear in
  the coefficient under consideration, and $c_{i,(a-1)m+i}(x^{a})=
  ah_{i}$.  Raising $h^{a}$ to the $r$th power yields
  $$
  c_{i,((a-1)m+i)r} (x^{\degOne})= c_{i,((a-1)m+i)r}((x^{a})^{r})=
  a^{r}h_{i}^{r}= ah_{i}^{r}
  $$
  and proves $E_{2}$, since $((a-1)m+i)r = \dg-r(m-i)$.
  
  For $E_{3}$, we have
  \begin{align*}
    (\degTwo-1)m+i_{0}&= \dg- r(m-i_{0}),\\
    c_{i_{0},(\degTwo-1)m+i_{0}}(x^{\degOne}+g_{\degTwo}x^{\degTwo})&
    = c_{i_{0},\dg-r(m-i_{0})}(x^{\degOne})+
    c_{i_{0},(\degTwo-1)m+i_{0}}(g_{\degTwo}x^{\degTwo})\\
     &= ah_{i_{0}}^{r}+\degTwo g_{\degTwo}h_{i_{0}}.
  \end{align*}
  For $E_{4}$, we have $\degTwo m=\dg-m$ and from $E_{1}$ and $E_{2}$
  \begin{align*}
    c_{m-1, \degTwo m}(x^{\degOne}+g_{\degTwo}x^{\degTwo})  &=
    c_{m-1,\dg-m}(x^{\degOne})+c_{m-1,\degTwo m}(g_{\degTwo}x^{\degTwo})=ah_{m-1}^{r}+g_{\degTwo},\\
    c_{m-1,\degTwo m-1}(x^{\degOne}+g_{\degTwo}x^{\degTwo}) &=
    ~coeff(h^{\degOne}, \degTwo m-1)+c_{m-1,\degTwo m-1}(g_{\degTwo}x^{\degTwo})\\
    &= 0+\degTwo g_{\degTwo}h_{m-1} = -g_{\degTwo}h_{m-1}.\qed
  \end{align*}
\end{proof}

In the following algorithm, the instruction ``determine $h_{i}$ (or
$g_{\degTwo}$) by $E_{\mu}$ (at $x^{j}$)'', for $1 \leq \mu \leq 4$,
means that the property $E_{\mu}$ involves some quantity
$c_{ij}(\cdot)$ which is a summand in $~coeff(g \circ h,j)= f_{j}$,
the other summands are already known, and we can solve for $h_{i}$ (or
$g_{\degTwo}$).
When we use $E_{2}$, we first compute $y=h_{i}^{r}$ and then $h_{i}$
by extracting the $r$th root of $y$. Over a finite field, this always
yields a unique answer, since $r$ is a power of $p$. But in general,
$y$ might not have an $r$th root. We say ``compute $h_{i}^{r}$ by
$E_{2}$, then $h_{i}$ if possible'' to mean that first $y$ is
determined, then $h_{i}$ as its $r$th root; if $y$ does not have an
$r$th root, then the empty set is returned.

The main effort in the correctness proof is to show that all data
required are available at that point in the algorithm, and that the
equation can indeed be solved. The algorithm's basic structure is
driven by the relationship between the degrees $\degTwo m$ of
$g_{\degTwo}h^{\degTwo}$ and $\dg-r$ of
$h^{\degOne}-x^{\dg}$.

\begin{namedalgorithm}{Algorithm}[Input,Output]{algoWd}[Wild
  decomposition]
\item $f \in {F}[x]$ monic and original of degree $\dg=
  \degOne m$, where $F$ is a field of characteristic $p\geq 2$, $d \geq 1$, $r=p^{d}$,
  and $\degOne = ar$ with $p \nmid a$.
\item Either a set of at most $r+1$ pairs $(g,h)$ with $g,h \in
  {F}[x]$ monic and original of degrees $\degOne$ and $m$,
  respectively, and $f=g \circ h$, or ``failure''.
\item\label{algoWd:step1} Let $j$ be the largest integer for which
  $f_{j} \neq 0$ and $p \nmid j$. If no such $j$ exists then if $d
  \geq 2$ call \ref{algoWd} recursively and else call a tame
  decomposition algorithm, in either case with input $f^{*}=f^{1/p}$
  and $\degOne^{*}= \degOne/p$. If a set of $(g^{*}, h^{*})$ is output
  by the call, then return the set of all Frobenius compositions
  $(x^{p} \circ g^{*}, h^{*})$.
\item\label{algoWd:step2} If $p \nmid m$ then if $m \nmid j$ then
  return ``failure'' else set $\degTwo=j/m$. If $p \mid m$ then if $m
  \nmid j+1$ then return ``failure'' else set $\degTwo=(j+1)/m$. If $p
  \mid \degTwo$, then return ``failure''. Calculate $i_{0}= (\degTwo
  m-\dg)/(r-1)+m$.
\item\label{algoWd:step3} If $\degTwo m \geq \dg-r+2$ then do the
  following.
  \begin{enumabc}
  \item\label{algoWd:step3-a} Set $g_{\degTwo}= f_{\degTwo m}$.
  \item\label{algoWd:step3-b} Determine $h_{i}$ for $i = m-1, \ldots,
    1$ by $E_{1}$.
  \end{enumabc}
\item\label{algoWd:step4} If $\degTwo m = \dg-r+1$ then do the
  following.
  \begin{enumabc}
  \item\label{algoWd:step4-a} Set $g_{\degTwo} = f_{\degTwo m}$.
  \item\label{algoWd:step4-b} Determine $h_{m-1}$ by $E_{3}$. If
    \short\ref{eq:m-4} does not have a unique solution, then return
    ``failure''.
  \item\label{algoWd:step4-c} Determine $h_{i}$ for $i = m-2, \ldots,
    1$ by $E_{1}$.
  \end{enumabc}
\item\label{algoWd:step5} If $\degTwo m = \dg-r$ then do the
  following.
  \begin{enumabc}
  \item\label{algoWd:step5-a} Determine $h_{m-1}$ by $E_{4}$, in the
    following way. Compute the set $S$ of all nonzero $s \in
    \mathbb{F}_{q}$ with
    \begin{equation}\label{eq:as}
      as^{r+1}- f_{\degTwo m}s -f_{\degTwo m-1}=0.
    \end{equation}
    If $S = \varnothing$ then return the empty set, else do steps 5.b
    and 5.c for all $s \in S$, setting $h_{m-1} = s$.
  \item\label{algoWd:step5-b} Determine $g_{\degTwo}$ by $E_{1}$ and
    $E_{2}$ at $x^{\degTwo m}$, from $f_{\degTwo m}=
    ah^{r}_{m-1}+g_{\degTwo}$.
  \item\label{algoWd:step5-c} For $i=m-2, \ldots, 1$ determine $h_{i}$
    by $E_{1}$.
  \end{enumabc}
\item\label{algoWd:step6} If $\degTwo m < \dg-r$ then do the
  following.
  \begin{enumabc}
  \item\label{algoWd:step6-a} Determine $h^{r}_{m-1}$ by $E_{2}$, then
    $h_{m-1}$ if possible.
  \item\label{algoWd:step6-b} If $r \nmid m$ then determine
    $g_{\degTwo}$ by $E_{1}$ at $x^{\degTwo m}$ (as
    $g_{\degTwo}=f_{\degTwo m}$), else by $E_{1}$ at $x^{\degTwo m-1}$
    (via $\degTwo g_{\degTwo}h_{m-1}= f_{\degTwo m-1}$).
  \item\label{algoWd:step6-c} Determine $h^{r}_{i}$ by $E_{2}$,
    then $h_{i}$ if possible, for decreasing $i$
    with $m-2 \geq i >i_{0}$.
  \item\label{algoWd:step6-d} If $i_{0}$ is a positive integer, then
    determine $h_{i_{0}}$ by $E_{3}$. If $E_{3}$ does not yield a
    unique solution, then return ``failure''.
  \item\label{algoWd:step6-e} Determine $h_{i}$ for decreasing $i$
    with $i_{0}> i \geq 1$ by $E_{1}$.
  \end{enumabc}
\item\label{algoWd:step7} [We now know $h$.] Compute the remaining
  coefficients $g_{1}, \ldots, g_{\degTwo-1}$ as the ``Taylor
  coefficients'' of $f$ in base $h$.
\item\label{algoWd:step8} Return the set of all $(g,h)$ for which $g
  \circ h = f$. If there are none, then return the empty set.
\end{namedalgorithm}

The Taylor expansion method determines for given $f$ and $h$ the
unique $g$ (if one exists) so that $f = g \circ h$; see \cite{gat90c}.

We first illustrate the algorithm in some examples.
\begin{example}
  We let $p=5$, $\dg=50$, and $\degOne=r=5$, so that $a=d=1$ and
  $m=10$, and start with $\degTwo=4=r-1$. We assume $f_{39}= g_{4}
  h_{9} \neq 0$. Then
  $$
  h^{5}+g_{4}h^{4}= x^{50}+ h_{9}^{5}x^{45}+ (h_{8}^{5}+g_{4})x^{40}
  +4g_{4}h_{9} x^{39}+ g_{4} (4h_{8}+h^{2}_{9})x^{38}
  $$
  $$
  + x^{36}\cdot O(x) + (h_{7}^{5}+ g_{4}(4h_{5}+h_{9}h_{6} +h_{8}h_{7} +
  h_{9}^{2}h_{7}+ h_{9}h_{8}^{2}+ h_{9}^{3}h_{8}))x^{35}+O(x^{34}).
  $$

  Step \short\ref{algoWd:step1} determines $j=39$, and step
  \short\ref{algoWd:step2} finds $\degTwo= (39+1)/10$ and $i_{0}=15/2
  \not\in \mathbb{N}$. Since $\degTwo m=40 < 45 = \dg-r$, we go to
  step \short\ref{algoWd:step6}. Step
  \short\ref{algoWd:step6}\short\ref{algoWd:step6-a} computes $h_{9} $
  at $x^{45}$, step \short\ref{algoWd:step6}\short\ref{algoWd:step6-b}
  yields $g_{4}$ at $x^{39}$, step
  \short\ref{algoWd:step6}\short\ref{algoWd:step6-c} determines
  $h_{8}$ at $x^{40}$ by $E_{2}$, step
  \short\ref{algoWd:step6}\short\ref{algoWd:step6-d} is skipped, and
  then step \short\ref{algoWd:step6}\short\ref{algoWd:step6-e} yields
  $h_{7},...,h_{1}$ at $x^{37},...,x^{31},$ respectively, all using
  $E_{1}$. Step \short\ref{algoWd:step7} determines $g_{1}$, $g_{2}$,
  $g_{3}$, and step \short\ref{algoWd:step8} checks whether indeed
  $f=g \circ h$, and if so, returns $(g,h)$.

  With the same values, except that $\degTwo=3$, we have
  \begin{align*}
    h^{5}+ g_{3} h^{3}  &= x^{50} + h_{9}^{5} x^{45 } + h_{8}^{5} x^{40}+ h_{7}^{5} x^{35}\\
    &\quad + (h_{6}^{5} + g_{3}) x^{30} + 3 g_{3 }h_{9} x^{29}+ g_{3}
    (3h_{9}^{2}+3h_{8}) x^{28} + x^{26} \cdot O(x)\\
    &\quad + (h_{5}^{5}+ g_{3}(3h_{5}+ 3 h_{9} h_{6}+ 3 h_{8}h_{7} + 3
    h_{9}^{2} h_{7} + 3 h_{9}h_{8}^{2} ))x^{25 } + O(x^{24}).
  \end{align*}

  Assuming that $f_{29}=3g_{3}h_{9}\neq 0$, the algorithm computes
  $j=29$, $\degTwo=(29+1)/10$, $i_{0}=5 \in \mathbb{N}$, goes to step
  \short\ref{algoWd:step6}, determines $h_{9}$ at $x^{45}$,
  $g_{3}$ at $ x^{29}$, $h_{8}$, $h_{7}$, $h_{6}$ according to
  $E_{2}$, then $h_{5}$ at $x^{25}$ via the known value for $h_{5}^{5}
  + 3 g_{3} h_{5}$ in step
  \short\ref{algoWd:step6}\short\ref{algoWd:step6-d} with
  $E_{3}$. Condition \ref{eq:pc} below requires that
  $(-3g_{3})^{(q-1)/4} \neq 1$ and guarantees that $h_{5}$ is uniquely
  determined, as shown in the proof of \ref{thm:kg} below.  Finally
  $h_{4},...,h_{1}$ and $g_{1}, g_{2}$ are computed.

  As a last example, we take $p=5$, $\dg = 25$, $\degOne=r=m=5$ and
  $\degTwo = 4$, so that $a=1$ and
  $$
  h^{5}+g_{4}h^{4}= x^{25}+ (h_{4}^{5} + g_{4}) x^{20}+
  4g_{4}h_{4}x^{19} + O(x^{18}).
  $$
  Again we assume $f_{19}= 4g_{4}h_{4} \neq 0$. Then steps
  \short\ref{algoWd:step1} and \short\ref{algoWd:step2} determine
  $j=19$, $\degTwo=4$, and $i_{0}=15/4 \not\in \mathbb{N}$. We have
  $\degTwo m=20=\dg-r$, so that we go to step
  \short\ref{algoWd:step5}. In step
  \short\ref{algoWd:step5}\short\ref{algoWd:step5-a}, we have to solve
  \ref{eq:as}. The number of solutions is discussed starting with
  \ref{bluher} below. We consider two special cases, namely $q=5$ and
  $q=125$. For $q=5$, we have $25$ pairs $(v, w)=(f_{20}, f_{19})\in
  \mathbb{F}_{5}^{2}$ to consider, with $w \neq 0$.  When $v \neq 0$,
  then the number of solutions is
  \begin{align*}
    \begin{cases}
      2 & \text{if } wv^{-2} \in \{2,0\},\\
      1 & \text{if } wv^{-2} = 1,\\
      0 & \text{otherwise},
    \end{cases}
  \end{align*}
  and when $v=0$:
  \begin{align*}
    \begin{cases}
      2 & \text{for the squares } w=1,4,\\
      0 & \text{otherwise}.
    \end{cases}
  \end{align*}

  Over $\mathbb{F}_{125}$, we have the following numbers of nonzero
  solutions $s$ when $v \neq 0$:
  \begin{align*}
    \begin{cases}
      6 & \text{for } 1 \cdot 124 \text{ values} ~(v,w),\\
      2 & \text{for } 47 \cdot 124 \text{ values}~ (v,w),\\
      1 & \text{for } 25 \cdot 124 \text{ values} ~(v,w),\\
      0 & \text{for } 52 \cdot 124 \text{ values}~ (v,w),
    \end{cases}
  \end{align*}
  and when $v=0$:

  \begin{align*}
    \begin{cases}
      2 & \text{for } 62 \text{ values of } w, \text{ namely the squares},\\
      0 & \text{for } 62 \text{ values of } w.
    \end{cases}
  \end{align*}
  These numbers are explained below. We run the remaining steps in
  parallel for each value $h_{4}= s$ with $s \in S$. This yields
  $g_{4}$ in step \short\ref{algoWd:step5}\short\ref{algoWd:step5-b},
  $h_{3}$, $h_{2}$, $h_{1}$ in step
  \short\ref{algoWd:step5}\short\ref{algoWd:step5-c}, and $g_{1}$,
  $g_{2}$, $g_{3}$ in step \short\ref{algoWd:step7}.
\end{example}

We denote by ${\sf M}(\dg)$ a multiplication time, so that polynomials
of degree at most $\dg$ can be multiplied with ${\sf M}(\dg)$
operations in ${F}$. Then ${\sf M} (\dg)$ is in $O(\dg \log
\dg \loglog \dg)$; see \cite{gatger03}, Chapter 8, and \cite{fue07}
for an improvement.

For an input $f$, we set $\sigma(f) = \#S$ if the precondition of step
\short\ref{algoWd:step5} is satisfied and $S$ computed there, and
otherwise $\sigma(f)=1$.

\begin{theorem}\label{thm:kg}
  Let $f$ be an input polynomial with parameters $\dg$, $p$,
  $q=p^{e}$, $d$, $r$, $a$, $\degOne$, $m$ as specified, $g$, $h$,
  $\degTwo$, $i_{0}$ as in \ref{eq:ghf} and \ref{eq:ratio}, so that
  $f=g \circ h$, set $c=\gcd(d,e)$ and suppose further that
  \begin{equation}
    \label{eq:pc}
    \text{if } i_{0} \in \mathbb{N}\text{ and } 1 \leq i_{0}<m, 
      \text{ then } (-\degTwo g_{\degTwo}/a)^{(q-1)/(p^{c}-1)} \neq 1.
  \end{equation}
  On input $f$, \ref{algoWd} returns either ``failure'' or a set of at
  most $\sigma(f)$ normal decompositions $(g^{*}, h^{*})$ of $f$, and
  $(g,h)$ is one of them. Except if returned in step
  \short\ref{algoWd:step1}, none of them is a Frobenius
  decomposition. If $F=\mathbb{F}_{q}$ is finite, then the algorithm uses
  $$
  O\bigl( {\sf M}(\dg) \log \degOne \, (m+ \log(\degOne q))\bigr)
  $$
  or $\softO(\dg(m + \log q))$ operations in $\F_{q}$.
\end{theorem}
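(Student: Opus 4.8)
The plan is to trace \ref{algoWd} instruction by instruction, carrying along at each point an exact inventory of which coefficients of $g$ and of $h$ have already been pinned down, and to check that the coefficient equation invoked at that point is a linear (or linearized) equation in the one remaining unknown with unit leading coefficient (the only regime where it has more than one solution being step \ref{algoWd:step5}). Throughout I take $F=\F_q$. I would first dispose of the Frobenius branch, step \ref{algoWd:step1}. If it triggers then $f$ has no nonzero coefficient at an exponent prime to $p$, i.e.\ $f\in F[x^p]$, so $f'=g'(h)h'=0$; as $F[x]$ is a domain and $h$ is nonconstant this forces $g\in F[x^p]$ or $h\in F[x^p]$, and the hypothesis $h_{m-1}\neq0$ (with $p\nmid m-1$ when $p\mid m$, and with the leading exponent $m$ prime to $p$ when $p\nmid m$) excludes $h\in F[x^p]$. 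Hence $g\in F[x^p]$, which together with $p\nmid\degTwo$ (from \eqref{eq:int}) forces $\degTwo=0$, i.e.\ $g=x^{\degOne}$ and $f=h^{\degOne}$. Then $f^{1/p}=x^{\degOne/p}\circ h$ exists over any $F$, the recursive or tame call on it recovers $(x^{\degOne/p},h)$ --- uniquely at the bottom tame level by \ref{cor:inj-1} --- and re-composing with $x^p$ returns $(x^{\degOne},h)=(g,h)$; every such output is a Frobenius decomposition and $\sigma(f)=1$.

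In the remaining case some exponent prime to $p$ carries a nonzero coefficient of $f$, so $f\notin F[x^p]$, whence $f$ is not a Frobenius composition and no decomposition of it is Frobenius; this settles the second assertion. Writing $h^{\degOne}=(h^a)^{r}$ and using that $p$-th powering is additive in characteristic $p$, $h^{\degOne}\in F[x^{r}]$, so it contributes to $f$ only at exponents divisible by $r$ (a fortiori by $p$); consequently the non-$p$-divisible part of $f$ comes entirely from $\sum_{1\le i\le\degTwo}g_ih^i$, whose top contribution is $g_{\degTwo}x^{\degTwo m}$, and examining its largest non-$p$-divisible coefficient (invoking $g_{\degTwo}\neq0$, $p\nmid\degTwo$, $h_{m-1}\neq0$) shows that step \ref{algoWd:step2} assigns $\degTwo$ correctly and that none of its failure tests fires. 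I would then handle the four regimes of steps \ref{algoWd:step3}--\ref{algoWd:step6} uniformly: in each, by downward induction on $i$, the coefficient of $f$ at the designated exponent equals a polynomial in the already-known data ($g_{\degTwo}$, the $h_b$ with $b>i$, and the relevant leading coefficients of $h^{\degOne}$) plus exactly one isolating term from \ref{lem:int} --- $\degTwo g_{\degTwo}h_i$ for $E_1$, $ah_i^{r}$ for $E_2$, $ah_{i_0}^{r}+\degTwo g_{\degTwo}h_{i_0}$ for $E_3$, or the two relations of \eqref{eq:m-3} for $E_4$. The degree count (using that $\degOne$ vanishes in $F$, so $\deg g'=\degTwo-1$) is what certifies that nothing else intrudes and that all correction terms are available; solving then uses $p\nmid a$, $p\nmid\degTwo$ and $h_{m-1}\neq0$, and in the $E_3$ case it is the linearized equation $ay^{r}+\degTwo g_{\degTwo}y=c$, whose kernel is trivial exactly when $-\degTwo g_{\degTwo}/a$ is not an $(r-1)$-th power in $\F_q^{\times}$, i.e.\ precisely under \eqref{eq:pc} since $p^{c}-1=\gcd(r-1,q-1)$; this gives the unique solution needed in steps \ref{algoWd:step4-b} and \ref{algoWd:step6-d}. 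A short auxiliary point handles step \ref{algoWd:step5}: because $\gcd(m,am-1)=1$, the relation $\degTwo m=\dg-r$ forces $m\mid r$, and then $p\nmid\degTwo$ forces $m=r$ and $\degTwo=\degOne-1$, so $E_4$ applies and eliminating $g_{\degTwo}$ between the two relations of \eqref{eq:m-3} yields precisely \eqref{eq:as}; hence the true $h_{m-1}$ lies in $S$, whose size $\#S=\sigma(f)$ is governed by the result of \cite{blu04a}.

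For the output count: in steps \ref{algoWd:step3}, \ref{algoWd:step4} and \ref{algoWd:step6} the whole vector $h$ is recovered uniquely, while step \ref{algoWd:step5} branches over the $\sigma(f)$ values $h_{m-1}\in S$; on each surviving branch step \ref{algoWd:step7} (the base-$h$ Taylor expansion, \cite{gat90c}) yields the unique $g$ with $f=g\circ h$ if one exists, so at most $\sigma(f)$ normal decompositions are returned in total, and the true $(g,h)$ satisfies every equation met, hence survives every branch and the final check in step \ref{algoWd:step8}, and is among them. For the running time over $\F_q$: reading coefficients and the search in step \ref{algoWd:step2} cost $O(\dg)$; each of the $O(m)$ rounds that fixes one $h_i$ evaluates a single coefficient of $g$ composed with the current truncation of $h$ in $O({\sf M}(\dg)\log\degOne)$ by repeated squaring, plus at most one $r$-th root at cost $O(d\log q)\subseteq O(\log\degOne\log q)$; step \ref{algoWd:step5-a} extracts the roots of a polynomial of degree $r+1\le\degOne$ over $\F_q$ in $O({\sf M}(\degOne)\log(\degOne q))$; steps \ref{algoWd:step7}--\ref{algoWd:step8} are one base change and one composition, each $O({\sf M}(\dg)\log\degOne)$; and the step \ref{algoWd:step1} recursion adds $O(\dg\log q)$ for the $p$-th roots together with one tame call, all within the claimed bound. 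Summing gives $O({\sf M}(\dg)\log\degOne\,(m+\log(\degOne q)))$, and with ${\sf M}(\dg)=\softO(\dg)$ and $m=\dg/\degOne$ this is $\softO(\dg(m+\log q))$.

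The main obstacle is the bookkeeping inside steps \ref{algoWd:step3}--\ref{algoWd:step6}: for every exponent used one must certify that precisely the intended isolating term of \ref{lem:int} appears, that all remaining summands --- including the correct leading coefficients of $h^{\degOne}=(h^a)^{r}$ --- have already been computed by that point, and that the isolating coefficient is a unit. This verification is regime-dependent and error-prone, and it is exactly where the hypotheses $p\nmid a$, $p\nmid\degTwo$, $h_{m-1}\neq0$, the vanishing of $\degOne$ in $F$, and condition \eqref{eq:pc} are consumed.
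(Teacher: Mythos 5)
Your proposal follows the same route as the paper: verify $\degTwo$ is correctly extracted in steps 1--2, then run a downward induction showing each coefficient equation from \ref{lem:int} isolates a single unknown with a unit coefficient (using $p\nmid a$, $p\nmid\degTwo$, $h_{m-1}\neq 0$, the degree bound $\deg\bigl(h^{\degOne}-x^{\dg}\bigr)\leq\dg-r$ coming from $h^{\degOne}=(h^a)^r$, and the collapse of $g$ between degrees $\degTwo$ and $\degOne$), reduce uniqueness of the $E_3$ solution to the kernel of the $\F_p$-linear map $y\mapsto ay^r+\degTwo g_{\degTwo}y$ being trivial and identify this with condition \eqref{eq:pc} via $\gcd(q-1,r-1)=p^c-1$, show $\degTwo m=\dg-r$ forces $r=m$ and $\degTwo=\degOne-1$ so that eliminating $g_{\degTwo}$ yields \eqref{eq:as}, and bound the cost term by term. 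The one cosmetic difference is your explicit treatment of the Frobenius branch: under the hypotheses actually imposed (in particular $p\nmid a\degTwo$, hence $\degTwo\geq 1$ and $g_{\degTwo}\neq 0$), the paper instead observes directly that $j=\degTwo m$ or $\degTwo m-1$ is found in step \short\ref{algoWd:step1}, so that branch never fires and the ``$\sigma(f)=1$'' issue you raise for the recursion does not arise --- but this is a corner case the hypotheses already exclude, and it does not affect the rest of your argument.
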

\begin{proof}
  Since $ r = p^{d} \mid \degOne$, we have $~coeff(h^{\degOne},j)=0$
  unless $r \mid j$. Furthermore $g_{\degTwo}h^{\degTwo}=
  g_{\degTwo}x^{\degTwo m} + \degTwo g_{\degTwo}h_{m-1} x^{\degTwo
    m-1}+O(x^{\degTwo m-2})$ and $\degTwo g_{\degTwo}h_{m-1} \neq 0$,
  so that $j$ from step \short\ref{algoWd:step1} equals $\degTwo m$
  (if $p \nmid m$) or $\degTwo m-1$ (if $p\mid m$). Thus $\degTwo$ is
  correctly determined in step \short\ref{algoWd:step2}. In
  particular, $f$ is not a Frobenius composition.

  We denote by $G$ the set of $(g,h)$ allowed in the theorem.  We
  claim that the equations used in the algorithm involve only
  coefficients of $f$ and previously computed values, and usually have
  a unique solution. It follows that most $f \in
  \gamma_{\dg,\degOne}(G)$ are correctly and uniquely decomposed by
  the algorithm. The only exception to the uniqueness occurs in
  \ref{eq:as}.

  In the remaining steps, we use various coefficients $f_{j}$ for
  $j=(\degTwo-1)m+i$ with $1 \leq i \leq m$ or $j= \dg-r(m-i)$ with
  $i_{0} \leq i < m$. The value $i_{0}$ is defined so that $\dg-r
  (m-i_{0})=(\degTwo-1)m+i_{0}$, and thus
  \begin{equation}\label{eq:i}
    \dg -r(m-i) \geq (\degTwo-1)m+i ~\text{if and only if } i \geq i_{0},
  \end{equation}
  since the first linear function in $i$ has the slope $r > 1$,
  greater than for the second one. Since $i \geq 1$, it follows that
  $j > (\degTwo-1)m$ for all $j$ under consideration.  For the
  low-degree part of $g$ we have
  $$
  \deg ((g-(x^{\degOne}+g_{\degTwo}x^{\degTwo}))\circ h) \leq
  (\degTwo-1)m <j,
  $$
  so that
  $$
  f_{j} = ~coeff (g \circ h,j)=
  ~coeff((x^{\degOne}+g_{\degTwo}x^{\degTwo})\circ h,j)=
  ~coeff(h^{\degOne}+g_{\degTwo}h^{\kappa},j)
  $$
  for all $j$ in the algorithm.

  We have to see that the application of $E_{3}$ in steps
  \short\ref{algoWd:step4}\short\ref{algoWd:step4-b} (where
  $i_{0}=m-1$) and \short\ref{algoWd:step6}\short\ref{algoWd:step6-d}
  (where $m-2\geq i_{0}\geq 1$) always has a unique solution. The
  right hand side of \ref{eq:m-4}, say $as^{r}+\degTwo g_{\degTwo}s$,
  is an $\F_{p}$-linear function of $s$. The equation has a unique
  solution if and only if its kernel is $\{0\}$. (\citealp{seg64},
  Teil 1, � 3, and \citealp{wan90a} provide an explicit solution in
  this case.)  But when $s \in \F_{q}$ is nonzero with $as^{r} +
  \degTwo g_{\degTwo}s = 0$, then $-\degTwo g_{\degTwo}/a=
  s^{r-1}$. Writing $z = p^{c}$, so that $z-1=~gcd(q-1,r-1)$, we have
  $$
  (-\degTwo g_{\degTwo}/a)^{(q-1)/(z-1)}=(s^{r-1})^{(q-1)/(z-1)}=
  (s^{(r-1)/(z-1)})^{q-1}=1,
  $$ 
  contradicting the condition \ref{eq:pc}.

  For the correctness it is sufficient to show that all required
  quantities are known, in particular $c_{i,j}
  (g_{\degTwo}x^{\degTwo})$  in $ E_{1}$ and
  $c_{i,j}(x^{\degOne})$ in $ E_{2}$, and that the equations
  determine the coefficient to be computed. We have
  \begin{equation}\label{am}
    \deg (h^{\degOne}-x^{\dg})= \deg((h^{a}-x^{am})^{r}) \leq (am-1)r=\dg-r,
  \end{equation}
  so that $g_{\degTwo}= f_{\degTwo m}$ in steps
  \short\ref{algoWd:step3}\short\ref{algoWd:step3-a} and
  \short\ref{algoWd:step4}\short\ref{algoWd:step4-a}.

  The precondition of step \short\ref{algoWd:step3} implies that for
  all $i<m$ we have
  $$
  (\kappa-1)m\geq\dg-r-m+2>\dg-mr+(r-1)(m-1)\geq\dg-rm+(r-1)i,
  $$
  $$
  n-r(m-i) < (\degTwo-1)m+i.
  $$
  Thus from $E_{1}$ we have with $j=(\degTwo-1)m-i$
  \begin{align*}
    f_{(\degTwo-1)m+i}  &= ~coeff (h^{\degOne}, j) + ~coeff (g_{\degTwo}h^{\degTwo},j)\\
    &= ~coeff ((h^{(i)})^{\degOne}, j)+\degTwo g_{\degTwo}h_{i}
  \end{align*}
  with $\degTwo g_{\degTwo} \neq 0$, so that $h_{i}$ can be computed
  in step \short\ref{algoWd:step3}\short\ref{algoWd:step3-b}.

  The precondition in step \short\ref{algoWd:step4} implies that
  $i_{0}= m-1$, and hence $(r-1)\mid (a-\degTwo)m$. $E_{3}$ says that
  $f_{\degTwo m-1}=c_{m-1, \degTwo
    m-1}(x^{\degOne}+g_{\degTwo}x^{\degTwo})= ah_{m-1}^{r}+ \degTwo
  g_{\degTwo}h_{m-1}$. We have seen above that under our assumptions
  the equation $f_{\degTwo m-1}= as^{r}+\degTwo g_{\degTwo}s$ has
  exactly one solution $s$.  By an argument as for step
  \short\ref{algoWd:step3}\short\ref{algoWd:step3-b}, also step
  \short\ref{algoWd:step4}\short\ref{algoWd:step4-c} works correctly.

  The only usage of $E_{4}$ occurs in step
  \short\ref{algoWd:step5}\short\ref{algoWd:step5-a}, where
  $\degTwo=(n-r)/m= \degOne-r/m.$ Since $p\mid \degOne$, $r$ is a
  power of $p$, and $p \nmid \degTwo$, this implies that $r=m$ and
  $\degTwo=\degOne-1$.  We have from $E_{4}$
  \begin{align*}
    f_{\degTwo m} &= ah_{m-1}^{r}+g_{\degTwo},\\
    f_{\degTwo m-1} &= -g_{\degTwo}h_{m-1}= -(f_{\degTwo
      m}-ah_{m-1}^{r})h_{m-1}= ah_{m-1}^{r+1}- f_{\degTwo m}h_{m-1}.
  \end{align*}
  Thus $h_{m-1} \in S$ as computed in step
  \short\ref{algoWd:step5}\short\ref{algoWd:step5-a} and $g_{\degTwo}$
  is correctly determined in step
  \short\ref{algoWd:step5}\short\ref{algoWd:step5-b}. The precondition
  of step \short\ref{algoWd:step5} implies that $i_{0}= m-1-1/(r-1)$,
  which is an integer only for $r=2$. In that case, $i_{0}=m-2=0$ and
  no further $h_{i}$ is needed. Otherwise, $m-2 < i_{0}<m-1$ and step
  \short\ref{algoWd:step5}\short\ref{algoWd:step5-c} works correctly
  since $i < i_{0}$.

  The precondition of step \short\ref{algoWd:step6} implies that
  $i_{0} < m-1$. If $r \nmid m$, then $~coeff(h^{\degOne},\degTwo
  m)=0$ by $E_{2}$, and otherwise $~coeff(h^{\degOne}, \degTwo
  m-1)=0$. Thus $g_{\degTwo}$ is correctly computed in step
  \short\ref{algoWd:step6}\short\ref{algoWd:step6-b}. Correctness of
  the remaining steps follows as above.

  For the cost of the algorithm over $F=\mathbb{F}_{q}$, two contributions are from
  calculating $(h^{(j)})^{\degTwo}$ for some $j < m$ and the various
  $r$th roots. The first comes to $O (m \cdot \log \degTwo \cdot {\sf
    M}(\dg))$ and the second one to $O (m \cdot \log_{p}q)$ operations
  in $\mathbb{F}_{q}$. $E_{3}$ and $E_{4}$ are applied at most
  once. We then have to find all roots of a univariate polynomial of
  degree at most $r+1$. This can be done with $O({\sf M}(r) \log r
  \log rq)$ operations (see \cite{gatger03}, Corollary 14.16). The
  Taylor coefficients in step \short\ref{algoWd:step7} can be
  calculated with $O({\sf M}(\dg) \log \degOne)$ operations (see
  \cite{gatger03}, Theorem 9.15).  All other costs are dominated by
  these contributions, and we find the total cost as
  $$
  O \bigl({\sf M}(\dg) \log \degOne \cdot (m+\log(\degOne
  q))\bigr).\qed
  $$
\end{proof}
A more direct way to compute $h$ (say, in step 3) is to consider its
reversal as the $\kappa$th root of the reversal of
$(f-h^{k})/g_{\kappa}$, feeding the contribution of $h^{k}$ into the
Newton iteration as in \cite{gat90c}. I have not analyzed this
procedure.

Our next task is to determine the number $N$ of decomposable $f$
obtained as $g \circ h$ in \ref{thm:kg}. Since \ref{eq:as} is an
equation of degree $r+1$, it has at most $r+1$ solutions, and $\sigma
(f) \leq r+1$. $N$ is at least the number of $(g,h)$ permitted by
\ref{thm:kg}, divided by $r+1$. The following considerations lead to a
much better lower bound on $N$.

In the following we write, as usually, $p = ~char \mathbb{F}_{q}$, and
also
\begin{equation}\label{eq:N}
  q = p^{e}, r=p^{d}, c= \gcd(d,e), z=p^{c},
\end{equation}
so that $\mathbb{F}_{q} \cap \mathbb{F}_{r} = \mathbb{F}_{z}$
(assuming an embedding of $\mathbb{F}_{q}$ and $\mathbb{F}_{r}$ in a
common superfield) and $\gcd(q-1,r-1)= z-1$ (see \ref{lem:even}). We
have to understand the number of solutions $s$ of \ref{eq:as}, in
other words, the size of
$$
S(v,w)= \{s \in \mathbb{F}^{\times}_{q} \colon s^{r+1}-vs-w=0\}
$$
for $v=f_{\degTwo m}/a$, $w=f_{\degTwo m-1}/a \in
\mathbb{F}_{q}$. \ref{eq:as} is only used in step
\short\ref{algoWd:step5}, where $m=r$, as noted above. We have
$\degTwo=(j+1)/m$ in step \short\ref{algoWd:step2} and hence
$f_{\degTwo m-1} \neq 0$ and $w\neq 0$.  Furthermore, we define for $u
\in \mathbb{F}_{q}$
\begin{equation}\label{eq:Tu}
  T(u) = \{t \in \mathbb{F}^{\times}_{q} \colon t^{r+1}-ut+u=0\}.
\end{equation}

In \ref{eq:as}, we have $w \neq 0$, but $v$ might be zero. In order to
apply a result from the literature, we first assume that also $v$ is
nonzero, make the invertible substitution $s = -v^{-1}wt$, and set
$u=v^{r+1}(-w)^{-r} = -v^{r+1}w^{-r} \in \mathbb{F}_{q}$. Then $u \neq
0$ and
\begin{align}\label{eq:w1}
  s^{r+1} -vs-w &= (-v^{-1}w)^{r+1}(t^{r+1}-ut+u),\\
  \#S(v,w) &= \#T(u).\nonumber
\end{align}

This reduces the study of $ S(v,w) $, with two parameters, to the
one-parameter problem $T(u)$.  The polynomial $t^{r+1}-ut+u$ has
appeared in other contexts such as the inverse Galois problem,
difference sets, and M\"uller-Cohen-Matthews polynomials. \cite{blu04a}
has determined the combinatorial properties that we need here; see her
paper also for further references. \citeauthor{blu04a} allows an
infinite ground field $F$, but we only use her results for $F=\mathbb{F}_{q}$.

For $i \geq 0$, let
\begin{equation}\label{eq:c}
  \begin{aligned}
    C_{i}  &= \# \{u \in \mathbb{F}^{\times}_{q} \colon \#T (u)=i\},\\
    c_{i} &=\#C_{i}.
  \end{aligned}
\end{equation}
Then $C_{i}= \varnothing$ for $i>r+1$. \cite{blu04a} completely
determines these $c_{i}$, as follows.
\begin{fact}\label{bluher}
  With the notations \ref{eq:N} and \ref{eq:c}, let $I= \{0,1,2,
  z+1\}$. Then
  \begin{equation}\label{eq:c1}
    \begin{aligned}
      c_{1}  &= \frac{q}{z}-\gamma,\\
      c_{i}  &= 0 ~\text{unless } i \in I,\\
      c_{z+1}  &= \left\lfloor \frac{q}{z^{3}-z}\right\rfloor,
    \end{aligned}
  \end{equation}
  where
  \begin{equation}\label{eq:ec}
    \gamma = 
    \begin{cases}
      1 & \textrm{if } q \textrm{ is even and }e/c\textrm{ is odd },\\
      0  & \textrm{otherwise},\\
    \end{cases}
  \end{equation}
  and furthermore
  \begin{equation}\label{eq:c2}
    q = 1+ \sum_{i \in I}c_{i}= 2+\sum_{i \in I} ic_{i}.
  \end{equation}
\end{fact}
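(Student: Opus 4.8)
The plan is to combine the explicit classification of \cite{blu04a} with two elementary double counts.

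First I would settle \ref{eq:c2}, which rests only on the definitions \ref{eq:c} and \ref{eq:Tu}. Every $u\in\F_q^\times$ contributes to exactly one $c_i$, namely $i=\#T(u)$, so $\sum_{i\geq 0}c_i=q-1$; granting the vanishing $c_i=0$ for $i\notin I$ (handled below), this is the first identity $q=1+\sum_{i\in I}c_i$. For the second, I would count the pairs $(u,t)\in\F_q^\times\times\F_q^\times$ with $t^{r+1}-ut+u=0$ in two ways. Summing over $u$ gives $\sum_{i\in I}ic_i$. Summing over $t$: the equation is linear in $u$, reading $u(t-1)=t^{r+1}$; for $t=1$ it becomes $0=1$, with no solution, while for $t\in\F_q^\times\setminus\{1\}$ there is the single solution $u=t^{r+1}/(t-1)$, which is nonzero. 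Hence there are $q-2$ such pairs, and $q=2+\sum_{i\in I}ic_i$.

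The heart of the matter --- that $\#T(u)\in\{0,1,2,z+1\}$ for every $u\in\F_q^\times$, together with the counts $c_1=q/z-\gamma$ and $c_{z+1}=\lfloor q/(z^3-z)\rfloor$ --- is exactly what \cite{blu04a} provides, so the remaining task is to match her setting with ours. Since $u\neq 0$, the polynomial $t^{r+1}-ut+u$ already has the shape $x^{r+1}+Ax+B$ with $AB\neq 0$ studied there, and the standard substitution $t\mapsto A^{1/r}t$ (well defined and bijective on $\F_q$, as $r$ is a power of $p$) normalizes it to Bluher's one-parameter family without changing $\#T(u)$, the parameter running bijectively over $\F_q^\times$ as $u$ does. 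The point that needs care is that $\F_q$ need not contain $\F_r$: the ``many roots'' regime is controlled by the common subfield $\F_q\cap\F_r=\F_z$, and $\gcd(q-1,r-1)=z-1$ (see \ref{lem:even}), which is why the largest attainable value of $\#T(u)$ is $z+1$ rather than $r+1$. Transporting Bluher's enumeration through this identification yields the asserted $c_1$ and $c_{z+1}$, with the correction $\gamma$ of \ref{eq:ec} recording her exceptional case, $q$ even and $e/c$ odd.

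Finally, knowing $c_i=0$ for $i\notin I$ and the values of $c_1$ and $c_{z+1}$, the two relations of \ref{eq:c2} form a linear system in the remaining unknowns $c_0$ and $c_2$ and determine them, so all $c_i$ are fixed. I expect the genuine work to lie entirely in the bookkeeping of the preceding paragraph --- aligning the parameter with the range over which \cite{blu04a} states the classification, and transcribing the even-characteristic split faithfully --- rather than in any conceptual difficulty.
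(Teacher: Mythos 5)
Your proposal is correct and matches the paper's approach: Bluher's Theorem 5.6 supplies the classification of $\#T(u)$ together with the formulas for $c_{1}$ and $c_{z+1}$, while \ref{eq:c2} is exactly the preimage statistic of the map $t\mapsto t^{r+1}/(t-1)$ from $\F_q\smallsetminus\{0,1\}$ to $\F_q^{\times}$, which is the same thing as your double count of pairs $(u,t)$. The only excess in your write-up is the normalization step: $t^{r+1}-ut+u$ is already the one-parameter family that Bluher treats (her hypothesis $tu\neq 0$ is just our $t\neq 0$ since $u\neq 0$), so no further substitution $t\mapsto A^{1/r}t$ is needed.
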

\begin{proof}
  The claims are shown in \cite{blu04a}, Theorem 5.6. Her statement
  assumes $tu \neq 0$, which is equivalent to our assumption $t \neq
  0$. \ref{eq:c2} corresponds to the fact that the numbers $c_{i}$
  form the preimage statistic of the map from $\mathbb{F}_{q}
  \smallsetminus \{0,1\}$ to $\mathbb{F}_{q}\smallsetminus \{0\}$
  given by the rational function $x^{r+1}/(x-1)$.
\end{proof}

\ref{eq:c1} and \ref{eq:c2} also determine the remaining two values
$c_{0}$ and $c_{2}$, namely $c_{2}=
\frac{1}{2}(q-2-c_{1}-(z+1)c_{z+1})$ and $c_{0}= 1+
c_{2}+zc_{z+1}$. For large $z$, we have
$$
c_{2} \approx \frac{q}{2}(1-\frac{1}{z}- \frac{z+1}{z^{3}-z}) =
\frac{q}{2}(1-\frac{1}{z-1}) \approx \frac{q}{2}.
$$
Thus $x^{r+1}/(x-1)$ behaves a bit like squaring: about half the
elements have two preimages, and about half have none.

For the case $v=0$, we have the following facts, which are presumably
well-known. For an integer $m$, we let the integer $\nu(m)$ be the
multiplicity of $2$ in $m$, so that $m = 2^{\nu(m)}m^{*}$ with an odd
integer $m^{*}$.
\begin{lemma}\label{lem:even}
  Let $\mathbb{F}_{q}$ have characteristic $p$ with $q=p^{e}$,
  $r=p^{d}$ with $d \geq 1$, $b = \gcd (q-1, r+1)$ and $w \in
  \mathbb{F}_{q}^{\times}$. Then the following hold.
  \begin{enumerate}
  \item\label{lem:even-1-1}
    \begin{equation*}
      \#S(0,w) = 
      \begin{cases}
        b & \textrm{if } w^{(q-1)/b}=1,\\
        0 & \textrm{otherwise}.
      \end{cases}
    \end{equation*}
  \item\label{lem:even-1-2} We let $c = \gcd (d,e)$, $z=p^{c}$,
    $\delta = \nu(d)$, $\epsilon = \nu(e)$, $\alpha = \nu(r^{2}-1)$,
    $\beta = \nu(q-1)$,
    \begin{align*}
      \lambda =
      \begin{cases}
        2 & \text{if } \delta < \epsilon,\\
        1 & \text{if } \delta \geq \epsilon,
      \end{cases}
    \end{align*}
    \begin{align*}
      \mu =
      \begin{cases}
        1 & \text{if } \alpha > \beta,\\
        0 & \text{if } \alpha \leq \beta.
      \end{cases}
    \end{align*}
    Then $\gcd (r-1, q-1)= z-1$ and
    \begin{equation*}\label{lem:even-1}
      b = \frac{(z^{\lambda }-1)\cdot 2^{\mu}}{z-1}=
      \begin{cases}
        2(z+1) & \textrm{if } \delta < \epsilon ~\textrm{and }
        \alpha > \beta,\\
        z+1 & \textrm{if } \delta < \epsilon ~\textrm{and } \alpha
        \leq \beta,\\
        2  & \textrm{if } \delta \geq \epsilon ~\textrm{and } \alpha > \beta,\\
        1 & \textrm{if }\delta \geq \epsilon ~\textrm{and } \alpha
        \leq \beta.
      \end{cases}
    \end{equation*}
  \item\label{lem:even-3} If $p$ is odd, then $\alpha > \beta$ if and
    only if $e/c$ is odd.
  \end{enumerate}
\end{lemma}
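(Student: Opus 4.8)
The plan is to prove the three parts in order, reducing everything to two elementary inputs: the standard identity $\gcd(p^{i}-1,p^{j}-1)=p^{\gcd(i,j)}-1$ (a one-line Euclidean argument on the exponents), and the $2$-adic valuation rules for odd $p$, namely $\nu(p^{k}-1)=\nu(p-1)$ and $\nu(p^{k}+1)=\nu(p+1)$ when $k$ is odd, while $\nu(p^{k}-1)=\nu(p^{2}-1)+\nu(k)-1$ and $\nu(p^{k}+1)=1$ when $k$ is even (lifting the exponent, together with the remark that $p^{k}-1$ and $p^{k}+1$ are consecutive even integers). For \ref{lem:even-1-1}: $\F_{q}^{\times}$ is cyclic of order $q-1$, so $s\mapsto s^{r+1}$ has kernel of size $\gcd(q-1,r+1)=b$ and image the unique subgroup of order $(q-1)/b$, which equals $\{y\in\F_{q}^{\times}\colon y^{(q-1)/b}=1\}$; hence for $w\neq 0$ the equation $s^{r+1}=w$ is solvable iff $w^{(q-1)/b}=1$, and then has exactly $b$ solutions (a coset of the kernel). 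This settles \ref{lem:even-1-1} once $b$ is known, which is the content of \ref{lem:even-1-2}.

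For \ref{lem:even-1-2}, the first assertion is $\gcd(r-1,q-1)=\gcd(p^{d}-1,p^{e}-1)=p^{\gcd(d,e)}-1=z-1$. For $b=\gcd(p^{e}-1,p^{d}+1)$ I would start from $\gcd(p^{2d}-1,p^{e}-1)=p^{\gcd(2d,e)}-1$ and compute $\gcd(2d,e)$: its odd part is that of $c=\gcd(d,e)$, and $\nu(\gcd(2d,e))=\min(\delta+1,\epsilon)$ against $\nu(c)=\min(\delta,\epsilon)$ gives $\gcd(2d,e)=2c$ if $\delta<\epsilon$ and $\gcd(2d,e)=c$ if $\delta\geq\epsilon$, so $\gcd(p^{2d}-1,p^{e}-1)=z^{\lambda}-1$. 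Factoring $p^{2d}-1=(p^{d}-1)(p^{d}+1)$, where $\gcd(p^{d}-1,p^{d}+1)\mid 2$, shows that for every odd prime $\ell$ at most one of the two factors is divisible by $\ell$, so the odd part of $b$ equals the odd part of $(z^{\lambda}-1)/(z-1)$; this is $1$ for $\lambda=1$ and the odd part of $z+1$ for $\lambda=2$, which matches the odd part of the claimed value $(z^{\lambda}-1)2^{\mu}/(z-1)$.

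It then remains to match $2$-adic valuations. Here $\nu(b)=\min\bigl(\nu(p^{e}-1),\nu(p^{d}+1)\bigr)=\min(\beta,\nu(p^{d}+1))$, with $\nu(p^{d}+1)$ equal to $1$ if $\delta\geq 1$ and to $\nu(p+1)$ if $\delta=0$; using $\alpha=\nu(p^{2d}-1)=\nu(p^{2}-1)+\delta$ (the even-exponent rule, valid since $2d$ is even), in each of the four cases $\{\delta<\epsilon,\ \delta\geq\epsilon\}\times\{\alpha>\beta,\ \alpha\leq\beta\}$ the sign of $\alpha-\beta$ is forced by $\delta$ versus $\epsilon$, and one checks directly that $\nu(b)=\mu+\nu\bigl((z^{\lambda}-1)/(z-1)\bigr)$; the case $p=2$ is consistent since there $\alpha=\beta=0$, $\mu=0$, and all the quantities involved are odd. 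Substituting $\lambda\in\{1,2\}$ and $\mu\in\{0,1\}$ yields the four displayed values of $b$. For \ref{lem:even-3}: with $\alpha=\nu(p^{2}-1)+\delta$ as above, if $e$ is even then $\beta=\nu(p^{2}-1)+\epsilon-1$, whence $\alpha>\beta\iff\delta\geq\epsilon\iff\nu(e)\leq\nu(d)\iff e/c$ is odd; while if $e$ is odd then $c$ and $e/c$ are odd and $\alpha=\nu(p-1)+\nu(p+1)+\delta>\nu(p-1)=\beta$, so both conditions hold.

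The main obstacle is the $2$-adic bookkeeping in \ref{lem:even-1-2}: one must simultaneously track the parities of $d$ and $e$, the value of $\nu(p^{c}-1)$ relative to $\nu(p-1)$ and $\nu(p^{2}-1)$, and keep the argument uniform across $p=2$ and $p$ odd. Nothing here is deep, but the branching is where a slip is most likely, so I would organize it as a single table indexed by the sign of $\delta-\epsilon$ and of $\alpha-\beta$, each cell recording $\nu(p^{d}+1)$, $\beta$, $\mu$, $\lambda$ and verifying the equality of valuations.
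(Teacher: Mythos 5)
Your proof is correct, and it follows the paper's overall skeleton — compute $\gcd(2d,e)=\lambda c$, identify $\gcd(r^{2}-1,q-1)=z^{\lambda}-1$, split off the $\gcd(r-1,q-1)=z-1$ factor — but you take a genuinely cleaner route in the $2$-adic bookkeeping, and that pays off most in part \short\ref{lem:even-3}. The paper avoids the lifting-the-exponent formulas you invoke: it proves $\gcd(p^{d}-1,p^{e}-1)=p^{c}-1$ by tracking a polynomial Euclidean algorithm, corrects the $\gcd$ product by a factor $2^{-\mu}$ using only the remark that exactly one of $\nu(p^{d}\mp 1)$ equals $1$, and for part \short\ref{lem:even-3} introduces auxiliary integers $k_{q}=(q-1)/(z-1)$ and $k_{r}$ and splits into $\gamma=\nu(z-1)\geq 2$ versus $\gamma=1$, the latter handled by an explicit congruence mod $2^{\tau+2}$ with $\tau=\nu(z+1)$. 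Your use of $\nu_{2}(p^{k}-1)=\nu_{2}(p^{2}-1)+\nu_{2}(k)-1$ (for $k$ even) and its companions collapses that entire case analysis into the single chain $\alpha>\beta\iff\delta\geq\epsilon\iff e/c$ odd, and it also lets you observe that for odd $p$ two of the four displayed cases in part \short\ref{lem:even-1-2} are vacuous — worth stating explicitly, since it explains why verifying only two $\nu_{2}$-equalities suffices. The trade is self-containedness (the paper proves everything from scratch) versus brevity (you cite LTE); both are defensible, and your separation into ``odd part'' and ``$2$-adic part'' of $b$ is a tidy way to organize the argument.
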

\begin{proof}
  \short\ref{lem:even-1-1} The power function $y \mapsto y^{r+1}$ from
  $\mathbb{F}_{q}^{\times}$ to $\mathbb{F}_{q}^{\times}$ maps $b$
  elements to one, and its image consists of the $u \in
  \mathbb{F}_{q}$ with $u^{(q-1)/b}=1$.

  \short\ref{lem:even-1-2} For the first claim that
  \begin{equation}\label{eq:claim}
    \gcd (q-1, r-1)= z-1,
  \end{equation}
  we may assume, by symmetry, that $d > e$ and let $d = ie +j$ be the
  division with remainder of $d$ by $e$, with $0 \leq j < e$. Then for
  \begin{align*}
    a = \frac{x^{j}(x^{d-j}-1)}{x^{e}-1}= x^{j} \cdot
    \frac{x^{ie}-1}{x^{e}-1}\in \mathbb{Z}[x],
  \end{align*}
  we have
  \begin{align*}
    x^{d}-1 = a \cdot (x^{e}-1)+(x^{j}-1).
  \end{align*}
  By induction along the Extended Euclidean Algorithm for $(d,e)$ it
  follows that all quotients in the Euclidean Algorithm for $(x^{d}-1,
  x^{e}-1)$ in $\mathbb{Q}[x]$ are, in fact, in $\mathbb{Z}[x]$, hence
  also the B{\'{e}}zout coefficients, and that all remainders are of the
  form $x^{y}-1$, where $y$ is some remainder for $d$ and $e$. For $c=
  \gcd(d,e)$, there exist $u$, $v$, $s$, $t \in \mathbb{Z}[x]$ so that
  \begin{align*}
    u \cdot (x^{c}-1)  &= x^{d}-1,\\
    v \cdot(x^{c}-1)  &= x^{e}-1,\\
    s \cdot(x^{d}-1)+t \cdot(x^{e}-1)  &= x^{c}-1.
  \end{align*}
  Substituting any integer $q$ for $x$ into these equations shows the
  claim \ref{eq:claim}.

  We note that $\gcd(2d,e)= \lambda c$ and
  \begin{equation*}
    \gcd(p^{d}-1, p^{d}+1) =
    \begin{cases}
      2 & \text{if } p\text{ is odd},\\
      1 & \text{if } p\text{ is even}.
    \end{cases}
  \end{equation*}

  When $p$ is even, then applying \ref{eq:claim} to $q=p^{e}$ and
  $r^{2}=p^{2d}$, we find
  \begin{align*}
    p^{\lambda c}-1  &= \gcd((p^{d}-1)(p^{d}+1), p^{e}-1)\\
    &= \gcd (p^{d}-1, p^{e}-1)\cdot \gcd (p^{d}+1, p^{e}-1)\\
    &= (p^{c}-1)\cdot b,\\
    b &= \frac{p^{\lambda c}-1}{p^{c}-1} =
    \begin{cases}
      z+1 & \text{if } \delta < \epsilon,\\
      1 & \text{if } \delta \geq \epsilon.
    \end{cases}
  \end{align*}
  For odd $p$, the second equation above is still almost correct,
  except possibly for factors which are powers of $2$. We note that
  exactly one of $\nu(p^{d}-1)$ and $\nu(p^{d}+1)$ equals $1$, and
  \begin{align*}
    p^{\lambda c}-1  &=  \gcd((p^{d}-1)(p^{d}+1), p^{e}-1)\\
     &= \gcd(p^{d}-1, p^{e}-1)\cdot \gcd(p^{d}+1, p^{e}-1)\cdot
    2^{-\mu}\\
     &= (p^{c}-1) \cdot b \cdot 2^{-\mu},\\
    b  &= \frac{(p^{\lambda c}-1)\cdot 2^{\mu}}{p^{c}-1}.
  \end{align*}

  \short\ref{lem:even-3} We define the integers $k_{q}$ and $k_{r}$ by
  \begin{align*}
    \frac{q-1}{z-1} &= \frac{z^{e/c}-1}{z-1} = z^{e/c-1}+ \cdots + 1
    =
    k_{q},\\
    \frac{r^{2}-1}{z-1} &= \frac{(r+1)(z^{d/c}-1)}{z-1}=
    (r+1)(z^{d/c-1}+ \cdots +1)= (r+1)k_{r}.
  \end{align*}
  Now $r+1$ is even and $z$ is odd. If $e/c$ is odd, then $k_{q}$ is
  odd and hence $\alpha > \beta$. Now assume that $e/c$ is even. Then
  $d/c$ is odd, and so is $k_{r}$. Hence $\nu(r-1)= \nu (z-1)$, and we
  denote this integer by $\gamma$. If $\gamma \geq 2$, then
  $\nu(r+1)=1$ and $\alpha = \nu(r+1)+\gamma \leq
  \nu(k_{q})+\gamma=\beta$.

  Now suppose that $\gamma = 1$, and let $\tau = \nu (z+1)$ and
  $m=(z+1)\cdot 2^{-\tau}$. Then $\tau \geq 2$, $m$ is an odd integer,
  and
  \begin{align*}
    z^{2}  &= (m2^{\tau}-1)^{2} \equiv -2 \cdot 2^{\tau}+1 \equiv
    2^{\tau+1}+1 \bmod 2^{\tau+2},\\
    r^{2} &= (z^{2})^{d/c} = (2^{\tau+1}+1)^{d/c} \equiv 2^{\tau+1}+1
    \bmod 2^{\tau+2},\\
    q  &= (z^{2})^{e/2c} \equiv (2^{\tau+1}+1)^{e/2c}\bmod 2^{\tau+2}.
  \end{align*}
  The last value equals $2^{\tau+1}+1$ or $1 \text{ modulo
  }2^{\tau+2}$ if $e/2c$ is odd or even, respectively. In either case,
  it follows that $\alpha = \nu (r^{2}-1)= \tau+1 \leq \nu (q-1)=
  \beta$.
\end{proof}
\begin{theorem}\label{th:decom}
  Let $\F_{q} $ have characteristic $p$ with $q=p^{e}$, and take
  integers $d \geq 1$, $r=p^{d}$, $\degOne=ar$ with $p\nmid a$, $m
  \geq2$, $\dg=\degOne m$, $c= \gcd(d,e)$, $z=p^{c}$, $\mu =
  \gcd(r-1,m)$, $r^{*}=(r-1)/\mu$, and let $G$ consist of the $(g,h)$
  as in \ref{thm:kg}. Then we have the following lower bounds on the
  cardinality of $\gamma_{\dg,\degOne}(G)$.
  \begin{enumerate}
  \item\label{th:decom-1} If $r\neq m$ and $\mu=1$:
    $$
    q^{\degOne+m-2}(1-q^{-1}(1+q^{-p+2}
    \frac{(1-q^{-1})^{2}}{1-q^{-p}})) (1-q^{-\degOne}),
    $$
  \item\label{th:decom-2} If $r\neq m$:
    $$
    q^{\degOne+m-2}\bigl((1-q^{-1}(1+q^{-p+2}
    \frac{(1-q^{-1})^{2}}{1-q^{-p}}))(1-q^{-\degOne})
    $$
    $$
    -q^{-\degOne-r^{*}-c/e+2}\frac{(1-q^{-1})^{2}(1-q^{-r^{*}(\mu-1)})}{(1-q^{-c/e})
      (1-q^{-r^{*}})}(1+q^{-r^{*}(p-2)}) \bigr).
    $$ 
  \item\label{th:decom-3} If $r=m$:
    $$
    q^{\degOne+m-2} (1-q^{-1})(\frac 1 2 + \frac{1+q^{-1}}{2z+2}
    +\frac{q^{-1}} 2 -q^{-\degOne} \frac{1-q^{-p+1}}{1-q^{-p}} -
    q^{-p+1}\frac{1-q^{-1}} {1-q^{-p}}).
    $$
  \end{enumerate}
\end{theorem}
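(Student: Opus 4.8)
The plan is to use Algorithm~\ref{algoWd}, as quantified by \ref{thm:kg}, as an approximate inverse of $\gamma_{\dg,\degOne}$ on $G$, and then to read off $\#\gamma_{\dg,\degOne}(G)$ from a coefficient count of $G$ plus a correction for the rare cases where that inverse is genuinely multivalued. By \ref{thm:kg}, whenever $(g,h)\in G$ satisfies the genericity condition \ref{eq:pc}, the algorithm on $f=g\circ h$ returns a set of at most $\sigma(f)$ normal decompositions containing $(g,h)$, so the fibre of $\gamma_{\dg,\degOne}$ through such a pair has size at most $\sigma(f)$, and exactly $1$ when $r\neq m$: indeed step~\short\ref{algoWd:step5}, the only source of $\sigma(f)>1$, is entered only when $\degTwo m=\dg-r$, which together with $p\nmid\degTwo$ and $p\mid\degOne$ forces $m=r$. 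Hence for $r\neq m$ the pairs satisfying \ref{eq:pc} inject into $P_\dg^=$ with pairwise distinct images, and any failure of injectivity comes only from the pairs violating \ref{eq:pc}.

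I would first treat $r\neq m$ with $\mu=1$ (Part~\short\ref{th:decom-1}). Here the $\degTwo$ for which \ref{eq:pc} is invoked (by step~\short\ref{algoWd:step4} or~\short\ref{algoWd:step6}) must satisfy $\degTwo\equiv a\pmod{r^*}$ with $r^{*}=r-1$ and $i_{0}\in\{1,\dots,m-1\}$; a short residue computation using $\degOne=ar\equiv a\pmod{r-1}$ shows that no admissible $\degTwo\in\{1,\dots,\degOne-1\}$ does so, so \ref{eq:pc} is vacuous, every valid pair is generic, $\gamma_{\dg,\degOne}|_{G}$ is injective, and $\#\gamma_{\dg,\degOne}(G)=\#G$. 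Counting $G$ is then bookkeeping: a monic original $h$ of degree $m$ with $h_{m-1}\neq 0$ gives $(q-1)q^{m-2}$ choices, a monic original $g$ of degree $\degOne$ with $\deg(g-x^{\degOne})=\degTwo$ gives $(q-1)q^{\degTwo-1}$, one sums over $\degTwo\in\{1,\dots,\degOne-1\}$ with $p\nmid\degTwo$, and discards the single Frobenius case $g=x^{\degOne}$. Splitting off the $\degTwo$ divisible by $p$ as a second geometric progression and using $\degOne=ap^{d}$ collapses the double sum, via $\tfrac{q-1}{q^{p}-1}=\tfrac{q^{-p+1}(1-q^{-1})}{1-q^{-p}}$, to the stated closed form.

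For $r\neq m$ with $\mu>1$ (Part~\short\ref{th:decom-2}) the triggering $\degTwo$ do occur, forming an arithmetic progression of difference $r^{*}=(r-1)/\mu$ near $\degOne$; for each, the forbidden values of $g_{\degTwo}$ are the $(r-1)$-th powers, of which there are $(q-1)/(p^{c}-1)=(q-1)/(z-1)$. For such a pair, equation~\short\ref{eq:m-4} has a $z$-element kernel, hence up to $z$ candidate preimages, but a second candidate $h^{(t)}$ that actually closes up to $g^{(t)}\circ h^{(t)}=f$ forces $f$ to be $h^{(t)}$-decomposable, a condition of codimension about $\degOne$; summing $q$ to an exponent of the form $\degTwo-\degOne+\cdots$ over the progression of active $\degTwo$, and using $q^{-c/e}=p^{-c}=z^{-1}$, turns this into the extra term subtracted in Part~\short\ref{th:decom-2} (the factor $1+q^{-r^{*}(p-2)}$ absorbing the active $\degTwo$ divisible by $p$), so that $\#\gamma_{\dg,\degOne}(G)\ge\#G-(\text{loss})$. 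For $r=m$ (Part~\short\ref{th:decom-3}) I would partition $G$ by the $\degTwo$ that the algorithm reads off $f$: for $\degTwo\le\degOne-2$ one is in step~\short\ref{algoWd:step6} with $\sigma(f)=1$ (and \ref{eq:pc} is inactive throughout $r=m$), counted and summed as in Part~\short\ref{th:decom-1}; the remaining pairs have $\degTwo=\degOne-1$ and enter step~\short\ref{algoWd:step5}, where a preimage of $f$ is exactly a choice of $h_{m-1}\in S(v,w)$ with $(v,w)=(f_{\degTwo m}/a,\,f_{\degTwo m-1}/a)$, $w\neq 0$, together with freely chosen lower coefficients of $g$ and $h$, so every image $f$ has precisely $\#S(v,w)$ preimages and $\#\gamma_{\dg,\degOne}(G_{\degOne-1})=q^{\degOne+m-4}\cdot\#\{(v,w)\colon w\neq 0,\ S(v,w)\neq\varnothing\}$. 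The substitution $s=-v^{-1}wt$ reduces the case $v\neq 0$ to Bluher's statistic: $u=-v^{r+1}w^{-r}$ is attained exactly $q-1$ times on $(\mathbb{F}_{q}^{\times})^{2}$ because $\gcd(r,q-1)=1$ makes $w\mapsto w^{r}$ a bijection, so by \ref{bluher} this part equals $(q-1)(c_{1}+c_{2}+c_{z+1})=(q-1)(q-1-c_{0})$, while the boundary $v=0$ contributes the $(q-1)/b$ many $(r+1)$-th powers by \ref{lem:even}\short\ref{lem:even-1-1} and~\short\ref{lem:even-1-2}; inserting $c_{1}=q/z-\gamma$, $c_{z+1}=\lfloor q/(z^{3}-z)\rfloor$, $c_{0}=1+c_{2}+zc_{z+1}$ from \ref{eq:c1}--\ref{eq:c2}, discarding the floor downwards, and combining with the $\degTwo\le\degOne-2$ count yields Part~\short\ref{th:decom-3}, the $\tfrac12$ coming from $c_{2}\approx q/2$ and the $\tfrac{1+q^{-1}}{2z+2}$ term from $c_{z+1}$.

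The main obstacle is the loss bound in Part~\short\ref{th:decom-2}: since $p\mid\degOne$ we are in the wild regime, where equal-degree collisions genuinely occur and are not governed by any clean Ritt-type dichotomy, so controlling how many non-generic pairs actually collide requires the codimension argument sketched above rather than an off-the-shelf uniqueness statement. A secondary, purely technical difficulty is marshalling the nested geometric and arithmetic-progression sums — and the floor function and assorted roundings in Part~\short\ref{th:decom-3} — consistently in the direction that keeps every estimate a genuine lower bound.
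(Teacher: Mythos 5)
Your framework and the Parts~\short\ref{th:decom-1} and~\short\ref{th:decom-3} calculations follow the paper's: Algorithm~\ref{algoWd}, quantified by \ref{thm:kg}, inverts $\gamma_{\dg,\degOne}$ on pairs satisfying \ref{eq:pc}; you correctly observe that $\sigma(f)=1$ whenever $r\neq m$ because step~\short\ref{algoWd:step5} requires $\degTwo m=\dg-r$ and hence $r=m$; Part~\short\ref{th:decom-1} is then the geometric sum $S_{12}$ over admissible $\degTwo$ with $p\nmid\degTwo$, after noting $K=\varnothing$; and Part~\short\ref{th:decom-3} separates the $\degTwo=\degOne-1$ stratum, reduces it via $s=-v^{-1}wt$ to the Bluher statistic (using $\gcd(r,q-1)=1$), and handles $v=0$ with \ref{lem:even}, exactly as the paper does through $\gamma_{\dg,\degOne}(G^{*})$.

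The gap is in Part~\short\ref{th:decom-2}. You propose a codimension argument: that a pair $(g,h)$ violating \ref{eq:pc} which collides with another preimage forces a condition of codimension about $\degOne$ on $f$, and that summing that loss over the active $\degTwo$ produces the subtracted term. This is not the paper's argument, and as stated it does not yield a lower bound: you neither justify the codimension claim (equal-degree collisions in this wild regime are precisely what is hard to control, and you cannot cite any clean uniqueness statement for the pairs that fail \ref{eq:pc}) nor turn it into an explicit count that demonstrably dominates the stated negative term. The paper instead simply \emph{discards} every pair for which \ref{eq:pc} fails. For each $\degTwo\in K$ exactly $(q-1)/(z-1)$ values of $g_{\degTwo}$ are forbidden, so the number of surviving pairs is $S_{12}-S_{3}$ with $S_{3}=\sum_{\degTwo\in K}q^{\degTwo+m-1}(1-q^{-1})^{2}/(z-1)$; every surviving pair injects by the same argument as in Part~\short\ref{th:decom-1}, so $\#\gamma_{\dg,\degOne}(G)\geq S_{12}-S_{3}$ directly. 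The subtracted term in the statement is then obtained by evaluating $S_{3}$ as a geometric sum over the arithmetic progression $K=\{\degOne-(r-1)+jr^{*}\colon 1\leq j\leq\mu-1\}\smallsetminus\{\degOne-ipr^{*}\}$ and applying $1/(z-1)=q^{-c/e}/(1-q^{-c/e})$; no analysis of collisions among the discarded pairs is involved at all, and attempting one (even if made rigorous) would only give a sharper bound that you would still have to show dominates the stated one.
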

\begin{proof}
  We have seen at the beginning of the proof of \ref{thm:kg} that
  steps \short\ref{algoWd:step1} and \short\ref{algoWd:step2}
  determine $j$ and $\degTwo$. We also know that, given $g_{\degTwo}$
  and $h_{m-1}$, the remaining coefficients of $g$ and $h$ are
  uniquely determined by those of $f$.

  We count the number of compositions $g \circ h$ according to the
  four mutually exclusive conditions in steps \short\ref{algoWd:step3}
  through \short\ref{algoWd:step6}, for a fixed $\degTwo$. The
  admissible $\degTwo$ are those with $1 \leq \degTwo < \degOne$ and
  $p \nmid \degTwo$. $E_{3}$ or $E_{4}$ are used if and only if either
  $i_{0} \in \mathbb{N}$ or $\degTwo m = \dg-r$, respectively. If
  neither happens, then the number of $(g,h)$ is
  \begin{equation}\label{eq:qq}
    q^{\degTwo}(1-q^{-1})\cdot q^{m-1}(1-q^{-1})= q^{\degTwo+m-1}(1-q^{-1})^{2}.
  \end{equation}
  $E_{3}$ is used if and only if $\degTwo \in K$, where
  \begin{align*}
    K = \{\degTwo \in \mathbb{N} \colon 1 \leq \degTwo < \degOne, p
    \nmid \degTwo, i_{0} \in \mathbb{N}, 1 \leq i_{0} < m\},
  \end{align*}
  which corresponds to steps
  \short\ref{algoWd:step4}\short\ref{algoWd:step4-b} (where
  $i_{0}=m-1$) and \short\ref{algoWd:step6}\short\ref{algoWd:step6-d}
  (where $i_{0} \in \mathbb{N}$ and $1 \leq i_{0} \leq m-2$).  For
  $\degTwo \in K$, we have the condition \ref{eq:pc} that $(-\degTwo
  g_{\degTwo}/a)^{(q-1)/(z-1)} \neq 1$.  The exponent is a divisor of
  $q-1$, and there are exactly $(q-1)/(z-1)$ values of $g_{\degTwo}$
  that violate \ref{eq:pc}. Thus for $\kappa \in K$ the number of
  $(g,h)$ equals
  \begin{equation}
    \label{eq:q-1}
    (q-1-\frac{q-1}{z-1}) q^{\degTwo-1} \cdot q^{m-1}(1-q^{-1})= q^{\degTwo+m-1}(1-\frac{1}{z-1})(1-q^{-1})^{2}.
  \end{equation}

  The only usage of $E_{4}$ occurs in step
  \short\ref{algoWd:step5}\short\ref{algoWd:step5-a}, where
  $\degTwo=(n-r)/m= \degOne-r/m$. We have seen in the proof of
  \ref{thm:kg} that this implies $r=m$ and $\degTwo=\degOne-1$. We
  split $G$ according to whether $\degTwo=\degOne-1$ or
  $\degTwo<\degOne-1$, setting
  $$
  G^{*}=\{(g,h) \in G \colon \degTwo =\degOne -1 ~\text{in
    \ref{eq:ghf}}\}.
  $$

  We define three summands $S_{12}$, $S_{3}$, and $S_{4}$ according to
  whether only $E_{1}$ and $E_{2}$, or also $E_{3}$, or $E_{4}$ are
  used, respectively:
  \begin{align*}
    S_{12} &= \sum_{\substack{1 \leq \degTwo < \degOne\\p \nmid
        \degTwo}}q ^{\degTwo+m-1}
    (1-q^{-1})^{2},\\
    S_{3} &= \sum_{\degTwo \in K}(q^{\degTwo+m-1}(1-q^{-1})^{2}
    -q^{\degTwo+m-1}(1-q^{-1})^{2}(1-\frac{1}{z-1})),\\
    S_{4}&=q^{\degOne+m-2}(1-q^{-1})^{2}-\#\gamma_{\dg,\degOne}(G^{*}).
  \end{align*}
  We will see below that $K= \varnothing$ if $r=m$. Thus
  \begin{align*}
    \#\gamma_{\dg,\degOne}(G) \geq
    \begin{cases}
      S_{12} & \text{if } r \neq m \text{ and } K = \varnothing,\\
      S_{12}-S_{3} & \text{if } r\neq m,\\
      S_{12}-S_{4} & \text{if } r=m.
    \end{cases}
  \end{align*}
  The subtraction of $S_{3}$ corresponds to replacing the summand
  \ref{eq:qq} by \ref{eq:q-1} for $\degTwo \in K$. Similarly, $S_{4}$
  replaces \ref{eq:qq} for $\degTwo=\degOne-1$ by the correct value if
  $E_{4}$ is applied.

  Since $p\mid \degOne$, the first sum equals
  \begin{align*}
    S_{12} &= q^{m-1}(1-q^{-1})^{2}(\sum_{1 \leq \degTwo <
      \degOne}q^{\degTwo}- \sum_{\substack{1 \leq \degTwo < \degOne
        \\
        p\mid \degTwo}}q^{\degTwo}) \\
    &= q^{m-1} (1-q^{-1})^{2}(\frac{q^{\degOne}-1}{q-1}-1-
    \frac{(q^{p})^{\degOne/p}-1}{q^{p}-1}+1)
    \\
    &=
    q^{\degOne+m-2}(1-q^{-1})(1-q^{-\degOne})\frac{1-q^{-p+1}}{1-q^{-p}}\\
    &= q^{\degOne+m-2}(1-q^{-1}(1+q^{-p+2}
    \frac{(1-q^{-1})^{2}}{1-q^{-p}}))(1-q^{-\degOne}).
  \end{align*}

  For $S_{3}$, we describe $K$ more transparently.  From
  \ref{eq:ratio} we find
  \begin{equation}
    \begin{aligned}\label{eq:K-1}
      1  \leq i_{0} &= \frac{\degTwo m-\dg}{r-1}+m \leq m-1\\
      & \Longleftrightarrow \degOne-(r-1)+ \frac{r-1}{m}\leq \degTwo
      \leq \degOne - \frac{r-1}{m},
    \end{aligned}
  \end{equation}
  \begin{align}
    \label{eq:K-2} i_{0} \in \mathbb{Z}& \Longleftrightarrow (r-1)\mid
    (\degTwo-a)m.
  \end{align}
  We have $\mu = \gcd (r-1,m)$ and $r^{*}= (r-1)/\mu$, and set
  $m^{*}=m/\mu$, so that $\gcd(r^{*}, m^{*})= 1$ and
  \begin{align*}
    \ref{eq:K-1}& \Longleftrightarrow \degOne- (r-1) +
    \frac{r^{*}}{m^{*}}
    \leq \degTwo \leq \degOne - \frac{r^{*}}{m^{*}},\\
    \ref{eq:K-2}& \Longleftrightarrow r^{*}\mid (\degTwo-a)m^{*}
    \Longleftrightarrow r^{*} \mid (\degTwo-a).
  \end{align*}
  
  Since $r^{*}\mid \degOne-a=a(r-1)$, we have
  \begin{align}\label{eq:k-a}
    \ref{eq:K-2} & \Longleftrightarrow  \exists j \in \mathbb{Z} \quad \degTwo=\degOne-(r-1)+jr^{*},\\
    \label{eq:a+j}
    \ref{eq:K-1}& \Longleftrightarrow \frac{1}{m^{*}} \leq j \leq
    \frac{r-1}{r^{*}}- \frac{1}{m^{*}} \Longleftrightarrow 1 \leq j
    \leq \mu-1.
  \end{align}
  Since $\mu \mid (r-1)$ and $r=p^{d}$, we have $p \nmid \mu$. Thus
  \begin{align}\label{modp}
    p \mid \degTwo & \Longleftrightarrow 1- \frac{j}{\mu} \equiv 1+
    \frac{j(r-1)}{\mu} \equiv \degOne-(r-1)+jr^{*}= \degTwo \equiv 0 \bmod p\\
    & \Longleftrightarrow j \equiv \mu \bmod p \Longleftrightarrow
    \exists
    i \in \mathbb{Z} \quad j = \mu - ip,\nonumber\\
    \ref{eq:K-1} & \Longleftrightarrow 1 \leq j = \mu - ip \leq \mu-1
    \Longleftrightarrow 1 \leq i \leq
    \lfloor\frac{\mu-1}{p}\rfloor.\nonumber
  \end{align}

  Abbreviating $\mu^{*}= \lfloor(\mu -1)/p \rfloor$, it follows that
  \begin{align*}
    K = \{\degOne-(r-1) +jr^{*} \colon 1 \leq j \leq \mu -1\}
    \smallsetminus \{\degOne-ipr^{*} \colon 1 \leq i \leq \mu^{*}\}.
  \end{align*}

  In particular, we have $K = \varnothing$ if $\mu = 1$.  Assuming
  $\mu \geq 2$ and using $z=p^{c}= q^{c/e}$, we can evaluate $S_{3}$
  as follows.
  \begin{align*}
    S_{3}&= \sum_{\degTwo \in K} \frac{q^{\degTwo+m-1}}{z-1}(1-q^{-1})^{2}\\
    &= \frac{q^{m-1}(1-q^{-1})^{2}}{z-1} \sum_{\degTwo \in K}q^{\degTwo} \\
    &= \frac{q^{m-1} (1-q^{-1})^{2}} {z-1}(q^{\degOne-(r-1)+r^{*}}
    \frac{(q^{r^{*}})^{\mu-1}-1} {q^{r^{*}}-1} -q^{\degOne-pr^{*}}
    \frac{(q^{-pr^{*}})^{\mu^{*}}-1 }{q^{-pr^{*}}-1})\\
    &=q^{\degOne+m-1-r^{*}-c/e}\frac{(1-q^{-1})^{2}(1-q^{-r^{*}(\mu-1)})}{(1-q^{-c/e})(1-q^{-r^{*}})}\\
    & \quad\cdot (1-q^{-r^{*}(p-1)}
    \frac{(1-q^{-r^{*}})(1-q^{-pr^{*}\mu^{*}}) }{(1-q^{-r^{*}(\mu-1)})
      (1-q^{-pr^{*}})})\\
   &\leq
    q^{\degOne+m-1-r^{*}-c/e}\frac{(1-q^{-1})^{2}(1-q^{-r^{*}(\mu-1)})}
    {(1-q^{-c/e}) (1-q^{-r^{*}})}.
  \end{align*}

  In order to evaluate $S_{4}$, we first recall from the above that we
  have $\degTwo m=\dg-r$, $\degTwo=\degOne-1$, $m=r$, and any $(g,h)
  \in G^{*}$ is uniquely determined by $f=g \circ h$, $g_{\degOne-1}$,
  and $h_{m-1}$. To any $(g,h) \in G^{*}$, we associate the field
  elements
  \begin{equation}
    \begin{aligned}
      V(g,h)&= h^{r}_{m-1} +g_{\degOne-1}/a,\\
      W(g,h) &= -g_{\degOne-1}h_{m-1}/a,\\
      U(g,h) &= -V(g,h)^{r+1} W(g,h)^{-r}.
    \end{aligned}
  \end{equation}
  Then if $f = g \circ h$, we have $aV(g,h)=f_{n-r}$, $aW(g,h)=
  f_{n-r-1} \neq 0$, and for nonzero $s \in \mathbb{F}_{q}$ and
  $t=-V(g,h)\cdot W(g,h)^{-1}s$, \ref{eq:w1} says that
  $$
  \ref{eq:as} \text{ holds } \Longleftrightarrow s \in S(V(g,h),
  W(g,h)) \Longleftrightarrow t \in T(U(g,h)).
  $$
  We recall the sets $C_{i}$ from \ref{eq:c} and for $i \in
  \{1,2,z+1\}$, we set
  \begin{align*}
    G_{i} &= \{(g,h)\in G \colon V(g,h) \neq 0, U(g,h) \in
    C_{i}\},\\
    G_{0} &= \{(g,h)\in G \colon V(g,h)=0\}.
  \end{align*}

  Now let $v \in \mathbb{F}_{q}^{\times}$, $i \in \{1,2,z+1\}$, $u \in
  C_{i}$, and $g_{\degOne-2}$, $\ldots$, $g_{1}$, $h_{m-2}$, $\ldots$,
  $h_{1} \in \mathbb{F}_{q}$. From these data, we construct $(g,h)\in
  G_{i}$ with $g = \sum_{1 \leq i \leq \degOne} g_{i}x^{i}$ and $h =
  \sum_{1 \leq i \leq m} h_{i}x^{i}$ and $g_{\degOne}= h_{m}=1$, so
  that only $g_{\degOne-1}$ and $h_{m-1}$ still need to be determined.
  Furthermore, if $f=g \circ h$, we show that different data lead to
  different $f$. This will prove that
  \begin{equation}\label{data}
    \gamma_{\dg,\degOne}(G_{i}) \geq (q-1)c_{i} \cdot q^{\degOne+m-4}.
  \end{equation}
  By assumption, we have $u \neq 0$ and $\#T(u)=i \geq 1$.  We choose
  some $t \in T(u)$ and define $w,s \in \mathbb{F}_{q}^{\times}$ by
  \begin{align*}
    w^{r} &= -v^{r+1}u^{-1},\\
    s &= -v^{-1}wt.
  \end{align*}
  Then $s \in S(v,w)$ by \ref{eq:w1}. We set $h_{m-1}=s$ and
  $g_{\degOne-1} = av-as^{r}$. Now $g$ and $h$ are determined, and
  $E_{1}$ and $E_{2}$ imply that
  \begin{align*}
    f_{n-r} &= ah^{r}_{m-1}+ g_{\degTwo} = aV(g,h)=av,\\
    f_{n-r-1}  &= -g_{\degTwo}h_{m-1} = aW (g,h)= -a(v-s^{r})s = a(s^{r+1}-vs)=aw,\\
    U(g,h)&= - v^{r+1}w^{-r} = -v^{r+1}(-v^{r+1}u^{-1})^{-1} =u.
  \end{align*}
  Suppose that $(u,v)$ and $(\tilde{u}, \tilde{v})$ lead to $(f_{n-r},
  f_{n-r-1})=(av,aw)$ and $(\widetilde{f_{n-r}},
  \widetilde{f_{n-r-1}})= (a\tilde{v}, a\tilde{w})$, and that the
  latter pairs are equal. Then $v = \tilde{v}$ and $u = -v^{r+1}
  w^{-r} = - \tilde{v}^{r+1}\tilde{w}^{-r}= \tilde{u}$.  This
  concludes the proof of \ref{data}.

  A similar argument works for $G_{0}$. We let $b = \gcd(q-1,r+1)$,
  take $w \in \mathbb{F}_{q}$ with $w^{(q-1)/b}=1$, and some $s \in
  \mathbb{F}_{q}$ with $s^{r+1}=w$. There are $(q-1)/b$ such $w$, and
  according to \ref{lem:even-1-1}, $b$ such values $s$ for each $w$.
  We set $h_{m-1}=s$ and $g_{\degOne-1}= -ah^{r}_{m-1}$ and, as above,
  complete them with arbitrary coefficients to $(g,h)\in G_{0}$. When
  $f= g\circ h$, then $f_{\dg-r}=0$ and $f_{n-r-1}= -
  g_{\degOne-1}h_{m-1}= ah_{m-1}^{r+1}= aw = aW(g,h)$, and different
  $w$ lead to different $f$. It follows that
  \begin{align}\label{eq:w2}
    \gamma_{\dg,\degOne}(G_{0}) \geq \frac{q-1}{b}.
  \end{align}

  The images of $G_{1}$, $G_{2}$, $G_{z+1}$, and $G_{0}$ under
  $\gamma_{\dg,\degOne}$ are pairwise disjoint, since the map $V
  \times W \times U \colon \bigcup_{i=0,1,2,z+1}G_{i} \longrightarrow
  \mathbb{F}_{q}^{3}$ is injective, and its value together with the
  lower coefficients of $g$ and $h$ determines $f$, again injectively.
  It follows that
  \begin{align}\label{al:GG}
    \sum_{i=0,1,2,z+1}\#\gamma_{\dg,\degOne}(G_{i}) &\geq
    \sum_{i=1,2,z+1}(q-1)c_{i} \cdot q^{\degOne+m-4}+
    \frac{q-1}{b}\cdot
    q^{\degOne+m-4}\\
    &= (q-1)q^{\degOne+m-4}(\sum_{i=1,2,z+1}c_{i}+
    \frac{1}{b}).\nonumber
  \end{align}
  We write $q=p^{e}$ and set
  \begin{align*}
    z^{*}  &=
    \begin{cases}
      z & \text{if }e/c\text{ is odd},\\
      z^{2} & \text{if }e/c\text{ is even}.
    \end{cases}
  \end{align*}
  \ref{bluher} yields
  $$
  c_{z+1} = \displaystyle \left\lfloor\frac{q}{z^{3}-z}
  \right\rfloor=\frac{q-z^{*}}{z^{3}-z}, 
  $$ 
  \begin{align*}
    2 \sum_{i=1,2,z+1}c_{i}  &=
    2c_{1}+(q-2-c_{1}-(z+1)c_{z+1})+2c_{z+1}\\
    &= q-2+ \frac{q}{z}- \gamma - (z-1)\frac{q-z^{*}}{z^{3}-z}\\
    &= q-2+ \frac{q}{z}- \gamma - \frac{q-z^{*}}{z^{2}+z},
  \end{align*}
  \begin{align*}
    \#\gamma_{\dg,\degOne}(G^{*}) &\geq
    q^{\degOne+m-3}(1-q^{-1})(\frac{1}{2}(q-2+\frac{q}{z}-\gamma
    -\frac{q-z^{*}}{z^{2}+z})+\frac{1}{b}).
  \end{align*}

  We call the last factor $B$. If $e/c$ is odd, then, in the notation
  of \ref{lem:even}, $\delta = \nu (d)\geq \nu(e)= \epsilon$, so that
  $b \in \{1,2\}$, and
  \begin{align*}
    b =
    \begin{cases}
      2 & \text{if }p\text{ is odd},\\
      1 & \text{if }p=2.
    \end{cases}
  \end{align*}
  If $p$ is odd, then $\gamma = 0$ and $2/b-\gamma=1$. If $p=2$, then
  $\gamma=1$ and again $2/b-\gamma = 2-1=1$.  It follows that
  \begin{align*}
    2B &= q-2+\frac{q}{z}-\frac{q-z}{z^{2}+z}+ \frac{2}{b}-\gamma =
    q(1+\frac{1}{z+1}(1-\frac{z}{q})).
  \end{align*}
  If $e/c$ is even, then $\gamma =0$, $b=z+1$ and
  \begin{align*}
    2B &= q-2 + \frac{q}{z}- \frac{q-z^{2}}{z^{2}+z}+ \frac{2}{z+1} =
    q(1+\frac{1}{z+1}(1-\frac{z}{q})).
  \end{align*}
  It follows that in all cases
  \begin{align*}
    \#\gamma_{\dg,\degOne}(G^{*})  &\geq \frac{1}{2}q^{\degOne+m-2}(1-q^{-1})(1+\frac{1}{z+1}(1-\frac{z}{q})), \\
    S_{4} &\leq q^{\degOne+m-2}(1-q^{-1})(1-q^{-1}
    -\frac{1}{2}(1+  \frac{1}{z+1}(1- \frac{z}{q}) ))\\
    &= q^{\degOne+m-2}(1-q^{-1})(\frac{1}{2}-q^{-1}-
    \frac{1}{2z+2}(1- \frac{z}{q}) ).
  \end{align*}

  Together we have found the following lower bounds on
  $\#\gamma_{\dg,\degOne}(G)$.  If $r \neq m$ and $\mu = 1$, then
  \begin{align*}
    \#\gamma_{\dg,\degOne}(G) \geq S_{12} =
    q^{\degOne+m-2}(1-q^{-1}(1+q^{-p+2}
    \frac{(1-q^{-1})^{2}}{1-q^{-p}}))(1-q^{-\degOne}).
  \end{align*}

  If $r \neq m$, then
  \begin{align*}
    \#\gamma_{\dg,\degOne}(G)  &\geq S_{12} - S_{3} \geq
    q^{\degOne+m-2}(1-q^{-1}(1+q^{-p+2}
    \frac{(1-q^{-1})^{2}}{1-q^{-p}}))  (1-q^{-\degOne})\\
    & \quad-
    q^{\degOne+m-\degOne-2r^{*}-c/e}\frac{(1-q^{-1})^{2}(1-q^{-r^{*}(\mu-1)})}{(1-q^{-c/e})(1-q^{-r^{*}})}(1+q^{-r^{*}(p-2)})\\
    &= q^{\degOne+m-2}\bigl((1-q^{-1}(1+q^{-p+2}
    \frac{(1-q^{-1})^{2}}{1-q^{-p}})) (1-q^{-\degOne})\\
    &\quad -q^{-\degOne-r^{*}-c/e+2}\frac{(1-q^{-1})^{2}(1-q^{-r^{*}(\mu-1)})}{(1-q^{-c/e})(1-q^{-r^{*}})}(1+q^{-r^{*}(p-2)})\bigr).
  \end{align*}

  If $r=m$, then
  \begin{align*}
    \#\gamma_{\dg,\degOne}(G)  &\geq S_{12} - S_{4}\geq
    q^{\degOne+m-2}(1-q^{-1})(1-q^{-\degOne})\frac{1-q^{-p+1}}{1-q^{-p}}\\
    &\quad- q^{\degOne+m-2}(1-q^{-1})(\frac{1}{2}-q^{-1}- \frac{1}{2z+2}(1- \frac{z}{q})  )\\
    &= q^{\degOne+m-2}(1-q^{-1})(\frac 1 2 + \frac{1+q^{-1}}{2z+2}
    +\frac{q^{-1}} 2 \\
    &\quad -q^{-\degOne} \frac{1-q^{-p+1}}{1-q^{-p}} -
    q^{-p+1}\frac{1-q^{-1}} {1-q^{-p}}).  \qed
  \end{align*}
\end{proof}
\begin{corollary}\label{cor:decom}
  With the assumptions and notation of \ref{th:decom}, the set
  $D^{+}_{\dg,k}$ of non-Frobenius compositions has at least the
  following size.
  \begin{enumerate}
  \item\label{cor:decom-1} If $ r \neq m$ and $\mu=1$:
   $$q^{\degOne+m}(1-q^{-1})
   (1-q^{-\degOne})(1-q^{-1}(1+q^{-p+2}\frac{(1-q^{-1})^{2}}{1-q^{-p}})).$$
  \item\label{cor:decom-2} If $r \neq m$:
    \begin{align*}
      & q^{\degOne+m}(1-q^{-1})\bigl((1-q^{-1}(1+q^{-p+2}
      \frac{(1-q^{-1})^{2}}{1-q^{-p}}))(1-q^{-\degOne})\\
      &\quad
      -q^{-\degOne-r^{*}-c/e+2}\frac{(1-q^{-1})^{2}(1-q^{-r^{*}(\mu-1)})}{(1-q^{-c/e})
        (1-q^{-r^{*}})}(1+q^{-r^{*}(p-2)})\bigr)\\
      &\geq  q^{\degOne+m}(1-q^{-1})\bigl((1-q^{-1}(1+q^{-p+2}
      \frac{(1-q^{-1})^{2}}{1-q^{-p}}))(1-q^{-\degOne})\\
      &\quad -q^{-\degOne-r^{*}+2}
      \frac{(1-q^{-1})^{2}(1-q^{-r^{*}(\mu-1)})}{1-q^{-r^{*}}}
      (1+q^{-r^{*}(p-2)})\bigr).
    \end{align*}
    If furthermore $r^{*}\geq 2$ and $p > \mu$, then the latter
    quantity is at least
    $$
    q^{\degOne+m}(1-q^{-1})\bigl((1-q^{-1}(1+q^{-p+2}
    \frac{(1-q^{-1})^{2}}{1-q^{-p}}))
    (1-q^{-\degOne})-\frac{4}{3}q^{-\degOne}(1-q^{-1})^{2}\bigr).
    $$
  \item\label{cor:decom-3} If $r=m$:
    $$
    q^{\degOne+m} (1-q^{-1})^2 (\frac 1 2 + \frac{1+q^{-1}}{2z+2}
    +\frac{q^{-1}} 2 -q^{-\degOne} \frac{1-q^{-p+1}}{1-q^{-p}} -
    q^{-p+1}\frac{1-q^{-1}} {1-q^{-p}}).
    $$
  \end{enumerate}
\end{corollary}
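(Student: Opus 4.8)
The plan is to obtain \ref{cor:decom} from \ref{th:decom} essentially by scaling. The map $\gamma_{\dg,\degOne}$ produces only \emph{normal} decompositions (monic, original $h$, and here also monic original $g$), whereas $D^{+}_{\dg,\degOne}$ is the set of \emph{all} non-Frobenius compositions of degree $\dg$ with a left component of degree $\degOne$. Passing from the first to the second costs exactly the factor $q(q-1)$ of the action of $\F_{q}^{\times}\times\F_{q}$ from \ref{invariance1}, and then the three bounds of \ref{cor:decom} are just the three bounds of \ref{th:decom} multiplied by $q^{2}(1-q^{-1})$, together with the two elementary simplifications claimed in \ref{cor:decom-2}.

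First I would set up the orbit count. The group $\F_{q}^{\times}\times\F_{q}$ acts on $P_{\dg}^{=}$ by $(a,b)\cdot f=af+b$; since $\deg f=\dg\geq 2$ this action is free, so each orbit has size $q(q-1)$ and, as in \ref{invariance1}, contains exactly one monic original polynomial. By \ref{invariance1} and \ref{rem:coll} the sets $D_{\dg,\degOne}=~im \gamma_{\dg,\degOne}$ and $D_{\dg}^{\varphi}=D_{\dg}\cap\F_{q}[x^{p}]$ are each a union of orbits (for the latter, $af+b\in\F_{q}[x^{p}]$ iff $f\in\F_{q}[x^{p}]$, because $a\in\F_{q}^{\times}$ and $b\in\F_{q}\subseteq\F_{q}[x^{p}]$), hence so is $D^{+}_{\dg,\degOne}=D_{\dg,\degOne}\cap(D_{\dg}\smallsetminus D_{\dg}^{\varphi})$ from \ref{al:DD}. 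Therefore, exactly as in \ref{invariant} but applied to this subset,
\[
\#D^{+}_{\dg,\degOne}=q(q-1)\cdot\#\bigl(D^{+}_{\dg,\degOne}\cap P_{\dg}^{0}\bigr).
\]
Every $f=g\circ h$ with $(g,h)\in G$ is monic and original, lies in $D_{\dg,\degOne}$ since $\deg g=\degOne$, and is not a Frobenius composition (as noted in the proof of \ref{thm:kg}); thus $\gamma_{\dg,\degOne}(G)\subseteq D^{+}_{\dg,\degOne}\cap P_{\dg}^{0}$ and
\[
\#D^{+}_{\dg,\degOne}\geq q(q-1)\cdot\#\gamma_{\dg,\degOne}(G)=q^{2}(1-q^{-1})\cdot\#\gamma_{\dg,\degOne}(G).
\]

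Next I would substitute the three lower bounds on $\#\gamma_{\dg,\degOne}(G)$ from \ref{th:decom-1}, \ref{th:decom-2}, \ref{th:decom-3}. Each has the shape $q^{\degOne+m-2}\cdot(\cdots)$, and multiplying by $q^{2}(1-q^{-1})$ turns $q^{\degOne+m-2}$ into $q^{\degOne+m}(1-q^{-1})$ in parts \ref{cor:decom-1} and \ref{cor:decom-2}, and into $q^{\degOne+m}(1-q^{-1})^{2}$ in the case $r=m$ of part \ref{cor:decom-3}; the parenthesised factors carry over unchanged, giving the three displayed bounds verbatim, in particular the first displayed expression in \ref{cor:decom-2}.

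Finally, the two refinements in \ref{cor:decom-2} are pure arithmetic on the subtracted error term $Y=q^{-\degOne-r^{*}-c/e+2}\cdot\frac{(1-q^{-1})^{2}(1-q^{-r^{*}(\mu-1)})}{(1-q^{-c/e})(1-q^{-r^{*}})}\cdot(1+q^{-r^{*}(p-2)})$, for which I want an upper bound. Since $q^{c/e}=p^{c}=z\geq 2$, the factor $q^{-c/e}/(1-q^{-c/e})=1/(z-1)$ is at most $1$, so $Y$ is bounded above by the same expression with $c/e$ deleted from the exponent and $(1-q^{-c/e})$ removed from the denominator, which is the second expression in \ref{cor:decom-2}. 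Then, when $r^{*}\geq 2$ and $p>\mu$ — where I may assume $\mu\geq 2$, since $\mu=1$ forces $Y=0$ through the factor $1-q^{-r^{*}(\mu-1)}$, and $\mu\geq 2$ with $p>\mu$ forces $p\geq 3$ and $q=p^{e}\geq 3$ — one checks
\[
q^{2-r^{*}}\cdot\frac{1-q^{-r^{*}(\mu-1)}}{1-q^{-r^{*}}}\cdot(1+q^{-r^{*}(p-2)})\leq\tfrac{4}{3}
\]
by distinguishing $r^{*}=2$ from $r^{*}\geq 3$ and bounding each factor crudely using $\mu-1\leq p-2$ and $q\geq 3$; hence $Y\leq\tfrac{4}{3}q^{-\degOne}(1-q^{-1})^{2}$, which yields the last displayed bound. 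I expect no genuine obstacle here: the only points that need a little care are phrasing the orbit argument for $D^{+}_{\dg,\degOne}$ rather than for $D_{\dg}$ as in \ref{invariant}, and keeping the small-$q$ instances of the final inequality honest.
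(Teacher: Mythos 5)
Your proposal is correct and follows essentially the same route as the paper: multiply the three bounds of Theorem~\ref{th:decom} by $q^{2}(1-q^{-1})$ (the paper phrases this as replacing $g$ by $(ax+b)\circ g$; your orbit-counting formulation is a slightly more explicit version of the same observation, and your remark that the images are non-Frobenius is exactly what is noted in the proof of Theorem~\ref{thm:kg}), then simplify the error term using $q^{-c/e}/(1-q^{-c/e})=1/(z-1)\leq 1$ and, for the $\tfrac{4}{3}$ bound, the inequalities $q^{-r^{*}}\leq 1/4$, $q^{-r^{*}(p-2)}\leq q^{-r^{*}(\mu-1)}$, and $(1-q^{-r^{*}(\mu-1)})(1+q^{-r^{*}(p-2)})\leq\tfrac{4}{3}(1-q^{-r^{*}})$. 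Your additional observation that one may assume $\mu\geq 2$, hence $q\geq 3$, is harmless but not needed — the paper's chain already works for all $q\geq 2$ from $r^{*}\geq 2$ alone.
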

\begin{proof}
  All $g$ and $h$ considered in \ref{th:decom} are monic and original,
  and so are their compositions $f$.  We may replace the left hand
  component $g$ of any $(g,h) \in G$ by $(ax+b) \circ g$, where $a,b
  \in \mathbb{F}_{q}$ are arbitrary with $a \neq 0$. Hence
  \begin{align*}
    \#D^{\#}_{\dg,\degOne} \geq q^{2}(1-q^{-1}) \cdot \#
    \gamma_{\dg,\degOne}(G),
  \end{align*}
  and the claims follow from \ref{th:decom}. For the first inequality
  in \short\ref{cor:decom-2}, we observe that $c \geq 1$ and

  \begin{align}\label{frac:leqp}
    \frac{q^{-c/e}}{1-q^{-c/e}}= \frac{p^{-c}}{1-p^{-c}} \leq 1.
  \end{align}

  For the last estimate, we have
  $$
  q^{-r^{*}}\leq 1/4,
  $$
  $$
  q^{-r^{*}(p-2)} \leq q^{-r^{*}(\mu-1)},
  $$
  $$
  (1-q^{-r^{*}(\mu-1)})(1+q^{-r^{*}(p-2)})\leq
  \frac{4}{3}(1-q^{-r^{*}}).\qed
  $$
\end{proof}

The algorithm works over any field of characteristic $p$ where each
element has a $p$th root; in $\mathbb{F}_{q}$, this is just the $(q /
p)$th power. It even works over an arbitrary extension of
$\mathbb{F}_{p}$, rather than just the separable ones, provided we
have a subroutine that tests whether a field element is a $p$th power,
and if so, returns a $p$th root. Then where a $p$th root is requested
in the algorithm (steps
\bare\ref{algoWd:step3}\bare\ref{algoWd:step3-a},
\bare\ref{algoWd:step6}\bare\ref{algoWd:step6-a}, and
\bare\ref{algoWd:step6}\bare\ref{algoWd:step6-c}), we either return
``no decomposition'' or the root, depending on the outcome of the
test.
\begin{example}\label{ex:com}
  When $\dg = p^{2}$, then we have $\degOne=r=m=p$ in
  \ref{cor:decom-3}, and including the Frobenius compositions
  (\ref{lem:propdivi-2}), we obtain
  \begin{align*}
    \#D_{\dg}  &\geq
    \frac{1}{2}q^{2p}(1-q^{-1})^{2}(1+\frac{1+q^{-1}}{p+1}+q^{-1}-2q^{-p+1})
    + q^{p+1}(1-q^{-1})\\
    &= \alpha_{\dg} \cdot \bigl(\frac 1 2
    (1+\frac{1}{p+1})(1-q^{-2})+{q^{-p}} \bigr).
  \end{align*}

  In characteristic 2, the estimate is exact, since we have accounted
  for all compositions and a monic original polynomial of degree $2$
  is determined by its linear coefficient. Thus
  \begin{align*}
    {\#D_{4}} &= {\alpha_{4}} \cdot (\frac{2}{3} \cdot (1-q^{-2}) +
    q^{-2})
    = \alpha_4 \cdot \frac{2+q^{-2}}{3}, \\
    {\#D_{4}} &= \frac{3}{4} \alpha_4
    \text{ over } \mathbb{F}_{2} , \\
    {\#D_{4}} &= \frac{11}{16} \alpha_4 \text{ over } \mathbb{F}_{4}.
  \end{align*}

  Over an algebraically closed field, a quartic polynomial is
  decomposable if and only if its cubic coefficient vanishes; compare
  to \ref{ex:sl}.  For $p=3$, we find
  \begin{align*}
    \#D_{9} &\geq \alpha_{9} \cdot (\frac{5}{8}(1-q^{-2})+{q^{-3}})
    = \alpha_{9} \cdot (\frac{5}{8}-q^{-2}(\frac 5 8 -{q^{-1}})), \\
    \#D_{9} &\geq \frac{16}{27} \cdot \alpha_{9} > 0.59259 \,
    \alpha_9
    \text{ over } \mathbb{F}_{3}, \\
    \#D_{9} &\geq \frac{451}{3^6} \cdot \alpha_{9} > 0.61065 \,
    \alpha_{9} \text{ over } \mathbb{F}_{9}.
  \end{align*}

 \ref{tab:numforsm} shows that these are serious underestimates of the
 actual ratios $\approx 0.8518$ and $0.9542$.
  In the same vein we find, when $p=\degOne $ and $\dg = ap^{2} >p^2$
  with $p \nmid a$, that
  \begin{align*}
    \#D_{n,n/p} \geq \frac{ \alpha_{\dg}} 2 \cdot (\frac 1 2
    (1+\frac{1}{p+1})(1-q^{-2})+{q^{-p}}).\qed
  \end{align*}
\end{example}
\begin{example}\label{ex:coll}
  In $\mathbb{F}_{3}[x]$, we have, besides the eight Frobenius
  collisions according to \ref{rem:coll}, four two-way collisions of degree $9$:
  \begin{gather*}
    (x^{3}+x) \circ (x^{3}-x) = (x^{3}-x) \circ (x^{3}+x) = x^{9}-x,\\
    (x^{3}+x^{2})\circ (x^{3}-x^{2}-x) = (x^{3}-x^{2}+x) \circ
    (x^{3}+x^{2}) = x^{9}+x^{5}-x^{4}+x^{3}+x^{2},\\
    (x^{3}+x^{2}+x)\circ (x^{3}-x^{2}) = (x^{3}-x^{2})\circ
    (x^{3}+x^{2}-x) = x^{9}+x^{5}+x^{4}+x^{3}-x^{2},\\
    (x^{3}+x^{2}+x) \circ (x^{3}-x^{2}+x) =
    (x^{3}-x^{2}+x)\circ(x^{3}+x^{2}+x) = x^{9}+x^{5}+x.
  \end{gather*}

  Our general bounds of \ref{thm:Estimate-1}, \ref{cor:decom}, and
  \ref{ex:com} say that
  $$
  18 \cdot 16 = 288 < 18\cdot 17 = 306 < \#D_{9}= 414 = 18 \cdot 23 <
  486 = 18 \cdot 27 = \alpha_{9}.\qed
  $$
\end{example}

\section{Distinct-degree collisions of
  compositions}\label{sec:collcomp}

In this section, we turn to the last preparatory task. Namely, for a
lower bound on $D_{\dg}$ we have to understand $D_{\dg,l} \cap
D_{\dg,\dg/l}$, that is, the distinct-degree collisions \ref{eq:circ}
when $\deg g^{*} = \deg h = l$. In our application, $l$ is the
smallest prime divisor of $n$.

The following is an example of a collision:
$$
x^{k}w^{l} \circ x^{l}= x^{kl}w^{l}(x^{l})= x^{l} \circ x^{k}w(x^{l}),
$$
for any polynomial $w \in F[x,y]$, where $F$ is a field (or even a
ring).  We define the (bivariate) \emph{Dickson polynomials of the
first kind} $T_{m} \in F[x,y]$ by $T_{0}=2$, $T_{1}=x$,
and
\begin{equation}\label{Trecursion}
  T_{m}=xT_{m-1}- yT_{m-2}
  \text{ for }m \geq 2.
\end{equation}
The monograph of \cite{lidmul93a} provides extensive information about
these polynomials. We have $T_{m}(x,0)= x^{m}$, and $T_{m}(x,1)$ is
closely related to the \emph{Chebyshev polynomial} $C_{\dg}=\cos(\dg
\arccos x)$, as $T_{\dg}(2x,1)=2C_{\dg}(x)$. $T_{m}$ is monic (for $m
\geq 1$) of degree $m$, and
$$
T_{m}= \sum_{0 \leq i \leq
  m/2}\frac{m}{m-i}\binom{m-i}{i}(-y)^{i} x^{m-2i} \in F[x,y].
$$
Furthermore,
\begin{equation}\label{eq:Trecursion}
T_{m}(x,y^{l})\circ T_{l}(x,y)=
T_{lm}(x,y)=T_{l}(x,y^{m})\circ T_{m}(x,y),
\end{equation}
and if $l \neq m$, then substituting any $z \in F$ for $y$ yields a
collision.

Ritt's Second Theorem is the central tool for understanding
distinct-degree collisions, and the following notions enter the
scene. The functional inverse $v^{-1}$ of a linear polynomial
$v=ax+b$ with $a$, $b \in F$ and $a \neq 0$ is defined as
$v^{-1}=(x-b)/a$.  Then $v^{-1} \circ v = v \circ v^{-1} =
x$. Two pairs $(g,h)$ and $(g^{*},h^{*})$ of polynomials are called
\emph{equivalent} if there exists a linear polynomial $v$ such that
$$
g^{*}=g \circ v, \ h^{*}=v^{-1} \circ h.
$$
Then $g \circ h = g^{*}\circ h^{*}$, and we write $(g,h) \sim
(g^{*},h^{*})$.  The following result says that, under certain
conditions, the examples above are essentially the only
distinct-degree collisions. It was first proved by \cite{rit22} for
$F=\C$. We use the strong version of \cite{zan93}, adapted to finite
fileds. The adaption uses \cite{sch00c}, Section 1.4, Lemma 2, and
leads to his Theorem 8. Further references can be found in this
monograph as well. 
\begin{fact}\label{RST}(Ritt's Second Theorem)
  Let $l$ and $m$ be integers, $F$ a field, and $g$, $h$, $g^{*}$,
  $h^{*}\in F[x]$ with
  \begin{equation}\label{eq:RST-1}
    m > l \geq 2, \gcd(l,m)=1, \deg g= \deg h^{*} = m,\\
    \deg h = \deg g^{*} = l,
  \end{equation}
  \begin{equation}\label{eq:RST-2}
    g'(g^{*})' \neq 0,
  \end{equation}
  where $g'= \partial g/ \partial x$ is the derivative of $g$. Then
  \begin{equation}\label{eq:RST-3}
    g \circ h = g^{*} \circ h^{*}
  \end{equation}
  if and only if
  $$
  \exists k \in \mathbb{N}, v_{1}, v_{2} \in F[x] ~\text{linear}, w
  \in F[x] ~\text{with}~ k+l \deg w = m, z\in F^{\times},
  $$
  so that either
  \begin{equation}
    \tag*{First Case}
    \left\{
      \begin{aligned}
        (v_{1} \circ g, h \circ v_{2})  &\sim (x^{k} w^{l}, x^{l}),\\
        (v_{1} \circ g^{*}, h^{*} \circ v_{2}) &\sim (x^{l},
        x^{k}w(x^{l})),
      \end{aligned}
    \right.
  \end{equation}
  or
  \begin{equation}
    \tag*{Second Case}
    \left\{
      \begin{aligned}
        (v_{1} \circ g, h \circ v_{2})&\sim ( T_{m}(x,z^{l}), T_{l}(x,z)),\\
        (v_{1} \circ g^{*}, h^{*} \circ v_{2}) &\sim (T_{l}(x,z^{m}), T_{m}(x,z)).
      \end{aligned}
    \right.
  \end{equation}
\end{fact}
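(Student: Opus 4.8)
The statement is Ritt's Second Theorem, which this paper imports from the literature rather than reproving; what follows is the shape of the argument one would assemble, following \cite{rit22} and \cite{zan93} together with the finite-field adaptation in \cite{sch00c}.

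The plan is to prove the theorem first over an algebraically closed field $\overline{F}$ of characteristic $p$, and then descend to $\mathbb{F}_q$. For the first part I would start from the collision $f=g\circ h=g^{*}\circ h^{*}$ with $\deg g=\deg h^{*}=m>l=\deg h=\deg g^{*}$, $\gcd(l,m)=1$, and the hypothesis \ref{eq:RST-2}. The point of \ref{eq:RST-2} is that it forces the ramification relevant to the genus computation to be tame, so that Riemann--Hurwitz is available in its usual shape; this is exactly where the older proofs needed the far stronger $p>\deg f$, and what \cite{zan93} replaced by $g'(g^{*})'\neq 0$. One then passes to the function-field picture: $\overline{F}(x)$ lies over $\overline{F}(f)$, with intermediate fields $\overline{F}(h)$ and $\overline{F}(h^{*})$ of degrees $l$ and $m$. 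Taking a Galois closure with group $G$, the two ``vertical'' subgroups attached to $g$ and $g^{*}$ generate $G$, which then acts on two block systems of sizes $l$ and $m$; since $\gcd(l,m)=1$, a group-theoretic analysis forces $G$ to be cyclic or dihedral. The cyclic alternative yields the First Case, with $g$ and $g^{*}$ equivalent, after composing with suitable linear $v_{1},v_{2}$, to $x^{k}w^{l}$ and $x^{l}$; the dihedral alternative yields the Second Case with the Dickson polynomials $T_{m}(x,z^{l})$ and $T_{l}(x,z)$, using \ref{eq:Trecursion}. The converse implication in \ref{eq:RST-3} is immediate from $x^{k}w^{l}\circ x^{l}=x^{l}\circ x^{k}w(x^{l})$ and from \ref{eq:Trecursion}.

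For the descent to $\mathbb{F}_q$, observe that \ref{eq:RST-3} already has coefficients in $\mathbb{F}_q$, whereas the normalizing linear maps $v_{1},v_{2}$ and the data $k$, $w$, $z$ produced above live a priori only over $\overline{\mathbb{F}}_q$. Here I would invoke \cite{sch00c}, Section~1.4, Lemma~2: applying any $\sigma\in\mathrm{Gal}(\overline{\mathbb{F}}_q/\mathbb{F}_q)$ to an expression of First or Second type again produces an expression of the same type for the same (rational) $f$, and a uniqueness statement for this shape then lets one rationalize the parameters, so that $v_{1},v_{2}\in\mathbb{F}_q[x]$, $w\in\mathbb{F}_q[x]$, and $z\in\mathbb{F}_q^{\times}$. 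This is precisely the path from Schinzel's Section~1.4 to his Theorem~8.

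The main obstacle is the group-theoretic classification in positive characteristic: one has to check that the tameness granted by $g'(g^{*})'\neq 0$ really excludes every exceptional imprimitive group that could otherwise occur, so that only the cyclic and dihedral cases remain. That is the delicate work done in \cite{zan93}; the subsequent Galois descent in \cite{sch00c} is comparatively routine.
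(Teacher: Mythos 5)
The paper states this as a Fact without proof, deferring to \cite{rit22}, \cite{zan93}, and \cite{sch00c}, Section~1.4, Lemma~2 and Theorem~8; you correctly recognize this and sketch the argument supplied by those sources rather than the paper. Your outline --- tameness secured by \ref{eq:RST-2} so that Riemann--Hurwitz applies, a Galois-closure/block-system analysis with $\gcd(l,m)=1$ forcing the cyclic and dihedral alternatives (yielding the $x^{k}w^{l}$ and Dickson shapes respectively), and a final Galois descent from $\overline{\mathbb{F}}_q$ to $\mathbb{F}_q$ via Schinzel's rationality lemma --- is a fair summary of the proof found in the cited references, so there is nothing in the paper itself to diverge from.
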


In principle, one also has to consider the First Case with $(g,h,m)$
and $(g^{*}, h^{*}, l)$ interchanged; see \cite{zan93}, Main Theorem
(ii).  Then $k+m \deg w = l$ and hence deg $w= 0$. But this situation
is covered by the First Case in \ref{RST}, with $k=m$. We note that
the conclusion of the First Case is asymmetric in $l$ and $m$, but in
the Second Case it is symmetric, so that there the assumption $m > l$
does not intervene.

According to \ref{invariance1}, we may assume $h$ and $h^{*}$ to be
monic and original. If one of $g$ or $g^{*}$ is also monic and
original, then so is the other one, and also the composition
\ref{eq:RST-3}. It is convenient to add this condition:
\begin{equation}\label{eq:intersec}
  f = g \circ h,\text{ and } g, h, g^{*}, h^{*} \text{ are monic and original}.
\end{equation}
The transition between the general and this special case is by left
composition with a linear polynomial.

The following lemma about Dickson polynomials will be useful for
determining the number of collisions exactly. We write
$T_{\dg}'(x,y)=\partial T_{\dg}(x,y)/\partial x$ for the derivative
with respect to $x$.
\begin{lemma}\label{Tsqfree}
  Let $F$ be a field of characteristic $p \geq 0$, $n \geq 1$, and $z
  \in F^{\times}$.
  \begin{enumerate}
  \item
    \label{Tsqfree-1}
    If $p=0$, or $p \geq 3$ and $\gcd(n,p)=1$, then the derivative
    $T'_n(x,z)$ is squarefree in $F[x]$.
  \item
    \label{Tsqfree-1b}
    If $p=0$ or $\gcd(n,p)=1$, and $n$ is odd, then there exists some
    monic squarefree $u\in F[x]$ of degree $(n-1)/2$ so that
    $T_n(x,z^{2})=(x-2z) \cdot u^{2}+2z^{\dg}$.
  \item
    \label{Tsqfree-2}
    Let $\gamma = (-y)^{\lfloor n/2 \rfloor}$.  $T_n$ is an odd
    or even polynomial in $x$ if $n$ is odd or even, respectively.  It has
    the form
    \begin{align*}
    T_n =
    \begin{cases}
      x^n - nyx^{n-2} +- \cdots + \gamma x & \text{if }n\text{ is odd},\\
      x^n - nyx^{n-2} +- \cdots + 2\gamma & \text{if }n\text{ is even}.
    \end{cases}
    \end{align*}
  \item
    \label{Tsqfree-3}
    If $p \geq 2$, then $T_{p^j} = x^{p^j}$ for $j\geq 0$.
  \item \label{Tsqfree-3b} If $p \geq 2$ and $p \mid \dg$, then
    $T_{\dg}'=0$.
  \item
    \label{Tsqfree-4}
    For a new indeterminate $t$, we have
    $t^{\dg}T_{\dg}(x,y)=T_{\dg}(tx,t^{2}y)$.
  \item
    \label{Tsqfree-5}
    $T_{\dg}(2z,z^{2})=2z^{\dg}$.
  \end{enumerate}
\end{lemma}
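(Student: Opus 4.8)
The plan is to prove the seven items in the order \ref{Tsqfree-4}, \ref{Tsqfree-5}, \ref{Tsqfree-2}, \ref{Tsqfree-3}, \ref{Tsqfree-3b}, \ref{Tsqfree-1}, \ref{Tsqfree-1b}, since several depend on earlier ones, and to use throughout the classical parametrization of Dickson polynomials: over a splitting field the substitution $x=s+y/s$ turns $T_n(x,y)$ into the symmetric Laurent polynomial $s^n+y^n s^{-n}$.

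First I would prove \ref{Tsqfree-4} by induction on $n$ from the recursion \ref{Trecursion}: replacing $(x,y)$ by $(tx,t^2y)$ turns $xT_{n-1}-yT_{n-2}$ into $tx\cdot t^{n-1}T_{n-1}-t^2y\cdot t^{n-2}T_{n-2}=t^n T_n(x,y)$. Item \ref{Tsqfree-5} follows by specializing $t\mapsto z$, $x\mapsto 2$, $y\mapsto1$ to get $z^n T_n(2,1)=T_n(2z,z^2)$, together with $T_n(2,1)=2$ (immediate induction: $2\cdot2-2=2$). Item \ref{Tsqfree-2} is another induction on \ref{Trecursion}: multiplying by $x$ flips parity and multiplying by $y$ preserves it, so the alternating parity propagates from $T_0=2$, $T_1=x$; reading off the top two coefficients of $xT_{n-1}-yT_{n-2}$ gives $x^n-nyx^{n-2}+\cdots$, and the lowest coefficient (constant term for even $n$, linear coefficient for odd $n$) is $(-y)^{\lfloor n/2\rfloor}$ up to the integer factor forced by the recursion; in particular $T_n(0,y)=0$ for odd $n$. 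For \ref{Tsqfree-3}: in the explicit sum in the text every non-leading coefficient of $T_p$ is $\tfrac{p}{p-i}\binom{p-i}{i}$ with $1\le i\le p/2$, which equals $p\cdot\binom{p-i}{i}/(p-i)$ with integral second factor since $\gcd(p-i,p)=1$, hence is $\equiv0\bmod p$; so $T_p=x^p$ in characteristic $p$, and \ref{eq:Trecursion} with induction gives $T_{p^j}=x^p\circ T_{p^{j-1}}=x^{p^j}$. Equivalently, writing $n=p^j m$ with $p\nmid m$ and $j\ge1$, the identities \ref{eq:Trecursion}, \ref{Tsqfree-3} and the Frobenius relation yield $T_n=T_m(x^{p^j},y^{p^j})=T_m(x,y)^{p^j}$; differentiating in $x$ then produces the factor $p^j=0$, which is \ref{Tsqfree-3b}.

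For \ref{Tsqfree-1} I would bring in the Dickson polynomial $U_{n-1}$ of the second kind ($U_0=1$, $U_1=x$, same recursion), for which an easy induction gives $\partial T_n/\partial x=nU_{n-1}$; since $p\nmid n$, squarefreeness of $T_n'(x,z)$ is equivalent to that of $U_{n-1}(x,z)$. In the parametrization one has $U_{n-1}(x,z)=(s^{2n}-z^n)/(s^{n-1}(s^2-z))$ as rational functions, so the roots of $U_{n-1}(x,z)$ are the images under $s\mapsto s+z/s$ of the solutions of $s^{2n}=z^n$ with $s^2\neq z$. When $p$ is odd and $\gcd(n,p)=1$ the equation $s^{2n}=z^n$ is separable with $2n$ solutions; discarding the two with $s^2=z$, the remaining $2n-2$ fall into $n-1$ genuine pairs $\{s,z/s\}$, giving $n-1$ distinct values of $x$, as many as $\deg U_{n-1}$, so no root repeats. (The oddness of $p$ is exactly what keeps $s^{2n}=z^n$ separable, which is why $p=2$ is excluded here.)

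Finally \ref{Tsqfree-1b}, which I expect to be the main obstacle, since it asserts not just squarefreeness but an exact identity with a perfect square over $F$ itself. I would substitute $x=s+z^2/s$, so $T_n(x,z^2)=s^n+z^{2n}s^{-n}$ and hence $T_n(x,z^2)-2z^n=(s^n-z^n)^2 s^{-n}$, while $x-2z=(s-z)^2 s^{-1}$; dividing and factoring $s^n-z^n=\prod_{\zeta^n=1}(s-\zeta z)$ (distinct factors as $\gcd(n,p)=1$) gives $(T_n(x,z^2)-2z^n)/(x-2z)=u^2$ with $u=s^{-(n-1)/2}\prod_{\zeta\neq1}(s-\zeta z)$. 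The delicate point is that $u\in F[x]$: it is a Laurent polynomial in $s$ invariant under $s\mapsto z^2/s$ — here one uses that $n$ is odd, so $\prod_{\zeta\neq1}\zeta=1$ and $-1$ is not an $n$-th root of unity — hence a polynomial in $s+z^2/s=x$. Its top $s$-term shows it is monic of degree $(n-1)/2$, and it is squarefree because the involution $s\mapsto z^2/s$ fixes none of the points $s=\zeta z$ with $\zeta\neq1$, so these supply $(n-1)/2$ distinct roots of $u$. Rearranging gives $T_n(x,z^2)=(x-2z)u^2+2z^n$.
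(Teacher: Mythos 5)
Your proof is correct, and for the two substantive items (\short\ref{Tsqfree-1} and \short\ref{Tsqfree-1b}) it takes a genuinely different and more self-contained route than the paper. The paper leans on external results — Williams (1971) and Corollary~3.14 / Theorem~3.12(i) of Lidl--Mullen(--Turnwald) — to get the explicit factorizations of $T_n'$ and of $T_n(x,z^2)-2z^n$ in terms of Gau\ss{} periods $\alpha_k=\rho^k+\rho^{-k}$, then passes to an extension $E$, checks distinctness of the quadratic factors using $p\neq 2$, and finally runs a Galois-descent argument (permutation of the $\alpha_k$) to show $u\in F[x]$. You instead use the classical parametrization $x=s+y/s$, $T_n\mapsto s^n+y^n/s^n$, and the second-kind polynomials $U_{n-1}$, reducing everything to counting the fibers of $s\mapsto s+z/s$ over the roots of $s^{2n}=z^n$; this avoids the Lidl--Mullen citations entirely. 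For \short{\ref{Tsqfree-1b}} your computation $(T_n(x,z^2)-2z^n)/(x-2z)=(s^n-z^n)^2/(s^{n-1}(s-z)^2)$ is cleaner than the paper's term-by-term completion of the square of each quadratic factor.

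One step you should make explicit: you flag ``the delicate point is that $u\in F[x]$,'' but your argument as written only shows that $u$ is an invariant Laurent polynomial and hence a polynomial in $x=s+z^2/s$, which a priori gives $u\in E[x]$. The clean way to close this is to observe that $\prod_{\zeta\neq 1}(s-\zeta z)=(s^n-z^n)/(s-z)=s^{n-1}+zs^{n-2}+\cdots+z^{n-1}$ already lies in $F[s]$, so $u=s^{-(n-1)/2}\prod_{\zeta\neq1}(s-\zeta z)\in F[s,s^{-1}]$; then the involution-invariance together with $s^k+z^{2k}s^{-k}=T_k(x,z^2)\in F[x]$ for each $k$ expresses $u$ as an $F$-linear combination of the $T_k(x,z^2)$, hence $u\in F[x]$ with no Galois argument at all. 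This actually simplifies the rationality step relative to the paper's proof. With that gloss added, the argument is complete.
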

\begin{proof}
  \ref{Tsqfree-1} \cite{wil71} and Corollary 3.14 of \cite{lidmul93a}
  show that if $F$ contains a primitive $n$th root of unity $\rho$,
  then $T'_n(x,z)/nc$ factors over $F$ completely into a product of
  quadratic polynomials $(x^{2}-\alpha_{k}^{2}z)$, where $1\leq k
  <\dg/2$, the $\alpha_k = \rho^k+ \rho^{-k}$ are Gau\ss\ periods
  derived from $\rho$, and the $\alpha_k^2$ are pairwise distinct,
  with $c=1$ if $n$ is odd and $c=x$ otherwise. We note that
  $\alpha_{k}=\alpha_{\dg-k}$.  We take an extension $E$ of $F$ that
  contains a primitive $n$th root of unity and a square root $z_{0}$
  of $z$. This is possible since $p=0$ or $\gcd(n,p)=1$.
  ~Thus $x^{2}- \alpha_{k}^{2}z = (x- \alpha_{k}z_{0})(x+
  \alpha_{k}z_{0})$, and the $\pm \alpha_{k}z_{0}$ for $1 \leq k < n/2$ are
  pairwise distinct, using that $p \neq 2$. It follows that
  $T'_n(x,z)$ is squarefree over $E$. Since squarefreeness is a
  rational condition, equivalent to the nonvanishing of the
  discriminant, $T'_n(x,z)$ is also squarefree over $F$.

  For \ref{Tsqfree-1b}, we take a Galois extension field~$E$ of~$F$
  that contains a primitive $n$th root of unity $\rho$, and set
  $\alpha_k = \rho^k + \rho^{-k}$ and $\beta_k = \rho^{k} - \rho^{-k}$
  for all $k\in\mathbb{Z}$.  We have $T_{n}(2z,z^{2}) = 2z^{\dg}$ by
  \ref{Tsqfree-5}, proven below, and Theorem~3.12(i) of
  \cite{lidmul93} states that
  $$
   T_{n}(x,z^{2})-2z^{\dg} = (x-2z) \prod_{1 \leq k < n/2}
   (x^{2}-2\alpha_{k}zx+4z^{2}+\beta_{k}^{2}z^{2});
  $$
  see also \cite{tur95}, Proposition 1.7. Now $ -\alpha_{k}^{2} + 4 +
  \beta_{k}^{2} = -(\rho^{k}+\rho^{-k})^{2} + (\rho^{k}-\rho^{-k})^{2}
  + 4 = 0, $ so that $ x^{2}-2 \alpha_{k}z x+4z^{2}+\beta_{k}^{2}z^{2}
  = (x-\alpha_{k}z)^{2}$. We set $u= \prod_{1 \leq k < n/2} (x-
  \alpha_{k}z)\in E[x]$. Then $T_{n}(x,z^{2})-2z^{\dg} = (x-2z)u^{2}$,
  and $u$ is squarefree. It remains to show that $u \in F[x]$. We take
  some $\sigma \in~Gal(E:F)$.  Then $\sigma(\rho)$ is also a primitive
  $\dg$th root of unity, say $\sigma(\rho)=\rho^{i}$ with $1\leq
  i<\dg$ and $~gcd(i,\dg)=1$. We take some $k$ with $1\leq k<\dg/2$,
  and $j$ with $ik\equiv j \bmod \dg$ and $0<|j| <\dg/2$. Then
  $\sigma(\alpha_{k})=\alpha_{|j|}$. Hence, $\sigma$ induces a
  permutation on $\Set{\alpha_{1}, \dots, \alpha_{(n-1)/2}}$. It
  follows that
  
  $$u = \prod_{1 \leq k < n/2} (x-\alpha_{k}z) = \prod_{1 \leq k < n/2}
  (x-\sigma(\alpha_{k}z)) = \sigma u.
  $$
  Since this holds for all $\sigma$, we have $u\in F[x]$.

  \ref{Tsqfree-2} follows from the recursion \ref{Trecursion}, and
  \ref{Tsqfree-3} from \cite{lidmul93a}, Lemma
  2.6(iii). \ref{Tsqfree-3b} follows from \ref{eq:Trecursion} and
  \ref{Tsqfree-3}. The claim in \ref{Tsqfree-4} is Lemma 2.6(ii) of
  \cite{lidmul93a}. It also follows inductively from \ref{Trecursion},
  as does \ref{Tsqfree-5}.
\end{proof}
In the following, we present several pairs of results. In each pair,
the first item is a theorem, valid over fairly general fields, that
describes the structure of distinct-degree collisions. The second one
is a corollary, valid over finite fields, giving bounds on the number
of such collisions. We start with the following normal form for the
decompositions in Ritt's Second Theorem. The uniqueness result is not
obvious, as witnessed by the quotes in the Introduction.
\begin{theorem}\label{th:fifi}
  Let $F$ be a field of characteristic $p$, let $m>l\geq 2$ be
  integers, and $n=lm$.  Furthermore, we have monic original $f, g, h,
  g^{*}, h^{*} \in F[x]$ satisfying \ref{eq:RST-1} through
  \ref{eq:intersec}. Then either \short\ref{th:fifi-1} or
  \short\ref{th:fifi-2} hold, and \short\ref{Tsqfree-2} is also
  valid.
  \begin{enumerate}
  \item\label{th:fifi-1} (First Case) There exists a monic
    polynomial $w \in F[x]$ of degree $s$ and $c \in F$ so that
    \begin{equation}\label{eq:mopo}
      f=(x-\parTwo^{kl}w^{l}(\parTwo^{l})) \circ x^{kl}w^{l}(x^{l}) \circ (x+\parTwo),
    \end{equation}
   where $m=sl+k$ is the division with remainder of $m$ by $l$, with
   $1 \leq k < l$. Furthermore
    \begin{align}\label{eq:unidet}
      &  kw+lxw' \neq 0 \text{ and } p \nmid l,\\
      g &= (x-\parTwo^{kl}w^{l}(\parTwo^{l})) \circ x^{k}w^{l} \circ (x+\parTwo^{l}), \nonumber\\
      h& = (x-\parTwo^{l}) \circ x^{l} \circ (x+\parTwo), \nonumber\\
      g^{*}& = (x-\parTwo^{kl}w^{l}(\parTwo^{l})) \circ x^{l} \circ (x+\parTwo^{k}w(\parTwo^{l})),\nonumber \\
      h^{*} &= (x-\parTwo^{k}w(\parTwo^{l})) \circ x^{k}w(x^{l}) \circ
      (x+\parTwo).\nonumber
    \end{align}

    Conversely, any $(w,\parTwo)$ as above for which \ref{eq:unidet}
    holds yields a collision satisfying \ref{eq:RST-1} through
    \ref{eq:intersec}, via the above formulas. If $p\nmid m$, then
    $(w,\parTwo)$ is uniquely determined by $f$ and $l$.
  \item\label{th:fifi-2} (Second Case) There exist $z,\parTwo \in F$
    with $z \neq 0$ so that
    \begin{align}\label{eq:TN}
      f=(x-T_{n}(\parTwo,z)) \circ T_{n}(x,z) \circ (x+\parTwo).
    \end{align}
    Now $(z,\parTwo)$ is uniquely determined by $f$. Furthermore we have
    \begin{align}\label{eq:ab}
      & p \nmid \dg,\\
      g &= (x-T_{n}(\parTwo,z)) \circ T_{m}(x,z^{l}) \circ (x + T_{l}(\parTwo,z)),\nonumber \\
      h &= (x-T_{l}(\parTwo,z)) \circ T_{l}(x,z) \circ (x+\parTwo), \nonumber\\
      g^{*} &=  (x-T_{n}(\parTwo,z)) \circ T_{l}(x,z^{m}) \circ (x+T_{m}(\parTwo,z)),\nonumber \\
      h^{*} &=(x-T_{m}(\parTwo,z)) \circ T_{m}(x,z) \circ (x+\parTwo).\nonumber
    \end{align}
    Conversely, if \ref{eq:ab} holds, then any $(z,\parTwo)$ as above yields a
    collision satisfying \ref{eq:RST-1} through \ref{eq:intersec}, via
    the above formulas.

  \item\label{th:fifi-3} When $l \geq 3$, the First and Second Cases
    are mutually exclusive. For $l=2$, the Second Case is included in
    the First Case.
  \end{enumerate}
\end{theorem}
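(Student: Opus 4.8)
The plan is to extract the shape of the collision from Ritt's Second Theorem (\ref{RST}) and then let the normalization \ref{eq:intersec} do the rest of the work. In either case of \ref{RST} each symbol $\sim$ conceals one affine polynomial, and together with $v_1,v_2$ this gives a small number of affine maps $u,v_1,v_2$ and a polynomial $w$ that may be taken monic. First I would write out, in the First Case, the relations $v_1\circ g\circ u = x^k w^l$ and $u^{-1}\circ h\circ v_2 = x^l$ (and the two analogous ones for $g^*,h^*$). Imposing that $h$ be monic and original forces $h(y)=(y+a)^l-a^l$ for a single $a\in F$ and reduces $v_2,u$ to the translations appearing in \ref{eq:unidet}; the remaining maps, and hence $g,g^*,h^*$ and $f$, are then determined, giving \ref{eq:mopo} and the formulas of \short\ref{th:fifi-1}. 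The side conditions \ref{eq:unidet} are just a translation of the hypothesis \ref{eq:RST-2}: since $g^*\sim x^l$ and $g\sim x^kw^l$, one has $(g^*)'\neq 0\iff p\nmid l$ and $g'\neq 0\iff (x^kw^l)'=x^{k-1}w^{l-1}(kw+lxw')\neq 0\iff kw+lxw'\neq 0$. The Second Case runs identically with $x^l$ replaced by $T_l(x,z)$ and $x^kw^l$ by $T_m(x,z^l)$; here \ref{eq:RST-2} becomes $p\nmid l$ and $p\nmid m$, hence $p\nmid n$, using \ref{Tsqfree-3b} ($T_i'=0$ when $p\mid i$), which is exactly \ref{eq:ab}. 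The reduction of a general collision to the normalized setting \ref{eq:intersec} is by left composition with a linear polynomial, as in \ref{invariance1}.

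The converse directions are routine verifications. In the First Case, $g\circ h=(x-a^{kl}w^l(a^l))\circ x^kw^l\circ x^l\circ(x+a)=f$ and $g^*\circ h^*=(x-a^{kl}w^l(a^l))\circ x^l\circ x^kw(x^l)\circ(x+a)=f$ both follow from the single identity $x^kw^l\circ x^l = x^{kl}w^l(x^l) = x^l\circ x^kw(x^l)$, and \ref{eq:RST-1}, \ref{eq:RST-2} are immediate from the degrees and from \ref{eq:unidet}. In the Second Case the same computation uses the Dickson identity \ref{eq:Trecursion}, namely $T_m(x,z^l)\circ T_l(x,z)=T_n(x,z)=T_l(x,z^m)\circ T_m(x,z)$.

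For the uniqueness assertions I would argue straight from the normal form. Put $P=x^{kl}w^l(x^l)$, a monic polynomial of degree $n$ lying in $F[x^l]$; then \ref{eq:mopo} reads $f(x)=P(x+a)-P(a)$. Since $P\in F[x^l]$ and $l\nmid n-1$, the coefficient of $x^{n-1}$ in $P(x+a)$ is $na$, so $a=f_{n-1}/n$ once $p\nmid n$ — which holds under the hypothesis $p\nmid m$ together with $p\nmid l$ from \ref{eq:unidet}. Knowing $a$, we get $P(x)=f(x-a)-f(-a)$, and then $w$ is recovered from $P/x^{kl}=w(x^l)^l$ (monic $l$-th root), so $(w,a)$ is unique. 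In the Second Case the same trick gives $a=f_{n-1}/n$ ($p\nmid n$ from \ref{eq:ab}, and $T_n$ has no term of degree $n-1$), after which the coefficient of $x^{n-2}$, equal to $\binom{n}{2}a^2-nz$ by \short\ref{Tsqfree-2}, determines $z$.

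Finally, for \short\ref{th:fifi-3}, suppose $f$ admits both normal forms. By the previous paragraph both force the same $a=f_{n-1}/n$, and subtracting the two expressions for $f(x-a)$ shows $T_n(x,z)=P(x)+c$ for some constant $c$, with $P\in F[x^l]$. If $l\geq 3$ then $l\nmid n-2$, so the coefficient of $x^{n-2}$ on the right-hand side is $0$, whereas on the left it is $-nz\neq 0$ by \short\ref{Tsqfree-2} (as $p\nmid n$, $z\neq 0$) — a contradiction, so the cases exclude each other. If $l=2$ then $m$ is odd, $T_2(x,z)=x^2-2z$ differs from $x^2$ only by a constant, and \short\ref{Tsqfree-1b} applied to the odd index $m$ yields a monic squarefree $u$ with $T_m(x+2z,z^2)-2z^m=x\,u(x+2z)^2$; feeding this into the Dickson decomposition of \short\ref{th:fifi-2} rewrites it in the shape of \short\ref{th:fifi-1} with $k=1$ and $w(x)=u(x+2z)$, so the Second Case is a First Case. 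I expect the main obstacle to be the bookkeeping in the first paragraph: carefully checking that unwinding the equivalences $\sim$ and the $v_i$ in \ref{RST}, under the rigid normalization \ref{eq:intersec} imposed simultaneously on $g,h,g^*,h^*$, really pins every affine map down to the single parameter $a$ (resp.\ $a$ and $z$) in a self-consistent way; the converse, uniqueness, and separation parts are then short.
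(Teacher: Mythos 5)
Your argument for parts \short\ref{th:fifi-1} and \short\ref{th:fifi-2} follows the paper's route: unwind the four Ritt relations, use the normalization to pin down the affine maps, then recover $(w,\parTwo)$ (resp.\ $(z,\parTwo)$) by coefficient extraction. The paper carries out the bookkeeping you flag as the main obstacle, and the one piece you cannot skip is the discrepancy between Ritt's $K$ (any positive integer with $K+l\deg W=m$) and the normalized remainder $k$ ($1\leq k<l$). The paper repairs this by padding: one sets $w$ proportional to $x^{s-\deg W}\cdot W(a_2^l x)$, turning $x^K W^l = x^{k+jl}W^l = x^k(x^j W)^l$ into $x^k w^l$ with $\deg w=s=\lfloor m/l\rfloor$. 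Your uniqueness argument (read off $a=f_{n-1}/n$, then $P=f(x-a)-f(-a)$, then a monic $l$-th root) is the same coefficient comparison the paper uses, just packaged as a direct recovery.

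Where you genuinely diverge, and improve, is part \short\ref{th:fifi-3} for $l\geq 3$. The paper compares the derivatives of the two normal forms, which forces a case split on $p$ and an invocation of squarefreeness of $T_l'$ (Lemma~\ref{Tsqfree-1}) and the odd-index factorization (Lemma~\ref{Tsqfree-1b}) for $p=2$, plus a separate check at $l=3$, $p=2$. Your observation — that $T_n(x,z)=P(x)+c$ with $P\in F[x^l]$ is impossible for $l\geq 3$ because $l\nmid n-2$ kills the $x^{n-2}$ coefficient on the right while it is $-nz\neq 0$ on the left by \short\ref{Tsqfree-2} and $p\nmid n$ — is one line, uniform in $p$, and only needs $z\neq 0$ and $p\nmid n$, both already available from \ref{eq:ab}. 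This is a cleaner argument and worth noting.

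For $l=2$ there is a small slip. From Lemma~\ref{Tsqfree-1b} you get $T_m(x,z^2)-2z^m=(x-2z)u(x)^2$, and after shifting, $T_m(x+2z,z^2)-2z^m=x\,u(x+2z)^2$. But when you compose with $T_2(x,z)=x^2-2z$ this yields $T_n(x,z)-2z^m=(x^2-4z)\,u(x^2-2z)^2$, which is \emph{not} of the form $x^2\,w(x^2)^2$. The fix (as the paper does) is to apply the lemma with $-z$ instead of $z$, giving $T_m(x,z^2)=T_m(x,(-z)^2)=(x+2z)\tilde u(x)^2-2z^m$; then composition with $x^2-2z$ produces $T_n(x,z)=x^2\,\tilde u(x^2-2z)^2-2z^m$, and $w(y)=\tilde u(y-2z)$ exhibits the First-Case form with $k=1$. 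The polynomial $\tilde u$ here is not the shift of your $u$; it is the $u$ obtained from the lemma at the other square root of $z^2$, so your $w(x)=u(x+2z)$ should be replaced accordingly.
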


\begin{proof}

   By assumption, either the First or the Second
  Case of Ritt's Second Theorem (\ref{RST}) applies. 

  \short\ref{th:fifi-1} From the First Case in \ref{RST}, we have a
  positive integer $K$, linear polynomials $v_{1}$, $v_{2}$, $v_{3}$,
  $v_{4}$ and a nonzero polynomial $W$ with $d = \deg W =(m-K)/l $ and
  (renaming $v_{2}$ as $v_{2}^{-1}$)
  \begin{align*}
    x^{K}W^{l} &=  v_{1} \circ g \circ v_{3},\\
    x^{l} &= v_{3}^{-1} \circ h \circ v_{2}^{-1},\\
    x^{l} &= v_{1} \circ g^{*} \circ v_{4},\\
    x^{K}W(x^{l}) &= v_{4}^{-1} \circ h^{*} \circ v_{2}^{-1}.
  \end{align*}
  We abbreviate $r=~lc(W)$, so that $r\neq 0$, and write $v_{i} =
  a_{i} x+b_{i}$ for $1 \leq i \leq 4$ with all $a_{i}$, $b_{i}\in F$
  and $a_{i} \neq 0$, and first express $v_{3} $, $v_{4}$, and $v_{1}$
  in terms of $v_{2}$. We have
  \begin{align*}
    h &= v_{3} \circ x^{l} \circ v_{2} = a_{3}(a_{2}x+b_{2})^{l}+b_{3},\\
    h^{*} &= v_{4} \circ x^{K}W(x^{l}) \circ v_{2} =
    a_{4}(a_{2}x+b_{2})^{K} \cdot W((a_{2}x+b_{2})^{l})+b_{4}.
  \end{align*}
  Since $h$ and $h^{*}$ are monic and original and $K+ld=m$, it
  follows that
  $$
  a_{3}= a_{2}^{-l}, \ b_{3}=-a_{2}^{-l}b_{2}^{l}, \ a_{4}=
  a_{2}^{-m}r^{-1}, \ b_{4}= -a_{2}^{-m}b_{2}^{K}r^{-1}W(b_{2}^{l}).
  $$ 
  Playing the same game with $g$, we find
  \begin{align*}
    g=v_{1}^{-1} \circ x^{K} W^{l} \circ v_{3}^{-1}&= a_{1}^{-1}
    \bigl( (\frac{x-b_{3}}{a_{3}})^{K} W^{l} (\frac{x-b_{3}}{a_{3}})
    -b_{1}
    \bigr),\\
    a_{1} &= a_{2}^{n} r^{l}, \\
    b_{1} &= b_{2}^{Kl}W^{l}(b_{2}^{l}).
  \end{align*}
  We note that then
  $$
  g^{*} = v_{1}^{-1} \circ x^{l} \circ v_{4}^{-1} = a_{1}^{-1} \bigl(
  (\frac{x-b_{4}}{a_{4}})^{l}-b_{1} \bigr)
  $$ 
  is automatically monic and original. Furthermore, we have $d =
  (m-K)/l \leq \lfloor m/l \rfloor = s$ and
  \begin{equation}
    \label{adm}
    f = v_{1}^{-1} \circ (v_{1} \circ g \circ
    v_{3}) \circ (v_{3}^{-1} \circ h \circ v_{2}^{-1}) \circ v_{2}= v_{1}^{-1}
    \circ x^{Kl}\cdot W^{l}(x^{l}) \circ v_{2}.
  \end{equation}
  We set
  \begin{align*}\parTwo&=\frac{b_{2}}{a_{2}}\in F, \quad
    u_{1}=x+\frac{b_{1}}{a_{1}}=\frac{v_{1}}{a_{1}}, \quad
    u_{2}=x+\parTwo=\frac{v_{2}}{a_{2}}, \quad\\
    w&=r^{-1}a^{-ld}_{2}x^{s-d}\cdot W(a^{l}_{2}x) \in
    F[x].
  \end{align*} 
  Then $b_{1}/a_{1}= a^{kl}w^{l}(a^{l})$, $w$ is monic of degree $s$,
  $ u_{1}^{-1}= x-b_{1}/a_{1}= x - \parTwo^{kl}
  w^{l}(\parTwo^{l})$, and
\begin{align}\label{eq:mondegs}
W(x)=~lc (W)a^{ls}_{2}x^{-(s-d)}w(a_{2}^{-l}x).
\end{align}

   Noting that $m = ld + K =ls + k$, the equation
  analogous to \ref{adm} reads
  \begin{align}
    u_{1}^{-1} \circ x^{kl} w^{l}(x^{l})\circ u_{2} &= a_{1}\cdot
    v^{-1}_{1} \circ x^{kl} \cdot
    \frac{x^{l^{2}(s-d)}W^{l}(a_{2}^{l}x^{l})}{a_{2}^{dl^{2}}r^{l}}
    \circ
    \frac{v_{2}}{a_{2}} \nonumber \\
    &= v_{1}^{-1}\circ a_{2}^{\dg}r^{l}\cdot
    \bigl(\frac{v_{2}}{a_{2}}\bigr)^{kl} \cdot
    \bigl(\frac{v_{2}}{a_{2}}\bigr)^{l^{2}(s-d)} \cdot
    \frac{W^{l}(v_{2}^{l})}{a_{2}^{dl^{2}}r^{l}} \nonumber \\
    &= v_{1}^{-1}\circ x^{Kl} \cdot W^{l}(x^{l})\circ v_{2} = f.
    \label{compFirst}
  \end{align}
  This proves the existence of $w$ and $\parTwo$, as claimed in
  \ref{eq:mopo}.

  In order to express the four components in the new parameters, we
  note that $K=k+l(s-d)$. Thus
  \begin{align*}
    g &= v_{1}^{-1} \circ x^{K}W^{l} \circ v_{3}^{-1} \\
    &= (r^{-l}a_{2}^{-n} x-\parTwo^{kl}w^{l}(\parTwo^{l})) \circ
    (a_{2}^{l}(x+\parTwo^{l}))^{K} \cdot
    W^{l}(a_{2}^{l}(x+\parTwo^{l})) \\
    &= r^{-l}a_{2}^{-n} \bigl(a_{2}^{Kl}(x+\parTwo^{l})^{K} \cdot
    r^{l}a_{2}^{l^{2}s} a_{2}^{-l^{2}(s-d)}
    (x+\parTwo^{l})^{-l(s-d)} w^{l}(x+\parTwo^{l})\bigr)\\
    & \quad-\parTwo^{kl}w^{l}(\parTwo^{l}) \\
    &= a_{2}^{-n+Kl+l^{2}s-l^{2}s+l^{2}d} (x+\parTwo^{l})^{K-ls+ld}
    w^{l}(x+\parTwo^{l})
    -\parTwo^{kl}w^{l}(c\parTwo^{l}) \\
    &= (x+\parTwo^{l})^{k} w^{l}(x+\parTwo^{l}) -\parTwo^{kl}
    w^{l}(\parTwo^{l})\\
    &= \bigl(x-\parTwo^{kl} w^{l}(\parTwo^{l})\bigr) \circ x^{k}w^{l} \circ
    (x+\parTwo^{l}), \\ 
    h &= v_{3} \circ x^{l} \circ v_{2}
    = a_{2}^{-l}(a_{2}x+b_{2})^{l} - a_{2}^{-l}b_{2}^{l}\\
    &= (x-\parTwo^{l}) \circ x^{l} \circ (x+\parTwo), \\
    g^{*} &= v_{1}^{-1} \circ x^{l} \circ v_{4}^{-1} \\
    &= (r^{-l}a_{2}^{-n} x-\parTwo^{kl} w^{l}(\parTwo^{l})) \circ
    \bigl(ra_{2}^{m}(x+r^{-1}a_{2}^{-m} b_{2}^{K} \cdot
    W(b_{2}^{l}))\bigr)^{l} \\
    &= (x+r^{-1} a_{2}^{-m} b_{2}^{K} \cdot r a_{2}^{ls}
    b_{2}^{-l(s-d)} w(\parTwo^{l}))^{l} -
    \parTwo^{kl}w^{l}(\parTwo^{l}) \\
    &= \bigl(x+a_{2}^{-k}b_{2}^{k}w(\parTwo^{l})\bigr)^{l}
    -\parTwo^{kl}w^{l}(\parTwo^{l}) \\
    &= (x-\parTwo^{kl}w^{l}(\parTwo^{l})) \circ x^{l} \circ (x+\parTwo^{k}w(\parTwo^{l})),\\
    h^{*} &= v_{4} \circ x^{K} W(x^{l}) \circ v_{2} \\
    &=\bigl(r^{-1}a_{2}^{-m} (x-b_{2}^{K} W(b_{2}^{l}))\bigr) \circ
    (a_{2}(x+\parTwo))^{K} W(a_{2}^{l}(x+\parTwo)^{l}) \\
    &= r^{-1}a_{2}^{-m} \cdot ra_{2}^{ls} \cdot \bigl((a_{2}^{K}
    (x+\parTwo)^{K} (a_{2}^{l}(x+\parTwo)^{l}))^{-(s-d)}
    w((x+\parTwo)^{l})\\
    &\quad  - b_{2}^{K} b_{2}^{-l(s-d)} w(\parTwo^{l})\bigr) \\
    &= a_{2}^{-k} \bigl(a_{2}^{K-l(s-d)} (x+\parTwo)^{K-l(s-d)} w((x+\parTwo)^{l})
    -b_{2}^{K-l(s-d)} w(\parTwo^{l}) \bigr)\\
    &= (x+\parTwo)^{k} w((x+\parTwo)^{l})- \parTwo^{k}w(\parTwo^{l})\\
    &= (x-\parTwo^{k}w(\parTwo^{l})) \circ x^{k}w(x^{l}) \circ (x+\parTwo).
  \end{align*}
  \ref{eq:mopo} has been shown above. We note that in the right hand
  component $x+\parTwo$, the constant $\parTwo$ is arbitrary. All other linear
  components follow automatically from the required form of $g$, $h$,
  $g^{*}$, $h^{*}$, namely, being monic and original, and from the
  condition that $g$ and $h$ (and $g^{*}$ and $h^{*}$) have to match
  up with their ``middle'' components.
  Furthermore, we have
  \begin{align}\label{eq:kw}
    \begin{aligned}
      0  = g'=(x^{k-1}w^{l-1}(kw+lxw')) \circ (x+\parTwo^{l})
      &\Longleftrightarrow
      kw+ lxw'=0,\\
      0  = (g^{*})' = lx^{l-1} \circ (x+\parTwo^{k}w(\parTwo^{l}))
     &\Longleftrightarrow p \mid l.
    \end{aligned}
  \end{align}

  Thus \ref{eq:unidet} follows from \ref{eq:RST-2}.

  In order to prove the uniqueness if $p\nmid\dg$, we take monic $w$,
  $ \tilde w \in F[x]$ of degree $s$, and $\parTwo$, $\tilde \parTwo \in F$ and
  the unique monic linear polynomials $v$ and $\tilde v$ for which
  \begin{align}\label{eq:unique}
    f = v \circ x^{kl}w^{l}(x^{l}) \circ (x+\parTwo)= \tilde v \circ x^{kl}
    \tilde{w}^{l}(x^{l}) \circ (x+\tilde \parTwo).
  \end{align}
  
  By composing on the left and right with $\tilde{v}^{-1}$ and
  $(x+\tilde \parTwo)^{-1}$, respectively, and abbreviating
  $u=\tilde{v}^{-1}\circ v$, we find
  \begin{align*}
    x^{kl}\tilde{w}^{l}(x^{l})&=\tilde{v}^{-1}\circ v\circ
    x^{kl}w^{l}(x^{l})\circ(x+\parTwo)\circ(x-\tilde \parTwo)\\
    &=u\circ x^{kl}w^{l}(x^{l})\circ(x+\parTwo-\tilde \parTwo).
  \end{align*}
  Since $l\geq 2$ and the left hand side is a polynomial in $x^{l}$,
  its second highest coefficient (of $x^{n-1}$) vanishes. Equating
  this with the same coefficient on the right, and abbreviating
  $a^{*}=\parTwo-\tilde \parTwo$, we find
  $$
  0=kla^{*}+sl^{2}a^{*}=na^{*},
  $$
  so that $a^{*}=0$, since $p\nmid\dg$. Thus $\parTwo=\tilde \parTwo$ and
  $$
  x^{k}\tilde{w}^{l}\circ x^{l}=x^{kl}\tilde{w}^{l}(x^{l})=u\circ
  x^{kl}w^{l}(x^{l})=u\circ x^{k}w^{l}\circ x^{l},
  $$
 $$x^k \tilde w^l =u \circ x^k w^l.$$
  Now $x^{k}\tilde w^{l}$ and $x^{k}w^{l}$ are monic and original, since $k\geq 1$.
  It follows that $u=x$ and $w^l= \tilde w^l$. Both polynomials are
  monic, so that $w=\tilde w$, as claimed. (The equation for $h$ in
  \ref{th:fifi-1} determines $a$ uniquely provided that $p \nmid l$,
  even if $p \mid m$. However, the value of $h$ is not unique in this case.)

  Conversely, we take some $(w,\parTwo)$ satisfying \ref{eq:unidet} and
  define $f$, $g$, $h$, $g^{*}$, $h^{*}$ via the formulas in
  \short\ref{th:fifi-1}. Then \ref{eq:RST-1}, \ref{eq:RST-3}, and
  \ref{eq:intersec} hold. As to \ref{eq:RST-2}, we have $p \nmid l $
  from \ref{eq:unidet}, and hence $(g^{*})' \neq 0$. Furthermore,
  $$
  (x^{k}w^{l})'= x^{k-1}w^{l-1}\cdot (kw+lxw')\neq 0,
  $$
  so that also $g' \neq 0$.

  \short\ref{th:fifi-2} In the Second Case, again renaming $v_{2}$ as
  $v_{2}^{-1}$, and also $z$ as $z_{2}$, we have from \ref{RST}
  \begin{align*}
    T_{m}(x,z_{2}^{l}) &= v_{1} \circ g \circ v_{3},\\
    T_{l}(x,z_{2})&= v_{3}^{-1} \circ h \circ v_{2}^{-1},\\
    T_{l}(x,z_{2}^{m})&= v_{1} \circ g^{*} \circ v_{4},\\
    T_{m}(x,z_{2})&= v_{4}^{-1} \circ h^{*} \circ v_{2}^{-1},\\
    h &= v_{3} \circ T_{l}(x,z_{2}) \circ v_{2} =
    a_{3}T_{l}(a_{2}x+b_{2},z_{2})+b_{3},\\
    h^{*} &= v_{4} \circ T_{m}(x,z_{2})\circ v_{2}=
    a_{4}T_{m}(a_{2}x+b_{2},z_{2})+b_{4}.
  \end{align*}
  As before, it follows that
  $$
  a_{3}=a_{2}^{-l}, \quad b_{3}=-a_{2}^{-l}T_{l}(b_{2},z_{2}), \quad
 a_{4}=a_{2}^{-m}, \quad b_{4}=-a_{2}^{-m}T_{m}(b_{2},z_{2}).
  $$
  Furthermore, we have
  \begin{align*}
    g &= v_{1}^{-1} \circ T_{m}(x,z_{2}^{l}) \circ v_{3}^{-1} =
    a_{1}^{-1}(T_{m}(a_{3}^{-1}(x-b_{3}),z_{2}^{l})-b_{1}),\\
    a_{1}&= a_{2}^{\dg},\\
    b_{1} &= T_{m}(T_{l}(b_{2}, z_{2}),z_{2}^{l})=T_{\dg}(b_{2}, z_{2}),\\
    f &= \bigl(a_{2}^{-\dg}(x-T_{\dg}(b_{2},z_{2}))\bigr) \circ
    T_{\dg}(x,z_{2}) \circ (a_{2}x+b_{2}).
  \end{align*}

  We now set $\parTwo=b_{2}/a_{2}$ and $z=z_{2}/a_{2}^{2}$ and show that the
  preceding equation holds with $(1,\parTwo,z)$ for $(a_{2}, b_{2},
  z_{2})$. \ref{Tsqfree-4} with $t=a_{2}^{-1}$ says that
\begin{align*}
a_{2}^{-\dg}T_{\dg}(a_{2}x+b_{2}, z_{2})&=T_{\dg}(x+\parTwo,z),\\
a_{2}^{-\dg}T_{\dg}(b_{2},z_{2})&= T_{\dg}(\parTwo,z),\\
f&=(x-T_{\dg}(\parTwo,z))\circ T_{\dg}(x,z)\circ(x+\parTwo).
\end{align*}
Thus the first claim in \short\ref{th:fifi-2} holds with these
values.
In the same vein, applying \ref{Tsqfree-4} with $t$ equal to
$a_{2}^{-1}, a_{2}^{-l}, a_{2}^{-m}, a_{2}^{-1}$, respectively, yields
\begin{align*}
a_{2}^{-l}T_{l}(a_{2}x+b_{2},z_{2})&=T_{l}(x+\parTwo,z),\\
a_{2}^{-\dg}T_{m}(a_{2}^{l}x+T_{l}(b_{2},z_{2}),z_{2}^{l})&=T_{m}(x+a_{2}^{-l}T_{l}(b_{2},z_{2}),z^{l})\\
&=T_{m}(x+T_{l}(\parTwo,z),z^{l}),\\
a_{2}^{-\dg}T_{l}(a_{2}^{m}x+T_{m}(b_{2},z_{2}),z_{2}^{m})&=T_{l}(x+a_{2}^{-m}T_{m}(b_{2},z_{2}),z^{m})\\
&=T_{l}(x+T_{m}(\parTwo,z),z^{m}),\\
a_{2}^{-m}T_{m}(a_{2}x+b_{2}, z_{2})&=T_{m}(x+\parTwo,z).
\end{align*}

For the four components, we have
  \begin{align*}
    g  &= v_{1}^{-1} \circ T_{m}(x,z_{2}^{l}) \circ v_{3}^{-1}\\
    &= a_{2}^{-\dg}(x-T_{\dg}(b_{2},z_{2})) \circ T_{m}(x,z_{2}^{l}) \circ (a_{2}^{l}x+T_{l}(b_{2},z_{2}))\\
    &=a_{2}^{-\dg}T_{m}(a_{2}^{l}x+T_{l}(b_{2},z_{2}),z_{2}^{l})-a_{2}^{-\dg}T_{m}(T_{l}(b_{2},z_{2}),z_{2}^{l})\\
    &=T_{m}(x+T_{l}(\parTwo,z),z^{l})-T_{\dg}(\parTwo,z)\\
    &=(x-T_{\dg}(\parTwo,z))\circ T_{m}(x,z^{l})\circ(x+T_{l}(\parTwo,z)),\\
    h &= v_{3} \circ T_{l}(x,z_{2}) \circ v_{2}  = a_{2}^{-l} T_{l}(a_{2}x+b_{2},z_{2})-a_{2}^{-l} T_{l}(b_{2},z_{2})\\
    &= a_{2}^{-l}(x-T_{l}(b_{2},z_{2})) \circ T_{l}(x,z_{2}) \circ (a_{2}x+b_{2})\\
    &=T_{l}(x+\parTwo,z)-T_{l}(\parTwo,z)\\
    &=(x-T_{l}(\parTwo,z))\circ T_{l}(x,z)\circ (x+\parTwo),\\
    g^{*}  &= v_{1}^{-1} \circ T_{l} (x,z_{2}^{m}) \circ v_{4}^{-1}\\
    &= a_{2}^{-\dg}(x-T_{\dg}(b_{2},z_{2})) \circ T_{l}(x,z_{2}^{m})
    \circ (a_{2}^{m}x+T_{m}(b_{2},z_{2}))\\
    &=a_{2}^{-\dg}T_{l}(a_{2}^{m}x+T_{m}(b_{2},z_{2}),z_{2}^{m})-a_{2}^{-\dg}T_{\dg}(b_{2},z_{2})\\
    &= T_{l}(x+T_{m}(\parTwo,z),z^{m})-T_{\dg}(\parTwo,z)\\
    &=(x-T_{\dg}(\parTwo,z))\circ T_{l}(\parTwo,z^{m})\circ (x+T_{m}(\parTwo,z)),\\
    h^{*}  &= v_{4} \circ T_{m}(x,z_{2}) \circ v_{2}\\
    &= a_{2}^{-m}(x-T_{m}(b_{2},z_{2})) \circ T_{m}(x,z_{2})\circ
    (a_{2}x+b_{2})\\
    &=a_{2}^{-m}T_{m}(a_{2}x+b_{2},z_{2})-a_{2}^{-m}T_{m}(b_{2},z_{2})\\
    &=T_{m}(x+\parTwo,z)-T_{m}(\parTwo,z)\\
    &=(x-T_{m}(\parTwo,z))\circ T_{m}(x,z)\circ (x+\parTwo).
  \end{align*}

  Since
  $$
  0 \neq g' = T_{m}'(x,z^{l})\circ (x+T_{l}(\parTwo,z)),
  $$
  \ref{Tsqfree-3b} implies that $p \nmid m$. Similarly, the
  non-vanishing of $(g^{*})'$ implies that $p \nmid l$, and
  \ref{eq:ab} follows.

  Next we claim that the representation of $f$ is unique.
So we take some $(z,\parTwo),(z^{*},\parTwo^{*})\in F^{2}$ with
$zz^{*}\neq 0$ and
\begin{align}\label{eq:klam}
(x-T_{\dg}(\parTwo,z))\circ T_{\dg}(x,z)\circ
(x+a)=(x-T_{\dg}(\parTwo^{*},z^{*}))\circ T_{\dg}(x,z^{*})\circ(x+\parTwo^{*}).
\end{align}

  Comparing the coefficients of $x^{\dg-1}$ in \ref{eq:klam} and using
  \ref{Tsqfree-2} yields $\dg \parTwo=\dg \parTwo^{*}$, hence $\parTwo=\parTwo^{*}$, since
  $p\nmid \dg$. We now compose \ref{eq:klam} with $x-\parTwo$ on the right
  and find
$$
(x-T_{\dg}(\parTwo,z))\circ T_{\dg}(x,z)=(x-T_{\dg}(\parTwo,z^{*}))\circ T_{\dg}(x,z^{*}).
$$
  Now the coefficients of $x^{\dg-2}$ yield $-\dg z=-\dg z^{*}$, so
  that $z=z^{*}$.

  The converse claim that any $(z,\parTwo)$ with $z\neq 0$ and
  \ref{eq:ab} yields a
  collision as prescribed follows since \ref{eq:ab} and
  \ref{Tsqfree-3b} imply that $T_{m}'(x,z^{l}) T_{l}'(x,z^{m}) \neq 0$.

  \short\ref{th:fifi-3} We first assume $l \geq 3$ and show that the
  First and Second Cases are mutually exclusive. Assume, to the
  contrary, that in our usual notation we have
  \begin{equation}
    \label{exptrig1}
    f = v_1 \circ x^{kl}w^{l}(x^{l}) \circ (x+\parTwo)=
    v_2 \circ T_n(x,z) \circ (x+\parTwo^{*}),
  \end{equation} 
  where $v_{1}$ and $v_{2}$ are the unique linear polynomials that
  make the composition monic and original, as specified in
  \short\ref{th:fifi-1} and \short\ref{th:fifi-2}. Then
  \begin{align*}
    f &= (v_1 \circ x^{k}w^{l}\circ (x+\parTwo^l)) \circ ((x+\parTwo)^l-\parTwo^l) \\
    &= \bigl(v_2 \circ T_{m} (x+T_l(\parTwo^{*},z),z^{l} )\bigr) \circ
    (T_l(x+\parTwo^{*},z)-T_l(\parTwo^{*},z)).
  \end{align*}

  These are two normal decompositions of $f$, and since $p \nmid m$ by
  \ref{eq:ab}, 
  the uniqueness of \ref{cor:inj-1} implies that
  \begin{align}
    \label{exptrig3}
    h &= (x+\parTwo)^l-\parTwo^l =T_l(x+\parTwo^{*},z)-T_l(\parTwo^{*},z),\\
    h' &= l(x+\parTwo)^{l-1} =T'_l(x+\parTwo^{*},z).  \nonumber
  \end{align}

  If $p = 0$ or $p \geq 3$, then according to \ref{Tsqfree-1}, $T'_l(x,z)$
  is squarefree, while $(x+\parTwo)^{l-1}$ is not, since $l\geq 3$. This
  contradiction refutes the assumption \ref{exptrig1}.

  If $p=2$, then $l$ is odd by \ref{eq:ab}. After adjoining a square
  root $z_{0}$ of $z$ to $F$ (if necessary), \ref{Tsqfree-1b} implies that
  $T'_{l}(x,z)=((x-2z_{0})u^{2}+2z_{0}^{\dg})'=u^{2}$ has $(l-1)/2$ distinct roots in an
  algebraic closure of $F$, while $(x+\parTwo)^{l-1}$ has only one. This
  contradiction is sufficient for $l \geq 5$. For $l=3$, we have
  $T_{3}= x^{3}-3 yx$ and there are no $a$, $\parTwo^{*}$, $z\in F$ with $z \neq 0$
  so that
  \begin{align*}
    x^{3} + ax^{2} + a^{2}x  &= (x+a)^{3}-a^{3}= (x+\parTwo^{*})^{3} -3z
    (x+\parTwo^{*})-((\parTwo^{*})^{3} -3z\parTwo^{*})\\
    &= x^{3} + a^{*} x^{2} +((\parTwo^{*})^{2}+z)x.
  \end{align*}
  Again, \ref{exptrig1} is refuted.

  For $l=2$, we claim that any composition
  $$
  f = v_{1} \circ T_{m}(x,z^{2}) \circ T_{2}(x,z) \circ v_{2}
  $$ 
  of the Second Case already occurs in the First Case. We have
  $T_{2}=x^{2}-2y$. Since $m$ is odd by \ref{eq:RST-1} and $p \nmid m$
  by \ref{eq:ab}, \ref{Tsqfree-1b} guarantees a monic $u\in F[x]$ of
  degree $d = (m-1)/2$ with $T_{m}(x,z^{2}) =T_{m}(x,(-z)^{2})=
  (x+2z)u^{2}-2z^{m}$. Then for $\tilde{u} = u \circ (x-2z)$ we have
  $$
  f = v_{1} \circ ((x+2z)u^{2}-2z^{m}) \circ (x^{2}-2z) \circ v_{2} =
  (v_{1}-2z^{m}) \circ x^{2} \tilde{u}^{2} (x^{2}) \circ v_{2},
  $$  
  which is of the form \ref{eq:mopo}, with $k = m-2d = 1$.
\end{proof}

\begin{remark}\label{rem:opara}
  Other parametrizations are possible. As an example, in the Second
  Case, for odd $q=p$, one can choose a nonsquare
  $z_{0}\in F = \mathbb{F}_{q}$ and $B=\{1,\dots,(q-1)/2 \}$. Then all $f$
  in \ref{eq:TN} can also be written as
$$
f=b^{-\dg}(x-T_{\dg}(a,z))\circ T_{\dg}(x,z)\circ(bx+a)
$$
with unique $(z, a, b) \in \{1, z_{0}\} \times F \times B=Z$. To wit,
let $z,a \in F$ with $z\neq 0$. Take the unique $(z^{*}, a^{*}, b) \in
Z$, so that $z^{*}=b^{2}z$ and $a^{*}=ab$. Then $z^{*}$ is determined
by the quadratic character of $z$, and $b$ by the fact that every
square in $F^{\times}$ has a unique square root in $A$; the other one
is $-b\in F^{\times}\setminus A$. \ref{Tsqfree-4} says that
$$
b^{\dg}T_{\dg}(x,z)=T_{\dg}(bx,z^{*}),
$$
\begin{align*}
(x-T_{\dg}(a,z))\circ T_{\dg}(x,z)\circ
(x+a)&=b^{-\dg}(x-T_{\dg}(a^{*},z^{*}))\circ T_{\dg}(bx,z^{*})\circ(x+a)\\
&=b^{-\dg}(x-T_{\dg}(a^{*},z^{*}))\circ T_{\dg}(x,z^{*})\circ (bx+a^{*}),
\end{align*}
as claimed. If $F$ is algebraically closed, as in \cite{zan93}, we can take
$z=1$. The reduction from finite fields to this case is provided by
\cite{sch00c}, Section 1.4, Lemma 2.
\end{remark}

\begin{remark}
Given just $f \in F[x]$, how can we determine whether Ritt's Second
Theorem applies to it, and if so, compute $(w,a)$ or $(z,a)$, as
appropriate? We may assume $f$ to be monic and original of degree
$n$. The divisor $l$ of $n$ might be given as a further input, or we
perform the following for all divisors $l$ of $n$ with $2 \leq l \leq
\sqrt{n}$ and $\gcd(l,n/l)=1$. If $p \nmid n$, the task is easy. We
compute decompositions
$$
f = g \circ h = g^{*} \circ h^{*}
$$
with $~deg h = ~deg g^{*}=l$ and all components monic and original. If
one of these decompositions does not exist, Ritt's Second Theorem does
not apply; otherwise the components are uniquely determined. If
$h_{l-1}$ is the coefficient of $x^{l-1}$ in $h$, then $a =
h_{l-1}/l$ in \ref{eq:mopo}. Furthermore,
\begin{align*}
g(-a^{l}) &= -a^{kl}w^{l}(a^{l}),\\
g \circ (x-a^{l})-g(-a^{l})&= x^{k}w^{l},
\end{align*}
from which $w$ is easily determined via an $x$-adic Newton iteration
for extracting an $l$th root of the reversal of the left hand side,
divided by $x^{k}$.
Actually only a single Newton step is required to compute the root
modulo $x^{2}$.

If the Second Case applies, then by \ref{Tsqfree-2} the three highest
coefficients in $f$ are
\begin{align*}
f  &= x^{n}+f_{n-1}x^{n-1}+f_{n-2}x^{n-2}+O(x^{n-3})\\
 &= (x+a)^{n}-nz(x+a)^{n-2}+O(x^{n-4})\\
&= x^{n}+na x^{n-1}+\bigl(\frac{n(n-1)}{2}a^{2}-nz \bigr) x^{n-2}+O(x^{n-3});
\end{align*}
this determines $a$ and $z$.
\end{remark}

\begin{remark}\label{rem:rid}
  If $p\nmid\dg$, then we can get rid of the right hand component
  $x+\parTwo$ by a further normalization. Namely, when
  $f=x^{\dg}+\sum_{{0\leq i<\dg}}{f_{i}x^{i}},$ then
  $f\circ(x+\parTwo)=x^{\dg}+(\dg \parTwo+f_{\dg-1})x^{\dg-1}+O(x^{\dg-2})$. We
  call $f$ \emph{second-normalized} if $f_{\dg-1}=0$.  (This has been
  used at least since the times of Cardano and Tartaglia.) For any
  $f,$ the composition $f\circ(x-f_{\dg-1}/\dg)$ is second-normalized,
  and if
  \begin{align}\label{eq:secnorm}
    \deg g = m\text{ and } f = g\circ
    h=x^{\dg}+mh_{\dg/m-1}x^{\dg-1}+O(x^{\dg-2})
  \end{align}
  is second-normalized, then so is $h$ (but not necessarily $g$).
\end{remark}

\begin{corollary}\label{cor:andsecond} In \ref{th:fifi}, if
  $p\nmid\dg$ and $f$ is second-normalized, then all claims hold with
  $\parTwo=0$. 
\end{corollary}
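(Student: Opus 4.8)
The plan is to follow the $x^{n-1}$-coefficient through the two explicit factorizations provided by \ref{th:fifi} and show that second-normalization forces $\parTwo=0$. In both the First and the Second Case, the theorem writes $f$ in the shape $f = v \circ M \circ (x+\parTwo)$, where $v$ is a monic linear polynomial and $M$ is the ``middle'' component: $M = x^{kl}w^{l}(x^{l})$ in the First Case and $M = T_{n}(x,z)$ in the Second Case. The first step is to record that the coefficient of $x^{n-1}$ in $M$ vanishes in either case. In the First Case $M$ is a polynomial in $x^{l}$ with $l\geq 2$, and since $n-1\equiv -1 \not\equiv 0 \pmod{l}$, the $x^{n-1}$-term is absent. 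In the Second Case, \ref{Tsqfree-2} gives $T_{n}(x,z) = x^{n} - nzx^{n-2} + \cdots$, so again the $x^{n-1}$-coefficient is zero.

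Next I would compute $M\circ(x+\parTwo)$. Writing $M = x^{n} + 0\cdot x^{n-1} + \sum_{i\leq n-2}m_{i}x^{i}$, the expansion of $(x+\parTwo)^{n}$ contributes $n\parTwo\, x^{n-1}$, while each term $m_{i}(x+\parTwo)^{i}$ with $i\leq n-2$ has degree at most $n-2$ and therefore cannot affect the coefficient of $x^{n-1}$. Hence the $x^{n-1}$-coefficient of $M\circ(x+\parTwo)$ equals $n\parTwo$ exactly. Left composition with the monic linear polynomial $v$ alters only the constant term, so the coefficient of $x^{n-1}$ in $f$ is $n\parTwo$. Since $f$ is second-normalized this coefficient is $0$, and since $p\nmid n$ by hypothesis we deduce $\parTwo = 0$.

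It then remains to substitute $\parTwo=0$ into the statement of \ref{th:fifi}. In the First Case the formulas collapse to $f = x^{kl}w^{l}(x^{l})$, $g = x^{k}w^{l}$, $h = x^{l}$, $g^{*} = x^{l}$, $h^{*} = x^{k}w(x^{l})$, and in the Second Case to $f = T_{n}(x,z)$, $g = T_{m}(x,z^{l})$, $h = T_{l}(x,z)$, $g^{*} = T_{l}(x,z^{m})$, $h^{*} = T_{m}(x,z)$. The side conditions \ref{eq:unidet} and \ref{eq:ab}, the converse directions, and the uniqueness assertions carry over unchanged; note in particular that $p\nmid n = lm$ gives $p\nmid m$, so the First-Case uniqueness of $(w,\parTwo)$ from $f$ and $l$ remains available, and the value forced on its second coordinate is $0$. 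I do not anticipate a real obstacle here: the argument is a one-line coefficient comparison, and the only point demanding a moment's attention is verifying that the lower-order terms of $M$ do not leak into the $x^{n-1}$-coefficient after the shift $x\mapsto x+\parTwo$, which is immediate from the degree bound $i\leq n-2$.
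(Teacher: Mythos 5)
Your proof is correct, and it is the natural argument: the paper itself supplies no written proof for \ref{cor:andsecond}, but the coefficient-of-$x^{n-1}$ comparison you carry out is exactly the device the paper uses inside the uniqueness arguments of \ref{th:fifi} (both when equating coefficients in \ref{eq:unique} for the First Case and in \ref{eq:klam} for the Second Case), so you have filled the gap with the approach the paper clearly has in mind.
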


\begin{example}
  We note two instances of misreading Ritt's Second Theorem.
  \cite{boddeb09} claim in the proof of their Lemma 5.8 that $t \leq
  q^{5}$ in the situation of \ref{lem:div-1}. This contradicts the
  fact that the exponent $s+3$ of $q$ is unbounded. A
  second instance is in \cite{cor90}. The author claims that his
following example contradicts the
Theorem. He takes (in our language) positive integers $b$, $c$, $d$,
$t$, sets $m = bp^{c}+d$, and $l=p^{c}+1$, elements $h_{0}, \ldots,
h_{t} \in F$, where $c < p$ and $tl \leq m$ and $F$ is a field of
characteristic $p >0$, and
\begin{align*}
h & = \sum_{0 \leq i \leq t}h_{i}x^{m-il},\\
g^{*} & = \sum_{o \leq i, j \leq t} h_{i}h_{j}x^{m-ip^{c}-j}.
\end{align*}
Then
$$
x^{l} \circ h = g^{*} \circ x^{l},
$$
provided that all $h_{i}$ are in $\mathbb{F}_{p^{n}}$. If $d > b$, we
have $m=bl+(d-b)$, so that $s=b$ and $k=d-b$.

Applying \ref{th:fifi}, we find $w = \sum_{0 \leq i \leq
  t}h_{i}x^{b-i}$ and $a=0$. Then
\begin{align*}
h & = x^{4}w(x^{l}),\\
g^{*}& = x^{k}w^{l}.
\end{align*}
Thus the example falls well within Ritt's Second Theorem. \cite{zan93}
points out that this was also remarked by A. Kondracki, a student of
Andrzej Schinzel.
\end{example}

For the arguments below, it is convenient to assume $F$ to be perfect.
Then each element of $F$ has a $p$th root, where $p\geq 2$ is the
characteristic. Any finite field is perfect.

For the next result, we have to make the first condition in
\ref{eq:unidet} more explicit.

\begin{lemma}\label{lem:root}
  Let $F$ be a perfect field, $l$ and $m$ positive integers
  with $\gcd(l,m)=1$, $m=ls+k$ and $s=tp+r$ divisions with remainder,
  so that $1 \leq k < l$ and $0 \leq r < p$,
 and $w \in F[x]$ monic of degree $s$. Then
  \begin{equation}\label{eq:lxw}
   p \nmid l  \text{ and } kw + lxw'= 0  \Longleftrightarrow p \mid m \text{
      and } \exists u \in F[x] \quad w=x^{r} u^{p}, u \text{ monic}.
  \end{equation}
\end{lemma}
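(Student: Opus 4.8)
The plan is to prove the biconditional \ref{eq:lxw} by analyzing the differential equation $kw + lxw' = 0$ directly, treating the two implications separately but using the same coefficient-comparison calculation.

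First I would write $w = \sum_{0 \le i \le s} w_i x^i$ with $w_s = 1$, so that $xw' = \sum_{0 \le i \le s} i w_i x^i$. Then $kw + lxw' = \sum_{0 \le i \le s}(k + li) w_i x^i$, and this vanishes if and only if $(k+li)w_i = 0$ in $F$ for every $i$ with $w_i \neq 0$. In particular, looking at the top coefficient $i = s$ (where $w_s = 1 \neq 0$), we get $k + ls = 0$ in $F$, i.e. $p \mid k + ls = m$. So in both directions the hypothesis forces $p \mid m$; and conversely, once $p \mid m$ we have $k + ls \equiv 0 \pmod p$. For a general index $i$, since $\gcd(l,p) = 1$ (in the forward direction we assume $p \nmid l$; in the backward direction $p\mid m$ and $\gcd(l,m)=1$ together give $p \nmid l$), the congruence $k + li \equiv 0 \equiv k + ls \pmod p$ is equivalent to $l(i-s) \equiv 0 \pmod p$, hence to $i \equiv s \pmod p$. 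Writing $s = tp + r$ with $0 \le r < p$, the admissible exponents $i$ (those for which $w_i$ may be nonzero) are exactly those with $i \equiv r \pmod p$.

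This says precisely that $kw + lxw' = 0$ (together with $p\nmid l$) holds iff $w$ is a polynomial in $x^p$ times $x^r$, i.e. $w = x^r v$ for some $v \in F[x^p]$. Since $F$ is perfect, $F[x^p] = \{u^p : u \in F[x]\}$: every element of $F[x^p]$ is $\sum a_j x^{jp} = (\sum a_j^{1/p} x^j)^p = u^p$ with $u = \sum a_j^{1/p} x^j \in F[x]$, and conversely $u^p \in F[x^p]$ in characteristic $p$. Moreover $\deg v = s - r = tp$, so $\deg u = t$, and $w$ monic forces $u$ monic (the leading coefficient of $w$ is that of $u^p$, which is $(~lc\, u)^p$, so $~lc\, u = 1$). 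Conversely, if $w = x^r u^p$ with $u$ monic, then $w' = r x^{r-1}u^p$ (as $(u^p)' = 0$ in characteristic $p$), so $kw + lxw' = (k + lr)x^r u^p$, and $k + lr \equiv k + ls \equiv m \equiv 0 \pmod p$ gives $kw+lxw' = 0$; and $p\nmid l$ holds because $p\mid m$, $\gcd(l,m)=1$. This closes both directions.

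I do not expect a serious obstacle here; the only points that need care are (i) justifying that $p \nmid l$ is available in the backward direction — it comes for free from $p \mid m$ and $\gcd(l,m)=1$, so one should state this explicitly rather than assuming it — and (ii) the bookkeeping of which residue class mod $p$ the exponents lie in, making sure $r$ (the remainder of $s$, equivalently of $m$ after accounting for $k$ and $l$) is the right one; the identity $k + lr \equiv k + ls = m \pmod p$ is what ties the statement's $r = s \bmod p$ to the vanishing condition. The perfectness of $F$ is used only to pass between $F[x^p]$ and $p$th powers, exactly as flagged in the remark preceding the lemma.
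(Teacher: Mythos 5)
Your proof is correct, and it takes a genuinely simpler route than the paper's. The paper proves ``$\Longrightarrow$'' by a somewhat roundabout induction: it first establishes the chain of identities $(k+il)w^{(i)} + lxw^{(i+1)} = 0$ for all $i$, deduces $(k+il)w^{(i)}(0)=0$, uses $p \nmid k+il$ for $0 \le i < r$ to conclude that the $r$ lowest coefficients of $w$ vanish (so $v = x^{-r}w \in F[x]$), and then verifies separately that $v' = 0$ before invoking perfectness. You instead compare coefficients directly in $kw + lxw' = \sum_i (k+li)w_i x^i$: the top coefficient $i=s$ forces $p \mid k+ls = m$, and then $p \nmid l$ reduces the vanishing condition on each $(k+li)$ to the congruence $i \equiv s \equiv r \pmod p$. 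This characterizes the support of $w$ in one step and immediately gives $w = x^r v$ with $v \in F[x^p]$; perfectness finishes. Your version avoids the higher derivatives entirely and makes the role of $r$ transparent — it is simply the residue of $s$ mod $p$, which is the unique residue that can carry a nonzero coefficient. The ``$\Longleftarrow$'' direction in both proofs is the same short computation $kw+lxw' = (k+lr)x^r u^p = 0$, with the remark (which you rightly flag) that $p\nmid l$ comes for free from $p\mid m$ and $\gcd(l,m)=1$. The two proofs buy the same result; yours is shorter and more elementary, while the paper's derivative identities are perhaps closer in spirit to the surrounding material in Section \ref{sec:collcomp}, which manipulates derivatives of compositions throughout.
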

If the conditions in \ref{eq:lxw} are satisfied, then $u$ is uniquely
determined. 
\begin{proof}
  For ``$\Longrightarrow$'', we denote by $w^{(i)}$ the $i$th
  derivative of $w$. By induction on $i\geq 0$, we find that
  \begin{align*}
    (k+il) w^{(i)}+ lxw^{(i+1)}  &= 0,\\
    (k+il) w^{(i)}(0) &= 0.
  \end{align*}
  Now $p \nmid s-i$ for $0 \leq i < r$, $p \mid m = k+ls= ~lc
  (kw+lxw')$, and $p \nmid l$.  Thus
  $$
  p \nmid m -(s-i)l=k+ls-ls+il=k+il
  $$
  for $0 \leq i < r$, and hence $w^{(i)}(0)= 0$ for these $i$.  Since
  $r < p$, this implies that the lowest $r$ coefficients of $w$
  vanish, so that $x^{r}\mid w$ and $v = x^{-r}w \in F[x]$. Then
  \begin{align*}
    lv'&= l(-rx^{-r-1}w+x^{-r}w')= x^{-r-1}(-lrw-kw)\\
    &= -x^{-r-1}w\cdot(lr+k)= -x^{-r-1}w \cdot(m-l(s-r))=0.
  \end{align*}
  This implies that $v'=0$ and $v = u^{p}$ for some
  $u\in F[x]$, since $F$ is perfect.

  For ``$\Longleftarrow$'', $p\nmid l$ follows from $\gcd(l,m)=1$, and
  we verify
  \begin{align*}
    kw + lxw' &= kx^{r}u^{p}+ lx \cdot rx^{r-1} u^{p} =
    x^{r} u^{p}(k+lr)\\
    &= w \cdot (m-l(s-r))=0.
  \end{align*}
The uniqueness of $u$ is immediate, since $x^{r}u^{p}= x^{r}
\tilde{u}^{p}$ implies $u = \tilde{u}$.
\end{proof}

We can now estimate the number of distinct-degree
collisions. If $p\nmid m$, the bound is exact. We use Kronecker's $\delta$ in
the statement. 
\begin{corollary}\label{lem:div-b}
  Let $\mathbb{F}_{q}$ be a finite field of characteristic $p$, let
  $l$ and $m$ be integers with $m>l\geq 2$ and $\gcd(l,m)=1$, $n=lm$,
  $s= \lfloor m/l \rfloor$, and $t =\#(D_{\dg, l} \cap D_{\dg,m} \cap
  D_{\dg}^{+})$.  Then the following hold.
  \begin{enumerate}
  \item\label{lem:div-1/2} \label{lem:div-1} If $p \nmid \dg$, then
    $$
    t = (q^{s+3}+ (1-\delta_{l,2})
    (q^{4}-q^{3}))(1-q^{-1}),
    $$
    $$
    q^{s+3}(1-q^{-1})\leq t \leq (q^{s+3}+ q^{4})(1-q^{-1}).
    $$
  \item\label{lem:div-1/4} If $p \mid l$, then $t=0$.
  \item\label{lem:div-1/3} If $p \mid m$, then
    $$
    t \leq (q^{s+3}-q^{\lfloor s/p\rfloor +3})(1-q^{-1}).
    $$
  \end{enumerate}
\end{corollary}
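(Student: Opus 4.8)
The plan is to count the set $D_{\dg,l}\cap D_{\dg,m}\cap D_{\dg}^{+}$ by invoking the structural description in \ref{th:fifi}, which parametrizes precisely the non-Frobenius distinct-degree collisions of the stated shape by pairs $(w,\parTwo)$ in the First Case and $(z,\parTwo)$ in the Second Case. Every $f$ in the target set arises, after left composition by a linear polynomial (which we have normalized away by requiring $f$ monic and original, cf.\ \ref{invariance1} and \ref{eq:intersec}), from such a collision; and by \ref{RST} together with \ref{th:fifi} this correspondence is exhaustive. So $t$ is the number of monic original $f$ of degree $\dg$ admitting a collision with $\deg h=l$, and I will compute it by counting the parameter pairs and accounting for the overlaps between the two cases and the (non-)injectivity of the parametrization.

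\textbf{Part \short\ref{lem:div-1/4}.} If $p\mid l$, then in the First Case \ref{eq:unidet} demands $p\nmid l$, a contradiction, and in the Second Case \ref{eq:ab} demands $p\nmid\dg$, hence $p\nmid l$, again a contradiction. So no collision of the required type exists and $t=0$; this also forces $\delta_{l,2}=0$ is irrelevant here since $p\mid l$ with $l=2$ would mean $p=2$.

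\textbf{Part \short\ref{lem:div-1/2}, the tame case $p\nmid\dg$.} Here \ref{th:fifi} gives a clean bijective count. First I count the First Case: by \ref{cor:andsecond} I may take $\parTwo=0$ after second-normalizing (\ref{rem:rid}), and then $f$ is determined by a monic $w\in F[x]$ of degree $s=\lfloor m/l\rfloor$ subject to the open condition $kw+lxw'\neq 0$; since $p\nmid m$, \ref{lem:root} (with its contrapositive) shows $kw+lxw'=0$ is impossible, so \emph{all} $q^{s}$ monic $w$ of degree $s$ are admissible, and restoring the linear left-component and the freedom in $\parTwo$ multiplies by $q^{3}(1-q^{-1})$ — one factor $q(1-q^{-1})$ for the leading coefficient of the outer linear map, one $q$ for its constant term, one $q$ for $\parTwo$. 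Wait: since we count $f$ up to nothing (we count actual polynomials $f$ of degree $\dg$), and left composition by $ax+b$ with $a\neq 0$ gives $q(q-1)$ choices while $\parTwo$ gives $q$ choices, the First Case contributes $q^{s}\cdot q(q-1)\cdot q = q^{s+2}(q-1)=q^{s+3}(1-q^{-1})$ many $f$, and by the uniqueness clause of \ref{th:fifi-1} (valid since $p\nmid m$) distinct data give distinct $f$. The Second Case is parametrized by $(z,\parTwo)\in F^{\times}\times F$, uniquely by \ref{th:fifi-2}, contributing $(q-1)\cdot q$ pairs, times $q(q-1)$ for the left linear map, i.e.\ $q^{2}(q-1)^{2}$ many $f$; but by \ref{th:fifi-3} when $l=2$ the Second Case is already subsumed in the First, so it adds nothing new, and when $l\geq 3$ the two cases are disjoint. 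Hence $t = q^{s+3}(1-q^{-1}) + (1-\delta_{l,2})\,q^{2}(q-1)^{2} = (q^{s+3}+(1-\delta_{l,2})(q^{4}-q^{3}))(1-q^{-1})$, and the displayed two-sided estimate is immediate by dropping or keeping the second term.

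\textbf{Part \short\ref{lem:div-1/3}, the case $p\mid m$.} Now only the First Case can occur (the Second needs $p\nmid\dg$), but the uniqueness and exhaustiveness of \ref{th:fifi-1} are more delicate — the remark in \ref{th:fifi-1} flags that $h$ need not be unique when $p\mid m$, and \ref{eq:unidet} forces us to exclude those $w$ with $kw+lxw'=0$. By \ref{lem:root}, writing $s=tp+r$ with $0\leq r<p$, the excluded $w$ are exactly $w=x^{r}u^{p}$ with $u\in F[x]$ monic of degree $t=\lfloor s/p\rfloor$, and there are $q^{\lfloor s/p\rfloor}$ of them. So the admissible monic $w$ number $q^{s}-q^{\lfloor s/p\rfloor}$, and the same multiplicity $q^{3}(1-q^{-1})$ from the left linear map and $\parTwo$ as before gives the upper bound $t\leq (q^{s}-q^{\lfloor s/p\rfloor})\,q^{3}(1-q^{-1}) = (q^{s+3}-q^{\lfloor s/p\rfloor+3})(1-q^{-1})$. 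This is only an upper bound — not an equality — precisely because the parametrization $(w,\parTwo)\mapsto f$ may fail to be injective when $p\mid m$ (different $(w,\parTwo)$, or different $h$'s, could yield the same $f$), so we can only say each valid $f$ comes from \emph{at least} one admissible $(w,\parTwo)$ (modulo the linear action), giving $\#(\text{target set})\leq\#(\text{admissible data})$.

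\textbf{Main obstacle.} The routine parts are the multiplicity bookkeeping (the factor $q^{3}(1-q^{-1})$) and the root count from \ref{lem:root}. The one genuine subtlety is making sure, in part \short\ref{lem:div-1/3}, that every $f$ in $D_{\dg,l}\cap D_{\dg,m}\cap D_{\dg}^{+}$ really does arise from the First-Case formulas of \ref{th:fifi} even when $p\mid m$ — i.e.\ that \ref{RST} and \ref{th:fifi} still apply. The hypotheses \ref{eq:RST-1}–\ref{eq:RST-2} require $g'(g^{*})'\neq 0$; since $f\in D_{\dg,l}\cap D_{\dg,m}$ we have decompositions with the right degrees, and because $f\notin D_{\dg}^{\varphi}$ (it is a non-Frobenius composition) one argues $g'\neq 0$ and $(g^{*})'\neq 0$ by looking at which of $g,h$ can be a $p$th power — this is the step that needs care, and it is where the hypothesis $D_{\dg}^{+}$ (rather than all of $D_{\dg}$) is doing real work. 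I would handle it by noting that a wild collision with both $g'=0$ or $(g^{*})'=0$ forces $f\in F[x^{p}]$, contradicting $f\in D_{\dg}^{+}$, so \ref{RST} applies and then \ref{th:fifi-1} delivers the parametrization, after which the count goes through as above.
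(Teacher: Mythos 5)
Your proof is correct and follows essentially the same route as the paper: invoke \ref{th:fifi} to parametrize the monic original collisions by $(w,\parTwo)$ (First Case, $q^{s+1}$ pairs) and $(z,\parTwo)$ (Second Case, $q^{2}(1-q^{-1})$ pairs), multiply by $q^{2}(1-q^{-1})$ for the left linear composition, use \ref{th:fifi-3} to decide whether the two contributions are disjoint or the Second is subsumed in the First, use \ref{lem:root} to count the excluded $w$ when $p\mid m$, and observe that injectivity of the $(w,\parTwo)$-parametrization may fail there so only an upper bound results. You are slightly more careful than the paper in one spot: the paper silently assumes \ref{RST} applies to every $f\in T$, while you point out (correctly, via the equivalence $f\in D_{\dg}^{\varphi}\Leftrightarrow g'(g^{*})'=0$ from \ref{fact-0}) that the hypothesis $f\in D_{\dg}^{+}$ is precisely what guarantees \ref{eq:RST-2}. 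The detour through second-normalization and \ref{cor:andsecond} in Part \short\ref{lem:div-1} is unnecessary — you end up re-introducing the $\parTwo$-dimension anyway, arriving at the same $q^{s+3}(1-q^{-1})$ by a slightly circuitous route — but it causes no error.
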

\begin{proof}
  \short\ref{lem:div-1/2} The monic original polynomials $f \in
  D_{\dg,l} \cap D_{\dg,m} \cap D_{\dg}^{+}=T$ fall either into the
  First or the Second Case of Ritt's Second Theorem. In the First
  Case, such $f$ are injectively parametrized by $(w,\parTwo)$ in
  \ref{th:fifi-1}.  Condition \ref{eq:unidet} is satisfied, since $p
  \nmid m= k+ls =~lc(kw+lxw')$. Thus there are $q^{s+1}$ such
  pairs. Allowing composition by an arbitrary linear polynomial on the
  left, we get $q^{s+3}(1-q^{-1})$ elements of $T$. In the Second
  Case, we have the parameters $(z,\parTwo), q^{2}(1-q^{-1})$ in number,
  from \ref{th:fifi-2}. Composing with a linear polynomial yields a total of
  $q^{4}(1-q^{-1})^{2}$. Furthermore, \ref{th:fifi-3} says that $t$
  equals the sum of the two contributions if $l \geq 3$, and it equals
  the first summand for $l=2$; in the letter case, we have
  $p\neq 2$. Both claims in \short\ref{lem:div-1} follow.

  \short\ref{lem:div-1/4} \ref{eq:unidet} and \ref{eq:ab} are never
  satisfied, so that $t=0$.

  \short\ref{lem:div-1/3} We have essentially the same situation as in
  \short\ref{lem:div-1/2}, with $p \nmid l$ and $(w,\parTwo)$
  parametrizing our $f$ in the First Case, albeit not
  injectively. Thus we only obtain an upper bound. The first condition
  in \ref{eq:unidet} holds if and only if $w$ is not of the form
  $x^{r} u^{p}$ as in \ref{eq:lxw}. We note that $\deg u= (s-r)/p =
  \lfloor s/p \rfloor$ in \ref{eq:lxw}, so that the number of
  $(w,\parTwo)$ satisfying \ref{eq:unidet} equals $q^{s+1}-q^{\lfloor
    s/p\rfloor+1}$. Since $p \mid m \mid \dg$, \ref{eq:ab} does not
  hold, and there is no non-Frobenius decomposition in the Second
  Case.\qed
\end{proof}

\begin{example}
  We note two instances of misreading Ritt's Second Theorem.
  \cite{boddeb09} claim in the proof of their Lemma 5.8 that $t \leq
  q^{5}$ in the situation of \ref{lem:div-1}. This contradicts the
  correct statement, where the exponent $s+3$ of $q$ is unbounded. A
  second instance is in \cite{cor90}. The author claims that his
  following example contradicts the Theorem. He takes (in our
  language) positive integers $b$, $c$, $d$, $t$ and elements $h_{0},
  \ldots, h_{t} \in F$ and sets $m = bp^{c}+d$ and $l=p^{c}+1$, where
  $c < p$, $tl \leq m$, and $F$ is a field of characteristic $p
  >0$. Then for
\begin{align*}
h & = \sum_{0 \leq i \leq t}h_{i}x^{m-il},\\
g^{*} & = \sum_{o \leq i, j \leq t} h_{i}h_{j}x^{m-ip^{c}-j}.
\end{align*}
we have
$$
x^{l} \circ h = g^{*} \circ x^{l},
$$
provided that all $h_{i}$ are in $\mathbb{F}_{p^{n}}$. If $d > b$, we
have $m=bl+(d-b)$, so that $s=b$ and $k=d-b$.
Applying \ref{th:fifi}, we find $w = \sum_{0 \leq i \leq
  t}h_{i}x^{b-i}$ and $a=0$. Then
\begin{align*}
h & = x^{k}w(x^{l}),\\
g^{*}& = x^{k}w^{l}.
\end{align*}
Thus the example falls well within Ritt's Second Theorem. \cite{zan93}
points out that this was also remarked by A. Kondracki, a student of
Andrzej Schinzel.
\end{example}

\begin{lemma}\label{lem:propdivi}
  Let $F$ be a perfect field, let $l$, $m \geq 2$ be integers
  for which $p$ divides $n= lm$, and let $g$ and $h$ in $F[x]$ have
  degrees $l$ and $m$, respectively. Then the following hold.
  \begin{enumerate}
  \item\label{lem:propdivi-1} $g \circ h \in D_{\dg}^{\varphi}
    \Longleftrightarrow g'h'=0 \Longleftrightarrow g \in
    D_{l}^{\varphi}$ or $h \in D_{m}^{\varphi}$,
  \item\label{lem:propdivi-2} $\#D_{\dg}^{\varphi} =
    q^{\dg/p+1}(1-q^{-1})$,
  \item\label{lem:propdivi-3}
    \begin{equation*}
      \#D_{\dg,l}^{\varphi}
      \begin{cases}
        = \#D_{\dg/p,l} & \text{if } p \nmid l,\\
       = \#D_{\dg/p,l/p} & \text{if } p \nmid m,\\
       \leq \#D_{\dg/p,l} + \#D_{\dg/p, l/p} & \text{always}.
      \end{cases}
    \end{equation*}
  \end{enumerate}
\end{lemma}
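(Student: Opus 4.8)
The argument rests on the elementary observation that, in characteristic $p$, a polynomial $v\in F[x]$ lies in $F[x^{p}]$ if and only if $v'=0$: writing $v=\sum_{i}v_{i}x^{i}$, we have $v'=\sum_{i}iv_{i}x^{i-1}$, which vanishes exactly when $v_{i}=0$ for every $i$ with $p\nmid i$. Combined with the facts that $F[x]$ is a domain and that $u'(h)=0$ is equivalent to $u'=0$ whenever $h$ is non-constant, this settles \short\ref{lem:propdivi-1}: since $\deg g,\deg h\geq 2$, the composition $f=g\circ h$ is genuinely decomposable, so $f\in D_{\dg}^{\varphi}$ iff $f\in F[x^{p}]$ iff $f'=(g\circ h)'=g'(h)\cdot h'=0$, and the latter holds iff $g'=0$ or $h'=0$, that is, iff $g\in F[x^{p}]$ or $h\in F[x^{p}]$. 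In the degrees occurring here every Frobenius composition is decomposable, so ``$g\in F[x^{p}]$'' may be read as ``$g\in D_{l}^{\varphi}$'' and likewise for $h$.

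For \short\ref{lem:propdivi-2} I work over $F=\mathbb{F}_{q}$ and use the Frobenius desubstitution map $\Phi\colon f\mapsto\hat f$, where $\hat f$ is the unique polynomial with $f=\hat f(x^{p})$. It is injective, it is defined on all of $F[x^{p}]\cap P_{\dg}^{=}$ because $p\mid\dg$, it halves degrees by the factor $p$, and it is onto $P^{=}_{\dg/p}$: writing say $p\mid l$, we have $\dg/p=(l/p)m\geq 2$, so every $\hat f(x^{p})$ of degree $\dg$ equals $\hat f\circ x^{p}\in D_{\dg}$. Hence $\Phi$ restricts to a bijection $D_{\dg}^{\varphi}\to P^{=}_{\dg/p}$, and $\#D_{\dg}^{\varphi}=\#P^{=}_{\dg/p}=q^{\dg/p+1}(1-q^{-1})$ by the count of $P^{=}_{e}$ recorded in \ref{secDec}.

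For \short\ref{lem:propdivi-3} I track where the bijection $\Phi$ of \short\ref{lem:propdivi-2} sends $D_{\dg,l}^{\varphi}$. Fix $f=g\circ h\in D_{\dg,l}^{\varphi}$ with $\deg g=l$ and $h$ monic original of degree $m$. By \short\ref{lem:propdivi-1}, $h'=0$ or $g'=0$. If $h'=0$ then $p\mid m$, $h=h^{**}(x^{p})$ with $h^{**}$ monic original of degree $m/p$, and $\Phi(f)=g\circ h^{**}\in D_{\dg/p,\,l}$. If $g'=0$ then $p\mid l$, $g=g^{**}(x^{p})$ with $\deg g^{**}=l/p$, and using $h^{p}=\varphi_{1}(h)(x^{p})$ (equation \ref{eq:frob}) one gets $\Phi(f)=g^{**}\circ\varphi_{1}(h)\in D_{\dg/p,\,l/p}$, where $\varphi_{1}(h)$ is again monic original. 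When $p\nmid l$ only the first alternative can occur and when $p\nmid m$ only the second; and since $p\mid lm$, either $p\mid m$ (so $l\mid\dg/p$ and $D_{\dg/p,l}$ is defined) or $p\mid l$ (so $l/p\in\mathbb{Z}$, $l/p\mid\dg/p$, and $D_{\dg/p,l/p}$ is defined). This gives the inclusions $\Phi(D_{\dg,l}^{\varphi})\subseteq D_{\dg/p,l}$ if $p\nmid l$, $\subseteq D_{\dg/p,l/p}$ if $p\nmid m$, and $\subseteq D_{\dg/p,l}\cup D_{\dg/p,l/p}$ in general, hence all three inequalities of the claim. For the matching lower bounds in the two special cases I exhibit preimages: given $\tilde f=g\circ\tilde h\in D_{\dg/p,l}$ with $\deg g=l$ (so $p\mid m$), set $h=\tilde h(x^{p})$, so $g\circ h=\tilde f(x^{p})\in D_{\dg,l}^{\varphi}$ and $\Phi(g\circ h)=\tilde f$; given $\tilde f=\tilde g\circ\tilde h\in D_{\dg/p,l/p}$ with $\deg\tilde g=l/p$ (so $p\mid l$), set $g=\tilde g(x^{p})$ and $h=\varphi_{1}^{-1}(\tilde h)$, so that $g\circ h=\tilde g(h^{p})=\tilde g(\tilde h(x^{p}))=\tilde f(x^{p})\in D_{\dg,l}^{\varphi}$ and $\Phi(g\circ h)=\tilde f$. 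Since $\Phi$ is a bijection, the two equalities of cardinalities follow.

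The heart of the matter — and the sole place the hypothesis that $F$ is perfect is used — is the last construction in the case $p\mid l$: recovering $h$ from $\tilde h$ requires applying the inverse Frobenius $\varphi_{1}^{-1}$ to the coefficients of $\tilde h$, i.e. extracting $p$-th roots in $F$, which is available precisely because $F$ is perfect. A minor additional chore is the bookkeeping around degenerate subscripts, namely $l/p=1$ or $m/p=1$, where the inner component becomes linear and the corresponding set $D_{\dg/p,\cdot}$ degenerates to the full space $P^{=}_{\dg/p}$, and around deciding which of $D_{\dg/p,l}$ and $D_{\dg/p,l/p}$ is actually defined; these points are routine but should be stated explicitly.
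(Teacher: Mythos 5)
Your proof is correct and follows essentially the same route as the paper's (much terser) one: represent Frobenius compositions via the desubstitution $f \mapsto \hat f$ with $f = \hat f(x^{p})$, track degrees, and use $\varphi_{1}^{-1}$ (hence perfectness) when $p\mid l$. The paper simply notes that Frobenius compositions are exactly the $g^{*}\circ x^{p}$ with $g^{*}\in P^{=}_{\dg/p}$ for part (ii) and rewrites a collision in $D_{\dg,l}^{\varphi}$ as $g\circ h^{*}\circ x^{p}$ (resp.\ the symmetric version) for part (iii); your map $\Phi$ is the same bijection, spelled out in more detail.
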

\begin{proof}
  \bare\ref{lem:propdivi-1} is clear. For \ref{lem:propdivi-2}, all
  Frobenius compositions are of the form $g^{*} \circ x^{p}$ with
  $g^{*} \in P^{=}_{\dg/p}$, and $g^{*}$ is uniquely determined by the
  composition. In \ref{lem:propdivi-3}, if $p \nmid l$, then $p \mid
  m$, and according to \ref{eq:frob}, any $g \circ h \in
  D_{\dg,l}^{\varphi}$ can be uniquely rewritten as $g \circ h^{*} \circ
  x^{p}$, with $h^{*} \in P^{0}_{m/p}$. If $p \nmid m$, then the
  corresponding argument works. For the third line, we may assume that
  $p$ divides $l$ and $m$, and then have both possibilities above for
  Frobenius compositions.
\end{proof}

A particular strength of Zannier's and Schinzel's result in \ref{RST} is that,
contrary to earlier versions, the characteristic of $F$ appears only
very mildly, namely in \ref{eq:RST-2}. We now elucidate the case
excluded by \ref{eq:RST-2}, namely $g'(g^{*})'=0$, which is mentioned
in \cite{zan93}, page 178. This case can only occur when $p \geq
2$. We recall the Frobenius power $\varphi_{j} \colon F[x]\rightarrow
F[x]$ from \ref{rem:coll}.
\begin{lemma}\label{fact}
  In the above notation, assume that $(l,m,g,h,g^{*},h^{*})$ and $f$
  satisfy \ref{eq:RST-1}, \ref{eq:RST-3}, and \ref{eq:intersec}, and
  that $F$ is perfect.
  \begin{enumerate}
  \item\label{fact-0} The following are equivalent:
    \begin{enumerate}
    \item $f $ is a Frobenius composition,
    \item $ f' = 0$,
    \item $g'(g^{*})' = 0$.
    \end{enumerate}
  \item\label{fact-1} If $g'=0$, then $p \nmid l$ and $(g^{*})' \neq
    0$, and there exist positive integers $j$ and
    $M$, and monic original $G$, $G^{*}$, $H^{*} \in F[x]$ so that
    \begin{align}\label{al:fact}
      \begin{aligned}
        & m =p^{j}M, \deg G = \deg H^{*} = M, \deg G^{*}=l,\\
       & g= x^{p^{j}} \circ G, g^{*} \circ x^{p^{j}} = x^{p^{j}} \circ
       G^{*}, h^{*} = x^{p^{j}} \circ H^{*},\\
        & G' (G^{*})' \neq 0, G \circ h=G^{*} \circ H^{*},\\
        & f = x^{p^{j}} \circ G \circ h = x^{p^{j}} \circ (G^{*} \circ
        H^{*}).
      \end{aligned}
    \end{align}
    In particular, $(l, M, G, h, G^{*}, H^{*})$ satisfies
    \ref{eq:RST-1} through \ref{eq:RST-3} if $M > l$, and $(M,l,G^{*},
    H^{*}, G,h)$ does if $2 \leq M < l$. If $M = 1$, then $G$ and
    $H^{*}$ are linear.
  \item\label{fact-2} If $(g^{*})' = 0$, then $p \nmid m$ and $g' \neq
    0$, and there exist positive integers $d$ and $L$, and monic
    original $G,H, G^{*} \in F[x]$ with
    \begin{align}\label{eq:fgx}
    l&=p^{d}L, p \nmid L, g = \varphi_{d}(G), h = x^{p^{d}}\circ H, g^{*}=
    x^{p^{d}} \circ G^{*},\nonumber\\
    G'(G^{*})' &\neq 0,\\
     G \circ H& = G^{*} \circ h^{*}, f = x^{p^{d}} \circ G \circ H.\nonumber
    \end{align}
    with $\varphi_{d}$ from \ref{rem:coll}.
   In particular, $(L,m, G, H, G^{*}, h^{*})$ satisfies
    \ref{eq:RST-1} through \ref{eq:RST-3} if $L \geq 2$.
  \item\label{fact-4} The data derived in \short\ref{fact-1} and
    \short\ref{fact-2} are uniquely determined. Conversely, given such
    data, the stated formulas yield $(l,m,g,h,g^{*}, h^{*})$ and $f$
    that satisfy \ref{eq:RST-1}, \ref{eq:RST-3}, and \ref{eq:intersec}.

  \end{enumerate}
\end{lemma}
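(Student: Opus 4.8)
I would organize everything around two elementary facts about the perfect field $F$: any $g\in F[x^{p}]$ equals $x^{p}\circ G_{1}$ for some $G_{1}$, so by iteration one may write any such $g$ uniquely as $x^{p^{j}}\circ G$ with $G$ monic original, $G'\neq0$, and $j\geq1$ maximal; and $\varphi_{-j}$ (taking $p^{j}$-th roots of coefficients) is an $\mathbb{F}_{p}$-algebra automorphism fixing $x$, so that $g\circ h^{p^{j}}=(\varphi_{-j}(g)\circ h)^{p^{j}}$ by the Freshman dream. Also $p^{j}$-th roots in $F[x]$ are unique, since $u^{p}=v^{p}$ forces $(u-v)^{p}=0$.

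\emph{Part \ref{fact-0} and the opening claims of \ref{fact-1}, \ref{fact-2}.} Being a Frobenius composition means $f\in F[x^{p}]$, i.e.\ $f'=0$, which is (a)$\Leftrightarrow$(b). If $g'=0$ then $f'=(g'\circ h)h'=0$, and likewise if $(g^{*})'=0$, so (c)$\Rightarrow$(b). Conversely $f'=0$ gives $(g'\circ h)h'=0$ and $((g^{*})'\circ h^{*})(h^{*})'=0$ in the domain $F[x]$; were $g'\neq0$ and $(g^{*})'\neq0$, then (as $h,h^{*}$ are non‑constant) $h'=(h^{*})'=0$, so $p$ divides both $l=\deg h$ and $m=\deg h^{*}$, against $\gcd(l,m)=1$ from \ref{eq:RST-1}; hence $g'(g^{*})'=0$. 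The same $\gcd$‑argument yields the rest: $g'=0\Rightarrow p\mid m\Rightarrow p\nmid l$, and then $(g^{*})'\neq0$ (else $p\mid l$); symmetrically $(g^{*})'=0\Rightarrow p\nmid m$ and $g'\neq0$.

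\emph{The descent (all of \ref{fact-1}, and the bulk of \ref{fact-2}).} Assume $g'=0$. Write $g=x^{p^{j}}\circ G$ as above, so $f=x^{p^{j}}\circ(G\circ h)=(G\circ h)^{p^{j}}$ with $(G\circ h)'=(G'\circ h)h'\neq0$ (here $h'\neq0$ because $p\nmid l$); thus $p^{j}$ is the largest $p$‑power for which $f$ is a $p^{j}$‑th power, since $f=v^{p^{j+1}}$ would give $v^{p}=G\circ h$ and hence $(G\circ h)'=0$. On the other side $f'=0$ and $(g^{*})'\neq0$ force $(h^{*})'=0$; write $h^{*}=x^{p^{j'}}\circ H^{*}$ with $H^{*}$ monic original, $(H^{*})'\neq0$, $j'$ maximal. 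Then $f=g^{*}\circ((H^{*})^{p^{j'}})=(\varphi_{-j'}(g^{*})\circ H^{*})^{p^{j'}}$, whose derivative is nonzero since $\varphi_{-j'}(g^{*})'=\varphi_{-j'}((g^{*})')\neq0$ and $(H^{*})'\neq0$; so $j'$ is likewise maximal, whence $j=j'$ and $\deg H^{*}=m/p^{j}=:M$. Putting $G^{*}=\varphi_{-j}(g^{*})$ (monic original of degree $l$, with $(G^{*})'=\varphi_{-j}((g^{*})')\neq0$), equality of $p^{j}$‑th roots of $f$ gives $G\circ h=G^{*}\circ H^{*}$; moreover $g^{*}\circ x^{p^{j}}=g^{*}(x^{p^{j}})=\varphi_{j}(G^{*})(x^{p^{j}})=x^{p^{j}}\circ G^{*}$ by \ref{eq:frob}, and $f=x^{p^{j}}\circ G\circ h=x^{p^{j}}\circ(G^{*}\circ H^{*})$. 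This is \ref{al:fact}; the ``in particular'' clauses hold because $\gcd(l,M)\mid\gcd(l,m)=1$, the degrees match the pattern of \ref{eq:RST-1} (after swapping roles when $M<l$), \ref{eq:RST-2} is $G'(G^{*})'\neq0$, \ref{eq:RST-3} is $G\circ h=G^{*}\circ H^{*}$, and for $M=1$ a monic original polynomial of degree $1$ is $x$. Running the same descent from $(g^{*})'=0$ produces, mutatis mutandis, $h=x^{p^{d}}\circ H$, $g^{*}=x^{p^{d}}\circ G^{*}$, $g=\varphi_{d}(G)$ (with $G,H,G^{*}$ monic original, $G',H',(G^{*})'\neq0$, the same $d$ on both sides because $(h^{*})'\neq0$ as $p\nmid m$), together with $G\circ H=G^{*}\circ h^{*}$ and $f=(G\circ H)^{p^{d}}$.

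\emph{The main obstacle ($p\nmid L$ in \ref{fact-2}) and \ref{fact-4}.} The step I expect to be hardest is that $p\nmid l/p^{d}$; the descent alone does not give it. Suppose $p\mid L_{0}:=l/p^{d}$, so $L_{0}\geq p\geq2$. The tuple $(L_{0},m,G,H,G^{*},h^{*})$ satisfies \ref{eq:RST-1} ($m>l\geq L_{0}$, $\gcd(L_{0},m)\mid\gcd(l,m)=1$), \ref{eq:RST-2}, and \ref{eq:RST-3}, so Ritt's Second Theorem \ref{RST} applies to it. In the First Case $H\circ v_{2}=v\circ x^{L_{0}}$ for linear $v,v_{2}$, so $(H\circ v_{2})'=v'(x^{L_{0}})\cdot L_{0}x^{L_{0}-1}=0$ since $p\mid L_{0}$, forcing $H'=0$; in the Second Case $H\circ v_{2}=v\circ T_{L_{0}}(x,z)$, and $T_{L_{0}}'=0$ by \ref{Tsqfree-3b}, so again $H'=0$; both contradict $H'\neq0$. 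Hence $p\nmid l/p^{d}$, $L=l/p^{d}$, and \ref{eq:fgx} and its ``in particular'' clause now follow exactly as \ref{al:fact} did — note that \ref{fact-1} needs no appeal to \ref{RST}, which is precisely why it asserts nothing about whether $p\mid M$. For \ref{fact-4}: $j$ (resp.\ $d$) is forced as the largest $p$‑power for which $g$ (resp.\ $h$) is a $p$‑power, whence $G=\varphi_{-j}(g)$, $G^{*}$, $H^{*}$ (resp.\ $G=\varphi_{-d}(g)$, $G^{*}$, $H$) are the corresponding $p^{j}$‑th ($p^{d}$‑th) roots and are uniquely determined; conversely, from data obeying the stated numerical relations and monic‑originality with $G'(G^{*})'\neq0$ and $G\circ h=G^{*}\circ H^{*}$ (resp.\ $G\circ H=G^{*}\circ h^{*}$), setting $g=x^{p^{j}}\circ G$, $g^{*}=\varphi_{j}(G^{*})$, $h^{*}=x^{p^{j}}\circ H^{*}$ (resp.\ $g=\varphi_{d}(G)$, $h=x^{p^{d}}\circ H$, $g^{*}=x^{p^{d}}\circ G^{*}$) and $f=g\circ h$, one checks \ref{eq:RST-1}, \ref{eq:RST-3}, \ref{eq:intersec} directly using the degree relations, the $\gcd$'s, and \ref{eq:frob}.
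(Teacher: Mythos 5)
Your proof is correct, and it departs from the paper's in two substantive places, in both cases yielding a somewhat more self-contained argument. First, to show that $h^{*}$ is a $p^{j}$-th power in \short\ref{fact-1} (and symmetrically for $h$ and $g^{*}$ in \short\ref{fact-2}), the paper compares coefficients of $g^{*}\circ h^{*}$ via $E_{1}$ of \ref{lem:int}: if $h^{*}$ had a coefficient at an index not divisible by $p^{j}$, this would force a nonzero coefficient of $f$ at an index not divisible by $p^{j}$. You instead observe that $e$ is characterized as the largest exponent for which $f=w^{p^{e}}$ with $w'\neq 0$; reading this characterization off from each side of the collision (using $h'\neq 0$ since $p\nmid l$, and $(h^{*})'=0$ since $(g^{*})'\neq 0$, together with $\varphi_{j}$ commuting with differentiation and the identity $g^{*}\circ u^{p^{j}}=(\varphi_{-j}(g^{*})\circ u)^{p^{j}}$) gives the same $j$ on both sides directly, and the equality $G\circ h=G^{*}\circ H^{*}$ then follows from the uniqueness of $p^{j}$-th roots in $F[x]$. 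This is a cleaner replacement for the paper's coefficient computation. Second, for $p\nmid L$ in \short\ref{fact-2}, the paper applies its own normal-form result \ref{th:fifi} to the reduced collision $G\circ H=G^{*}\circ h^{*}$ and reads off $p\nmid l^{*}$ from \ref{eq:unidet} (First Case) or \ref{eq:ab} (Second Case); you instead apply \ref{RST} directly and derive the contradiction $H'=0$ from the fact that $p\mid L_{0}$ would make $x^{L_{0}}$ (resp.\ $T_{L_{0}}$, by \ref{Tsqfree-3b}) have vanishing derivative. This bypasses the full machinery of \ref{th:fifi} at the price of using the additional fact $H'\neq 0$, which you correctly extract from the descent (the paper's one-step-at-a-time descent only guarantees $(G^{*})'\neq 0$). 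Your parenthetical remark that \short\ref{fact-1} does not and cannot assert $p\nmid M$, because its proof does not invoke \ref{RST} or \ref{th:fifi}, correctly pinpoints the asymmetry between \short\ref{fact-1} and \short\ref{fact-2} in the statement.
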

\begin{proof}
  \short\ref{fact-0} If $f = x^{p} \circ G$ is a Frobenius
  composition, then $f'= 0$. We have
  \begin{equation}\label{eq:g*}
    f' = (g' \circ h) \cdot h' = ((g^{*})' \circ h^{*}) \cdot (h^{*})'.
  \end{equation}
  If (b) holds, then $p\mid \deg f = \dg = lm$, hence $p \mid l$ or
  $p \mid m$. In the case $p \mid l$, \ref{eq:RST-1} implies that $p
  \nmid m$ and $g'(h^{*})' \neq 0$, hence $h'= (g^{*})'=0$ by
  \ref{eq:g*}. Symmetrically, $p \mid m $ implies that $g' =
  (h^{*})'=0$, so that (c) follows in both cases.

  If (c) holds, say $g'=0$, then the coefficient of $x^{i}$ in $g$
  is zero unless $p \mid i$. Since $F$ is perfect, every element
  has a $p$th root, and it follows
  that $g = x^{p}\circ G$ for some $G \in F[x]$. Thus $g$ is a
  Frobenius composition, and so is $f=g \circ h$.

  \short\ref{fact-1} Let $j \geq 1$ be the largest integer for which
  there exists some $G \in F[x]$ with $g = x^{{p^{j}}} \circ G$. Then
  $j$ and $G$ are uniquely determined, $G$ is monic and original, $G'
  \neq 0$, $p^{j} \mid m$, $\deg G =mp^{-j}=M$, and $p \nmid l$ by
  \ref{eq:RST-1}. Furthermore, we have
  \begin{equation}\label{eq:Gh}
    g^{*} \circ h^{*} = g \circ h = x^{p^{j}}\circ G \circ h.
  \end{equation}

  Writing $h^{*} = \sum_{1 \leq i \leq m} h_{i}^{*}x^{i}$ with
  $h_{m}^{*}= 1$, we let $I=\{i \leq m \colon h^{*}_{i} \neq 0\}$ be the
  support of $h^{*}$. Assume that there is some $i \in I$ with $p^{j}
  \nodiv i$, and let $k$ be the largest such $i$. Then $k < m$,
  $m(l-1)+k$ is not divisible by $p^{j}$, the coefficient of
  $x^{m(l-1)+k}$ in $(h^{*})^{l}$ is $lh^{*}_{k}$, and in $g^{*} \circ
  h^{*}$ it is $ ~lc(g^{*}) \cdot lh^{*}_{k}\neq 0$; see $E_{1}$ in
  \ref{lem:int}. This contradicts \ref{eq:Gh}, so that the assumption
  is false and $h^{*}=(H^{*})^{p^{j}}$ for a unique monic original
  $H^{*} \in F[x]$, of degree $M =m p^{-j}$.

  Setting $G^{*}=\varphi_{j}^{-1}(g^{*})$, we have $\deg G^{*} = \deg
  g^{*}=l$ and hence $(G^{*})' \neq 0$, $ x^{p^{j}}\circ G^{*}=
  \varphi_{j} (G^{*})\circ x^{p^{j}}$, and
  $$
  x^{p^{j}}\circ G \circ h=g \circ h= f = g^{*} \circ h^{*}=
  \varphi_{j}(G^{*})\circ x^{p^{j}} \circ H^{*}=x^{p^{j}} \circ G^{*}
  \circ H^{*},
  $$
  $$
  G\circ h= G^{*} \circ H^{*}.
  $$

  \short\ref{fact-2} Since $p \mid l = \deg g^{*}$, \ref{eq:RST-1}
  implies that $p \nmid m$, $g' \neq 0$, and $g' \circ h \neq 0$. In
  \ref{eq:g*}, we have $f'=0$ and hence $h' =0$. There exist monic
  original $G_{1}$, $H_{1} \in F[x]$ with $g^{*}= x^{p} \circ G_{1}$,
  $h = x^{p} \circ H_{1}$, and
  \begin{align*}
    x^{p} \circ G_{1} \circ h^{*}  &= f= g \circ x^{p}\circ
    H_{1}=x^{p}
    \circ \varphi_{1}^{-1}(g) \circ H_{1},\\
    G_{1} \circ h^{*}  &= \varphi_{1}^{-1}(g) \circ H_{1}.
  \end{align*}
  If $G_{1}' = 0$, then $H'_{1}=0$ and we can continue this
  transformation. Eventually we find an integer $j \geq 1$ and monic
  original $G_{j}$, $H_{j} \in F[x]$ with $p^{j} \mid l$, $g^{*}=
  x^{p^{j}} \circ G_{j}$, $h = x^{p^{j}} \circ H_{j}$, and $G_{j}'
  \neq 0$. We set $G = \varphi_{j}^{-1}(g), G^{*}= G_{j}$, and
  $H=H_{j}$. Then $G'(G^{*})'\neq 0,\; \deg G^{*} = \deg H = L, \deg G
  = m$. As above, we have
  $$
  G^{*}\circ h^{*}= G_{j} \circ h^{*} = \varphi_{j}^{-1}(g) \circ
  H_{j}= G \circ H,
  $$
  $$
  f = (x^{p^{d}} \circ G^{*}) \circ h^{*} = g \circ (x^{p^{d}} \circ
  H) = x^{p^{d}} \circ G \circ H.
  $$

  According to \short\ref{fact-2}, $d$ is the multiplicity of $p$ in
  $l$. We now show that $j=d$. We set $l^{*} = lp^{-j}$. If $l^{*}
  \geq 2$, then the above collision satisfies the assumptions
  \ref{eq:RST-1} through \ref{eq:RST-3}, with $l^{*} < m$ instead of
  $l$. Thus \ref{th:fifi} applies.

  In the First Case, \ref{eq:unidet} shows that $p \nmid l^{*}$. It
  follows that $j = d$ and $l^{*}=L$. In the Second Case, we have $p
  \nmid l^{*}m = lp^{-j}m$ by \ref{eq:mopo}, so that again $j=d$ and
  $l^{*}=L$. In the remaining case $l^{*}=1$, we have $L=1$ and
  $G^{*}=H=x$. 

  \short\ref{fact-4} The uniqueness of all quantities is clear.
\end{proof}

We need some simple properties of the Frobenius
map $\varphi_{j}$ from \ref{eq:frob}.

\begin{lemma}\label{lem:Frobmap}

  Let F be a field of characteristic $p\geq2$, $f$, $g\in F[x]$, $\parTwo\in
  F$, let $ i$, $j \geq 1$, and denote by $f'$ the derivative of $f$. Then
  \begin{enumerate}
  \item \label{lem:Frobmap-1}
    $\varphi_{j}(fg)=\varphi_{j}(f)\varphi_{j}(g)$,
  \item\label{lem:Frobmap-2}
    $\varphi_{j}(f^{i})={\varphi_{j}(f)}^{i}$,
  \item\label{lem:Frobmap-3} $\varphi_{j}(f\circ
    g)=\varphi_{j}(f)\circ \varphi_{j}(g)$,
  \item\label{lem:Frobmap-4}
    $\varphi_{j}(f(a))=\varphi_{j}(f)(\parTwo^{p^{j}})$,
  \item\label{lem:Frobmap-5} $\varphi_{j}(f')=\varphi_{j}(f)'$.
  \end{enumerate}
\end{lemma}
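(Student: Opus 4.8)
The plan is to verify the five identities in sequence, noting that each is essentially formal and follows from the fact that $\varphi_{j}$ is the ring endomorphism of $F[x]$ fixing $x$ and raising each coefficient to the $p^{j}$th power (equivalently, the $p^{j}$th power map twisted to act only on constants), together with the commutativity of $F[x]$ and the Frobenius identity $(a+b)^{p}=a^{p}+b^{p}$ in characteristic $p$. First I would recall from \ref{rem:coll} the defining properties: $\varphi_{j}$ is $\mathbb{F}_{p}$-linear, $\varphi_{j}(x)=x$, and $\varphi_{j}(a)=a^{p^{j}}$ for $a\in F$. From these, \short\ref{lem:Frobmap-1} is just the statement that $\varphi_{j}$ is multiplicative on monomials $ax^{i}$ — indeed $\varphi_{j}(ax^{i}\cdot bx^{k})=\varphi_{j}(ab\,x^{i+k})=(ab)^{p^{j}}x^{i+k}=a^{p^{j}}x^{i}\cdot b^{p^{j}}x^{k}=\varphi_{j}(ax^{i})\varphi_{j}(bx^{k})$ — extended $\mathbb{F}_{p}$-bilinearly to all of $F[x]\times F[x]$. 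Then \short\ref{lem:Frobmap-2} follows from \short\ref{lem:Frobmap-1} by an immediate induction on $i$.

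For \short\ref{lem:Frobmap-3}, write $f=\sum_{i}f_{i}x^{i}$. Then $\varphi_{j}(f\circ g)=\varphi_{j}\bigl(\sum_{i}f_{i}g^{i}\bigr)=\sum_{i}\varphi_{j}(f_{i})\,\varphi_{j}(g^{i})$ by $\mathbb{F}_{p}$-linearity, which equals $\sum_{i}f_{i}^{p^{j}}\varphi_{j}(g)^{i}$ by \short\ref{lem:Frobmap-2}; recognizing the right-hand side as $\varphi_{j}(f)\circ\varphi_{j}(g)$ (since $\varphi_{j}(f)=\sum_{i}f_{i}^{p^{j}}x^{i}$) finishes it. Part \short\ref{lem:Frobmap-4} is the special case of \short\ref{lem:Frobmap-3} with $g$ the constant polynomial $a$, once one observes $\varphi_{j}(a)=a^{p^{j}}=\parTwo^{p^{j}}$ (writing $\parTwo=a$ as in the statement), or alternatively it is a direct computation: $\varphi_{j}(f(a))=\varphi_{j}\bigl(\sum_{i}f_{i}a^{i}\bigr)=\sum_{i}f_{i}^{p^{j}}a^{ip^{j}}=\varphi_{j}(f)(a^{p^{j}})$.

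Finally, for \short\ref{lem:Frobmap-5} I would compute both sides on $f=\sum_{i}f_{i}x^{i}$: the derivative is $f'=\sum_{i\geq 1}if_{i}x^{i-1}$, so $\varphi_{j}(f')=\sum_{i\geq 1}(if_{i})^{p^{j}}x^{i-1}=\sum_{i\geq 1}\bar{i}\,f_{i}^{p^{j}}x^{i-1}$, where $\bar{i}$ denotes $i$ reduced mod $p$ (using that the integer $i$ acts on $F$ through its image in $\mathbb{F}_{p}$, which is fixed by every power of Frobenius); on the other hand $\varphi_{j}(f)=\sum_{i}f_{i}^{p^{j}}x^{i}$ so $\varphi_{j}(f)'=\sum_{i\geq 1}\bar{i}\,f_{i}^{p^{j}}x^{i-1}$, and the two agree. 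None of the steps presents a genuine obstacle; the only point requiring a moment's care is the bookkeeping in \short\ref{lem:Frobmap-5}, namely that the integer coefficient $i$ appearing in formal differentiation commutes with $\varphi_{j}$ because $\varphi_{j}$ fixes the prime field $\mathbb{F}_{p}$ pointwise — so I would state that explicitly rather than let it pass silently.
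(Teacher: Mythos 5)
Your proposal is correct and takes essentially the same route as the paper: both reduce \short\ref{lem:Frobmap-3} to the expansion $f=\sum_{i}f_{i}x^{i}$ via \short\ref{lem:Frobmap-1} and \short\ref{lem:Frobmap-2}, deduce \short\ref{lem:Frobmap-4} as a special case, and verify \short\ref{lem:Frobmap-5} coefficientwise using $i^{p^{j}}=i$ in $F$. You are merely more explicit than the paper about why $i$ commutes with $\varphi_{j}$ (namely that $\varphi_{j}$ fixes $\mathbb{F}_{p}$), which is a welcome clarification but not a different argument.
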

\begin{proof}\ref{lem:Frobmap-1} is immediate, and
  \ref{lem:Frobmap-2} follows. For \ref{lem:Frobmap-3}, we write
  $f=\sum{f_{i}x^{i}}$ with all $ f_{i}\in
  F$. Then
 $$\varphi_{j}(f\circ
 g)=\varphi_{j}(\sum{f_{i}g^{i}})=\sum{f_{i}^{p^{j}}\varphi_{j}(g^{i})}=
 \varphi_{j}(f)\circ\varphi_{j}(g).
$$
\ref{lem:Frobmap-4} is a special case of \ref{lem:Frobmap-3}. For
\ref{lem:Frobmap-5}, we have
  $$
   \varphi_{j}(f{'})=\varphi_{j}(\sum{if_{i}x^{i-1}})=\sum{i^{p^{j}}f_{i}^{p^{j}}x^{i-1}}=\sum{if_{j}^{p^{j}}x^{i-1}}=\varphi_{j}(f)'.
  \qed $$
\end{proof}

Our next goal is to get rid of the assumption \ref{eq:RST-2}, namely
that $g'(g^{*})' \neq 0$, in \ref{th:fifi}. This is achieved by the
following result. Its statement is lengthy, and the simple version
is: if \ref{eq:RST-2} is violated, remove the component $x^{p}$ from the
culprit as long as you can. Then \ref{th:fifi} applies.

\begin{theorem}\label{lem:div2a}
  Let $F$ be a perfect field of characteristic $p \geq 0$. Let $m > l \geq 2$
  be integers with $\gcd(l,m)=1$, set $\dg = lm$ and let $f,g,h,g^{*},
  h^{*} \in F[x]$ be monic original of degrees $n$, $m$, $l$, $l$,
  $m$, respectively, with $f=g \circ h = g^{*} \circ h^{*}$. Then the
  following hold.
  \begin{enumerate}
  \item\label{lem:div2a-i} If $g'=0$, then there exists a uniquely
    determined positive integer $j$ so that $p^{j}$ divides $m$ and
    either (\bare\ref{lem:div2a-i/1}) or (\bare\ref{lem:div2a-i/2})
    hold; furthermore, (\bare\ref{lem:div2a-i/3}) is true. We set $M=p^{-j}m$.
    \begin{enumerate}
    \item\label{lem:div2a-i/1} (First Case)
      \begin{enumerate}
      \item If $M>l$, then there exist a monic $W \in F[x]$ of
        degree $S = \lfloor M/l \rfloor$ and $\parTwo \in F$ so that
        $$
        KW +lxW'\neq 0
        $$
        for $K = M-l \lfloor M/l \rfloor$, and all conclusions of
        \ref{th:fifi-1}, except \ref{eq:unidet} and $k<l$, hold for $k
        = p^{j}K$, $s=p^{j}S$, and $w=W^{p^{j}}$.
        Conversely, any $W$ and $a$ as above yield via these formulas
        a collision satisfying \ref{eq:RST-1}, \ref{eq:RST-3} and
        \ref{eq:intersec}, with $g'=0$. If $p \nmid M$, then $W$ and
        $a$ are uniquely determined by $f$ and $l$.
      \item If $M <l$, then there exist a monic $W \in F[x]$ of
        degree $S = \lfloor l/M \rfloor$ and $\parTwo \in F$ so that
     $$
     f = (x-\parTwo^{kM}w^{M}(\parTwo^{M})) \circ x^{kM}
     w^{M}(x^{M}) \circ (x+\parTwo),
     $$
    $$
        KW +lxW'\neq 0
        $$
        for $K = l-M\lfloor l/M\rfloor$, and all conclusions of
        \ref{th:fifi-1}, with $l$ replaced by $M$ and excepting
        \ref{eq:unidet} and the division with remainder, hold for $k =
        p^{j}K$, $s=p^{j}S$, and $w=W^{p^{j}}$.  Conversely, any $W$
        and $a$ as above yield via these formulas a collision
        satisfying \ref{eq:RST-1}, \ref{eq:RST-3} and
        \ref{eq:intersec}, with $g'=0$. Furthermore, $W$ and
        $a$ are uniquely determined by $f$ and $l$.
      \item If $m=p^{j}$, then $g=h^{*}= x^{p^{j}}$ and $g^{*}=
        \varphi_{j}(h)$.
      \end{enumerate}
    \item\label{lem:div2a-i/2} (Second Case) $p \nmid M$, and all
      conclusions of \ref{th:fifi-2} 
      hold, except \ref{eq:ab}. 
    \item\label{lem:div2a-i/3}
      Assume that $M \geq 2$, and let $f$ be a collision of the Second
      Case. Then $f$ belongs to the First Case if and only if $~min(l,M)=2$.
    \end{enumerate}
  \item\label{lemdiv2a-ii} If $(g^{*})'=0$, then there exists a unique
    positive integer $d$ such that $p^{d} \mid l$, $p \nmid
    p^{-d}l=L$, and either (\bare\ref{lemdiv2a-ii/1}) or
    (\bare\ref{lemdiv2a-ii/2}) holds; furthermore,
    (\bare\ref{lemdiv2a-ii/3}) is true. 
\begin{enumerate}
\item\label{lemdiv2a-ii/1}(First Case) There exist a monic $ w \in
  F[x]$ of degree $\lfloor m/L\rfloor$ and $\parTwo \in F $ so that
    \begin{align*}
      f&=(x-a^{kl}w^{L}(a^{l}))\circ x^{kl}w^{L}(x^{l})\circ(x+a),\\
      g&=(x-\parTwo^{kl}w^{L}(\parTwo^{l})) \circ x^{k}w^{L} \circ (x+\parTwo^{l}),\\
      h&=(x-\parTwo^{l}) \circ x^{l} \circ (x+\parTwo),\\
      g^{*}&=(x-\parTwo^{kl}w^{L}(\parTwo^{l}))\circ x^{l} \circ
      (x+\parTwo^{k}\varphi_{d}^{-1} (w)(\parTwo^{L})),\\
      h^{*}&=(x-\parTwo^{k} \varphi_{d}^{-1}(w)(\parTwo^{L}))\circ
      x^{k}\varphi_{d}^{-1}(w)(x^{L})\circ(x+\parTwo),
    \end{align*}
    where $m = L \lfloor m/L\rfloor + k$. The quantities $w$ and $a$
    are uniquely determined by $f$ and $l$. Conversely, any $w$ and
    $a$ as above yield via these formulas a collision satisfying
    \ref{eq:RST-1}, \ref{eq:RST-3}, and
    \ref{eq:intersec}. Furthermore, $kw+lxw' \neq 0$.

\item\label{lemdiv2a-ii/2} (Second Case) There exist $z,a\in F$ with
    $z\neq 0$ for which all conclusions of \ref{th:fifi-2} hold, except
    \ref{eq:ab}. Conversely, any $(z,a)$ as above yields a collision
    satisfying \ref{eq:RST-1}, \ref{eq:RST-3} and \ref{eq:intersec}.
     \item\label{lemdiv2a-ii/3} When $L\geq 3$, then
       (\bare\ref{lemdiv2a-ii/1}) and (\bare\ref{lemdiv2a-ii/2}) are
       mutually exclusive. For $L\leq 2$, (\bare\ref{lemdiv2a-ii/2}) is
       included in (\bare\ref{lemdiv2a-ii/1}).
  \end{enumerate}
\end{enumerate}
\end{theorem}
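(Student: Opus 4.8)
The key observation is that \ref{lem:div2a} concerns precisely the case $g'(g^*)'=0$ excluded by hypothesis \ref{eq:RST-2} in Ritt's Second Theorem, and \ref{fact} tells us exactly how to strip off the offending Frobenius powers. So the skeleton is: apply \ref{fact-1} (resp.\ \ref{fact-2}) to factor out $x^{p^j}$ on the left (resp.\ the appropriate Frobenius twist from the right), producing a reduced collision $(l,M,G,h,G^*,H^*)$ (resp.\ $(L,m,G,H,G^*,h^*)$) that \emph{does} satisfy \ref{eq:RST-1}--\ref{eq:RST-3}; then invoke \ref{th:fifi} on this reduced collision; and finally recompose with the stripped Frobenius power, using \ref{lem:Frobmap} to push $\varphi_j$ (or $x^{p^j}$) through the compositions $x^k w^l$, $T_m(x,z)$ etc.\ and thereby convert the conclusions of \ref{th:fifi} for $(G,h,\dots)$ into the asserted formulas for $(g,h,\dots)$.

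\textbf{Carrying out part \bare\ref{lem:div2a-i} ($g'=0$).} From \ref{fact-1} we get the unique $j\geq 1$, $p^j\mid m$, $M=p^{-j}m$, and monic original $G,G^*,H^*$ with $g=x^{p^j}\circ G$, $h^*=x^{p^j}\circ H^*$, $\varphi_j(G^*)=g^*\circ x^{p^j}\cdot(x^{p^j})^{-1}$ in the sense of \ref{fact-1}, $G'(G^*)'\neq 0$, and $G\circ h=G^*\circ H^*$. Now split on $\min(l,M)$. If $M>l$, the reduced tuple $(l,M,G,h,G^*,H^*)$ meets \ref{eq:RST-1}--\ref{eq:RST-3}, so \ref{th:fifi} gives either a First-Case datum $(W,a)$ with $G=(x-\cdots)\circ x^K W^l\circ(x+a^l)$ etc.\ (degrees $S=\lfloor M/l\rfloor$, $K=M-lS$, and $KW+lxW'\neq 0$ from \ref{eq:unidet}), or a Second-Case datum $(z,a)$. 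Recomposing $g=x^{p^j}\circ G$ and using $\varphi_j$-equivariance from \ref{lem:Frobmap-1}--\ref{lem:Frobmap-3}, the factor $x^{p^j}$ turns $W^l$ into $(W^{p^j})^l$ and shifts $K\mapsto p^j K=k$, $S\mapsto p^j S=s$, $w=W^{p^j}$; one checks $k$ need not be $<l$ and $\deg w=s$, which is why \ref{th:fifi-1} holds ``except \ref{eq:unidet} and $k<l$''. The case $M<l$ is symmetric with the roles of the two components of the reduced collision interchanged (so $l$ is replaced by $M$ in the conclusions of \ref{th:fifi-1}), and $m=p^j$ is the degenerate case $M=1$, where $G=H^*=x$ forces $g=h^*=x^{p^j}$ and $g^*=\varphi_j(h)$ directly. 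Part \bare\ref{lem:div2a-i/2} is the Second-Case output of \ref{th:fifi-2}, noting $p\nmid M$ there; \bare\ref{lem:div2a-i/3} is exactly \ref{th:fifi-3} applied to the reduced collision, since $\min(l,M)$ plays the role that $l$ did there.

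\textbf{Part \bare\ref{lemdiv2a-ii} ($(g^*)'=0$) is dual.} Apply \ref{fact-2}: unique $d\geq 1$ with $p^d\|l$, $L=p^{-d}l$, $p\nmid L$, monic original $G,H,G^*$ with $g=\varphi_d(G)$, $h=x^{p^d}\circ H$, $g^*=x^{p^d}\circ G^*$, $G'(G^*)'\neq 0$, $G\circ H=G^*\circ h^*$, $f=x^{p^d}\circ G\circ H$. If $L\geq 2$ the reduced tuple $(L,m,G,H,G^*,h^*)$ satisfies \ref{eq:RST-1}--\ref{eq:RST-3}, so \ref{th:fifi} applies with $L$ in place of $l$; here $f$ itself is \emph{not} altered by a Frobenius twist on the outside (the $x^{p^d}$ sits on the left of $f$ and is already accounted for in $f=x^{p^d}\circ G\circ H$), so the formula for $f$ keeps exponent $l$, while the right-hand components pick up $\varphi_d^{-1}$ applied to $w$ — this is where \ref{lem:Frobmap-3}--\ref{lem:Frobmap-4} are needed to move $\varphi_d$ between $g^*$, $h^*$ and their middle factors. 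The uniqueness statements come from combining the uniqueness in \ref{fact-4} with that in \ref{th:fifi}. The case $L=1$ gives $G^*=H=x$, handled as in \ref{fact-2}. Finally \bare\ref{lemdiv2a-ii/3} is \ref{th:fifi-3} for the reduced collision, with threshold at $L=2$.

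\textbf{The main obstacle} will be the bookkeeping in recomposing: making sure that after stripping $x^{p^j}$ (or $\varphi_d$), applying \ref{th:fifi}, and recomposing, the linear components still assemble into monic original form and the degree arithmetic ($k=p^jK$, $s=p^jS$, $K=M-l\lfloor M/l\rfloor$ versus $k=m-l\lfloor m/l\rfloor$, etc.) is internally consistent — in particular that the stated relation between $k$ and $K$ is forced rather than assumed, and that $w=W^{p^j}$ is genuinely the monic polynomial of degree $s$ demanded. The condition $KW+lxW'\neq 0$ must be traced carefully: it is \ref{eq:unidet} for the \emph{reduced} collision (where $p\nmid M$ or $p\nmid L$ as appropriate makes $\ell c$ of that expression nonzero), and one has to verify it does \emph{not} upgrade to $kw+lxw'\neq 0$ in part \bare\ref{lem:div2a-i} (indeed $w=W^{p^j}$ has $w'=0$, so $kw+lxw'=kw$, which is why \ref{th:fifi-1}'s conclusion is stated ``except \ref{eq:unidet}''), whereas in part \bare\ref{lemdiv2a-ii} it \emph{does} remain $kw+lxw'\neq 0$ because there $w$ is not a $p$th power. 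Separating these two behaviours cleanly is the crux; everything else is a direct translation through \ref{lem:Frobmap}.
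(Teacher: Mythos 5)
Your proposal follows the same three-stage route as the paper's proof: apply \ref{fact} to strip the Frobenius power (producing a reduced collision that satisfies \ref{eq:RST-1}--\ref{eq:RST-3}), invoke \ref{th:fifi} on it, and recompose, using \ref{lem:Frobmap} to push $\varphi_{j}$ or $x^{p^{j}}$ through the compositions. One small gap worth flagging: in (\bare\ref{lemdiv2a-ii/3}) you invoke \ref{th:fifi-3} to cover ``$L\le 2$'', but that result only handles $L=2$ (it requires $l\ge 2$); the paper treats $L=1$ separately, observing that $l=p^{d}$, $k=0$, and setting $w=T_{m}(x,z^{p^{d}})$ so that $T_{\dg}(x,z)=w\circ x^{p^{d}}$ exhibits the Second-Case $f$ directly in First-Case form. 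Also, your remark about why \ref{eq:unidet} is excepted in part \bare\ref{lem:div2a-i} is a touch short of the point: it is not merely that $w'=0$ gives $kw+lxw'=kw$, but that $k=p^{j}K$ with $j\ge1$ forces $kw=0$ in characteristic $p$, so the condition necessarily fails and must be replaced by $KW+lxW'\neq 0$ for the reduced data.
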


\begin{proof}
  \short\ref{lem:div2a-i} We take the quantities $j$, $M$, $G$,
  $G^{*}$, $H^{*}$ from \ref{fact-1} and apply \ref{th:fifi} to the
  collision $G\circ h=G^{*}\circ H^{*}$ in \ref{al:fact}. We start
  with the First Case (\ref{th:fifi-1}). If $M > l$, it yields a monic
  $W \in F [x]$ of degree $\lfloor M/l \rfloor$ and $\parTwo \in F$
  with
  \begin{align}
    &G\circ h=G^{*}\circ h^{*}=(x-\parTwo^{*})\circ x^{Kl}W^{l}(x^{l})\circ (x+\parTwo), \nonumber \\
    & KW + lxW' \neq 0,
  \end{align}
  where $K=M-l\lfloor M/l \rfloor$ and $\parTwo^{*}=\parTwo^{Kl}W^{l}(\parTwo^{l})$.  We
  set $k=p^{j}K$ and $w=W^{p^{j}}$. Then
  \begin{align*}
    f&=g \circ  h =G^{p^{j}}\circ h = x^{p^{j}}\circ G \circ h \\
    &= x^{p^{j}}\circ(x-\parTwo^{*})\circ x^{Kl}W^{l}(x^{l})\circ(x+\parTwo)\\
    &=\bigl( x-(\parTwo^{*})^{p^{j}}\bigr) \circ
    x^{p^{j}Kl}(W^{p^{j}})^{l}(x^{l})\circ(x+\parTwo)\\
    &= (x-\parTwo^{kl}w^{l}(\parTwo^{l}))\circ x^{kl}w^{l}(\parTwo^{l})\circ(x+\parTwo).
  \end{align*}

Furthermore, we have
$$
ls + k = lp^{j}\lfloor M/l \rfloor +p^{j}(M-l\lfloor M/l\rfloor)=m.
$$

If $2 \leq M < l$, we have to reverse the roles of $M$ and $l$ in the
application of \ref{th:fifi-1}. Thus we now find a monic $W \in F[x]$
of degree $\lfloor l/M \rfloor$ and $\parTwo\in F$ with
  $$
  G \circ h = (x-a^{*}) \circ x^{KM}W^{M}(x^{M}) \circ (x+\parTwo),
  $$
  with $K = l-M \lfloor l/M\rfloor$, $ a^{*}=\parTwo^{KM}W^{M}(\parTwo^{M})$, and
  $KW +MxW' \neq 0$. We set $k=p^{j}K$ and $w=W^{p^{j}}$. Then
  \begin{align*}
    f  &= x^{p^{j}} \circ G \circ h = \varphi_{j}(x-a^{*}) \circ
    x^{p^{j}} \circ x^{KM}W^{M}(x^{M}) \circ (x+\parTwo)\\
   &= (x-\parTwo^{kM}w^{M}(\parTwo^{M})) \circ x^{kM} w^{M}(x^{M})
    \circ (x+\parTwo).
  \end{align*}
Furthermore we have 
$$
Ms+k = Mp^{j}\lfloor l/M \rfloor + p^{j}(l-M \lfloor l /M \rfloor) = p^{j}l.
$$  
Since $p \nmid l$, $W$ and $a$ are uniquely determined.

If $M=1$, then $g=x^{p^{j}}$, $f=x^{p^{j}} \circ h =
  \varphi_{j}(h)
  \circ x^{p^{j}}$, and $g^{*}= \varphi_{j}(h)$ by \ref{cor:inj-1}.

  In the Second Case of \ref{th:fifi}, we use $T_{p^j} = x^{p^j}$ from
  \ref{Tsqfree-3}. Now \ref{th:fifi-2} provides $z, \parTwo \in F$
  with $z\neq 0$ and
  \begin{align*}
  G \circ h &= G^{*}\circ H^{*}= (x-T_{lM}(\parTwo,z))\circ
  T_{lM}(x,z)\circ(x+\parTwo),\\
  G &= (x-T_{lM}(a,z)) \circ T_{M}(x,z^{l}) \circ (x+T_{l}(a,z)).
\end{align*}
Since $G' \neq 0$, we have $p\nmid M$, and hence $p \nmid lM$. Thus
$z$ and $a$ are uniquely determined. Furthermore
\begin{align*}
    f&=g\circ h = x^{p^{j}} \circ G\circ h\\
    &=(x^{p^{j}}-(T_{lM}(\parTwo,z))^{p^{j}})\circ T_{lM}(x,z)\circ (x+\parTwo)\\
    &=(x- T_{\dg}(\parTwo,z))\circ x^{p^{j}}\circ T_{lM}(x,z)\circ (x+\parTwo)\\
    &=(x-T_{\dg}(\parTwo,z))\circ T_{\dg}(x,z)\circ(x+\parTwo).
  \end{align*}

In \short\ref{lem:div2a-i/3}, we have $p \nmid lM = p^{-j}\dg$. By
\ref{th:fifi-3}, $G \circ h$ belongs to the First Case if and only if
$~min \{l,M\}=2$.

\short\ref{lemdiv2a-ii} We take $d$, $L$, $G$, $H$, $G^{*}$ from
\ref{fact-2}, and apply \ref{th:fifi} to the collision $G \circ H =
G^{*} \circ h^{*}$. In the First Case, this yields a monic $W \in
F[x]$ of degree $\lfloor m/L \rfloor$ and $\parTwo \in F$ so that the
conclusions of \ref{th:fifi-1} hold for these values, with $k =
m-L \cdot \lfloor m / L \rfloor$. We set $w =
\varphi_{d}(W)$. Then
  \begin{align*}
    \deg G  &= \deg (x^{k}W^{L})= (m-L\cdot \lfloor m / L
    \rfloor) + L \cdot \lfloor m / L \rfloor = m,\\ 
    g  &= \varphi_{d}(G) = \varphi_{d}\bigl ((x-\parTwo^{kL}W^{L}(\parTwo^{L}))\circ
    x^{k}W^{L}
    \circ (x+\parTwo^{L})\bigr )\\
    &= \varphi_{d}(x-\parTwo^{kL}W^{L}(\parTwo^{L})) \circ
    \varphi_{d}(x^{k}W^{L})
    \circ \varphi_{d}(x+\parTwo^{L})\\
    &=(x-\parTwo^{kl}w^{L}(\parTwo^{l})) \circ x^{k}w^{L} \circ (x+\parTwo^{l}).\\
    h &= x^{p^{d}} \circ H = x^{p^{d}} \circ (x-\parTwo^{L}) \circ x^{L}
    \circ
    (x+\parTwo)\\
    &= (x-\parTwo^{l}) \circ x^{l} \circ (x+\parTwo),\\
    g^{*} &= x^{p^{d}} \circ G^{*} = x^{p^{d}} \circ
    (x-\parTwo^{kL}W^{L}(\parTwo^{L}))
    \circ x^{L} \circ (x+\parTwo^{k}W(\parTwo^{L}))\\
    &= (x-\parTwo^{kl} W^{p^{d}L}(\parTwo^{L})) \circ x^{l} \circ (x+\parTwo^{k}W(\parTwo^{L}))\\
    &= (x-\parTwo^{kl}w^{L}(\parTwo^{l}))\circ x^{l} \circ (x+\parTwo^{k}
    \varphi_{d}^{-1}(w)(\parTwo^{L})),\\
    h^{*} &= (x-\parTwo^{k}W(\parTwo^{L})) \circ x^{k}W(x^{L}) \circ (x+\parTwo)\\
    &= (x-\parTwo^{k} \varphi_{d}^{-1}(w)(\parTwo^{L})) \circ x^{k}
    \varphi_{d}^{-1}(w)(x^{L}) \circ (x+\parTwo),\\
f &= (x-a^{kl}w^{L}(a^{l})) \circ x^{kl}w^{L}(x^{l}) \circ (x+a).
  \end{align*}

Furthermore, \ref{lem:Frobmap} implies that
$$
kw + lxw'= k \varphi_{d}(W)+lx \varphi_{d}(W)'=
\varphi_{d}(kW+lxW')\neq 0.
$$
In the Second Case, \ref{th:fifi-2} provides $z,a\in F$ with $z\neq 0$
and
\begin{align*}
g&=\varphi_{d}(G)=\varphi_{d}\bigl((x-T_{mL}(a,z))\circ
T_{m}(x,z^{L})\circ(x+T_{L}(a,z))\bigr)\\
&=\bigl(x-\varphi_{d}(T_{mL}(a,z))\bigr)\circ
\varphi_{d}(T_{m}(x,z^{L}))\circ\bigl(x+\varphi_{d}(T_{L}(a,z))\bigr)\\
&=(x-T_{mL}(a,z)^{p^{d}})\circ
T_{m}(x,(z^{L})^{p^{d}})\circ(x+T_{L}(a,z)^{p^{d}})\\
&=(x-T_{\dg}(a,z))\circ T_{m}(x,z^{l})\circ (x+T_{l}(a,z)),\\
h&=x^{p^{d}}\circ H=x^{p^{d}}\circ(x-T_{L}(a,z))\circ
T_{L}(x,z)\circ(x+a)\\
&=(x-T_{l}(a,z))\circ T_{l}(x,z)\circ (x+a),\\
g^{*}&=x^{p^{d}}\circ G^{*}=x^{p^{d}}\circ(x-T_{Lm}(a,z))\circ
T_{L}(x,z^{m})\circ(x+T_{m}(a,z))\\
&=(x-T_{\dg}(a,z))\circ x^{p^{d}}\circ T_{L}(x,z^{m})\circ
(x+T_{m}(a,z))\\
&=(x-T_{\dg}(a,z))\circ T_{l}(x,z^{m})\circ (x+T_{m}(a,z)),\\
h^{*}&=(x-T_{m}(a,z))\circ T_{m}(x,z)\circ (x+a),\\
f&=(x-T_{\dg}(a,z))\circ T_{\dg}(x,z)\circ (x+a).
\end{align*}

\short\ref{lemdiv2a-ii/3} follows from \ref{th:fifi-3} for $L\geq
2$. If $L=1$, then $l=p^{d}$ and $k=0$ in \short\ref{lemdiv2a-ii/1}. For
any
$$
f=(x-T_{\dg}(a,z))\circ T_{\dg}(x,z)\circ (x+a)
$$
in \short\ref{lemdiv2a-ii/2}, we take $w=T_{m}(x,z^{p^{d}})$. Then 
\begin{align*}
T_{\dg}(x,z)&=T_{m}(x,z^{p^{d}})\circ T_{p^{d}}(x,z)=w\circ
x^{p^{d}},\\
f&=(x-w(a^{l}))\circ w(x^{l})\circ (x+a),
\end{align*}
which is an instance of \short\ref{lemdiv2a-ii/1}.
\end{proof}

If $p \nmid \dg$, then the case where $\gcd (l,m) \neq 1$ is reduced
to the previous one by the following result of \cite{tor88a}. We will
only use the special case where $l=l^{*}$ and $m = m^{*}$.

\begin{fact}
Suppose we have a field $F$ of characteristic $p \geq 0$, integers $l,
l^{*}, m, m^{*}\\ \geq 2$ with $p \nmid lm$, monic original
polynomials $g, h, g^{*}, h^{*} \in F[x]$ of degrees $m, l, l^{*},
m^{*}$, respectively, with $g \circ h = g^{*} \circ
h^{*}$. Furthermore, let $i = \gcd (m, l^{*})$ and $j=\gcd
(l,m^{*})$. Then the following hold.
\begin{enumerate}
\item\label{fact:monopo-1}
There exist monic original polynomials $u, v, \tilde{g}, \tilde{h},
\tilde{g}^{*}, \tilde{h}^{*} \in F[x]$ of degrees $i, j, m/i, l/j,
l^{*}/i, m^{*}/j$, respectively, so that
\begin{align}\label{al:monopo}
g &= u \circ \tilde{g},\nonumber\\
h&= \tilde{h} \circ v,\\
g^{*}& = u \circ \tilde{g}^{*},\nonumber \\
h^{*}&= \tilde{h}^{*} \circ v.\nonumber
\end{align}
\item\label{fact:monopo-2}
Assume that $l = l^{*} < m= m^{*}$. Then $i=j$ and $m/i, l/i,
\tilde{f}=\tilde{g} \circ \tilde{h}, \tilde{g}, \tilde{h},
\tilde{g}^{*}, \tilde{h}^{*}$ satisfy the assumptions of \ref{th:fifi}.
\end{enumerate}
\end{fact}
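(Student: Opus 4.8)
The plan is to prove the two claims of the final Fact separately, each by following the standard "peeling off a common component" argument, and then to specialize part~\ref{fact:monopo-2} to the case $l=l^*$, $m=m^*$ so as to feed it into \ref{th:fifi}.

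\textbf{Proof of \ref{fact:monopo-1}.} First I would use the classical fact that, over a field where $p \nmid \deg f$, the set of (equivalence classes of) complete decompositions of a polynomial is governed by its monodromy group, and in particular two decompositions of the \emph{same} length that agree in the leftmost component up to equivalence can be compared. But the cleaner route, following \cite{tor88a}, is the gcd argument at the level of the associated field extensions: $g\circ h = g^*\circ h^*$ corresponds to two intermediate fields $F(x^{1/\text{something}})$... more concretely, writing $y$ for the common composition $f=g\circ h$, the subfields $F(h) \supseteq F(y)$ and $F(h^*)\supseteq F(y)$ of $F(x)$ have the property that $F(h)\cap F(h^*) =: F(v)$ with $v$ monic original, and by Lüroth $\deg v = \gcd(\deg h, \deg h^*) = \gcd(l,m^*) = j$ after using $p\nmid lm$ and the tower law; symmetrically $F(g^*)\cap F(g) = F(u)$ with $\deg u = \gcd(\deg g,\deg g^*)=\gcd(m,l^*)=i$. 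Then $h$ factors through $v$, i.e.\ $h = \tilde h\circ v$, and $h^*=\tilde h^*\circ v$; likewise $g=u\circ\tilde g$, $g^*=u\circ\tilde g^*$. Degree bookkeeping gives $\deg\tilde g = m/i$, $\deg\tilde h = l/j$, $\deg\tilde g^* = l^*/i$, $\deg\tilde h^* = m^*/j$, and after normalizing (\ref{invariance1}) one may take $u,v,\tilde g,\tilde h,\tilde g^*,\tilde h^*$ all monic and original. This is exactly \ref{al:monopo}.

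\textbf{Proof of \ref{fact:monopo-2}.} Now assume $l=l^*<m=m^*$. Then $i=\gcd(m,l)$ and $j=\gcd(l,m)$, so $i=j$. From \ref{al:monopo}, $u\circ\tilde g\circ\tilde h\circ v = g\circ h = g^*\circ h^* = u\circ\tilde g^*\circ\tilde h^*\circ v$; cancelling the outer $u$ on the left and $v$ on the right (both are injective as polynomial maps, and one argues at the level of $F(x)$) yields $\tilde g\circ\tilde h = \tilde g^*\circ\tilde h^* =: \tilde f$. Set $\tilde l = l/i$, $\tilde m = m/i$; then $\deg\tilde g = \tilde m$, $\deg\tilde h = \deg\tilde g^* = \tilde l$, $\deg\tilde h^* = \tilde m$, and $\deg\tilde f = \tilde l\tilde m$. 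I need to check the hypotheses of \ref{th:fifi}: that $\gcd(\tilde l,\tilde m)=1$, that $\tilde m>\tilde l\geq 2$, that $p\nmid\tilde l\tilde m$, and that $\tilde f,\tilde g,\tilde h,\tilde g^*,\tilde h^*$ are monic original with $\tilde f = \tilde g\circ\tilde h = \tilde g^*\circ\tilde h^*$. The coprimality $\gcd(\tilde l,\tilde m)=1$ is the whole point of extracting $i$: after dividing out the gcd, no common factor remains, which I would justify by the minimality in the Lüroth/gcd construction (if $\tilde l$ and $\tilde m$ shared a prime, it would force a further common intermediate field, contradicting that $F(v)=F(h)\cap F(h^*)$ was the full intersection). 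The inequalities $\tilde m > \tilde l$ follow from $m>l$ together with $i\mid l$, and $\tilde l\geq 2$ needs a small separate observation: if $\tilde l=1$ then $\tilde h$ is linear, hence $h=\tilde h\circ v$ has the same degree as $v$, forcing $i=l$, i.e.\ $l\mid m$, which is excluded once we are in the genuinely two-component situation (or handled trivially); I would state this as the boundary case and dispatch it. Finally $p\nmid\tilde l\tilde m$ is immediate from $p\nmid lm$ and $\tilde l\tilde m \mid lm$, and \ref{eq:RST-2} (namely $\tilde g'(\tilde g^*)'\neq 0$) follows because $p\nmid\deg\tilde g$ and $p\nmid\deg\tilde g^*$, both being tame. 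Normality of all the tilde-components is preserved by the construction, so all of \ref{eq:RST-1}--\ref{eq:intersec} hold and \ref{th:fifi} applies.

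\textbf{Main obstacle.} The delicate point is the coprimality $\gcd(\tilde l,\tilde m)=1$ in part~\ref{fact:monopo-2} and, underlying it, the precise Lüroth-type argument that identifies $\deg(F(h)\cap F(h^*))$ with $\gcd(l,m^*)$ — this genuinely uses $p\nmid lm$ (so that the relevant field extensions are separable and the degree multiplicativity behaves well) and is exactly where \cite{tor88a}'s hypothesis is consumed. I expect to cite \cite{tor88a} for \ref{fact:monopo-1} rather than reprove it, and then the extra work for \ref{fact:monopo-2} is only the cancellation of $u$ and $v$ plus the routine verification that the reduced data satisfy the numbered hypotheses of \ref{th:fifi}; the one thing requiring genuine care there is ruling out $\tilde l = 1$ (equivalently $\tilde m = \deg\tilde f$), i.e.\ making sure we really land in a situation Ritt's Second Theorem addresses and not a degenerate one.
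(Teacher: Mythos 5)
Your overall strategy matches the paper's: cite \cite{tor88a} for part~\ref{fact:monopo-1}, then for part~\ref{fact:monopo-2} strip $u$ and $v$ and verify the hypotheses of \ref{th:fifi}. Two substantive corrections, plus a credit.

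First, you identify the coprimality $\gcd(\tilde l,\tilde m)=1$ as the ``delicate point'' and propose to justify it by a Lüroth-type minimality of the intermediate field. This is a misdiagnosis: $\tilde l = l/i$, $\tilde m = m/i$ with $i=\gcd(l,m)$, so $\gcd(\tilde l,\tilde m)=1$ is a one-line arithmetic fact (any common prime factor of $l/i$ and $m/i$ would contradict maximality of $i$). No field theory is involved; the paper records it in half a sentence. The Lüroth/intersection-of-fields argument belongs entirely to part~\ref{fact:monopo-1}, which you correctly plan to outsource to Tortrat.

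Second, the actual subtlety in part~\ref{fact:monopo-1} is one your sketch does not address: \cite{tor88a} proves \ref{al:monopo} only over an algebraically closed field, and without the monic-original normalization. The paper first normalizes the six components to be monic original, which makes them uniquely determined over $\bar F$, and then invokes rationality of tame decomposition (valid because $p\nmid \dg$; \cite{sch00c} I.3, Thm.~6, or \cite{kozlan89}/\cite{gat90c}) to conclude they actually lie in $F[x]$. Without this descent step the Fact is only proved over $\bar F$. Relatedly, ``both injective as polynomial maps'' is not the right justification for stripping $u$ and $v$: $u$ need not be injective as a map. What one uses is injectivity of $\gamma_{\cdot,e}$ for $p\nmid e$ (\ref{cor:inj-1}), applied with $e=i$ to cancel the outer $u$, and then with $e=\tilde l\tilde m$ to cancel the inner $v$; both exponents are coprime to $p$ because $p\nmid lm$.

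Your observation about the boundary case $\tilde l=1$ (equivalently $l\mid m$) is a fair one: there $\tilde l\geq 2$ fails and \ref{th:fifi} does not literally apply, so the statement of part~\ref{fact:monopo-2} is slightly loose. The paper does not qualify the Fact but absorbs the issue at the point of use, separating the case $l\mid m$ explicitly in the proof of \ref{thm:FFC}.
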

\begin{proof}
  \short\ref{fact:monopo-1} \cite{tor88a} proves the claim if $F$ is
  algebraically closed, but without the condition of being monic original. Thus we have four decompositions
  \ref{al:monopo} over an algebraic closure of $F$. We may choose all
  six components in \ref{al:monopo} to be monic original. They are
  then uniquely determined. Since $p \nmid \dg$, decomposition is
  rational; see \cite{sch00c}, I.3, Theorem 6, and \cite{kozlan89} or
  \cite{gat90c} for an algorithmic
  proof. It follows that the six components are in $F[x]$. 

\short\ref{fact:monopo-2} We have $\gcd(l/i, m/i)=1$, and
$$
f = (u \circ \tilde{g}) \circ (\tilde{h} \circ v) = (u \circ
\tilde{g}^{*})\circ (\tilde{h}^{*} \circ v).
$$
The uniqueness of tame decompositions (\ref{cor:inj}) implies that
$\tilde{g} \circ \tilde{h} = \tilde{g}^{*} \circ \tilde{h}^{*}$. The
other requirements are immediate.
\end{proof}

\citeauthor{tor88a}'s result, together with the preceding material,
determines $D_{\dg,l} \cap D_{\dg,m}$ completely, if $p \nmid \dg =
lm$.
\begin{corollary}\label{thm:FFC}
  Let $\mathbb{F}_{q}$ be a finite field of characteristic $p$, and
  let $m > l \geq 2$ be integers with $p \nmid \dg = lm$, $i =
  \gcd(l,m)$ and $s=\lfloor m/l \rfloor$.  Let $t=\#(D_{\dg,l} \cap
  D_{\dg,m})$. Then the following hold.
\begin{enumerate}
\item\label{thm:FFC-1}\begin{align*}
 t=
\begin{cases}
q^{2l+s-1}(1-q^{-1}) & \text{if } l \mid m,\\
q^{2i}(q^{s+1}+(1- \delta_{l,2})
(q^{2}-q))(1-q^{-1}) & \text{otherwise}.
\end{cases}
\end{align*}
\item\label{thm:FFC-2} 
$$
t\leq 2q^{2l+s-1}(1-q^{-1}).
$$
\end{enumerate}
\end{corollary}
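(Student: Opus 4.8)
Looking at this, I need to prove Corollary \ref{thm:FFC}, which counts $D_{\dg,l} \cap D_{\dg,m}$ over a finite field when $p \nmid \dg = lm$. The earlier machinery (Tortrat's fact just stated, together with \ref{th:fifi} and \ref{lem:div-b}) gives me almost everything.

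\medskip

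\textbf{Plan of proof.}

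The plan is to split into two cases according to whether $l \mid m$ or not, and in each case reduce to results already proved.

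First, suppose $l \mid m$, so $i = \gcd(l,m) = l$. Then $D_{\dg,l} \cap D_{\dg,m} \subseteq D_{\dg,l}$, and I claim in fact every $f \in D_{\dg,l}$ that is also in $D_{\dg,m}$ arises from a collision where a common left component $u$ of degree $l$ has been split off. Concretely, by \citeauthor{tor88a}'s fact part \ref{fact:monopo-1} (applied with $l=l^*$, $m=m^*$, so $i=j$), any collision with $\deg g^* = l$, $\deg h = l$ factors through $g = u \circ \tilde g$, $h = \tilde h \circ v$, $g^* = u \circ \tilde g^*$, $h^* = \tilde h^* \circ v$, with $\deg u = \deg v = l$ (since $i = l$ forces $\deg \tilde g = m/l$, $\deg \tilde h = 1$, so $\tilde h$ is linear and can be absorbed). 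Thus $f = u \circ \tilde g \circ \tilde h \circ v$ with $\tilde g$ of degree $m/l$, and the data $(u, \tilde g \circ \tilde h \circ v)$ or equivalently the pair (monic original $u$ of degree $l$; monic original polynomial of degree $n/l = m$) determines $f$, injectively up to the left linear action. Since \ref{cor:inj-1} gives that the outer composition map $\gamma_{\dg, l}$ is injective in this tame situation, counting gives $\#P^=_l \cdot \#P^0_m$ after accounting for linear normalization on the far left: $q^{l+1}(1-q^{-1}) \cdot q^{m/l \cdot l - 1}$... I need to be careful here and count $q^{2l + s - 1}(1 - q^{-1})$; I expect this comes out as (linear left action: $q^2(1-q^{-1})$) times ($\#$ of monic original $u$ of degree $l$: $q^{l-1}$) times ($\#$ of monic original degree-$m$ right factors: $q^{m-1}$)... wait, $s = \lfloor m/l\rfloor = m/l$ here, and I need exponent $2l + s - 1$, so the bookkeeping must instead be: pick monic original $g$ of degree $m$ ($q^{m-1}$ choices, but $m \ne s$...). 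I will recount carefully using the parametrization $f = (x - c) \circ x^{kl}w^l(x^l) \circ \cdots$ restricted appropriately, or directly: when $l \mid m$ we are in the ``First Case'' degenerate form and the count should match \ref{lem:div-b}\ref{lem:div-1} adjusted.

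\medskip

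Second, suppose $l \nmid m$, so $\gcd(l,m) = i < l$. Here I reduce to \ref{lem:div-b}. By \citeauthor{tor88a}'s fact part \ref{fact:monopo-2}, every collision with $\deg h = \deg g^* = l$ descends, after splitting off common components $u$ (degree $i$) on the left and $v$ (degree $i$) on the right, to a collision of $\tilde f = \tilde g \circ \tilde h = \tilde g^* \circ \tilde h^*$ with $\deg \tilde h = \deg \tilde g^* = l/i$, $\deg \tilde g = \deg \tilde h^* = m/i$, and $\gcd(l/i, m/i) = 1$, $m/i > l/i \geq 2$ (the latter since $l \nmid m$ means $l/i \geq 2$), and $\lfloor (m/i)/(l/i) \rfloor = \lfloor m/l \rfloor = s$. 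Moreover $p \nmid n/i^2 = (l/i)(m/i)$. Applying \ref{lem:div-1}, the number of such $\tilde f$ (with components monic original, up to left linear action counted) is $(q^{s+3} + (1 - \delta_{l/i, 2})(q^4 - q^3))(1 - q^{-1})$, and $\delta_{l/i,2}$: note $l/i = 2 \iff l = 2i$; I need to check this matches $\delta_{l,2}$ — actually the statement uses $\delta_{l,2}$, so I need $l = 2 \iff l/i = 2$ in the relevant range, which holds when $i = 1$; when $i > 1$ and $l = 2i$ the two differ, so I should double-check whether the corollary statement's $\delta_{l,2}$ is really intended or whether it should be $\delta_{l/i,2}$ — I suspect the cases $l \nmid m$ with $l$ even but $l \ne 2$ need separate attention, but since $l \geq 2$ and $l \mid n$ with $l$ the smallest prime divisor in the application, $l$ is prime, so $i \in \{1, l\}$; with $l \nmid m$ we get $i = 1$, hence $\delta_{l/i,2} = \delta_{l,2}$ and all is consistent. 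I'll note this. Then I multiply the count of descended collisions by the number of ways to reattach $u$ (monic original of degree $i$: $q^{i-1}$ choices) and $v$ (monic original of degree $i$: $q^{i-1}$ choices), giving the factor $q^{2i - 2}$. This yields $t = q^{2i}(q^{s+1} + (1 - \delta_{l,2})(q^2 - q))(1 - q^{-1})$ after rewriting $q^{2i-2} \cdot (q^{s+3} + \dots) = q^{2i}(q^{s+1} + (1 - \delta_{l,2})(q^2-q))$. I must verify injectivity of the reattachment: different $(u, v, \tilde f)$ give different $f$ — this follows because $u, v$ are recovered as the unique common left/right factors of the appropriate degree, and then $\tilde f$ is determined.

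\medskip

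For part \ref{thm:FFC-2}: in the case $l \mid m$, $t = q^{2l+s-1}(1-q^{-1}) \leq 2q^{2l+s-1}(1-q^{-1})$ trivially. In the case $l \nmid m$, $i \geq 1$ so $q^{2i} \leq q^{2l}$... no wait, I need $i \leq l$, giving $q^{2i}(q^{s+1} + q^2 - q)(1-q^{-1}) \leq q^{2l}(q^{s+1} + q^2)(1-q^{-1})$, and since $m > l$ forces $s \geq 1$ so $q^{s+1} \geq q^2$, hence $q^{s+1} + q^2 \leq 2q^{s+1}$, giving $t \leq 2q^{2l + s + 1}(1-q^{-1})$ — but I want $2q^{2l+s-1}$. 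So I must be more careful: when $l \nmid m$ and $l$ prime, $i = 1$, so $t = q^2(q^{s+1} + (1-\delta_{l,2})(q^2-q))(1-q^{-1})$; with $s \geq 1$, $q^2 - q < q^2 \leq q^{s+1}$, so $t \leq q^2 \cdot 2q^{s+1}(1-q^{-1}) = 2q^{s+3}(1-q^{-1}) \leq 2q^{2l+s-1}(1-q^{-1})$ since $l \geq 2 \Rightarrow 2l - 1 \geq 3$. Good — the bound works precisely because $i = 1$ in the non-divisor case for prime $l$, and I should state this reduction at the outset (or handle general $i$ with the bound $q^{2i} \leq q^{2l}$ and a cruder estimate, then note $2l \geq l + i + 1$... this needs $i \leq l - 1$, true, and $s \geq 1$).

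\medskip

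\textbf{Main obstacle.} The delicate point is the bookkeeping of the left/right linear and common-component actions and confirming that all counts are exact (injective parametrizations), especially matching the exponent $2l + s - 1$ in the $l \mid m$ case and reconciling $\delta_{l,2}$ versus $\delta_{l/i,2}$. The structural content is entirely supplied by \citeauthor{tor88a}'s fact and \ref{lem:div-b}; the work is purely combinatorial reindexing, and the risk is an off-by-a-power-of-$q$ error in tracking which normalizations (monic, original, left-linear) have been quotiented out at each stage.
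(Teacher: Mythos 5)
Your approach is the paper's: reduce via Tortrat's splitting-off fact to the coprime case and invoke \ref{lem:div-b}. The $l\nmid m$ branch and part \short\ref{thm:FFC-2} are handled correctly, and your remark about $\delta_{l,2}$ versus $\delta_{l/i,2}$ is well taken---they agree exactly when $i=1$, which holds whenever $l$ is prime and $l\nmid m$, the situation in all of the paper's applications (as literally stated, the formula really should carry $\delta_{l/i,2}$, a harmless imprecision).

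The genuine gap, which you flag but do not close, is the $l\mid m$ case. Your interim parametrization---a pair consisting of a monic original $u$ of degree $l$ and an \emph{arbitrary} monic original polynomial of degree $m$---overcounts: it would give exponent $l+m$, not $2l+s-1$. The degree-$m$ right factor is not arbitrary; it must itself decompose. The fix is exactly the observation you gesture at but never carry out: when $l\mid m$ the refinement from \ref{fact:monopo-1} has $\deg\tilde h=\deg\tilde g^{*}=l/i=1$, so $\tilde h=\tilde g^{*}=x$ and $f$ factors as a chain $u\circ w\circ v$ with $u,v\in P_{l}^{0}$ and $w\in P_{m/l}^{0}$. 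All composition maps involved are injective (by \ref{cor:inj-1}, since $p\nmid\dg$), so $\#T=(q^{l-1})^{2}\cdot q^{m/l-1}=q^{2l+s-3}$ and $t=q^{2}(1-q^{-1})\#T=q^{2l+s-1}(1-q^{-1})$. Equivalently, in the paper's framework the descended set is $U=D_{m/l,1}\cap D_{m/l,m/l}\cap P^{0}_{m/l}$, which is all of $P^{0}_{m/l}$ because $D_{m/l,1}$ and $D_{m/l,m/l}$ are trivially the whole degree-$m/l$ slice; hence $\#U=q^{s-1}$ and $t=q^{2}(1-q^{-1})\cdot q^{2i-2}\cdot q^{s-1}$ with $i=l$. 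So the plan is sound, but the $l\mid m$ count must be recomputed as above before the proof is complete.
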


\begin{proof}
\short\ref{thm:FFC-1} Let $T = D_{\dg,l} \cap D_{\dg,m} \cap
D_{\dg}^{0}$ consist of the monic original polynomials in the
intersection, and similarly $U =
D_{\dg/i^{2}, l/i} \cap D_{\dg/i^{2}, m/i}\cap D^{0}_{\dg/i^{2}}$. Then
\ref{fact:monopo-2} implies that $T = P_{i}^{0} \circ U \circ
P_{i}^{0}$, using $G \circ H = \{g \circ h \colon g \in G, h \in H\}$
for sets $G, H \subseteq F[x]$. Furthermore, the composition maps
involved are injective. Thus
\begin{align*}
 \#T& = (\#P_{i}^{0})^{2}\cdot\# U = q^{2i-2} \cdot\#U,\\
 \#(D_{\dg,l} \cap D_{\dg,m}) &= q^{2}(1-q^{-1}) \cdot q^{2i-2}\cdot\#U.
\end{align*}

If $l \nmid m$, then $l/i \geq 2$ and from \ref{lem:div-1} we have
$$
\#U = \frac{q^{-2}}{1-q^{-1}}
\cdot(q^{s+3}+(1-\delta_{l,2})(q^{4}-q^{3}))(1-q^{-1}),
$$
which implies the claim in this case. If $l \mid m$, then $l/i = 1$ and
\ref{lem:div-b} is inapplicable. Now
\begin{align*}
U &= D_{m/l,1}\cap D_{m/l, m/l}\cap P^{0}_{m/l}= P^{0}_{m/l},\\
\#U  &= \#P^{0}_{m/l}= q^{m/l-1}= q^{s-1},
\end{align*}
which again shows the claim.

\short\ref{thm:FFC-2} We have $q^{2}\leq q^{s+1}$, and if $l\nmid m$,
then $2i\leq l\leq 2l-2$.
\end{proof}

This result shows that there are more polynomials in the intersection
when $l^{2}\mid \dg$ than otherwise.

We now have determined the size of the intersection if either $p\nmid
\dg$ or $~gcd(l,m)=1$. It remains a challenge to do this with the same
precision when both
conditions are violated. The following approach yields a rougher estimate.
\begin{theorem}\label{thm:div2a}
  Let $F$ be a field of characteristic $p\geq 2$, let $l, m ,\dg
  \geq 2$ be integers with $p\mid \dg = lm$, and set $T =D_{\dg, l}
  \cap D_{\dg,m} \cap D_{\dg}^{+}$.  Then the following hold.
  \begin{enumerate}
  \item\label{lem:div-6} If $p \nmid l$, then for any monic original
    $f \in T$ there exist monic original $g^{*}$ and $h^{*}$ in
    $F[x]$ 
    of degrees $l$ and $m$, respectively, with $f = g^{*} \circ
    h^{*}$, $(g^{*})'(h^{*})' \neq 0$, and $0\leq\deg(h^{*})' < m-l$.
    
  \item\label{lem:div-7} If $p \mid l$, then for any monic original
    $f\in T$ there exist monic original $g$ and $h\in F[x]$ of
    degrees $m$ and $l$, respectively, with $f=g\circ h$ and $\deg
    g'\leq \ m-(m+1)/l$.
  \end{enumerate}
\end{theorem}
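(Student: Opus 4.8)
The plan is to reduce the general collision $f = g \circ h = g^{*} \circ h^{*}$ with $\gcd(l,m)$ possibly nontrivial to the coprime-degree situation already handled by \ref{th:fifi} and \ref{lem:div2a}, and then read off the degree bounds on the derivatives of the extremal components. First I would invoke \ref{fact:monopo-1}: since $p \nmid lm$ is \emph{not} assumed here (we have $p \mid \dg$), I need to split into the two stated cases so that the relevant degree is coprime to $p$. In part \short\ref{lem:div-6}, where $p \nmid l$, I would pass to a decomposition $(g^{*}, h^{*})$ of degrees $l$ and $m$ in which $g^{*}$ is ``as reduced as possible'': concretely, strip off any Frobenius factor $x^{p^{d}}$ from $h^{*}$ (using \ref{fact-1} or \ref{lem:div2a-i} as appropriate, via $f \in D_{\dg}^{+}$ to exclude the fully-Frobenius degenerate subcase) until the inner component is no longer in $F[x^{p}]$. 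After this reduction $(h^{*})' \neq 0$; and $(g^{*})' \neq 0$ because $p \nmid l = \deg g^{*}$. For the bound $\deg (h^{*})' < m - l$, I would argue as follows: if $h^{*}$ were of the form $x^{m} + c x^{i} + O(x^{i-1})$ with $i > m - l$ and $c \neq 0$ (so the second-highest nonzero term sits in degree $> m-l$), then Ritt's Second Theorem in the normal form \ref{th:fifi} would force this term to come from the First or Second Case templates $x^{kl}w^{l}(x^{l})$ or $T_{n}$, both of which (by \ref{Tsqfree-2} and the visible gap of $l$ in the exponents of $x^{kl}w^{l}(x^{l})$) have their two top terms separated by at least $l$; contradiction unless the term vanishes, i.e.\ $\deg (h^{*})' \leq m - l - 1$, hence $0 \leq \deg (h^{*})' < m - l$. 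The inequality $\deg (h^{*})' \geq 0$ just records $(h^{*})' \neq 0$.

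For part \short\ref{lem:div-7}, where $p \mid l$ (so $p$ need not divide $m$), I would symmetrically pass to a decomposition $(g, h)$ of degrees $m$ and $l$ with $g$ reduced: strip Frobenius factors from the \emph{inner} side on the $h$-branch using \ref{fact-2} and \ref{lemdiv2a-ii}, so that $g' \neq 0$ (again $f \in D_{\dg}^{+}$ rules out the degenerate case $L = 1$ making everything Frobenius). The bound $\deg g' \leq m - (m+1)/l$ should come from the explicit First-Case normal form for $g$ in \ref{lemdiv2a-ii/1}, namely $g = (x - c) \circ x^{k} w^{L} \circ (x + \parTwo^{l})$ with $w$ of degree $\lfloor m/L \rfloor$ where $L = l/p^{d}$ and $m = L\lfloor m/L\rfloor + k$: the second-highest term of $x^{k} w^{L}$ lies in degree $m - L \leq m - (m+1)/l$ once one uses $L \geq (m+1)/l$, which follows from $L \mid l$ together with $m \geq 2$ and a short arithmetic check on $\lfloor m/L\rfloor$; in the Second Case one uses instead the Dickson form $g = (x - T_{\dg}(a,z)) \circ T_{m}(x, z^{l}) \circ (x + T_{l}(a,z))$ and \ref{Tsqfree-2}, whose top gap is $2$, and one checks $2 \leq (m+1)/l$ cannot fail in a way that breaks the bound, or handles the small residual cases directly. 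Throughout, the passage between ``general'' and ``monic original'' components is by left-composition with a linear polynomial, as in \ref{invariance1}, which changes neither decomposability nor the degree of a derivative.

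The main obstacle I anticipate is bookkeeping the Frobenius-stripping reduction cleanly: \ref{fact-1}, \ref{fact-2}, and \ref{lem:div2a} are stated for $\gcd(l,m) = 1$, so before applying them I must first run \ref{fact:monopo-1} to factor out the common part $u$ (degree $i = \gcd(l,m)$ or $\gcd$ of the crossed degrees), reducing to degrees $l/i$ and $m/i$ which are coprime — but then I have to make sure the derivative bound transfers back through the outer composition with $u$. Since $(g \circ h)' = (g' \circ h)\cdot h'$ and $\deg$ behaves multiplicatively under composition, the degree of $g'$ for the composite is $\deg(u' \circ \tilde g)\deg \tilde g' $-type expression, so I would track it as $\deg(\text{composite }g)' = (\deg u - 1)(\deg \tilde g) + \deg \tilde g'$ when $u' \neq 0$, and handle $u' = 0$ (i.e.\ $u$ itself Frobenius) by folding it into the Frobenius part and recursing — exactly the mechanism \ref{lem:div2a} already encapsulates. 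The residual verification that the arithmetic inequalities ($i < m-l$ after stripping, $L \geq (m+1)/l$, $2 \leq (m+1)/l$ in the Dickson subcase) hold, or else that the exceptional small cases satisfy the claimed bounds by inspection, is routine but must be done carefully.
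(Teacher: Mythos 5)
Your proposal is a genuinely different route from the paper's, but it does not work, and the paper's actual proof is dramatically simpler.

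The paper's proof never invokes Ritt's Second Theorem, \ref{th:fifi}, \ref{lem:div2a}, Frobenius stripping, or \ref{fact:monopo-1}. It is a pure degree count on $f'=(g'\circ h)\cdot h'=((g^{*})'\circ h^{*})\cdot(h^{*})'$. Since $f\in D_{\dg}^{+}$ gives $f'\neq 0$, every factor on both sides is nonzero. In part~\short\ref{lem:div-6}, $p\mid m=\deg g$ forces $\deg g'\leq m-2$, while $p\nmid l=\deg g^{*}$ gives $\deg(g^{*})'=l-1$ exactly; equating the two expressions for $\deg f'$ and using $\deg h'\leq l-1$ yields $\deg(h^{*})'\leq m-l-1$. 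Part~\short\ref{lem:div-7} is the symmetric computation with $\deg(g^{*})'\leq l-2$. No normal form or coprimality hypothesis enters.

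Your route has two concrete obstructions. First, \ref{fact:monopo-1} (Tortrat's reduction) requires $p\nmid lm$; here the hypothesis is $p\mid\dg=lm$. You notice the tension, but splitting into cases does not repair it --- the hypothesis of the cited fact is violated regardless of which case you are in, so you cannot legitimately factor out the common $u$ and pass to coprime inner degrees. Without that step, \ref{th:fifi}, \ref{fact-1}, \ref{fact-2}, and \ref{lem:div2a} (all of which assume $\gcd(l,m)=1$) are also unavailable. Second, even granting the normal form, your ``gap of at least $l$'' argument for $\deg(h^{*})'<m-l$ is not correct as stated: in the First Case template the inner component is $x^{kl}w^{l}(x^{l})\circ(x+\parTwo)$, and conjugation by $x+\parTwo$ generically reintroduces terms at \emph{every} degree between $m-1$ and $0$ via binomial coefficients. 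The top ones happen to vanish because $p\mid m$ kills $\binom{m}{j}\bmod p$ for small $j$, but proving that their vanishing persists down to degree $m-l+1$ is a nontrivial Lucas-theorem argument you would have to supply and which depends on the interplay of $l$ and the $p$-adic expansion of $m$. The derivative degree count sidesteps all of this and works uniformly, with no case analysis at all.
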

\begin{proof} We take a collision \ref{eq:intersec} and its derivative
  \ref{eq:g*}.  Since $f \in D_{\dg}^{+}$, we have $f' \neq 0$.

  \short\ref{lem:div-6} Since $p\mid m$, we have $\deg g' \leq m-2$, $(h^{*})' \neq 0$, and
  $\deg(h^{*})' \geq 0$, so that
  \begin{align*}
    \dg - m + \deg(h^{*})' &=(l-1) \cdot m + \deg(h^{*})' = \deg
    f'\\
   &\leq (m-2)\cdot l+l-1 = \dg-l-1,\\
   0&\leq\deg(h^{*})' < m-l.
  \end{align*}
  
  \short\ref{lem:div-7} We have $g'h' \neq 0$,  $\deg(g^{*})'\leq l-2$, $\deg h' \geq 0$,
  and
  \begin{align*}
    l \cdot \deg g' &\leq l \cdot \deg g' + \deg h' = \deg f'
    \\
    &\leq (l-2)\cdot m + m - 1 = lm - m -1,
    \\
    \deg g' &\leq m - \frac{m+1}{l}.\qed
  \end{align*}
\end{proof}
We deduce the following upper bounds on $\#T$.

\begin{corollary}
  \label{cor:ffchar}
  Let $\mathbb{F}_{q}$ be a finite field of characteristic $p$, $l$ a
  prime number dividing $m >l$, assume that $p \mid \dg = lm$, and set
  $t =\#(D_{\dg, l} \cap D_{\dg,m}\cap D_{n}^{+})$.  Then the
  following hold.
  \begin{enumerate}
  \item\label{cor:ffchar-2} If $p \nmid l$, then
    $$
    t \leq q^{m+\lceil l/p \rceil}(1-q^{-1}).
    $$
  \item 
    \label{cor:ffchar-3}
    If $p\mid l$, we set $c=\lceil (m-l+1)/l\rceil$. Then
    $$ 
    t\leq q^{m+l-c+\lceil c/p\rceil}(1-q^{-1}).
    $$
    If $l\mid m$, then $c=m/l$.
 \end{enumerate}
\end{corollary}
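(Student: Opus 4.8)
The corollary is the finite-field count extracted from \ref{thm:div2a}. Both parts follow the same template: fix a monic original $f$ in the intersection $T$, use \ref{thm:div2a} to produce a decomposition $(g^{*},h^{*})$ (resp. $(g,h)$) with a constrained derivative, then count how many such pairs there can be, and finally multiply by the number of linear polynomials one can compose on the left (to pass from monic original $f$ back to arbitrary monic $f$ of the right shape). The point of the derivative constraint is exactly that it forces one component to lie in a proper subspace, which is what cuts the exponent down.

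First I would treat \short\ref{cor:ffchar-2}. Here $p\nmid l$, so by \ref{lem:div-6} every monic original $f\in T$ can be written $f=g^{*}\circ h^{*}$ with $g^{*},h^{*}$ monic original of degrees $l,m$, with $(g^{*})'(h^{*})'\neq 0$ and $0\le\deg (h^{*})'<m-l$. The map sending a monic original $f\in T$ to this $(g^{*},h^{*})$ is well-defined (the components are uniquely determined, being monic original of coprime-free-... — more precisely, given $f$ the pair of degrees $(l,m)$ with $l$ prime gives a unique such decomposition when it exists, as in \ref{cor:inj-1}; here $p\nmid l$ so $\gamma_{\dg,l}$ is injective). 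So $\#T$ is at most the number of admissible pairs $(g^{*},h^{*})$. The polynomial $h^{*}$ is monic original of degree $m$ with $\deg (h^{*})'<m-l$; writing $h^{*}=x^{m}+\sum_{1\le i<m}h_{i}x^{i}$, the condition $\deg (h^{*})'<m-l$ says $h_{i}=0$ whenever $i>m-l$ and $p\nmid i$ — equivalently, the only free coefficients $h_{i}$ with $i>m-l$ are those with $p\mid i$, of which there are $\lfloor (m-1)/p\rfloor-\lfloor(m-l)/p\rfloor$, and all $h_{i}$ with $1\le i\le m-l$ are free. A short count shows the number of such $h^{*}$ is at most $q^{m-1+\lceil l/p\rceil}\cdot$(something $\le 1$); combined with $g^{*}$ ranging over at most $q^{l-1}(1-q^{-1})\le q^{l-1}$ monic original polynomials with $(g^{*})'\neq 0$, one gets $t\le q^{m+\lceil l/p\rceil-1}\cdot q^{l-1}$... — I need to recheck: the exponent in the claim is $m+\lceil l/p\rceil$, with a factor $(1-q^{-1})$, so I want $\#T\le q^{m+\lceil l/p\rceil-2}(1-q^{-1})$ times the $q^{2}(1-q^{-1})^{-1}\cdot(1-q^{-1})$... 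The clean way: count monic original $f\in T$, bound by (number of $h^{*}$)$\times$(number of $g^{*}$), then multiply by $q^{2}(1-q^{-1})$ for left composition by $ax+b$, $a\ne0$, as in the proof of \ref{cor:decom}. I would carry the bookkeeping carefully so the $\lceil l/p\rceil$ comes out exactly from $\#\{i: m-l<i<m,\ p\mid i\}\le \lceil l/p\rceil-1$ plus the constant term, i.e. $h_0$ is forced to $0$ (original), so actually the count of free high coefficients is at most $\lceil l/p\rceil$ when one also counts... this is the routine part.

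For \short\ref{cor:ffchar-3}, $p\mid l$, and \ref{lem:div-7} gives $f=g\circ h$ with $g,h$ monic original of degrees $m,l$, $f=g\circ h$, and $\deg g'\le m-(m+1)/l$, i.e. $\deg g'\le m-c$ with $c=\lceil (m+1)/l\rceil$; note when $l\mid m$ this is $\lceil(m+1)/l\rceil=m/l+1$... so I should double-check the stated $c=\lceil(m-l+1)/l\rceil$ against \ref{lem:div-7}'s $m-(m+1)/l$: writing $\deg g'\le m-c$ means $c=\lceil(m+1)/l\rceil$, and $\lceil(m+1)/l\rceil=\lceil(m-l+1)/l\rceil+1$, so there is an off-by-one I must reconcile — probably the theorem's bound $\deg g'\le m-(m+1)/l$ should be read together with $p\mid\deg g'$ when $p\mid m$ isn't assumed, and the corollary's $c$ absorbs this; I would just track it through. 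Writing $g=x^{m}+\sum_{1\le i<m}g_{i}x^{i}$, the constraint kills all $g_{i}$ with $i>m-c$ and $p\nmid i$, leaving at most $\lceil c/p\rceil$ free high coefficients (those with $p\mid i$ in that range, plus boundary) and $m-c$ free low coefficients; $h$ ranges over at most $q^{l-1}(1-q^{-1})$ monic original polynomials of degree $l$. Multiplying the two counts and then by $q^{2}(1-q^{-1})$ for left linear composition gives $t\le q^{m+l-c+\lceil c/p\rceil}(1-q^{-1})$ after collecting exponents $(m-c+\lceil c/p\rceil-1)+(l-1)+2 = m+l-c+\lceil c/p\rceil$. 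The final sentence, $c=m/l$ when $l\mid m$, is then just arithmetic: if $l\mid m$ then $(m-l+1)/l=m/l-1+1/l$, whose ceiling is $m/l$.

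\textbf{Main obstacle.} The analytic content is entirely in \ref{thm:div2a}, which I may invoke; the work here is purely combinatorial bookkeeping of how many coefficients of a monic original polynomial survive a bound on the degree of its derivative in characteristic $p$. The one genuinely delicate point is getting the exponents to match the claimed $\lceil l/p\rceil$ and $\lceil c/p\rceil$ \emph{exactly} rather than up to an additive constant — this requires being precise about which of the ``forbidden'' coefficient slots ($i$ in a given range with $p\mid i$) actually occur, and about the interaction with $f$ being original (so $g,h$ original), which removes the constant term slot. I expect no difficulty beyond careful counting, and I would present it as: identify the subspace of admissible $h^{*}$ (resp. $g$), compute its $\mathbb{F}_q$-dimension precisely, bound $\#T$ by the product of dimensions via injectivity of $\gamma_{\dg,l}$, and inflate by the $\leq q^{2}(1-q^{-1})$ left-composition factor.
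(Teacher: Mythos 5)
Your plan is the paper's plan: invoke \ref{lem:div-6} or \ref{lem:div-7} to pin down one component of a decomposition of each monic original $f$ in the intersection, read the degree constraint on the derivative as a vanishing condition on coefficients with index not divisible by $p$, count the resulting $\mathbb{F}_q$-linear space of admissible components, multiply by the $\leq q^{l-1}$ choices for the other component, and inflate by $q^{2}(1-q^{-1})$ for left composition with a linear polynomial. The argument is sound and the final exponent you exhibit for \short\ref{cor:ffchar-3} is correct.

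Two of your flagged hedges deserve resolution, though. The off-by-one with $c$ is resolved not by any auxiliary divisibility but by a clean floor/ceiling conversion: since $\deg g'\in\mathbb{Z}$, the bound $\deg g'\leq m-(m+1)/l$ gives $\deg g'\leq m-\lceil (m+1)/l\rceil=m-c-1$, which translates to $g_i=0$ for $i>m-c$ unless $p\mid i$. (Your speculation about $p\mid\deg g'$ is a red herring; and $p\mid m$ is automatic here, either from $p=l\mid m$ or from $\deg g'\leq m-2$ forcing $mg_m=0$ for the monic $g$.) So the free slots are exactly $\{1,\dots,m-c\}$ together with the multiples of $p$ in $(m-c,m]$; since $p\mid m$ there are $\lceil c/p\rceil$ of the latter, and subtracting one for monicity gives $m-c+\lceil c/p\rceil-1$, as you assert. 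Similarly, for \short\ref{cor:ffchar-2} the admissible $h^{*}$ form a space of dimension $m-l+\lceil l/p\rceil-1$, not the $m-1+\lceil l/p\rceil$ you first wrote; carrying the correct exponent through gives $t\leq q^{2}(1-q^{-1})\cdot q^{l-1}\cdot q^{m-l+\lceil l/p\rceil-1}=q^{m+\lceil l/p\rceil}(1-q^{-1})$.
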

\begin{proof}
  \short\ref{cor:ffchar-2} Any $h^{*}$ permitted in \ref{lem:div-6}
  has nonzero coefficients only at $x^{i}$ with $p \mid i$ or $i \leq
  m-l$. Since $p\mid m$, the number of such $i$ is $m-l+\lceil l/p
  \rceil$. Taking into account that $h^{*}$ is monic, the number of
  $g^{*} \circ h^{*}$, composed on the left with a linear polynomial,
  is at most
  $$
  q^{2}(1-q^{-1}) \cdot q^{l-1} \cdot q^{m-l+\lceil l/p \rceil-1}=
  q^{m+\lceil l/p \rceil}(1-q^{-1}).
  $$

  \short\ref{cor:ffchar-3} The polynomials $g$ permitted in
  \ref{lem:div-7} are monic of degree $m$ and satisfy
  \begin{align*}
    \deg g' &\leq m - \frac{m+1}{l},\\
    \deg g' &\leq m-2.
  \end{align*}
  Thus $p\mid m$, and $g$ has nonzero coefficients only at $ x^{i}$
  with $i \leq m$ and $p \mid i$ or $1\leq i \leq m-c$. The number of
  such $i$ is $m-c+\lceil c/p\rceil$. By composing with a linear
  polynomial on the left and by $h$ on the right and using that $g$ is
  monic, we find
  $$
  t \leq q^{2}(1-q^{-1}) \cdot q^{m-c+ \lceil c/p\rceil-1}
  \cdot q^{l-1} = q^{m+l-c+ \lceil c/p \rceil}(1-q^{-1}).
  $$
  If $l\mid m$, then $c=m/l-1+\lceil 1/l\rceil =m/l$.
\end{proof}
For perspective, we also note the following lower bounds on
$\#T$. Unlike the results up to \ref{thm:FFC}, there is a substantial
gap between the upper and lower bounds.

\begin{corollary}
  \label{cor:ffcharb}
  Let $\mathbb{F}_{q}$ be a finite field of characteristic $p$, $l$ a
  prime number dividing $m >l$, assume that $p \mid \dg = lm$, and set
  $t =\#(D_{\dg, l} \cap D_{\dg,m}\cap D_{n}^{+})$.  Then the
  following hold.
  \begin{enumerate}
  \item
    \label{cor:ffcharb-1}
    If $p \neq l$ divides $m$ exactly $d \geq 1$ times, then
    $$
    q^{2l+m/l-1}(1-q^{-1})(1-q^{-m/l})(1-q^{-1}(1+q^{-p+2}
    \frac{(1-q^{-1})^{2}}{1-q^{-p}})) \leq t
    $$
    if $l \nmid p^{d}-1$. Otherwise we set $\mu=~gcd(p^{d}-1,l)$,
    $r^{*}=(p^{d}-1)/\mu$ and have
    \begin{align*}
      & q^{2l+m/l-1}(1-q^{-1})\bigl((1-q^{-1}(1+q^{-p+2}
      \frac{(1-q^{-1})^{2}}{1-q^{-p}}))(1-q^{-m/l})\\
      & -q^{-m/l-r^{*}+2}
      \frac{(1-q^{-1})^{2}(1-q^{-r^{*}(\mu-1)})}{1-q^{-r^{*}}}
      (1+q^{-r^{*}(p-2)})\bigr) \leq t.
    \end{align*}
\item\label{cor:ffcharb-4} If $p=l$, $p\nmid m/p$, and $m$ has no
    prime divisor smaller than $p$, then
    $$
    q^{2p+m/p-1}(1-q^{-1})^{2}(1-q^{-p+1}) \leq t.
    $$
  \end{enumerate}
\end{corollary}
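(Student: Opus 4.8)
The plan is to build, in both cases, an explicit subfamily of $T=D_{n,l}\cap D_{n,m}\cap D_n^{+}$ by regrouping a triple composition, and then to count it. Write $s=m/l$. For $g\in P_l^{=}$ and $H\in D_{m,s}\cap P_m^{0}$, say $H=h_1\circ h_2$ with $h_1$ monic original of degree $s$ and $h_2$ monic original of degree $l$, put $f=g\circ H$. Then $f=g\circ(h_1\circ h_2)$ shows $f\in D_{n,l}$, and $f=(g\circ h_1)\circ h_2$, with $\deg(g\circ h_1)=ls=m$ and $\deg h_2=l=n/m$ and $h_2$ monic original, shows $f\in D_{n,m}$. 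Moreover $f$ is non-Frobenius whenever $H$ is: $f'=(g'\circ H)\,H'$, and if $H'\neq 0$ and $g'\neq 0$ then $f'\neq 0$; here $g'\neq 0$ is automatic when $p\nmid l$, and for $p=l$ it is forced by keeping only $g$ with $g'\neq 0$. Thus $\gamma_{n,l}\bigl(P_l^{=}\times(D^{+}_{m,s}\cap P_m^{0})\bigr)\subseteq T$, and everything reduces to counting this image.

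\textbf{Part 1 ($p\neq l$, so $p\nmid l$).} Here $\gamma_{n,l}$ is injective by \ref{cor:inj-1}, so the image has size $\#P_l^{=}\cdot\#(D^{+}_{m,s}\cap P_m^{0})$. Since $D^{+}_{m,s}$ is invariant under the $F^{\times}\times F$ action and meets $P_m^{0}$ in a fraction $1/(q^{2}(1-q^{-1}))$, exactly as in \ref{invariant}, this equals $q^{l+1}(1-q^{-1})\cdot\#D^{+}_{m,s}/(q^{2}(1-q^{-1}))=q^{l-1}\cdot\#D^{+}_{m,s}$. Since $p^{d}\|m$ and $p\neq l$, we have $p^{d}\|s$, so $s=ap^{d}$ with $p\nmid a$, and \ref{cor:decom} applies with left-component degree $s$, other degree $l\geq 2$, total degree $sl=m$; then $r=p^{d}\neq l$ and $\mu=\gcd(p^{d}-1,l)$ is $1$ or $l$, with $r^{*}=(p^{d}-1)/\mu$. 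Substituting the bound of \ref{cor:decom-1} (when $l\nmid p^{d}-1$) or of \ref{cor:decom-2} (when $l\mid p^{d}-1$, using the estimate $q^{-c/e}/(1-q^{-c/e})\leq 1$ as in that proof) into $t\geq q^{l-1}\cdot\#D^{+}_{m,s}$ yields precisely the two stated inequalities, with $m/l$ in place of $s$.

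\textbf{Part 2 ($p=l$, $p\nmid m/p$, $m$ has no prime divisor below $p$).} The inner count is now elementary: since $p\nmid s=m/p$, the map $\gamma_{m,s}$ is injective by \ref{cor:inj-1}, and $h_1$ is tame so $h_1'\ne 0$; hence $D^{+}_{m,s}\cap P_m^{0}$ is in bijection with the pairs $(h_1,h_2)\in P_s^{0}\times P_p^{0}$ with $h_2\neq x^{p}$ (which is exactly $H'\neq 0$), numbering $q^{s-1}(q^{p-1}-1)$. The outer map $\gamma_{n,p}$ is now wild and not injective, so the delicate step is to bound below the number of \emph{distinct} $f=g\circ H$ obtained as $(g,H)$ ranges over the $q^{p-1}-1$ monic original $g$ with $g'\neq 0$ (composed additionally with a free left linear, a factor $q^{2}(1-q^{-1})$) times these $H$. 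My plan is to control the multiplicity using \ref{cor:inj-3} --- here $p^{2}\mid n$, so $d=2$ and a simple polynomial has at most $1+p+p^{2}$ normal decompositions --- together with a direct estimate that the non-simple $f$ in the family, and coincidences among the $h_2$-variants, make up only a negligible part; alternatively one tracks the components through the reconstruction underlying \ref{algoWd} and \ref{thm:kg}, which recovers $(g,H)$ from $g\circ H$ for all but few inputs. The hypothesis that $m$ has no prime divisor below $p$ is used here precisely to get $\gcd(p-1,m/p)=1$, which keeps the inner structure clean. Either route should give $t\geq q^{2p+m/p-1}(1-q^{-1})^{2}(1-q^{-p+1})$, the slack between this and the value $\approx q^{2p+m/p-1}(1-q^{-1})(1-q^{-p+1})$ that injectivity would give being exactly the room that absorbs the multiplicity.

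\textbf{Main obstacle.} I expect the multiplicity control in Part 2 to be the hard part. Part 1 is essentially bookkeeping on top of \ref{cor:decom}, and it rests on the exact injectivity of $\gamma_{n,l}$ when $p\nmid l$; for $p=l$ only the much weaker \ref{cor:inj-3} is available, and extracting a clean lower bound from it without overcounting is where the real work lies --- which is also the structural reason the Part 2 bound is weaker and why a substantial gap to the upper bound of \ref{cor:ffchar} persists.
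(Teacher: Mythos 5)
Your Part 1 is correct and is essentially the paper's own proof. The paper counts the polynomials $v\circ g\circ w\circ h$ with $v$ linear and $g,w,h$ monic original of degrees $l,m/l,l$, using \ref{cor:inj-1} (injectivity of $\gamma_{n,l}$, since $p\nmid l$) to conclude the count is $q^{l+1}(1-q^{-1})\cdot N$ with $N=\#\gamma_{m,m/l}(G)$ bounded below by \ref{th:decom}; you package the same thing as $q^{l-1}\#D^{+}_{m,m/l}$ via \ref{cor:decom} and the invariance argument of \ref{invariant}, which is equivalent. The case split on $\mu=\gcd(p^{d}-1,l)\in\{1,l\}$ and the substitution then give the two displayed bounds, as you say.

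Part 2 is where the gap is, and you have correctly identified it as the hard part but not closed it. Your first route, dividing by the multiplicity bound $1+p+p^{2}$ from \ref{cor:inj-3}, cannot give the stated bound: it loses a factor of order $p^{2}$, and it requires the compositions to be \emph{simple} in the sense of \ref{cor:inj-3}, which you would still need to arrange. Your second route, ``tracking the components through the reconstruction underlying \ref{algoWd}'', is indeed what the paper does, but it requires three specific steps you do not carry out. First, the inner pair must be restricted to $h^{*}=w\circ h$ with $h^{*}_{m-1}\neq 0$ (equivalently $h_{p-1}\neq 0$), which gives $t^{*}=q^{m/p+p-2}(1-q^{-1})$; your restriction $h_{2}\neq x^{p}$ (size $q^{m/p-1}(q^{p-1}-1)$) is a \emph{different} and larger set, and \ref{algoWd} (hence the proof of \ref{th:decom-1}) assumes precisely $h_{m-1}\neq 0$, so it does not apply to all of your set. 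Second, one must verify the hypothesis \ref{eq:pc} of \ref{thm:kg} holds vacuously: here $r=p$, $\mu=\gcd(p-1,m)=1$, and $i_{0}=m(\degTwo-1)/(p-1)$, and the hypothesis that $m$ has no prime divisor below $p$ forces $i_{0}\notin[1,m)\cap\mathbb{N}$; you say you use this hypothesis to get $\gcd(p-1,m/p)=1$, but its actual role is to kill the $i_{0}$ case so that \ref{th:decom-1} applies with no restriction on $g_{\degTwo}$. Third, one must observe that in the proof of \ref{th:decom-1} the number $q^{m-1}(1-q^{-1})$ of admissible $h$ enters only as a free factor, so it may be replaced by $t^{*}$, giving a proportionally scaled lower bound with no multiplicity loss at all. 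Your closing remark that the slack between $(1-q^{-1})^{2}(1-q^{-p+1})$ and $(1-q^{-1})(1-q^{-p+1})$ ``absorbs the multiplicity'' is a misreading: the final constant is exactly what the scaled count produces after the algebraic simplification $(1-q^{-p})(1-q^{-1}(1+q^{-p+2}(1-q^{-1})^{2}/(1-q^{-p})))=(1-q^{-1})(1-q^{-p+1})$; there is no multiplicity being absorbed, because the argument is injective on the restricted family.
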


\begin{proof}
  \short\ref{cor:ffcharb-1} For any monic original $g, w, h \in
  \mathbb{F}_{q}[x]$ of degrees $l, m/l, l$, respectively, we have $g
  \circ w \circ h \in D_{\dg,l} \cap D_{\dg,m} \cap D^{0}_{\dg}$. We now
  estimate the number of such compositions.

 Since $p \nmid l =
  \deg g$, \ref{cor:inj-1} implies that the composition map $(g, w
  \circ h) \mapsto g \circ w \circ h$ is injective. To estimate from
  below the number $N$ of $w \circ h$, we use \ref{th:decom} with
  $r=p^{d}$, $a=m/lp^{d}$, $k=m/l$, $ \tilde{m}=l \neq r$, $\mu = \gcd
  (r-1,l)$, and $r^{*}= (r-1)/\mu$.  (Here $\tilde{m}$ is the value
  called $m$ in \ref{th:decom}, whose name conflicts with the present
  value of $m$.) 

  If $\mu = 1$, we obtain from \ref{th:decom-1}
  $$ 
  N \geq q^{l+m/l-2}(1-q^{-m/l})(1-q^{-1}(1+q^{-p+2}
  \frac{(1-q^{-1})^{2}}{1-q^{-p}})).
  $$

  If $\mu\neq 1$, \ref{th:decom-2} says that
  \begin{align*}
    N \geq \, & q^{l+m/l-2}\bigl((1-q^{-1}(1+q^{-p+2}
    \frac{(1-q^{-1})^{2}}{1-q^{-p}}))(1-q^{-m/l})\\
    & -q^{-m/l-r^{*}+2}
    \frac{(1-q^{-1})^{2}(1-q^{-r^{*}(\mu-1)})}{1-q^{-r^{*}}}
    (1+q^{-r^{*}(p-2)})\bigr),
  \end{align*}
  where we have used the simplification of \ref{frac:leqp}.  (We note
  that \ref{cor:decom} provides a simplified bound if $r^{*} \geq 2$ and
  $p>\mu$; when $p >l$, then these two inequalities hold unless $l=2$
  and $r=3$.)

  We compose these $w \circ h$ with $v \circ g$ on the left, where $v$
  is linear and $g$ monic original of degree $l$. This gives the lower
  bound
  $$
  q^{2}(1-q^{-1})\cdot q^{l-1} \cdot N = q^{l+1}(1-q^{-1})N
  $$
  on $t$, as claimed.

  Thus $g$ has nonzero coefficients only at $x^{i}$ with $p\mid i$ or
  $i\leq ap-a$. It follows that
$$
t\leq q^{a-1+ap-(a-\left\lfloor a/p\right\rfloor)}\cdot
q^{p-1}=q^{ap+p-a+\left\lfloor a/p\right\rfloor-2}.
$$

  \short\ref{cor:ffcharb-4} Clearly, $t$ is at least the number of $v
  \circ g \circ w \circ h$ with $v$ linear and $g, w, h \in F[x]$
  monic original of degrees $p$, $m/p$, $p$, respectively.

  We first bound the number $t^{*}$ of $h^{*}= w \circ h$ with
  $h^{*}_{m-1} \neq 0$. We denote as $h_{p-1}$ the second highest
  coefficient of $h$. Then $h^{*}_{m-1}= m/p \cdot h_{p-1}$, and
  $h^{*}_{m-1}$ vanishes if and only if $h_{p-1}$ does. By
  \ref{cor:inj-1}, $\gamma_{m, m/p}$ is injective, so that
  $$
  t^{*}= q^{m/p-1} \cdot q^{p-1}(1-q^{-1})= q^{m/p+p-2}(1-q^{-1}).
  $$
  We now consider $g\circ h^{*}$ as input to \ref{algoWd}.

  We have $r=p\neq m$ and $\mu= \gcd(p-1,m)=1$. In the proofs of
  \ref{th:decom-1} and \ref{cor:decom-1}, no special properties of $h$
  are used, except \ref{eq:pc}. In the notation used there, we have
  $i_{0}\in\mathbb{N}$ if and only if $p-1\mid(\kappa-1)m$. Now
  $\kappa<p$ and $m$ has no divisors less than $p$, so that
  $i_{0}\notin\mathbb{N}$ and \ref{eq:pc} holds vacuously for all
  $h$. Thus the lower bound also applies when we replace the number
  $q^{m-1}(1-q^{-1})$ of all possible second components by
  $t^{*}$. Thus
  $$
  t\geq q^{p+m} (1-q^{-1}) (1-q^{-p})
  (1-q^{-1}(1+q^{-p+2}\frac{(1-q^{-1})^{2}}{1-q^{-p}}))\cdot\frac{q^{m/p+p-2}(1-q^{-1})}{q^{m-1}(1-q^{-1})}$$
  $$=q^{2p+m/p-1}(1-q^{-1})(1-q^{-p})(1-q^{-1}(1+q^{-p+2}\frac{(1-q^{-1})^{2}}{1-q^{-p}}))$$
  $$=q^{2p+m/p-1}(1-q^{-1})^{2}(1-q^{-p+1}).\qed$$
\end{proof}

\begin{example}\label{ex:p2}
  We study the particular example $p=l=2$ and $m=6$, so that $n=12$. Let
  $t_{1}= t \cdot q^{-2}(1-q^{-1})^{-1}$ denote the number of monic
  original polynomials in $D_{12,2} \cap D_{12,6} \cap
  D_{12}^{+}$. Then \ref{cor:ffchar-3} says that $t_{1} \leq
  q^{5}$. By coefficient comparison, we now find a better
  bound. Namely, we are looking for $g \circ h = g^{*} \circ h^{*}$
  with $g, h, g^{*}, h^{*} \in \mathbb{F}_{q}[x]$ monic original of
  degrees $2,6,6,2$, respectively. (We have reversed the usual degrees
  of $g$, $h$ and $g^{*}$, $h^{*}$ for notational convenience.)  We
  write $h = \sum_{i}h_{i}x^{i}$, and similarly for the other
  polynomials.  Then we choose any $h_{2}, h_{4}, h_{5} \in
  \mathbb{F}_{q}$, and either $g_{1}$ arbitrary and $h_{1}= uh_{5}$,
  or $h_{1}$ arbitrary and $g_{1} = h_{5}(h_{1}+uhs)$, where $u =
  h_{5}^{4} + h_{5}^{2} h_{4}+h_{2}$. Furthermore, we set $h_{3}=
  h_{5}^{3}$ and $h_{1}^{*}= h_{5}$. Then the coefficients of $g^{*}$
  are determined. If $g'(g^{*})'\neq 0$, then the above constitute a
  collision, and by comparing coefficients, one finds that these are
  all. Their number is at most $2q^{4}$, so that $t_{1} \leq 2q^{4}$
  and $t \leq 2q^{6}(1-q^{-1})$.

For an explicit description of $g$, we set $u_{2}= h_{4} + h_{5}^{2}$.
In the first case, where $h_{1}= uh_{5}$, we have
$$
g^{*}= x^{6}+u_{2}^{2}x^{4}+g_{1}x^{3}+(u^{2}+u_{2}g_{1})x^{2}+g_{1}ux.
$$
In the second case, we have
$$
g^{*}= x^{6}+u_{2}^{2}x^{4}+h_{5}(h_{1}+uh_{5})x^{3}+(u_{2}h_{1}h_{5}+
uh_{2})x^{2}+h_{1}(h_{1}+uh_{5})x.
$$
In both cases, $g_{1}= g' \neq 0$ implies that $(g^{*})' \neq 0$.
\end{example}

\cite{gie88}, Theorem 3.8, shows that there exist polynomials of
degree $n$ over a field of characteristic $p$ with super-polynomially
many decompositions, namely at least $\dg ^{\lambda \log n}$ many,
where $\lambda = (6 \log p)^{-1}$.


\section{Counting tame decomposable polynomials}\label{sec:cdup}
This section estimates the dimension and number of decomposable
univariate polynomials. We start with the dimension of decomposables
over an algebraically closed field. Over a finite field,
\ref{thm:Estimate} below provides a general upper bound on the number
in \short\ref{thm:Estimate-1}, and an almost matching lower bound.
The latter applies only to the tame case, where $p \nmid \dg$, and
both bounds carry a relative error term. Lower bounds in the more
difficult wild case are the subject of \ref{sec:cgdp}.

\cite{gie88} was the first work on our counting problem. He proves (in
his Section 1.G and translated to our notation) an upper bound of
$d(\dg) q^{2+\dg/2}$ $(1-q^{-1})$ on the number of decomposable
polynomials, where $ d(\dg)$ is the number of divisors of $\dg$. This
is mildly larger than our bound of about $2q^{l+\dg/l}(1-q^{-1})$, in
\ref{thm:Estimate-1}, with its dependence on $l$ replaced by the
``worst case'' $l=2$, as in the \whole\ref{cor:Fq} \ref{cor:Fq-6}.  With the
same replacement, Giesbrecht's thesis contains the upper bound in the
following result, which is the geometric bound for our current
problem.
\begin{theorem}\label{thm:geo}
  Let $F$ be an algebraically closed field, $\dg \geq 2$, and $l$ the
  smallest prime divisor of $\dg$.  Then $D_{\dg} = \varnothing$ if
  $\dg$ is prime, and otherwise
  $$
  \text{dim } D_{\dg} = l + \dg/l.
  $$
\end{theorem}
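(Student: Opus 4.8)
The plan is to establish the two inequalities $\dim D_{\dg} \leq l + \dg/l$ and $\dim D_{\dg} \geq l + \dg/l$ separately, the second being the easy direction. For the lower bound, recall from \ref{substack} that $D_{\dg} = \bigcup_{e \mid \dg,\, 1 < e < \dg} D_{\dg,e}$, and in particular $D_{\dg,l} \subseteq D_{\dg}$. The composition map $\gamma_{\dg,l} \colon P_{l}^{=} \times P^{0}_{\dg/l} \to P_{\dg}^{=}$ has domain of dimension $\dim P_{l}^{=} + \dim P^{0}_{\dg/l} = (l+1) + (\dg/l - 1) = l + \dg/l$, and $D_{\dg,l} = \operatorname{im} \gamma_{\dg,l}$ by \ref{eq:Zan}. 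Since $l$ is the smallest prime divisor of $\dg$, we have $p \nmid l$ or $p = l$; in either case, by \ref{cor:inj-1} (when $p \nmid l$) the map $\gamma_{\dg,l}$ is injective, hence a dominant map onto a subvariety of dimension exactly $l + \dg/l$, and when $p = l$ one argues that $\gamma_{\dg,l}$ still has finite generic fibers (e.g.\ via \ref{cor:inj-3} applied to simple polynomials, which are Zariski-dense, or directly since a left component of degree $l = p$ has only finitely many preimages for a generic composition). Either way $\dim D_{\dg,l} = l + \dg/l$, giving the lower bound.

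For the upper bound, I would bound each piece $D_{\dg,e}$ for $1 < e < \dg$, $e \mid \dg$. Since $D_{\dg,e} = \operatorname{im}\gamma_{\dg,e}$ and the domain $P_{e}^{=} \times P^{0}_{\dg/e}$ has dimension $e + \dg/e$, we get $\dim D_{\dg,e} \leq e + \dg/e$. The function $e \mapsto e + \dg/e$ on the divisors $e$ of $\dg$ with $1 < e < \dg$ is convex-like: it is maximized at the extremes of the range, i.e.\ at $e = l$ (smallest proper divisor $> 1$) or $e = \dg/l$ (largest proper divisor $< \dg$), and these two values coincide, both equal to $l + \dg/l$. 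Hence $\dim D_{\dg,e} \leq l + \dg/l$ for every admissible $e$, and since $D_{\dg}$ is a finite union of the $D_{\dg,e}$, its dimension is the maximum of the individual dimensions, so $\dim D_{\dg} \leq l + \dg/l$. Combining with the lower bound yields $\dim D_{\dg} = l + \dg/l$. The case where $\dg$ is prime is trivial: then there are no divisors $e$ with $1 < e < \dg$, so $D_{\dg} = \varnothing$ by \ref{substack}.

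The one point requiring genuine care is the injectivity (or at least generic finiteness) of $\gamma_{\dg,l}$ in the \emph{lower bound} when $p = l$, since \ref{cor:inj-1} does not apply. The cleanest route is to note that the simple polynomials of degree $\dg$ with left component degree $l = p$ form a Zariski-dense subset of $D_{\dg,l}$ (being defined by the nonvanishing of a coefficient), and by \ref{cor:inj-3} each such polynomial has at most finitely many—indeed at most $s < 2\dg$—normal decompositions; so the restriction of $\gamma_{\dg,l}$ over this dense open set has finite fibers, forcing $\dim D_{\dg,l} = \dim(P_{l}^{=} \times P^{0}_{\dg/l}) = l + \dg/l$. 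I expect this density-plus-finite-fibers argument to be the main (modest) obstacle; everything else is the elementary observation that $e + \dg/e$ is maximized over proper divisors at the ends of the interval, together with the definitions of the composition maps and their domains already set up in Section~\ref{secDec}.
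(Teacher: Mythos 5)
Your proof has the same two-step structure as the paper's: lower bound via generic finiteness of $\gamma_{n,l}$, upper bound via convexity of $e\mapsto e+n/e$ over divisors. The upper bound is fine. The gap is in your justification of the lower bound when $p=l$. You claim that the simple polynomials form a Zariski-dense subset of $D_{n,l}$; this is false. Write $m=n/l$. Since $l$ is the smallest prime divisor of $n$, we have $m\geq p$. In characteristic $p$, $h^{p}=\sum_{j}h_{j}^{p}x^{pj}$ has nonzero coefficients only at multiples of $p$, so for $f=g\circ h=h^{p}+g_{p-1}h^{p-1}+\cdots$ with $\deg g=l=p$, the index $i$ of the top nonzero coefficient of $f-x^{n}$ is generically $p(m-1)=n-p$ (from $h_{m-1}^{p}x^{p(m-1)}$, with a further contribution from $g_{p-1}h^{p-1}$ only when $m=p$). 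Thus $p\mid i$ and $i\geq n-p$, so the generic $f\in D_{n,l}$ is \emph{not} simple in the sense used by \ref{cor:inj-3}, and in fact the simple polynomials are a proper closed subset of $D_{n,l}$ when $p=l$. Your fallback ("directly since a left component of degree $l=p$ has only finitely many preimages for a generic composition") is circular — it asserts exactly the generic finiteness that needs to be proven.

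To be fair, the paper's own proof is extremely terse ("By \ref{cor:inj}, the fibers of $\gamma_{n,l}$ are finite") and elides the same point. But there are cleaner ways to make the argument in the wild case. When $p=l$ and $p^{2}\nmid n$, use $\gamma_{n,n/l}$ in place of $\gamma_{n,l}$: then $p\nmid n/l$, so \ref{cor:inj-1} gives injectivity and $\dim D_{n,n/l}=n/l+l$ outright. The genuinely delicate case is $p=l$ with $p^{2}\mid n$, where neither $\gamma_{n,l}$ nor $\gamma_{n,n/l}$ has a tame left degree; there you need a separate argument that $\gamma_{n,l}$ is generically finite (e.g., via a direct fibre count, or by exhibiting a subfamily of the domain on which the composition map is injective and whose image already has dimension $l+n/l$). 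As written, your proof does not supply such an argument.
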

\begin{proof}
  We may assume that $\dg $ is composite. By \ref{cor:inj}, the fibers
  of $\gamma _{\dg,l}$ are finite, and hence
  $$
  \text{dim} D_{\dg} \geq \text{dim} D_{\dg,l}= \text{dim} (P^{=}_{l}
  \times P_{\dg/l}^{0})= l+ \dg/l.
  $$
  
  Now $D_{\dg,\dg/l} $ has the same dimension, and $D_{\dg,e}$ has
  smaller dimension for all other divisors $e$ of $\dg$. \qed

  The argument for \ref{lem:div-1} shows that if $\dg$ is composite,
  $p \nmid \dg$, and $l^{2} \nodiv \dg$, then $\text{dim}(D_{\dg,l} \cap
  D_{\dg,\dg /l} )\leq \lfloor \dg /l^{2} \rfloor +3 < l+ \dg/l$. Thus
  $\gamma_{\dg,l}$ and $\gamma_{\dg,\dg/l}$ describe two different
  irreducible components of $D_{\dg}$, both of dimension $l+\dg/l$.

  \cite{zan08} studies a different but related question, namely
  compositions $f= g \circ h$ in $ \mathbb{C} [x]$ with a
  \emph{sparse} polynomial $f$, having $t$ terms. The degree is not
  bounded.  He gives bounds, depending only on $t$, on the degree of
  $g$ and the number of terms in $h$.  Furthermore, he gives a
  parametrization of all such $f$, $g$, $h$ in terms of varieties (for
  the coefficients) and lattices (for the exponents).
\end{proof}

We now present a generally valid upper bound on the number of decomposables
and a lower bound in the tame case $p \nmid \dg$.
\begin{theorem}
  \label{thm:Estimate}
  Let $\mathbb{F}_{q}$ be a field of characteristic $p$ and with $q$
  elements, and $\dg \geq 2$. Let $l$ and $l_{2}$ be the smallest and
  second smallest nontrivial divisors of $\dg$, respectively (with
  $l_{2}=1$ if $\dg=l$ or $\dg=l^{2}$), $s =\lfloor \dg/l^{2}\rfloor$,
  and
  \begin{align}
    \label{thm:small}
    \alpha_{\dg}&=
     \begin{cases}
      0&\text{if } \dg=l,\\
      {q^{2l}(1-q^{-1})}& \text{if }\dg = l^{2},\\
      2q^{l+\dg/l}(1-q^{-1})&\text{ otherwise},
    \end{cases}
    \\
    \nonumber c&=\frac{(\dg-ll_{2})(l_{2}-l)}{ll_{2}},
    \\
     \beta_{\dg} &=
    \begin{cases}
      0&\textrm{if }\dg  \in \{l,l^{2},l^{3}, ll_{2}\}, \\
      \displaystyle \frac{q^{-c}}{1-q^{-1}} & \text{otherwise},
    \end{cases}
\nonumber \\
    \beta_{\dg}^{*}&= q^{-l-\dg/l+s+3},\\
   t &=
    \begin{cases}
      0 & \text{if } \dg\in\{l,l^{2}\},\\
      \#(D_{\dg,l} \cap D_{\dg,\dg/l})& \text{otherwise}.
    \end{cases}
  \end{align}
  Then the following hold.
  \begin{enumerate}
  \item\label{thm:Estimate-1} $\# D_{\dg}\leq
    \alpha_{\dg}(1+ \beta_{\dg})$. If
     $\dg\notin\{l^{2},l^{3}\}$, then $\# D_{\dg}\leq
    \alpha_{\dg}(1-\alpha_{\dg}^{-1}t+\beta_{\dg})$ .
  \item\label{thm:Estimate-2} $\#I_{\dg} \geq \# P^{=}_{\dg}
    -2\alpha_{\dg}$. 
  \item\label{thm:Estimate-3} If $p \nmid \dg$ and $l^{2} \nmid \dg$,
    then
    $$
\alpha_{\dg}(1-q^{-\dg/l+l+s-1})\leq\alpha_{\dg} (1- \beta^{*}_{\dg}) \leq \#D_{\dg} \leq\alpha_{\dg}(1-\frac{\beta_{\dg}^{*}}{2}+ \beta_{\dg}).
    $$
  \item\label{thm:Estimate-4} If $p \nmid \dg$, then
    $$
\alpha_{\dg} (1- q^{-\dg/l+l+s-1}) \leq\#D_{\dg} \leq
\alpha_{\dg}(1-\frac{\beta_{\dg}^{*}}{2}+ \beta_{\dg}). 
   $$
  \item\label{thm:Estimate-2/2} If $p \neq l$, then $\#D_{l^{2}}=
    \alpha_{l^{2}}$ and $\#D_{l^{3}}=
    \alpha_{l^{3}}(1-q^{-(l-1)^{2}}/2)$.
  \item\label{thm:Estimate-2/3} If $p\nmid \dg\neq l^{2}$ and $\dg/l$
    is prime, then
    $$
    \# D_{\dg}=\alpha_{\dg}\bigl(1-\frac 1 2
    q^{-\dg/l-l+3}(q^{s}+(1-\delta_{l,2})(q-1))\bigr).
    $$
  \end{enumerate}
\end{theorem}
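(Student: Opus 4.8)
The plan is to prove the seven parts of \ref{thm:Estimate} essentially by combining the basic counting identity \ref{substack}, the cardinality bounds from \ref{secDec}, and the collision estimates of \ref{sec:usd} and \ref{sec:collcomp}. The backbone is inclusion--exclusion: since $D_{\dg}=\bigcup_{e\mid\dg,\,1<e<\dg}D_{\dg,e}$, we have $\sum_{e}\#D_{\dg,e}-\sum_{e<e'}\#(D_{\dg,e}\cap D_{\dg,e'})\leq\#D_{\dg}\leq\sum_{e}\#D_{\dg,e}$, where $\#D_{\dg,e}\leq q^{e+\dg/e}(1-q^{-1})$ and, by \ref{cor:inj-1}, equality holds when $p\nmid e$. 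The function $e\mapsto e+\dg/e$ on the divisors of $\dg$ strictly between $1$ and $\dg$ attains its maximum $l+\dg/l$ exactly at $e=l$ and $e=\dg/l$ (these coincide iff $\dg=l^{2}$), and its second-largest value is $l_{2}+\dg/l_{2}$; summing the geometric-type tail over the remaining divisors is what produces $\alpha_{\dg}$ and the correction factor $\beta_{\dg}$. The exponent $c$ in the definition of $\beta_{\dg}$ is exactly $(l+\dg/l)-(l_{2}+\dg/l_{2})$, so the contribution of all divisors other than $l,\dg/l$ is at most $q^{l_{2}+\dg/l_{2}+1}(1-q^{-1})\cdot\frac{1}{1-q^{-1}}=\alpha_{\dg}\beta_{\dg}$ after the routine bookkeeping; the special small cases $\dg\in\{l,l^{2},l^{3},ll_{2}\}$ are where no third divisor contributes and $\beta_{\dg}=0$.

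For \short\ref{thm:Estimate-1} the upper bound is immediate from the above, with the refinement $\#D_{\dg}\leq\alpha_{\dg}-t+\alpha_{\dg}\beta_{\dg}$ coming from subtracting the single intersection term $\#(D_{\dg,l}\cap D_{\dg,\dg/l})=t$ (valid once $\dg\notin\{l^{2},l^{3}\}$, i.e. once $l$ and $\dg/l$ are genuinely distinct and not forced to coincide or collapse). Part \short\ref{thm:Estimate-2} follows since $\#I_{\dg}=\#P_{\dg}^{=}-\#D_{\dg}\geq\#P_{\dg}^{=}-\alpha_{\dg}(1+\beta_{\dg})$ and one checks $\alpha_{\dg}(1+\beta_{\dg})\leq2\alpha_{\dg}$ (as $\beta_{\dg}\leq1$ for $q\geq2$, because $c\geq1$ whenever $\beta_{\dg}\neq0$). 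For the lower bounds in \short\ref{thm:Estimate-3} and \short\ref{thm:Estimate-4}: in the tame case $p\nmid\dg$, $\gamma_{\dg,l}$ and $\gamma_{\dg,\dg/l}$ are both injective, so $\#D_{\dg}\geq\#D_{\dg,l}+\#D_{\dg,\dg/l}-\#(D_{\dg,l}\cap D_{\dg,\dg/l})=\alpha_{\dg}-t$ when $\dg\neq l^{2}$; here $t\leq 2q^{2l+s-1}(1-q^{-1})$ by \ref{thm:FFC-2}, and dividing by $\alpha_{\dg}=2q^{l+\dg/l}(1-q^{-1})$ gives $t/\alpha_{\dg}\leq q^{-\dg/l+l+s-1}=\beta_{\dg}^{*}$, yielding the clean lower bound $\alpha_{\dg}(1-\beta_{\dg}^{*})$. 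When $l^{2}\mid\dg$ (so $s=\dg/l^{2}$ and $l_{2}$ could be $l$ itself) the middle inequality $\alpha_{\dg}(1-\beta_{\dg}^{*})\leq\#D_{\dg}$ of \short\ref{thm:Estimate-4} still holds because the tail divisors only add; and the upper inequality in \short\ref{thm:Estimate-3}/\short\ref{thm:Estimate-4} uses the exact value $t=q^{2l+s-1}(1-q^{-1})$ from \ref{thm:FFC-1} (case $l\mid\dg/l$, which holds iff $l^{2}\mid\dg$, giving $t/\alpha_{\dg}=\beta_{\dg}^{*}/2$) or the generic bound $t\geq q^{s+3}(1-q^{-1})\cdot q^{2i-2}\cdots$ from \ref{thm:FFC} in the $l^{2}\nmid\dg$ case, so that $-t/\alpha_{\dg}\leq-\beta_{\dg}^{*}/2$.

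Parts \short\ref{thm:Estimate-2/2} and \short\ref{thm:Estimate-2/3} are the exact-count statements. For $\#D_{l^{2}}$: here the only admissible $e$ is $e=l$ (with $\dg/e=l$ too), so $D_{l^{2}}=D_{l^{2},l}=~im\gamma_{l^{2},l}$, and since $p\neq l$ the map is injective, giving $\#D_{l^{2}}=q^{2l}(1-q^{-1})=\alpha_{l^{2}}$. For $\#D_{l^{3}}$: the divisors strictly between $1$ and $l^{3}$ are $l$ and $l^{2}$, with $l+l^{2}=l^{2}+l$, so $\#D_{l^{3}}=\#D_{l^{3},l}+\#D_{l^{3},l^{2}}-\#(D_{l^{3},l}\cap D_{l^{3},l^{2}})$; both pieces equal $q^{l+l^{2}}(1-q^{-1})$ by \ref{cor:inj-1} (as $p\neq l$), and $\#(D_{l^{3},l}\cap D_{l^{3},l^{2}})$ is computed from \ref{thm:FFC-1} with $m=l^{2}$, $l\mid m$, giving $q^{2l+s-1}(1-q^{-1})$ with $s=\lfloor l^{2}/l^{2}\rfloor=1$, i.e. $q^{2l}(1-q^{-1})$; hence $\#D_{l^{3}}=2q^{l+l^{2}}(1-q^{-1})-q^{2l}(1-q^{-1})=\alpha_{l^{3}}(1-q^{-(l-1)^{2}}/2)$ after factoring. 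Finally, when $\dg/l$ is prime and $p\nmid\dg\neq l^{2}$, the only divisors of $\dg$ strictly between $1$ and $\dg$ are $l$ and $\dg/l$ (which are distinct primes, so $l_{2}=\dg/l$ possibly, but at any rate there is no third divisor), so $D_{\dg}=D_{\dg,l}\cup D_{\dg,\dg/l}$ with both maps injective, giving $\#D_{\dg}=2q^{l+\dg/l}(1-q^{-1})-t$ exactly, and substituting the exact $t$ from \ref{thm:FFC-1} (distinguishing $l\mid\dg/l$, which here would force $\dg/l=l$, excluded, so we are always in the ``otherwise'' branch giving $t=q^{2i}(q^{s+1}+(1-\delta_{l,2})(q^{2}-q))(1-q^{-1})$ with $i=\gcd(l,\dg/l)=1$) yields the stated closed form.

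The main obstacle I anticipate is not any single deep step but the careful case analysis of which divisors $e$ of $\dg$ contribute to $\alpha_{\dg}$, $\beta_{\dg}$, and $t$, and making sure the geometric-series tail bound producing $\alpha_{\dg}\beta_{\dg}$ is tight enough — in particular verifying $c=(l+\dg/l)-(l_{2}+\dg/l_{2})$ and checking all the boundary cases $\dg\in\{l,l^{2},l^{3},ll_{2}\}$ where the generic formula degenerates. A secondary subtlety is aligning the finite-field intersection counts from \ref{thm:FFC} (which are stated for $m>l\geq2$, $p\nmid\dg$) with the possibility $l^{2}\mid\dg$ (where $\gcd(l,\dg/l)=l\neq1$ falls under the $l\mid m$ branch of \ref{thm:FFC-1}), so that the $\beta_{\dg}^{*}/2$ versus $\beta_{\dg}^{*}$ discrepancy between \short\ref{thm:Estimate-3} and \short\ref{thm:Estimate-4} comes out correctly.
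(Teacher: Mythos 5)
Your overall strategy — inclusion--exclusion over the divisors $e$ of $\dg$, with $e=l,\dg/l$ dominating and a geometric tail producing $\beta_{\dg}$, the identification $c=u(l)-u(l_2)$ where $u(e)=e+\dg/e$, and the intersection bounds from \ref{lem:div-b}/\ref{thm:FFC} providing the correction term — is exactly the paper's approach, and parts (i), (ii), and (vi) are essentially right.

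There are two concrete errors, one of which would be caught by any careful recomputation.

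First, in part (v) for $\dg=l^{3}$ you applied \ref{thm:FFC-1} with $m=l^{2}$, $l\mid m$, and wrote $s=\lfloor l^{2}/l^{2}\rfloor=1$. That is the wrong $s$: in \ref{lem:div-b}/\ref{thm:FFC}, $s=\lfloor m/l\rfloor=\lfloor l^{2}/l\rfloor=l$ (which also agrees with the theorem's own $s=\lfloor\dg/l^{2}\rfloor=\lfloor l^{3}/l^{2}\rfloor=l$). So $t=q^{2l+s-1}(1-q^{-1})=q^{3l-1}(1-q^{-1})$, not $q^{2l}(1-q^{-1})$. With the correct $t$, one gets $t/\alpha_{l^{3}}=q^{3l-1}/(2q^{l+l^{2}})=q^{-(l^{2}-2l+1)}/2=q^{-(l-1)^{2}}/2$, which gives the stated formula. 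With your $t$, the claim ``$2q^{l+l^{2}}(1-q^{-1})-q^{2l}(1-q^{-1})=\alpha_{l^{3}}(1-q^{-(l-1)^{2}}/2)$'' is simply false — check $l=3$, $q=2$: left side $12288-64=12224$, right side $12288(1-2^{-4}/2)\approx 11904$.

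Second, in (iii)/(iv) you assert $q^{-\dg/l+l+s-1}=\beta_{\dg}^{*}$. Since $\beta_{\dg}^{*}=q^{-l-\dg/l+s+3}$, that identity would require $2l=4$, i.e.\ $l=2$. In general $\beta_{\dg}^{*}\le q^{-\dg/l+l+s-1}$ (strictly, for $l\ge 3$), and this is exactly why (iii) has two nested lower bounds and (iv) only the weaker one. The distinction comes from using two different sources for the bound on $t$: for $l^{2}\nmid\dg$, \ref{lem:div-1} gives the tight $t\le(q^{s+3}+q^{4})(1-q^{-1})$, hence $t/\alpha_{\dg}\le\beta_{\dg}^{*}$; for $l^{2}\mid\dg$, only the coarser \ref{thm:FFC-2} bound $t\le 2q^{2l+s-1}(1-q^{-1})$ is available, giving $t/\alpha_{\dg}\le q^{-\dg/l+l+s-1}$. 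You had the right dichotomy in mind but attached the wrong label to the FFC-2 bound. Relatedly, the upper bound $\alpha_{\dg}(1-\beta_{\dg}^{*}/2+\beta_{\dg})$ in (iii)/(iv) comes from the \emph{lower} bound $t\ge q^{s+3}(1-q^{-1})$ of \ref{lem:div-1}, since $q^{s+3}(1-q^{-1})/\alpha_{\dg}=\beta_{\dg}^{*}/2$ exactly — not from the exact FFC-1 value $q^{2l+s-1}(1-q^{-1})$, which would give $q^{l+s-1-\dg/l}/2$.
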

\begin{proof}
  When $\dg=l$ is prime, then $D_{\dg} = \varnothing$ and all claims
  are clear (reading $\alpha_{\dg}^{-1}t$ as $0$). We may now assume
  that $\dg$ is composite.

  \short\ref{thm:Estimate-1} The claim for $\dg\in\{l^{2},l^{3}\}$
  follows from \short\ref{thm:Estimate-2/2}, and we now exclude these
  cases. We write $u(e)=e+\dg/e$ for the exponent in
  \ref{cor:inj-1}. We have the two largest subsets $D_{\dg,l}$ and
  $D_{\dg,\dg/l}$ of $D_{\dg}$, both of size at most
  \begin{equation}\label{eq:DD}
    \frac{\alpha_{\dg}}{2}= q^{u(l)}(1-q^{-1})= q^{l+\dg/l}(1-q^{-1})=
    \#(P^{=}_{l} \times P^{0}_{\dg/l})=
    \#(P^{=}_{\dg/l} \times P^{0}_{l}).
  \end{equation}

  Their joint contribution to $\#D_{\dg}$ is at most
  \begin{equation}\label{eq:joint}
    \alpha_{\dg}-t.
  \end{equation}  

  Since $\dg$ is not $l$ or $l^{2}$, we have $l < l_{2} \leq \dg/l$,
  and $l_{2}$ is either $l^{2}$ or a prime number larger than $l$.
  The index set $E$ in \ref{substack} consists of all proper divisors
  of $\dg$.  If $\dg=ll_{2}$, then $E=\{l,l_{2}\}$, and from
  \ref{eq:joint} we have
  $$
  \#D_{\dg} \leq \alpha_{\dg} -t.
  $$

  We may now assume that $\dg \neq ll_{2}$.
  For any $e \in E$, we have $u(e)= e + \dg/e =
  u(\dg/e)$. Furthermore
  \begin{equation}\label{e'}
    u(e)- u(e')= \frac{(\dg-ee')(e'-e)}{ee'}
  \end{equation}
  holds for $e,e' \in E$, and in particular
  \begin{equation}\label{E}
    u(l)-u(l_{2})
    = (\dg-ll_{2})(l_{2}-l)/ll_{2}=c.
  \end{equation}
  Considered as a function of a real variable $e$, $u$ is convex on
  the interval $[1..\dg]$, since $\partial^{2}u/\partial e^{2} =
  2\dg/e^{3}>0$. Thus $u(l)-u(e) \geq c$ for all $e \in E_{2} = E
  \smallsetminus \{l,\dg/l\}$.  Then
  \begin{align*}
    \sum_{e \in E_{2}}q^{u(e)-u(l)} &=q^{-c} \sum_{e \in
      E_{2}}q^{u(e)-u(l) +c}\\
     &< q^{-c}\cdot 2\sum_{i \geq 0}q^{-i}=\frac{2q^{-c}}{1-q^{-1}},
  \end{align*}
  since each value $u(e)$ is assumed at most twice, namely for $e$ and
  $\dg/e$, according to \ref{e'}.  Using \ref{eq:joint}, it follows
  for $\dg \neq l^{2}$ that
  \begin{equation}\label{eq:e}
    \begin{aligned}
      \#D_{\dg}+t &\leq \sum_{e \in E} \#D_{\dg,e}\leq \sum_{e \in
        E}q^{u(e)}(1-q^{-1}) \\
      &\leq q^{l+\dg/l}(1-q^{-1})(2+ \sum_{e \in
        E_{2}}q^{u(e)-u(l)})\\
      &\leq q^{l+\dg/l}(1-q^{-1})(2+ \frac{2q^{-c}}{1-q^{-1}}) =
      \alpha_{\dg}(1+ \beta_{\dg}).
    \end{aligned}
  \end{equation}
  
  This implies the claim in \short\ref{thm:Estimate-1}.
  
  \short\ref{thm:Estimate-2} follows from $\beta_{\dg} \leq 1$.
  
  For \short\ref{thm:Estimate-3}, we have $D_{\dg,l} \cup
  D_{\dg,\dg/l} \subseteq D_{\dg}$. Since $p \nmid \dg$, both
  $\gamma_{\dg,l}$ and $\gamma_{\dg, \dg/l}$ are injective, by
  \ref{cor:inj-1}. From \ref{lem:div-1}, we find
  \begin{align*}
    \#D_{\dg}  &\geq \#D_{\dg,l} + \#D_{\dg,\dg/l}- \#(D_{\dg,l} \cap
    D_{\dg,\dg/l})\\
     &\geq 2q^{l+\dg/l}(1-q^{-1})- (q^{s+3}+q^{4})(1-q^{-1})\\
     &= \alpha_{\dg}(1-\frac{q^{s+3}+q^{4}}{2q^{l+\dg/l}}) \geq \alpha_{\dg}(1-\frac{q^{s+3}}{q^{l+\dg/l}})=
    \alpha_{\dg}(1-\beta^{*}_{\dg}),\\
 \#D_{\dg}&\leq\alpha_{\dg}(1-\frac{q^{s+3}(1-q^{-1})}{\alpha_{\dg}}+\beta_{\dg})=\alpha_{\dg}(1-\frac{\beta_{\dg}^{*}}{2}+\beta_{\dg}).
  \end{align*}

Furthermore, we have $1\leq s \leq n/l^2$ (since $n$ is composite),
  $s+3\geq 4$, $l \geq 2$, and hence
  $$
  -l-\frac n l + s + 3 \leq -\frac n l + l +s
  -1.
  $$
It follows that
  $$
  \beta^* \leq q^{-n/l+l+s-1}.
  $$

  \short\ref{thm:Estimate-4} For the lower bound if $l^{2}\mid \dg$,
  we replace the upper bound from \ref{lem:div-1} by the one
  from \whole\ref{thm:FFC-2}.

  In \short\ref{thm:Estimate-2/2}, for $\dg=l^{2}$, we have $D_{\dg}=
  D_{\dg,l}$ and
  $$
  \#D_{\dg} = q^{l+\dg/l}(1-q^{-1})= \alpha_{\dg},
  $$ using  the injectivity
  of $\gamma_{l^{2},l}$ (\ref{cor:inj-1}). When $\dg=l^{3}$, then
  \ref{thm:FFC} says that
 \begin{align*} 
  t&=q^{3l-1}(1-q^{-1}),\\
\# D_{l^{3}}&=\alpha_{l^{3}}(1-\frac{t}{\alpha_{l^{3}}})=\alpha_{\dg}(1-\frac{q^{-(l-1)^{2}}}{2}).
\end{align*}
  This shows \short\ref{thm:Estimate-2/2}. For \short\ref{thm:Estimate-2/3},
  we replace the bound on $\#(D_{\dg,l}\cap D_{\dg,\dg/l})$ by its
  exact value from \ref{lem:div-1}.
\end{proof}

\cite{boddeb09} state an upper bound as in \ref{thm:Estimate-1}, with
an error term which is only $O(\dg)$ worse than $\beta_{\dg}$.
\begin{remark}\label{remark:twice}
  How often does it happen that the smallest prime factor $l$ of $\dg$
  actually divides $n$ at least twice? The answer: almost a third of
  the time.

  For a prime $l$, let
  $$
  S_{l}= \{\dg \in \mathbb{N}\colon l^{2}\mid \dg, \forall \text{
    primes } r<l \quad r \nmid \dg\},
  $$
  so that $\bigcup_{l} S_{l}$ is the set in question. The union is
  disjoint, and its density is
  $$
  \sigma = \sum_{l} \frac{1}{l^{2}} \prod_{r<l}(1-\frac{1}{r}) \approx
  0.330098.
  $$
  If we take a prime $p$ and further ask that $p \nmid \dg$, then we
  have the density
  $$
  \sigma_{p}= \sigma - \frac{1}{p^{2}}
  \prod_{r<p}(1-\frac{1}{r})-\frac{1}{p} \sum_{l<p} \frac{1}{l^{2}}
  \prod_{r<l}(1-\frac{1}{r}).
  $$
  The correction terms $\sigma - \sigma_{p}$ are $\approx 0.25,
  0.13889, 0.07444$ for $p=2,3,5$, respectively.
\end{remark}

The upper and lower bounds in \ref{thm:Estimate-1} and
\short\ref{thm:Estimate-3} have distinct relative error estimates. We
now compare the two.
\begin{proposition}\label{prop:l}
  In the notation of \ref{thm:Estimate}, assume that $\dg \neq l,
  l^{2}, ll_{2}$.
  \begin{enumerate}
  \item If $l_{2} \leq l^{2}$, then $\beta_{\dg} >
    \beta_{\dg}^{*}$. If furthermore $l^{2} \nmid \dg$ and $p \nmid
    \dg$, then
    $$
    | \#D_{\dg}- \alpha_{\dg} | \leq \alpha_{\dg}\beta_{\dg}.
    $$
  \item If $l_{2} \geq l^{2}+l$, then $\beta_{\dg} \leq
    \beta_{\dg}^{*}$. If furthermore $l^{2} \nmid \dg$ and $p \nmid
    \dg$, then
    $$
    | \#D_{\dg}- \alpha_{\dg}| \leq \alpha_{\dg}\beta_{\dg}^{*}.
    $$
  \end{enumerate}
\end{proposition}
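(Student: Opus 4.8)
The statement is an elementary comparison of the two relative error terms $\beta_{\dg}$ and $\beta_{\dg}^{*}$, together with a routine deduction of the absolute error bound from \ref{thm:Estimate}. Recall from \ref{thm:Estimate} that for $\dg$ composite and $\dg\notin\{l,l^2,ll_2\}$ (and $\dg\notin\{l^3\}$ for the comparison to be nontrivial, which I should check), we have $\beta_{\dg}=q^{-c}/(1-q^{-1})$ with $c=(\dg-ll_2)(l_2-l)/(ll_2)=u(l)-u(l_2)$, and $\beta_{\dg}^{*}=q^{-l-\dg/l+s+3}=q^{-(u(l)-s-3)}$ where $s=\lfloor\dg/l^2\rfloor$ and $u(e)=e+\dg/e$. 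So both quantities are of the shape (constant near $1$) times a power of $q$, and the whole comparison reduces to comparing the exponents $c$ and $u(l)-s-3$, i.e. comparing $u(l_2)$ with $s+3$.

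\textbf{Part (i): $l_2\le l^2$.} Here I first want to show $c\le u(l)-s-3$, equivalently $u(l_2)\ge s+3$. Since $l_2\le l^2$ and $l_2\mid\dg$, write $\dg=l_2\cdot(\dg/l_2)$; then $u(l_2)=l_2+\dg/l_2\ge l_2+\dg/l^2\ge l_2+s$. Using $l_2\ge l+1\ge 3$ gives $u(l_2)\ge s+3$ as soon as $l_2\ge 3$, which holds since $l_2>l\ge 2$ (note $l_2=1$ only in the excluded cases $\dg=l$ or $\dg=l^2$). A little care is needed with the floor: $\dg/l_2\ge \lfloor \dg/l^2\rfloor=s$ follows from $l_2\le l^2$ and $l_2\mid\dg$, but I should double-check the edge case where $\dg/l_2$ is not an integer multiple issue — actually $l_2\mid\dg$ so $\dg/l_2\in\mathbb{N}$ and $\dg/l_2\ge\dg/l^2\ge s$, fine. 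Hence $c\le u(l)-s-3$, so $q^{-c}\ge q^{-(u(l)-s-3)}$, and since the prefactor $1/(1-q^{-1})>1>1$ while $\beta_{\dg}^{*}$ has prefactor $1$, we get $\beta_{\dg}>\beta_{\dg}^{*}$ (strict, because $1/(1-q^{-1})>1$). If moreover $l^2\nmid\dg$ and $p\nmid\dg$, then \ref{thm:Estimate-3} gives $\alpha_{\dg}(1-\beta_{\dg}^{*}/2+\beta_{\dg})\ge\#D_{\dg}\ge\alpha_{\dg}(1-\beta_{\dg}^{*})$; combining $\beta_{\dg}^{*}<\beta_{\dg}$ with these, both $\#D_{\dg}-\alpha_{\dg}$ and $\alpha_{\dg}-\#D_{\dg}$ are bounded by $\alpha_{\dg}\beta_{\dg}$, which is the claim.

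\textbf{Part (ii): $l_2\ge l^2+l$.} Now I want the reverse inequality $c\ge u(l)-s-3$, i.e. $u(l_2)\le s+3$. But $l_2\ge l^2+l$ forces $l_2$ to be large, and since $l_2\mid\dg$ with $\dg\ne l l_2$ we have $\dg\ge l_2\cdot l_3$ for some divisor $l_3\ge l_2$... wait, that pushes $\dg$ up, making $s$ large. The honest route: $l_2\mid\dg$ and $l_2>l$ means (as $l_2$ is the second smallest nontrivial divisor) either $l_2=l^2$ (excluded here since $l_2\ge l^2+l>l^2$) or $l_2$ is prime with $l_2\nmid\dg$ having no divisor strictly between $l$ and $l_2$; in particular $l\cdot l_2\mid\dg$, so $\dg\ge l l_2\ge l(l^2+l)=l^3+l^2$, hence $s=\lfloor\dg/l^2\rfloor\ge l+1$. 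Also $\dg/l_2\le \dg/(l^2+l)$, so $u(l_2)=l_2+\dg/l_2$; this is where I need to balance things. The key estimate I expect to carry the argument: from $\dg=l_2 m'$ with $m'\ge l$ (since $m'\mid\dg$, $m'>1$, and if $m'<l$ it would be a smaller nontrivial divisor), and $l_2\ge l^2+l$, one shows $l_2+\dg/l_2\le \dg/l^2+3$, i.e. $l_2+m'\le l_2 m'/l^2+3$. Since $m'\ge l$ and $l_2\ge l^2+l$, the right side $l_2 m'/l^2\ge l_2/l\ge l+1$ dominates; the detailed inequality is a two-variable calculation in $l_2,m'$ with the side condition $l\mid l_2-? $ — I'll verify it by writing $l_2 m'/l^2 - l_2 - m' = (l_2/l^2)(m'-l^2) - m' + \text{(lower order)}$ and checking it is $\ge -3$ for all admissible values, handling the small cases $l=2,3$ separately if needed. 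Once $u(l_2)\le s+3$ is established, $c\ge u(l)-s-3$ gives $q^{-c}\le\beta_{\dg}^{*}$; I then need the prefactor $1/(1-q^{-1})$ not to spoil $\beta_{\dg}\le\beta_{\dg}^{*}$, so I actually want the strict gap $c\ge u(l)-s-3+1=u(l)-s-2$, i.e. $u(l_2)\le s+2$, to absorb the factor $1/(1-q^{-1})\le 2\le q$ — this will follow because under $l_2\ge l^2+l$ the inequality $u(l_2)\le s+2$ has the same proof with one unit of slack to spare. The deduction of $|\#D_{\dg}-\alpha_{\dg}|\le\alpha_{\dg}\beta_{\dg}^{*}$ from \ref{thm:Estimate-3} is then identical to part (i) with the roles reversed.

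\textbf{Main obstacle.} The only real work is the two-variable exponent comparison in part (ii) — showing $l_2+\dg/l_2\le\lfloor\dg/l^2\rfloor+2$ under the hypothesis $l_2\ge l^2+l$ together with the structural facts that $l_2\mid\dg$, $l\mid\dg$, $\dg\ne ll_2$, and that $l_2$ has the stated minimality. The floor and the various small-$l$ boundary cases (especially $l=2$, where $l^2+l=6$ and $\dg$ could be as small as $12$) need to be checked by hand, but this is bookkeeping rather than anything deep. Part (i) and both relative-to-absolute deductions are immediate once the exponent inequalities are in place.
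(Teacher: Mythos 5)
Your overall strategy---reducing the comparison of $\beta_{\dg}$ and $\beta_{\dg}^{*}$ to a comparison of the exponents of $q$---coincides with the paper's, and part~(i) is correct. Your worry about $\dg=l^{3}$ is also well taken: there $\beta_{\dg}=0$ by the case distinction in its definition while $\beta_{\dg}^{*}>0$, so the paper's proof, which silently uses the generic formula $\beta_{\dg}=q^{-c+\mu}$, implicitly assumes $\dg\neq l^{3}$.

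The genuine gap is in part~(ii). You set $m'=\dg/l_{2}$ and use only $m'\geq l$ (later $m'\geq 2l$ via $\dg\neq ll_{2}$), but this is far too weak: with $l=2$, $l_{2}=7$, $m'=4$ your target inequality $l_{2}+m'\leq l_{2}m'/l^{2}+3$ reads $11\leq 10$, which is false. Such a pair cannot actually arise, and the reason is exactly the lemma your sketch is missing: $a=m'/l=\dg/(ll_{2})$ is a divisor of $\dg$ exceeding $1$, and $a=l$ would give $l^{2}\mid\dg$, forcing $l_{2}\leq l^{2}$ and contradicting $l_{2}\geq l^{2}+l$; hence by the minimality of $l_{2}$ one has $a\geq l_{2}$, i.e.\ $m'\geq ll_{2}$. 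With that, the paper rewrites $\beta_{\dg}\leq\beta_{\dg}^{*}$ as
$$
ll_{2}\,(l_{2}+\sigma+\mu-3)\;\leq\;\frac{\dg}{l}\,(l_{2}-l^{2}),
$$
where $\sigma=\dg/l^{2}-s\in[0,1)$ and $\mu=-\log_{q}(1-q^{-1})\in(0,1]$, then notes that the right-hand side equals $a\,l_{2}(l_{2}-l^{2})\geq l_{2}^{2}l$ (using $a\geq l_{2}$ and $l_{2}-l^{2}\geq l$) while the left-hand side is strictly less than $ll_{2}^{2}$. Without $a\geq l_{2}$ the two-variable calculation you defer cannot close, because $m'$ is not bounded below strongly enough to control $u(l_{2})=l_{2}+m'$.
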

\begin{proof}
  We let $\mu=-\log_{q}(1-q^{-1})$ and $\sigma = \dg/l^{2}-s$, so that
  $0 < \mu\leq 1$, $0\leq\sigma\leq 1-1/l<1$, and
  \begin{align*}
\beta_{\dg}&=q^{-c+\mu},\\ 
\beta_{\dg}^{*}&=q^{-l-\dg/l+\dg/l^{2}-\sigma+3}.
\end{align*}
 Furthermore,
  \begin{equation}\label{eq:lambda}
    \begin{aligned}
      \beta_{\dg}\leq \beta_{\dg}^{*}  &\Longleftrightarrow ll_{2}(l+
      \frac{\dg}{l} - \frac{\dg}{l^{2}}+\sigma+ \mu-3) \leq (\dg
      -ll_{2})(l_{2}-l)\\
      &\Longleftrightarrow ll_{2} (l_{2}+\sigma+\mu-3) \leq \frac{\dg}{l} (l_{2}-l^{2}).
    \end{aligned}
  \end{equation}
  We note that $l_{2} >l_{2}+ \sigma +\mu-3>0$. If $l_{2}\leq l^{2}$,
  it follows that $\beta_{\dg} > \beta_{\dg}^{*}$. If $l_{2}\geq
  l^{2}+l$, then $a=\dg/ll_{2}$ is a proper divisor of $\dg$, since
  $\dg \neq ll_{2}$. It follows that $a \geq l_{2}$, since $a=l$ would mean
  that $l^{2}$ is a divisor of $\dg$ with $l < l^{2} < l_{2}$,
  contradicting the minimality of $l_{2}$. Then
  $$
  \frac{\dg}{l}(l_{2}-l^{2}) \geq l_{2}^{2}\cdot l >ll_{2}(l_{2}+\sigma+\mu-3),
  $$
  and $\beta_{\dg} \leq \beta_{\dg}^{*}$.

  The claims about $\#D_{\dg}$ follow from \ref{thm:Estimate}.
\end{proof}

There remains the ``gray area'' of $l^{2} < l_{2} <l^{2}+l$, where
\ref{eq:lambda} has to be evaluated.  The three equivalent properties
in \ref{eq:lambda} hold when $\dg$ has at least four prime factors,
and do not hold when $\dg = ll_{2}$.

We can simplify the bounds of \ref{thm:Estimate}, at the price of a
slightly larger relative error. 
\begin{corollary}\label{cor:l}
  We assume the notation of \ref{thm:Estimate}.
  \begin{enumerate}
  \item\label{cor:l-1} If $\dg$ is prime, then $D_{\dg}= \varnothing$.
  \item\label{cor:l-2} For all $\dg$, we have
    \begin{equation}\label{eq:betan}
      \#D_{\dg} \leq \alpha_n (1+ q^{-\dg/3l^{2}}).
    \end{equation}
  \item\label{cor:l-3} If $p \nmid \dg$, then
    $$
    \left|\#D_{\dg}- \alpha_{\dg}\right| \leq \alpha_{\dg} \cdot q
    ^{-n/3l^{2}}.
    $$
  \end{enumerate}
\end{corollary}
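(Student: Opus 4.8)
The statement is Corollary~\ref{cor:l}, which is a clean-up of the more precise bounds in \ref{thm:Estimate}. The plan is to deduce each of the three items directly from the corresponding parts of \ref{thm:Estimate} by absorbing the various error terms $\beta_{\dg}$, $\beta_{\dg}^{*}$, $\alpha_{\dg}^{-1}t$ into a single clean exponent $q^{-\dg/3l^{2}}$. Part \short\ref{cor:l-1} is immediate from \ref{substack} (or \ref{thm:Estimate}), since $D_{\dg}=\varnothing$ when $\dg$ is prime. So the work is in \short\ref{cor:l-2} and \short\ref{cor:l-3}.

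First I would handle \short\ref{cor:l-2}. By \ref{thm:Estimate-1} we have $\#D_{\dg}\le \alpha_{\dg}(1+\beta_{\dg})$ for all composite $\dg$, so it suffices to show $\beta_{\dg}\le q^{-\dg/3l^{2}}$. Recall $\beta_{\dg}=q^{-c}/(1-q^{-1})$ with $c=(\dg-ll_{2})(l_{2}-l)/(ll_{2})$ when $\dg\notin\{l,l^{2},l^{3},ll_{2}\}$, and $\beta_{\dg}=0$ otherwise. The plan is to bound $c$ from below by $\dg/3l^{2}+1$ (say), so that $q^{-c}/(1-q^{-1})\le q^{-c}\cdot q/(q-1)\le q^{-c+1}\le q^{-\dg/3l^{2}}$; here one uses $q\ge 2$ so that $q/(q-1)\le 2\le q$. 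To estimate $c$: write $a=\dg/(ll_{2})$, which is a positive integer (a proper divisor of $\dg$ since $\dg\ne ll_{2}$), and note $l_{2}-l\ge 1$. Then
\begin{align*}
  c = \frac{(\dg-ll_{2})(l_{2}-l)}{ll_{2}} = (a-1)\,\frac{l_{2}-l}{1}\cdot\frac{1}{1}
  = (a-1)(l_{2}-l)\cdot\frac{ll_{2}}{ll_{2}}.
\end{align*}
Hmm — more carefully, $c=(a-1)(l_{2}-l)$ is false; rather $\dg-ll_{2}=ll_{2}(a-1)$ so $c=(a-1)(l_{2}-l)$ exactly when we divide by $ll_{2}$: yes, $c=(a-1)(l_{2}-l)$. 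Since $l_{2}\le\dg/l^{2}\cdot(l^{2}/l_{2})$... rather, since $l_{2}\mid\dg$ and $l_{2}>l$, and since $a\ge l_{2}$ unless $a=l$ (which would force $l^{2}\mid\dg$ with $l<l^{2}<l_{2}$ or $l^{2}=l_{2}$ handled separately), one argues $\dg=al\,l_{2}\ge l\cdot l_{2}^{2}$ or $\dg=l^{2}l_{2}$, giving in all remaining cases $c\ge \dg/(3l^{2})+1$ after a short case analysis on whether $l_{2}=l^{2}$ or $l_{2}$ is prime. The cases $\dg\in\{l^{2},l^{3}\}$ are covered by \ref{thm:Estimate-2/2}, which gives $\#D_{l^{2}}=\alpha_{l^{2}}$ and $\#D_{l^{3}}=\alpha_{l^{3}}(1-q^{-(l-1)^{2}}/2)<\alpha_{l^{3}}$, both trivially satisfying \ref{eq:betan}; the case $\dg=ll_{2}$ has $\beta_{\dg}=0$ and $\#D_{\dg}\le\alpha_{\dg}-t\le\alpha_{\dg}$.

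For \short\ref{cor:l-3}, assume $p\nmid\dg$. If $l^{2}\nmid\dg$, then \ref{thm:Estimate-3} gives $\alpha_{\dg}(1-\beta_{\dg}^{*})\le\#D_{\dg}\le\alpha_{\dg}(1-\beta_{\dg}^{*}/2+\beta_{\dg})$, so $|\#D_{\dg}-\alpha_{\dg}|\le\alpha_{\dg}\max(\beta_{\dg}^{*},\beta_{\dg})$, and I would show both $\beta_{\dg}^{*}$ and $\beta_{\dg}$ are at most $q^{-\dg/3l^{2}}$. For $\beta_{\dg}$ this is the estimate from \short\ref{cor:l-2}. For $\beta_{\dg}^{*}=q^{-l-\dg/l+s+3}$ with $s=\lfloor\dg/l^{2}\rfloor\le\dg/l^{2}$, one needs $l+\dg/l-s-3\ge\dg/3l^{2}$, i.e.\ $l+\dg/l-3\ge s+\dg/3l^{2}$; since $\dg/l\ge l\cdot l_{2}/l\cdot\ldots$ — concretely, $\dg/l\ge l\cdot(\dg/l^{2})\ge l s$ and $s\ge 1$, so $\dg/l-s\ge(l-1)s\ge l-1$, and for $l\ge 2$ the slack $l+\dg/l-s-3\ge l-2+(l-1)s$ comfortably dominates $\dg/3l^{2}\le s/3$ once $\dg$ (equivalently $s$) is not too small; the finitely many small cases ($\dg=l^{2}$ excluded, $\dg=l^{3}$, $\dg=2l$, etc.) are checked by hand using \ref{thm:Estimate-2/2}, \ref{thm:Estimate-2/3}. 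Finally, if $l^{2}\mid\dg$, use \ref{thm:Estimate-4}: $\alpha_{\dg}(1-q^{-\dg/l+l+s-1})\le\#D_{\dg}\le\alpha_{\dg}(1-\beta_{\dg}^{*}/2+\beta_{\dg})$, and the lower error $q^{-\dg/l+l+s-1}\le\beta_{\dg}^{*}$ (shown at the end of the proof of \ref{thm:Estimate}) together with the previous estimates closes it.

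\textbf{Main obstacle.} The only real work is the elementary but fiddly inequality $c\ge\dg/3l^{2}+1$ (and the analogous bound on $\beta_{\dg}^{*}$), which requires distinguishing whether $l_{2}=l^{2}$ or $l_{2}$ is a prime $>l$, and tracking the small-degree exceptions $\dg\in\{l^{2},l^{3},ll_{2}\}$ where $\beta_{\dg}=0$ or the exact values from \ref{thm:Estimate-2/2} apply. The choice of the constant $3$ in the exponent $\dg/3l^{2}$ is exactly calibrated to give enough room for these case distinctions; I would expect the verification to be a page of routine arithmetic with no conceptual difficulty.
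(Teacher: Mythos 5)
Your high-level plan — absorb $\beta_{\dg}$ and $\beta_{\dg}^{*}$ into a single clean exponent $q^{-\dg/3l^{2}}$ by proving $\beta_{\dg}\le q^{-\dg/3l^{2}}$ and $\beta_{\dg}^{*}\le q^{-\dg/3l^{2}}$ — matches the paper's structure, and the paper likewise reduces the bound on $\beta_{\dg}$ to an inequality on $c=(a-1)(l_{2}-l)$ with $a=\dg/ll_{2}\ge 2$. However, there is a genuine gap: the claim that $c\ge\dg/3l^{2}+1$ (equivalently $\beta_{\dg}\le q^{-\dg/3l^{2}}$) is simply false for a few small $\dg$, and you have misidentified the exceptional cases. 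You point to $\dg\in\{l^{2},l^{3},ll_{2}\}$ as the troublesome ones, but those are exactly where $\beta_{\dg}=0$ and there is nothing to prove. The real failures are at $\dg\in\{12,16,18\}$. For instance at $\dg=12$ we have $l=2$, $l_{2}=3$, $c=1$, so $\beta_{12}=q^{-1}/(1-q^{-1})=1/(q-1)>1/q=q^{-\dg/3l^{2}}$ for \emph{every} $q$; at $\dg=16,18$ the inequality fails at $q=2$. No calibration of the constant $3$ rescues the pointwise estimate on $\beta_{\dg}$ in these cases.

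The paper handles them differently: instead of the geometric-series estimate $\sum_{e\in E_{2}}q^{u(e)-u(l)}<2q^{-c}/(1-q^{-1})$ that defines $\beta_{\dg}$, it uses the crude but here sharper bound $\sum_{e\in E_{2}}q^{u(e)-u(l)}\le\#E_{2}\cdot q^{-c}$, which (after also discarding $-t\le 0$) yields $\#D_{\dg}\le\alpha_{\dg}(1+\epsilon q^{-c})$ with $\epsilon=\#E_{2}/2$. One then checks $\epsilon q^{-c}\le q^{-\dg/3l^{2}}$ directly for the three values in question (e.g.\ at $\dg=12$, $\epsilon=1$, $c=1$, and $q^{-1}=q^{-\dg/3l^{2}}$ exactly). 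So the desired inequality on $\#D_{\dg}$ is obtained without going through $\beta_{\dg}$ at all. Your proposal as written would leave these cases unproved. The argument for \short\ref{cor:l-3} (bounding $\beta_{\dg}^{*}$ by showing $\dg/l-l-s+1\ge\dg/3l^{2}$) is sound in outline, though the paper's derivation of it from $\dg\ge l^{2}$ is cleaner than your estimate via $\dg/l-s\ge(l-1)s$.
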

\begin{proof}
  \short\ref{cor:l-1} follows from \ref{thm:Estimate-1}, since
  $\alpha_{\dg}=0$. For \short\ref{cor:l-2}, we claim that
  $\beta_{\dg}\leq q^{-\dg/3l^{2}}$. The cases where $\dg \in
  \{l,l^{2},ll_{2}\}$ are trivial, and we may now assume that $a =
  \dg/ll_{2} \geq 2$. We set $\mu = -\log_{q} (1-q^{-1})$, so that
  $0< \mu \leq 1$ and $\beta_{\dg} = q^{-c + \mu}$. 

We have
$$
\frac{3l^{3}+3l}{3l-2}\geq\frac{3l^{2}}{3l-1}.
$$  
If
  \begin{equation}\label{eq:3l}
    l_{2}\geq \frac{3l^{2}+3l}{3l-2}= l+\frac{5}{3}+\frac{10}{9l-6},
  \end{equation}
  then $l_{2}-l-l_{2}/3l \geq 0$ and
  \begin{align}\label{a}
    a(l_{2}-l- \frac{l_{2}}{3l}) \geq 2(l_{2}-l-\frac{l_{2}}{3l}) \geq
    l_{2}-l+1,\nonumber\\
    (a-1)(l_{2}-l)-1 \geq \frac{al_{2}}{3l}= \frac{\dg}{3l^{2}},
  \end{align}
  from which the claim follows. \ref{eq:3l} is satisfied except when
  $(l,l_{2})$ is $(2,3)$, $(2,4)$ or $(3,5)$.

  In the first case, \ref{a} is satisfied for $a \geq 4$, and in the
  other two for $a \geq 3$. The latter always holds in the case $(3,5)$,
  and we are left with $\dg \in \{12,16,18\}$.  For these values of
  $\dg$, we use a direct bound on the sum in \ref{eq:e}, namely
  $$
  \sum_{e \in E_{2}} q^{u(e)-u(l)} \leq \#E_{2}\cdot q^{-c}= 2
  \epsilon q^{-c},
  $$
  where $\epsilon = \#E_{2}/2$, so that
  $$
  \#D_{\dg} \leq \alpha_{\dg}(1+ \epsilon q^{-c})-t.
  $$
  The required values are given in \ref{tab:para}. In all cases, we
  conclude from \ref{thm:Estimate-1} that $\#D_{\dg} \leq \alpha_{\dg}(1+q^{-\dg/3l^{2}})$.

  \begin{table}[h!]
    $$
    \begin{array}{l|c|c|c}
      \dg  & 12 &16 &18 \\\hline
      \epsilon & 1 & 1/2 & 1\\
      c & 1 & 2 & 2\\
      \dg/3l^{2}& 1 & 4/3 & 3/2
    \end{array}
    $$
    \caption{Parameters for three values of $\dg$.}
    \label{tab:para}
  \end{table}

  \short\ref{cor:l-3} Our claim is that $q^{-\dg/l+l+s-1} \leq
  q^{-\dg/3l^{2}}$. Since $\dg\geq l^{2}$, we have
\begin{align*}
l^{2}(3l-3) &\leq l^{2}(3l-2)\leq\dg(3l-2),\\
2\dg+3l^{3}&\leq 3l\dg+3l^{2},\\
\frac{\dg}{3l^{2}}+l+s  &\leq \frac{\dg}{3l^{2}}+l+\frac{\dg}{l^{2}}=\frac{2\dg}{3l^{2}}+l\leq\frac{\dg}{l}+1.
\end{align*}
This proves the claim, and \short\ref{cor:l-3} follows from
\short\ref{cor:l-2} and \ref{thm:Estimate}.
\end{proof}

\section{Counting general decomposable polynomials}\label{sec:cgdp}

\ref{thm:Estimate} provides a satisfactory result in the tame case,
where $p \nmid \dg$. Most of the preparatory work in Sections
\bare\ref{sec:usd} and \bare\ref{sec:collcomp} is geared towards the
wild case. The upper bound of \ref{thm:Estimate-1} still holds. We now
present the resulting lower bounds.

\begin{figure}[h!]
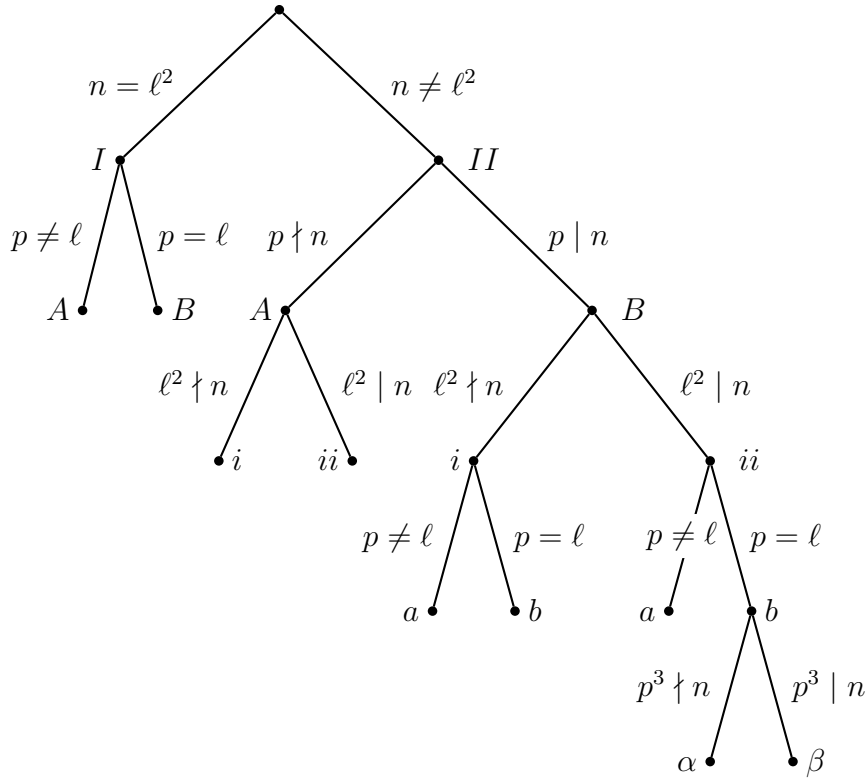

  \[
  \begin{psTree}[treesep=0.7cm,levelsep=2cm]{\Tdot}
    \begin{psTree}[treesep=1cm]{\Tdot~[tnpos=l]{I}^{n=l^{2}}}
      \Tdot~[tnpos=l]{A}^{p\neq l} \Tdot~[tnpos=r]{B}_{p=l}
    \end{psTree}
    \begin{psTree}[treesep=1.3cm]{\Tdot~[tnpos=r]{\enspace II}_{n\neq
          l^{2}}}
      \begin{psTree}[treesep=1cm]{\Tdot~[tnpos=l]{A}^{p\nmid n}}
        \Tdot~[tnpos=r]{i}^{l^{2}\nmid n}
        \Tdot~[tnpos=l]{ii}_{l^{2}\mid n}
      \end{psTree}
      \begin{psTree}[treesep=1.3cm]{\Tdot~[tnpos=r]{\enspace B}_{p\mid
            n}}
        \begin{psTree}[treesep=1.1cm]{\Tdot~[tnpos=l]{i}^{l^{2}\nmid
              n}}
          \Tdot~[tnpos=l]{a}^{p \neq l} \Tdot~[tnpos=r]{b}_{p = l}
        \end{psTree}
        \begin{psTree}[treesep=1.1cm]{\Tdot~[tnpos=r]{\enspace
              ii}_{l^{2}\mid n}}
          \Tdot~[tnpos=l]{a}\ncput*{\rput*(-0.1,0){p \neq l}}
          \begin{psTree}[treesep=1.1cm]{\Tdot~[tnpos=r]{b}_{p = l}}
            \Tdot~[tnpos=l]{\alpha}^{p^{3} \nmid n}
            \Tdot~[tnpos=r]{\beta}_{p^{3} \mid n}
          \end{psTree}
        \end{psTree}
      \end{psTree}
    \end{psTree}
  \end{psTree}
  \]
  \caption{The tree of case distinctions for estimating $\#D_{n}$.}
  \label{fig:tree}
\end{figure}

We have to deal with an annoyingly large jungle of case
distinctions. To keep an overview, we reduce it to the single tree of
\ref{fig:tree}. Its branches correspond to the various bounds on
equal-degree collisions (\ref{cor:decom}) and on distinct-degree
collisions (Corollaries \bare\ref{lem:div-b}, \bare\ref{thm:FFC}, and
\bare\ref{cor:ffchar}).  Since at each internal vertex, the two
branches are complementary, the leaves cover all possibilities. We
use a top down numbering of the vertices according to the branches; as
an example, II.B.ii.b.$\beta$ is the rightmost leaf at the lowest
level.  Furthermore, if a branching is left out, as in II.B, then a
bound at that vertex holds for all descendants, which comprise three
internal vertices and five leaves in this example.

\begin{table}[h!]
  $$
  \begin{array}{l|l|l}
    \text{leaf in} & & \text{up-} \\
    \text{\ref{fig:tree}}
    & \text{lower bound on $\#D_{\dg}/\alpha_{\dg}$}
    & \text{per}\\ \hline
    \text{I.A} & 1 & 1\\
    \text{I.B} & 
    \frac 1 2 (1+\frac{1}{p+1})(1-q^{-2})+{q^{-p}} > 1/2 & 1\\
    \text{II.A.i} & 1- \beta_{\dg}^{*} \geq 1-q^{-n/l-l+n/l^2+3} &\\
    \text{II.A.ii} & 1-q^{-\dg/l+l+\dg/l^{2}-1}/2 &\\
    
    \text{II.B.i.a} & 1- (q^{-1}+q^{-p+1}+q^{-\dg/l-l+\dg/l^{2}+3})/2 &\\
    \text{II.B.i.b} & 1- (q^{-1}-q^{-p})/2 & 1\\
    \text{II.B.ii.a} & 1- (q^{-1}+q^{-p+1}-q^{-p}+q^{-l+1})/2 & \\
    \text{II.B.ii.b.$\alpha $} & 
    \frac 1 2 (\frac{3}{2}
    + \frac{1}{2p+2}
    -{q^{-1}} 
    -\frac{q^{-2}} 2 (1+\frac 1 {p+1}) 
    -\frac{q^{-p+1}}{1-q^{-p}})
    & \\
    \text{II.B.ii.b.$\beta$} & 1-q^{-1}-q^{-p+1} & 1\\
  \end{array}
  $$
  \caption{The bounds at the leaves of \ref{fig:tree}.}
  \label{tab:leaves}
\end{table}

\begin{theorem}\label{thm:multi}
  Let $\mathbb{F}_{q}$ be a finite field of characteristic $p$ with
  $q$ elements, and $l$ the smallest prime divisor of the composite
  integer $\dg \geq 2$. Then we have the following bounds on
  $\#D_{\dg}$ over $\mathbb{F}_{q}$.
  \begin{enumerate}
  \item\label{thm:multi-1} If the ``upper'' column in \ref{tab:leaves}
    contains a $1$, then
    $$
    \# D_{\dg} \leq \alpha_{\dg}.
    $$
  \item\label{thm:multi-2} The lower bounds in \ref{tab:leaves} hold.
  \end{enumerate}
\end{theorem}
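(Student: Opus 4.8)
The plan is to verify that each leaf of the case tree in \ref{fig:tree} is covered by combining the universal upper bound from \ref{thm:Estimate-1} with the appropriate lower bound on collisions, and then dividing by $\alpha_{\dg}$. The overall structure of the argument is bookkeeping: for each leaf, identify which subsets $D_{\dg,l}$ and $D_{\dg,\dg/l}$ (and possibly the Frobenius parts) are large, bound their union from below using inclusion-exclusion, and bound the intersection $D_{\dg,l}\cap D_{\dg,\dg/l}$ from above by the relevant corollary. Throughout we use $\alpha_{\dg}/2 = q^{l+\dg/l}(1-q^{-1})$ as the reference size of each of the two main pieces, together with $\#(D_{\dg,l}\cup D_{\dg,\dg/l}) = \#D_{\dg,l} + \#D_{\dg,\dg/l} - \#(D_{\dg,l}\cap D_{\dg,\dg/l}) \leq \#D_{\dg}$.

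First I would handle the degenerate leaves. For $\dg=l^{2}$ (leaf I.A or II at the top), \ref{thm:Estimate-2/2} gives $\#D_{l^{2}}=\alpha_{l^{2}}$ when $p\neq l$, which is leaf I.A, and \ref{ex:com} gives the bound at leaf I.B when $p=l$; in both cases $\#D_{\dg}\leq\alpha_{\dg}$ follows since $D_{\dg}=D_{\dg,l}$ has size exactly $q^{2l}(1-q^{-1})=\alpha_{l^{2}}$ (using \ref{cor:inj-1} in the tame case, and being a subset of that image in the wild case). Next, for $\dg\neq l^{2}$ in the tame case $p\nmid\dg$: leaf II.A.i ($l^{2}\nmid\dg$) is exactly \ref{thm:Estimate-3}, and leaf II.A.ii ($l^{2}\mid\dg$) is \ref{thm:Estimate-4}; both also give $\#D_{\dg}\leq\alpha_{\dg}(1-\beta_{\dg}^{*}/2+\beta_{\dg})$, but since the ``upper'' column is blank at these leaves, only the lower bound is claimed. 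The simplifications of the error terms ($\beta_{\dg}^{*}\leq q^{-\dg/l-l+\dg/l^{2}+3}$ using $s\leq\dg/l^{2}$, and $\beta_{\dg}^{*}\leq q^{-\dg/l+l+\dg/l^{2}-1}$ for the II.A.ii form) are the elementary exponent estimates already appearing in the proof of \ref{cor:l}.

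The wild leaves are the substance. For leaf II.B.i.a ($p\mid\dg$, $l^{2}\nmid\dg$, $p\neq l$): then $p\nmid l$, so by \ref{cor:inj-1} $\gamma_{\dg,l}$ is injective and $\#D_{\dg,l}=\alpha_{\dg}/2$; we need a lower bound on $\#D_{\dg,\dg/l}$, which since $p\mid\dg$ and $p\neq l$ has $p\mid\dg/l$, so we invoke \ref{cor:decom} (with the roles of the degrees matched appropriately, i.e.\ $r=p^{d}$ dividing $\dg/l$, $\tilde m=l$) to get a lower bound of the form $\alpha_{\dg}/2\cdot(1-q^{-1}(1+\cdots))$; subtracting the intersection bound $t\leq q^{s+3}(1-q^{-1})$ from \ref{cor:ffchar-2} or \ref{thm:FFC}, and dividing by $\alpha_{\dg}$, yields the stated $1-(q^{-1}+q^{-p+1}+q^{-\dg/l-l+\dg/l^{2}+3})/2$. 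Leaf II.B.i.b ($p=l$, $l^{2}\nmid\dg$, so $p\mid\dg$ exactly once): here $\dg=p\cdot m$ with $p\nmid m$, so $\dg/l=m$ is the only proper divisor of interest besides $l=p$; $D_{\dg,p}$ and $D_{\dg,m}$ both need wild-case lower bounds from \ref{cor:decom}, but since $l^{2}\nmid\dg$ one of these has tame left-component and is injective, while the Frobenius contribution is controlled by \ref{lem:propdivi-2}; the ``$+q^{-p}$'' flavor signals that a Frobenius term is being added back, giving $1-(q^{-1}-q^{-p})/2$ and hence $\#D_{\dg}\leq\alpha_{\dg}$. For leaf II.B.ii.a ($l^{2}\mid\dg$, $p\mid\dg$, $p\neq l$): now $p\nmid l$, $\gamma_{\dg,l}$ is injective, $\#D_{\dg,l}=\alpha_{\dg}/2$, and we combine \ref{cor:ffcharb-1} (lower bound on the intersection is irrelevant; rather the upper bound \ref{cor:ffchar-2}) with \ref{cor:decom} for $\#D_{\dg,\dg/l}$, getting $1-(q^{-1}+q^{-p+1}-q^{-p}+q^{-l+1})/2$. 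Finally leaves II.B.ii.b.$\alpha$ and .$\beta$ ($p=l$ dividing $\dg$ at least twice): if $p^{3}\nmid\dg$ (so $p^{2}\,\|\,\dg$) and, in the sharpest case, $\dg=p^{2}$, \ref{ex:com} gives the ratio $\tfrac12(1+\tfrac1{p+1})(1-q^{-2})+q^{-p}$; for general $\dg$ with $p^{2}\,\|\,\dg$ we use \ref{cor:ffcharb-4} (lower bound on the Frobenius-free intersection) plus \ref{cor:ffchar-3} (upper bound) plus \ref{lem:propdivi-2}, assembling the $\tfrac12(\tfrac32+\tfrac1{2p+2}-\cdots)$ expression; for $p^{3}\mid\dg$, the cruder \ref{cor:ffchar-3} upper bound $t\leq q^{m+l-c+\lceil c/p\rceil}(1-q^{-1})$ combined with the wild lower bound gives $1-q^{-1}-q^{-p+1}$, and in the $p^{3}\nmid\dg$ subcase the sharper assembly gives a ``$1$'' in the upper column via \ref{ex:com}.

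The main obstacle I expect is not any single inequality but the exhaustive matching of hypotheses at each leaf: one must repeatedly check that the divisibility conditions defining the leaf (e.g.\ ``$p=l$ and $p^{2}\,\|\,\dg$'') are precisely those under which the invoked corollary applies, that the parameter $r^{*}$, $\mu$, $c$, $s$ used in that corollary specializes correctly (in particular $r=p$, $a=\dg/l p$, $\tilde m=l$ for the II.B.ii.b branch, so that $\tilde m=r$ triggers the $r=m$ case of \ref{th:decom-3}), and that the $-t$ subtraction uses the correct one of Corollaries \ref{lem:div-b}, \ref{thm:FFC}, \ref{cor:ffchar}. A secondary nuisance is that in the wild branches $\gamma_{\dg,\dg/l}$ is generally not injective, so the lower bounds are on $\#\gamma_{\dg,\degOne}(G)$ rather than on $\#(P^{=}\times P^{0})$, and one must track the factor $q^{2}(1-q^{-1})$ from left-composition with a linear polynomial (as in \ref{cor:decom}) consistently. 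Once all leaf computations are laid out in a table matching \ref{tab:leaves}, the theorem follows by reading off the entries together with \ref{thm:Estimate-1}.
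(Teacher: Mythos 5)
Your overall strategy — split $D_\dg$ into the two dominant pieces $D_{\dg,l}$, $D_{\dg,\dg/l}$, use \ref{cor:inj} for the tame piece and \ref{cor:decom} for the wild piece, subtract the intersection via the appropriate one of \ref{lem:div-b}, \ref{thm:FFC}, \ref{cor:ffchar}, and track Frobenius compositions through \ref{lem:propdivi} — is exactly the paper's, and the leaf-by-leaf dispatch is the right plan. The paper formalizes the bookkeeping with normalized quantities $\nu,\nu_0,\nu_1,\nu_2,\nu_3$ and the double inequality $\nu_0+\nu_1-\nu_2+\nu_3\leq\nu\leq 1+\beta_\dg-\nu_2-\nu_3$, but that's the same inclusion--exclusion you describe.

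There is, however, a genuine gap in how you propose to establish the ``upper column = 1'' claim $\#D_\dg\leq\alpha_\dg$ at the wild leaves II.B.i.b and II.B.ii.b.$\beta$. You write that this ``follows by reading off the entries together with \ref{thm:Estimate-1},'' and at II.B.i.b you infer it from the lower-bound formula with a ``hence.'' Neither step works: \ref{thm:Estimate-1} only gives $\#D_\dg\leq\alpha_\dg(1+\beta_\dg)$, which is \emph{weaker} than $\alpha_\dg$, and a lower bound of the form $1-(q^{-1}-q^{-p})/2$ says nothing about the upper end. The paper must add a separate observation: the upper bound in eq:D has an extra $-\nu_3$ because Frobenius compositions are counted twice (once in $D_{\dg,p}$ and once in $D_{\dg,\dg/p}$), and then it proves that $\beta_\dg\leq\nu_3=\tfrac12 q^{-p+1}$ at II.B.i.b, which is a nontrivial inequality ($c=(\dg-ll_2)(l_2-l)/ll_2\geq l+1$) requiring a case split that bottoms out at $\dg=18$, $q=2$. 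Without this Frobenius double-counting correction and the explicit $\beta_\dg$ estimate, $\#D_\dg\leq\alpha_\dg$ does not follow at those leaves. (Your more direct argument at I.B --- $D_{l^2}=D_{l^2,l}$ is contained in an image of size $\alpha_{l^2}$ --- is fine, but it is specific to $\dg=l^2$ and does not extend.)
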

\begin{proof}
  We recall $D_{\dg,e}$ from \ref{eq:Zan} and $\alpha_{\dg}$ from
  \ref{thm:small}, the superscript $+$ for non-Frobenius from
  \ref{al:DD}, and set at each vertex
  $$
  \nu = \frac{\#D_{\dg}}{\alpha_{\dg}}, \; \nu_{0}=
  \frac{\#D^{+}_{\dg,l}}{\alpha_{\dg}}, \; \nu_{1}=
  \frac{\#D^{+}_{\dg,\dg/l}}{\alpha_{\dg}}, \; \nu_{2}=
  \frac{\#(D^{+}_{\dg,l} \cap D^{+}_{\dg,\dg/l})}{\alpha_{\dg}}, \;
  \nu_{3}= \frac{\#D^{\varphi}_{n}}{\alpha_{n}} .$$ Then $\nu =
  \nu_{0}+\nu_{3}$ if $\dg = l^{2}$, and otherwise
  \begin{equation}\label{eq:D}
    \nu_{0}+ \nu_{1}- \nu_{2} + \nu_{3} \leq \nu \leq 1 + \beta_{\dg} - \nu_{2}-\nu_{3}.
  \end{equation}
  In the lower bound, $\nu_{0}+\nu_{1}-\nu_{2}$ counts the
  non-Frobenius compositions of the dominant contributions $D_{\dg,l}$
  and $D_{\dg,\dg/l}$, and $\nu_{3}$ adds the Frobenius compositions.
  In the upper bound, $1-\nu_{2}$ bounds the two dominant
  contributions from above, $\beta_{\dg}$ accounts for the
  non-dominant contributions. We may subtract $\nu_{3}$ since the
  Frobenius compositions have been counted twice, in $D_{\dg,p}$ and
  $D_{\dg,\dg/p}$; of course, $\nu_{3}$ is nonzero only if $p\mid\dg.$

  The proof proceeds in two stages. In the first one, we indicate for
  some vertices $V$ bounds $\lambda_{i}(V)$ with the following
  properties:
  $$
  \nu_{0} \geq \lambda_{0}, \; \nu_{1} \geq \lambda_{1}, \;
  \lambda_{2} \geq \nu_{2} \geq \lambda_{4}.
  $$
  Such a bound at $V$ applies to all descendants of $V$.  The value
  $\lambda_{4}$ only intervenes in the upper bound on $\nu$, and we
  sometimes forego its detailed calculation and simply use
  $\lambda_{4}=0$. In the second stage, we assemble those bounds for
  each leaf, according to
  \ref{eq:D}.

  Throughout the proof, $d\geq 0$ denotes the multiplicity of $p$ in
  $\dg$, and $s = \lfloor \dg/l^{2}\rfloor.$
  In the first stage, we use \ref{thm:Estimate-2/2} at I.A:
  \begin{align*}
    \nu (\text{I.A}) = 1.
  \end{align*}
  At I.B, we have from \ref{ex:com}
  \begin{align*}
    \lambda_0(\text{I.B})  &\geq \frac 1 2
    (1+\frac{1}{p+1})(1-q^{-2})+{q^{-p}} .
  \end{align*}
  Furthermore,
  $$
  (1+\frac 1 {p+1} )(1-q^{-2}) \geq (1+\frac 1 {p+1} )(1-p^{-2}) =
  1+\frac {p-2} {p^2} \geq 1,
  $$
  so that $\lambda_0(\text{I.B}) > 1/2$. \ref{lem:propdivi-2} says
  that 
  $$
  \lambda_{3}(\text{I.B})=q^{-p+1}.
  $$  
  From \ref{cor:inj-1}, we
  have
  $$
  \lambda_{0}(\text{II.A})= \lambda_{1}(\text{II.A})= \frac{1}{2},
  $$
  and since $p \nmid \dg$,
  $$
  \nu_{3} (\text{II.A}) = 0.
  $$

  Vertex II.A.i has been dealt with in \ref{lem:div-1}:
  \begin{align*}
    \lambda_{2}(\text{II.A.i}) &=\beta_{\dg}^{*}\geq \frac{1}{2}q^{-\dg/l-l}(q^{s+3}+q^{4}),\\
    \lambda_{4}(\text{II.A.i}) &= \frac 1 2 q^{-n/l-l+s+3}.
  \end{align*}
  
  Since $l \mid \dg/l$, \ref{thm:FFC} yields
  \begin{align*}
    \lambda_{2}(\text{II.A.ii})= \lambda_{4}(\text{II.A.ii}) =
    \frac{1}{2} q^{-\dg/l+l+s-1}.
  \end{align*}
   
  Since $p \mid \dg$ at II.B, \ref{lem:propdivi-2} implies that
  $$
  \nu_{3}\text{(II.B)} = \frac{1}{2}q^{-l-\dg/l+\dg/p+1}.
  $$

  We now let $V$ be one of II.B.i.a or II.B.ii.a. Then we have
  $$
  \lambda_{0}(V)= \frac{1}{2},
  $$
  by \ref{cor:inj-1}.  Applying \ref{cor:decom} to $D_{\dg,\dg/l}$ at
  $V$, we have $d\geq 1$,$ r=p^{d} \neq l=m$, $k=\dg/l$, and
  \begin{equation}\label{eq:mu}
    \mu = \gcd(p^{d}-1,l)\text{ is either }1 \text{ or }l.
  \end{equation}
  In the first case, where $\mu=1$, we have
  $$
  \nu_{1}(V) \geq \frac{1}{2} (1-q^{-1}(1+q^{-p+2}
  \frac{(1-q^{-1})^{2}}{1-q^{-p}})) (1-q^{-\dg/l})
  $$
  from \ref{cor:decom-1}. In the second case, where $\mu=l$, we have
  $p > l = \mu \geq 2$. We first assume that $r\neq 3.$ Then
  $r-1=p^{d}-1$ is not a prime number, and $r^{*}= (r-1)/l \geq 2$, so
  that the last bound in \ref{cor:decom-2} applies and
  $$
  \nu_{1}(V)\geq \frac{1}{2} \bigl((1-q^{-1}(1+q^{-p+2}
  \frac{(1-q^{-1})^{2}}{1-q^{-p}}) \bigr) (1-q^{-\dg/l}) -
  \frac{2}{3}q^{-\dg/l}(1-q^{-1})^{2}.
  $$
  If $r=3$, then $p=3$, $\mu=l=2$, $r^*=1$, and according to the
  second bound in \ref{cor:decom-2}, we have to replace the last
  summand above by
  $$
  -\frac 1 2 q^{-n/l+1}(1-q^{-1})^2 (1+q^{-1}).
  $$
  Since $2/3\leq q(1+q^{-1})/2$, the latter term dominates in absolute
  value the one for $r\neq 3$. Its value is at least $q^{-\dg/l+1}/2$, and we find for $\mu=l$ that
  \begin{align*}
    \nu_{1}(V)  &\geq
    \frac{1}{2} - \frac {q^{-1}} 2 (1+q^{-p+2}(1-q^{-1})) \\
    & \quad-\frac {q^{-n/l}} 2 (1-q^{-1}-q^{-p+1} \frac {(1-q^{-1})^2}{1-q^{-p}} +q) \\
    &\geq \frac{1}{2} -\frac {q^{-1}} 2 (1+q^{-p+2}) +\frac {q^{-p}}
    2 - \frac{q^{-n/l}(q+1)} 2.
  \end{align*}
  Thus we may take the last value as $\lambda_{1}\text{(II.B.i.a) and
  } \lambda_{1}\text{(II.B.ii.a)}$.  Furthermore, \ref{lem:div-1/3}
  yields
  \begin{align*}
    \lambda_{2}(\text{II.B.i.a})  =
    \frac{1}{2}q^{-\dg/l-l}(q^{s+3}-q^{\lfloor s/p\rfloor +3}).
  \end{align*}

  When $V$ is II.B.i.b or II.B.ii.b, we have for $\lambda_{0}$ in the
  notation of \ref{cor:decom} that $k = r = p \neq \dg/p=m$ and $\mu =
  \gcd(p-1, \dg/p)=1$, since all proper divisors of $\dg/p$ are at
  least $l=p$. Thus we may apply \ref{cor:decom-1} to find
  \begin{align*}
    \lambda_{0}(V)&= \frac{1}{2} (1-q^{-p})(1-q^{-1}(1+q^{-p+2}
    \frac{(1-q^{-1})^{2}}{1-q^{-p}}))\\
    &= \frac{1}{2}(1-q^{-1}-q^{-p+1}+q^{-p}).
  \end{align*}
  At II.B.i.b, we have $p \nmid \dg/p$, so that \ref{cor:inj-1} for
  $D_{\dg,\dg/p}$ implies
  $$
  \lambda_{1}(\text{II.B.i.b})= \frac{1}{2},
  $$
  and \ref{lem:div-1/4} yields
  $$
  \lambda_{2}\text{(II.B.i.b)}= \lambda_{4}\text{(II.B.i.b)}= 0.
  $$


At II.B.ii.a, we have $l<p$, and \ref{cor:ffchar-2} says that
  \begin{align*}
    \lambda_{2}(\text{II.B.ii.a}) &= \frac{1}{2}q^{-l+\lceil
      l/p\rceil}= \frac{1}{2}q^{-l+1}.
  \end{align*}

  At II.B.ii.b.$\alpha$, we have $k=\dg/p$ and $r=p=z=m$ in
  \ref{cor:decom-3} for $D_{\dg,\dg/p}$, so that 
\begin{align*}
    \lambda_{1}(\text{II.B.ii.b.}\alpha)&=\frac{1}{2} (1-q^{-1})
    (\frac 1 2 + \frac{1+q^{-1}}{2p+2}
    +\frac{q^{-1}} 2 \\
    & \quad-q^{-n/p} \frac{1-q^{-p+1}}{1-q^{-p}} - q^{-p+1}\frac{1-q^{-1}}
    {1-q^{-p}}).
  \end{align*}
Furthermore, from \ref{cor:ffchar-3} we
  have 
  \begin{align*}
    \lambda_{2}(\text{II.B.ii.b}) &= \frac{1}{2}q^{-\dg/p^{2} + \lceil
      \dg/p^{3}\rceil}. 
  \end{align*}
  At II.B.ii.b.$\beta$, we have for $D_{\dg,\dg/p}$ that $k=\dg/p$,
  $r=p^{d-1} \neq p=m$, since $d \geq 3$, and $\mu= \gcd(r-1,m)=
  \gcd(p^{d-1}-1,p)=1$, so that \ref{cor:decom-1} yields
  \begin{align*}
    \lambda_{1}(\text{II.B.ii.b}.\beta) &= \frac{1}{2}
    \bigl(1-q^{-1}(1+q^{-p+2}
    \frac{(1-q^{-1})^{2}}{1-q^{-p}})\bigr) (1-q^{-\dg/p})\\
    &= \frac{(1-q^{-1})(1-q^{-p+1})(1-q^{-\dg/p})}{2(1-q^{-p})}.
  \end{align*}
  \ref{cor:ffcharb-4} says that
  \begin{align*}
    \lambda_{4}(\text{II.B.ii.b}.\alpha)&=
    \frac{1}{2}q^{-\dg/p+p+\dg/p^{2}-1}(1-q^{-1})(1-q^{-p+1}).
  \end{align*}


  We find the following bounds on $\nu$ at the leaves.

  I.A:
  $$
  \nu= \lambda_{0}(\text{I.A}) = 1,
  $$

  I.B: We have $\lambda_{3}(\text{I.B})=q^{-p+1}$, and all Frobenius
  compositions except $x^{p}\circ x^{p}$ are collisions. Thus
  $$
   1-q^{-p+1}(1-q^{-p+1}) \geq \nu \geq \frac 1 2
   (1+\frac{1}{p+1})(1-q^{-2})+{q^{-p}} > 1/2. 
  $$

  II.A.i:
  \begin{align*}
    \nu  &\leq 1 + \beta_{\dg}- \lambda_{4}(\text{II.A.i}) =
    1 + \beta_{\dg}-\frac 1 2 q^{-n/l-l+s+3} \leq 1+ \beta_{\dg},\\
    \nu  &\geq \lambda_{0}(\text{II.A})+
    \lambda_{1}(\text{II.A})-\lambda_{2}(\text{II.A.i})=
    1-\beta_{\dg}^{*}.
  \end{align*}

  II.A.ii:
  \begin{align*}
    \nu &\leq 1 + \beta_{\dg}- \lambda_{4}(\text{II.A.ii})=1+
    \beta_{\dg}-
    \frac{1}{2} q^{-\dg/l+l+\dg/l^{2}-1} \leq 1 + \beta_{\dg},\\
    \nu  &\geq \lambda_{0}(\text{II.A})+ \lambda_{1}(\text{II.A})- \lambda_{2}(\text{II.A.ii})\\
    &=\frac{1}{2} +
    \frac{1}{2}- \frac{1}{2} q^{-\dg/l+l+s-1}= 1- \frac{1}{2}q^{-\dg/l+l+s-1}.
  \end{align*}

  II.B.i.a: 

  For the lower bound, we find
  \begin{align}
    \nonumber
    \nu&\geq\lambda_{0}(\text{II.B.i.a})+\lambda_{1}(\text{II.B.i.a})-\lambda_{2}(\text{II.B.i.a})+\nu_{3}(\text{II.B})\\\nonumber
    &=\frac{1}{2}+\frac{1}{2}(1-q^{-1}(1+q^{-p+2})+q^{-p}-q^{-\dg/l}(q+1))\\\nonumber
    &\quad-\frac{1}{2}q^{-\dg/l-l}(q^{s+3}-q^{\lfloor s/p\rfloor +3})+\frac{1}{2}q^{-l-\dg/l+\dg/p+1}\\
    \label{eq:lowss} &\geq 1-\frac{1}{2}
    (q^{-1}+q^{-p+1})+\frac{q^{-p}}{2}-\frac {q^{-\dg/l}}2(q +
    1+q^{s-l+3}-q^{\dg/p-l+1}).
  \end{align}

  At the present leaf, we have $n=alp$ with $p>l\geq 2$ and $a\geq 1.$
  Thus $\dg/l\geq p$ and
  $$q^{-p}\geq q^{-\dg/l}.$$ 
  Furthermore, $\dg/p\geq l$ and
  $$q^{\dg/p-l+1}\geq q.$$
  It follows that
  \begin{align}\label{eq:follow}
    \nu \geq 1 -\frac{1}{2}(q^{-1}+q^{-p+1}+q^{-\dg/l-l+s+3}).
  \end{align}

  II.B.i.b: 
  $$
  \nu \leq 1 + \beta_{\dg}-
  \lambda_{4}(\text{II.B.i.b})-\nu_{3}(\text{II.B})= 1+\beta_{\dg}-0-
  \frac{1}{2} q^{-p+1}.
  $$

  We claim that $\beta_{\dg} \leq \frac{1}{2}q^{-p+1}$, so that $\nu
  \leq 1$. We may assume that $\dg \notin \{l^{2},ll_{2}\}$, since
  otherwise $\beta_{\dg}=0$. Setting $\mu = \log_{q}(2/(1-q^{-1}))$, we
  have $0 < \mu \leq 2$ and $2\beta_{\dg}= q^{-c+\mu} \leq q^{-c+2}$,
  so that it suffices to show
  $$
  l-1 = p-1 \leq c-2 = \frac{(\dg-ll_{2})(l_{2}-l)}{ll_{2}}-2.
  $$

  Abbreviating $a = \dg/ll_{2}$, this is equivalent to
  \begin{equation}\label{eq:al}
    \frac{l+1}{l_{2}-l}+1 \leq a.
  \end{equation}
  Since $p=l$ and $p^{2}\nmid\dg$, we have $l \nmid a$ and $a \geq
  l_{2} > l$, by the minimality conditions on $l$ and $l_{2}$. If
  $l_{2} \geq l+2$, \ref{eq:al} holds. If $l_{2}=l+1$, then $l=2$ and
  $a \geq 4$ is required for \ref{eq:al}. Since $2 \nmid a$, it
  remains the case $a=3$, corresponding to $\dg = 18$ and $p=2$. One
  checks that $\beta_{18}\leq \frac{1}{2}q^{-1}$ for $q \geq 4$. For
  $q=2$, we have to go back to \ref{eq:e} and check that
  $\nu_{3}=q^{10}(1-q^{-1})$ and
  $$
   \# D_{18}\leq \alpha_{18}-\nu_{3} +2q^{9}(1-q^{-1})= \alpha_{18}.
  $$ 
For the lower bound, we have
  \begin{align*}
    \nu  &\geq \lambda_{0}(\text{II.B.i.b}) +
    \lambda_{1}(\text{II.B.i.b})-
    \lambda_{2}(\text{II.B.i.b})+\nu_{3}(\text{II.B})\\
     &=\frac{1}{2}(1-q^{-1}-q^{-p+1}+q^{-p})+ \frac{1}{2}-0+\frac{1}{2}q^{-p+1}\\
     &=1- \frac{1}{2}(q^{-1}-q^{-p}).
  \end{align*}


  At II.B.ii.a, we have
    \begin{align*}
    \nu&\geq\lambda_{0}\bigl(\textrm{II.B.ii.a}\bigr)
    +\lambda_{1}\bigl(\textrm{II.B.ii.a}\bigr)-\lambda_{2}\bigl(\textrm{II.B.ii.a}\bigr)+\nu_{3}\bigl(\text{II.B}\bigl)\\
    &=\frac{1}{2}+\frac{1}{2}-\frac{q^{-1}}{2}(1+q^{-p+2})+\frac{q^{-p}}{2}-\frac{q^{-\dg/l}(q+1)}{2}\\
    &\quad-\frac{q^{-l+1}}{2}+\frac{q^{-l-\dg/l+\dg/p+1}}{2}\\
    &=1-\frac{1}{2}(q^{-1}+q^{-p+1})+\frac{q^{-p}}{2}-\frac{q^{-l+1}}{2}+\frac{q^{-\dg/l}}{2}(q^{\dg/p-l+1}-q-1).
  \end{align*}
  Since $\dg=al^{2}p$ with $a\geq 1$, we have $\dg/p\geq l^{2}>l+1$,
  and
  \begin{align*}
    q^{\dg/p-l+1}&>q^{2}>q+1,\\
    \nu &>1-\frac{1}{2}(q^{-1}+q^{-p+1}-q^{-p}+q^{-l+1}).
  \end{align*}

II.B.ii.b.$\alpha$:
\begin{align}
  \nu &\geq
  \lambda_{0}(\text{II.B.ii.b})+
  \lambda_{1}(\text{II.B.ii.b.$\alpha$}) -
  \lambda_{2}(\text{II.B.ii.b}) + \nu_{3}(\text{II.B})
  \nonumber \\
  &= 
  \begin{aligned}[t]
    \frac{1}{2}(&1-q^{-1}-q^{-p+1}+q^{-p}) +\frac{1}{2} (1-q^{-1})
    (\frac 1 2 + \frac{1+q^{-1}}{2p+2} +\frac{q^{-1}} 2
    \\
    &-q^{-n/p} \frac{1-q^{-p+1}}{1-q^{-p}} - q^{-p+1}\frac{1-q^{-1}}
    {1-q^{-p}}) - \frac{1}{2} q^{-n/p^{2}+\lceil n/p^{3}\rceil }
    +\frac{1}{2}q^{-p+1}
  \end{aligned}
  \nonumber \\
  \label{IIBiib1}
  &=
  \begin{aligned}[t]
    \frac 1 2 \bigl( &\frac{3}{2} + \frac{1}{2p+2} -{q^{-1}}
    -\frac{q^{-2}} 2 (1+\frac 1
    {p+1})+\frac{q^{-p}(2-q^{-1}-q^{-p})}{1-q^{-p}}\\
    &-q^{-n/p^{2}+\lceil n/p^{3}\rceil}
    -q^{-n/p} \frac{(1-q^{-1})(1-q^{-p+1})}{1-q^{-p}} \bigr).
  \end{aligned}
\end{align}

  We have $\dg=ap^{2}$ with $a>p$ and all prime divisors of $a$ larger
  than $p$. If $p\geq 3$, then $a\geq p+2$ and
  \begin{align}
    a&\geq p+2>p+1+\frac{1}{p-1}=\frac{p^{2}}{p-1},\nonumber\\
    a&\geq p+\frac{a}{p},\nonumber\\
    a&\geq p+\left\lceil\frac{a}{p}\right\rceil,\nonumber\\
    \label{alimine:1}
    q^{-p} &\geq q^{-\dg /p^{2}+\lceil{\dg /p^{3}}\rceil}.
  \end{align}

  We may now assume that $p=2$. If $a\geq 5$, then
  $$
  a-\frac{a}{2}=\frac{a}{2}\geq 2 =p,
  $$
  and \ref{alimine:1} again holds. In the remaining case $p=2$ and
  $a=3$, we have $n=12$ and \ref{alimine:1} is
  false. 
  Furthermore, we have $p < \dg/p$ and
  \begin{align*}
    \frac{q^{-p}}{1-q^{-p}} & > q^{-\dg/p}\frac{(1-q^{-1})(1-q^{-p+1})}{1-q^{-p}},\\
    q^{-p+1}(1-q^{-1})^{2} & = q^{-p+1}-(2-q^{-1})q^{-p}\geq q^{-p+1}-
    2q^{-p},
  \end{align*}
  so that for $n\neq 12$ the following holds:
  \begin{align*}
    \nu \geq \frac 1 2 \bigl(\frac{3}{2} + \frac{1}{2p+2} -{q^{-1}}
    -\frac{q^{-2}} 2 (1+\frac 1 {p+1}) -
    \frac{q^{-p+1}}{1-q^{-p}}\bigr).
  \end{align*}

  For $\dg = 12$, we have calculated in \ref{ex:p2} that
  $\lambda_{2}(\text{II.B.ii.b})= t/\alpha_{12} \leq q^{-2}= q^{-p}$,
  and we may use this to the same cancellation effect as
  \ref{alimine:1}, so that the last inequality also holds for $\dg=12$.

  II.B.ii.b.$\beta$: 
  \begin{align}\label{eq:numneu}
    \nu &\geq \lambda_{0}(\text{II.B.ii.b})+
    \lambda_{1}(\text{II.B.ii.b.$\beta$})-
    \lambda_{2}(\text{II.B.ii.b})+\nu_{3}(\text{II.B})\nonumber
    \\
    &=\frac{1}{2}(1-q^{-1}-q^{-p+1}+q^{-p}) + \frac 1 2
    \frac{(1-q^{-1})(1-q^{-p+1})(1-q^{-\dg/p})}{1-q^{-p}}\nonumber\\
    &\quad-\frac{1}{2}q^{-\dg/p^{2}+\left\lceil\dg/p^{3}\right\rceil}
    +\frac{1}{2}q^{-p+1}\nonumber
    \\
    &= 1-q^{-1}- \frac{q^{-p+1}}{2}\cdot
    \frac{(1-q^{-1})^{2}}{1-q^{-p}} + \frac{q^{-p}}{2}\\
    & \quad\quad  - \frac {q^{-\dg/p}(1-q^{-1}-q^{-p+1}+q^{-p})} {2(1-q^{-p})}
    -\frac{1}{2}q^{-\dg/p^{2}+\dg/p^{3}}.\nonumber
  \end{align}

  Since $n \geq p^{3}$, we have
  \begin{align}\label{eq:numne}
  \dg/p &\geq p^{2}>p,\nonumber\\
  q^{-p} &>q^{-\dg/p},\nonumber\\
  \dg(p-1) &\geq p^{3}(p-1),\nonumber\\
  -p+1 &\geq - \frac{\dg}{p^{2}}+ \frac{\dg}{p^{3}},\nonumber\\
  \nu &\geq 1-q^{-1}-\frac{q^{-p+1}}{2}- \frac{1}{2}q^{-\dg/p^{2}+\dg/p^{3}}
    \geq 1-q^{-1}-q^{-p+1}. \qed
  \end{align}
 \end{proof}
Except at I.B and II.B.ii.b.$\alpha$, the lower bounds are of the
satisfactory form $1-O(q^{-1})$. The leaf I.B is discussed in
\ref{ex:com}.  For small values of $q$, the entry in \ref{tab:leaves}
at II.B.ii.b.$\alpha$ provides the lower bounds in
\short\ref{tab:Lowerbounds}.

\begin{table}[h!]
  $$
  \begin{array}{c|l}
    q  & \#D_{\dg}/\alpha_{\dg} \geq \\\hline
    2& 1/6 > 0.1666 \\
    3 & 259/468 > 0.5534  \\
    4 & 133/240 > 0.5541\\
    5 & 106091/156200 > 0.6791 \\
    7 & 56824055/80707116 > 0.7040 \\
    8 & 2831/4032 > 0.7021 \\
    9 & 88087/117936 > 0.7469
  \end{array}
  $$
  \caption{Lower bounds at the leaf II.B.ii.b.$\alpha$, where
    $l^{2}=p^{2}\parallel \dg\neq p^{2}$.}
  \label{tab:Lowerbounds}
\end{table}

The multitude of bounds, driven by the estimates of \ref{sec:usd} and
\bare\ref{sec:collcomp}, is quite confusing.  The \ref{cor:Fq} in the
introduction provides simple and universally applicable estimates.
Before we come to its proof, we note that for special values, in
particular for small ones, of our parameters one may find better
bounds in other parts of this paper.

\begin{proof}[\ref{cor:Fq}] \ref{cor:Fq-6} follows from $2\leq l\leq
  \sqrt \dg$. The first upper bound on $\# D_{\dg}$ in \ref{cor:Fq-1}
  follows from \ref{cor:l-2}. It remains to deduce the lower
  bounds. Starting with the last claim, we note that \ref{cor:Fq-5} is
  \ref{cor:l-3}. In the assumption of \ref{cor:Fq-4}, the leaves I.B
  and II.B.ii.b.$\alpha$ are disallowed. We claim that \ref{thm:multi}
  implies
  \begin{align}\label{eq:numine}
    \nu \geq 1- 2 q^{-1}
  \end{align}
  at all leaves but these two. Leaf I.A is clear. At II.A.i, we have
  $n=al$, where $a>l$ and all prime factors of $a$ are larger than
  $l$.  When $a\geq l+2$, then
  \begin{align*}
    \frac n l - \frac n {l^2} = a(1-\frac 1 l ) &\geq (l+2)(1-\frac 1
    l ) = l+1-\frac 2 l \geq l,
    \\
    \beta^{*}_{\dg}&\leq q^{-n/l-l+n/l^2+3} \leq q^{3-2l} \leq
    q^{-1},\\
    \nu&\geq 1-\beta^{*}_{\dg}\geq 1-q^{-1}.
  \end{align*}
  When $a=l+1$, then $l=2$, $a=3$, $n=6$, and by \ref{thm:Estimate-3}
  we have again
  $$
   \frac{\#D_6}{\alpha_6} \geq 1-\beta^*_6 = 1-q^{-1}.
  $$
  At II.A.ii, we have $n=al^2$ with $a\geq l$ and
  \begin{align*}
  \frac n l - \frac n {l^2}& = a(l-1)\geq l(l-1) \geq l,\\
  q^{-n/l+l+n/l^2-1}& \leq q^{-1},\\
  \nu&\geq1-q^{-1}/2.
  \end{align*}
  
  At II.B.i.a, we consider the inequality
  \begin{align}\label{equ:inequalii}
    -\frac{\dg}{l}-l+s+3\leq -1,
  \end{align}
  with $s=\left\lfloor \dg/l^{2}\right\rfloor\leq \dg/l^{2}$. It holds
  for $l\geq 3$. When $l=2$, it holds for $\dg\geq 8$, and one checks
  it for $\dg=6$. Now $\dg=4$ is case II.B and excepted here. 
  Thus \ref{equ:inequalii} holds in all cases at II.B.i.a, and
  \ref{eq:follow} implies that $\nu\geq 1-3q^{-1}/2>1-2q^{-1}$.  
  
  \ref{eq:numine} is clear for II.B.i.b and II.B.ii.b.$\beta$. At
  II.B.ii.a, we have $p>l\geq 2$, and \ref{eq:numine} follows from
  \ref{tab:leaves}. This concludes the proof of \ref{cor:Fq-4}.

  In \ref{cor:Fq-2}, the second inequality follows from
  $(3-2q^{-1})\cdot(1-q^{-1})/4>1/2$ when $q\geq5$. For the first
  inequality, we have $1-2q^{-1} \geq (3-2q^{-1})/4$ when $q>5$. Thus
  it remains to prove \ref{cor:Fq-2} at II.B.ii.b.$\alpha$. It is
  convenient to show \ref{cor:Fq-1} and \ref{cor:Fq-2} together at
  this leaf.

  We have for $p\geq 3$ and $q\geq 5$ that
  \begin{align*}
    1-q^{-3}&\geq q^{-2}(3q+4)>q^{-2}(3p+4)-q^{-5}(p+2)\\
    &=q^{-2}(p+2)(1-q^{-3})+q^{-2}(2p+2),\\ \quad
    \frac{1}{2p+2}&>\frac{q^{-2}(p+2)}{2p+2}+\frac{q^{-2}}{1-q^{-3}}\geq
    \frac{q^{-2}}{2}(1+\frac{1}{p+1})+\frac{q^{-p+1}}{1-q^{-p}},
  \end{align*}
  and from \ref{tab:leaves}
  \begin{align}\label{eq:nufrom}
    \nu&\geq\frac{1}{2}(\frac{3}{2}+\frac{1}{2p+2}-q^{-1}-\frac{q^{-2}}{2}(1+\frac{1}{p+1})-\frac{q^{-p+1}}{1-q^{-p}})\\
    &>\frac{3}{4}-\frac{q^{-1}}{2}=\frac{3-2q^{-1}}{4}.\nonumber
  \end{align}



  For the remaining cases $q=3$ or $p=2$, we use \ref{IIBiib1}. At
  the current leaf, we can write $\dg=ap^{2}>p^{2}$ with all prime divisors
  of $a$ greater than $p$, and split the lower bound into two
  summands:
  \begin{align*}
    \nu_{q}&= \frac 1 2 \bigl( \frac{3}{2}
    + \frac{1}{2p+2}-{q^{-1}} -\frac{q^{-2}} 2 (1+\frac 1{p+1})+\frac{q^{-p}(2-q^{-1}-q^{-p})}{1-q^{-p}}\bigr),\\
    \quad \epsilon_{q,\dg}&= \frac 1 2 \bigl(q^{-a+\lceil a/p
      \rceil}
    +q^{-ap} \frac{(1-q^{-1})(1-q^{-p+1})}{1-q^{-p}} \bigr),
  \end{align*}
  so that $\nu\geq \nu_{q}-\epsilon_{q,\dg}$, and $\epsilon_{q,\dg}$
  is monotonically decreasing in $a$.

  For $q=3$, we have $a\geq 5$,
  \begin{align*}
    \nu_{3}&=\frac{203}{27\cdot 13}>0.5783,\\ \quad \nonumber
    \epsilon_{3,\dg}&\leq\frac{1}{2}(3^{-a+\lceil
      a/3\rceil}+\frac{8}{13}\cdot 3^{-3a})\leq \epsilon_{3,45}= \frac
    1 2(3^{-5+2}+\frac{8}{13}\cdot 3^{-15})\\\quad \nonumber
    &=\frac{1}{54}+\frac{4}{13}\cdot 3^{-15}<0.0186,\\\quad \nonumber
    \nu&\geq\nu_{3}-\epsilon_{3,\dg}>0.5598>1/2.\quad\nonumber
  \end{align*}
  For $p=2$, we find
  \begin{align*}
    \nu_{q}&=\frac{5}{6}-q^{-1}+\frac{q^{-2}} 6,\\ \quad\nonumber
    \epsilon_{q,\dg}&=\frac 1 2 (q^{-(a-1)/2}+q^{-2a}\cdot
    \frac{1-q^{-1}}{1+q^{-1}}).\nonumber
  \end{align*}
  When $q\geq 8$ and $\dg\geq 28$, so that $a\geq 7$, we have
  \begin{align*}
    \frac{q^{-2}}{6}&\geq\frac 1 2
    (q^{-3}+q^{-14}\cdot\frac{1-q^{-1}}{1+q^{-1}})=\epsilon_{q,28}\geq\epsilon_{q,\dg},\\
    \nu&\geq\nu_{q}-\epsilon_{q,\dg}\geq \frac 5 6 -q^{-1}\geq
    \frac{3-2q^{-1}}{4}.
  \end{align*}
  For the remaining values $q\in \{ 2,4 \}$ or $\dg\in \{ 12,20 \}$, we note
  the values
  \begin{align*}
     \nu_{2}&=\frac 3 8, \\
    \nu_{4}&=\frac{19}{32},\\
    \epsilon_{q,12}&=\frac 1 2(q^{-1}+q^{-6}\cdot
    \frac{1-q^{-1}}{1+q^{-1}}),\\
    \epsilon_{q,20}&=\frac 1
    2(q^{-2}+q^{-10}\cdot\frac{1-q^{^{-1}}}{1+q^{-1}}).
  \end{align*}
  We find that $\nu\geq(3-2q^{-1})/4$ for $q\geq 8$ and $\dg=20$, and
  for $q\geq 16$ and $\dg =12$. \ref{tab:numforsm} shows that this
  also holds for $(q,\dg)=(8,12).$ When $q=4$, we have $\nu\geq 1/2$
  for $\dg\geq 20$ by the above, and according to \ref{tab:numforsm}
  also for $\dg =12$.

  \begin{table}[h!]
    $$
    \begin{array}{r|r|r|r}
      q,\dg
      &\# D_{\dg}
      & \alpha_{\dg}
      &\# D_{\dg}/\alpha_{\dg} \geq\\ \hline
      2,4&6&8&0.7500\\
      2,8 & 36 & 64 & 0.5625\\
      2,12 & 236 & 256&0.9218\\
      2,16 & 762 & 1\,024 & 0.7441\\
      2,20 &3\, 264&4\, 096&0.7968\\
      2,24 & 14\,264 & 16\,384 & 0.8706\\
      2,28&49\,920 & 65\,536 &0.7617\\
      2,36 & 821\,600 & 1\,048\,576& 0.7835\\
      4,4&132&192&0.6875\\
      4,12& 100\, 848&98\, 304&1.0258\\
      8,4&2\,408&3\,584&0.6718\\
      8,12& 30\, 382\, 016 &29\, 360\, 128& 1.0348\\
      16,4&41\,040&61\,440&0.6679\\
      32,4& 677\,536& 1\,015\,808&0.6669\\
      64,4& 11\,011\,392& 16\,515\,072&0.6667\\
      128,4& 177\,564\,288& 266\,338\,304&0.6666\\
      256,4& 2\,852\,148\,480&4\,278\,190\,080&0.6666\\
      3,9&414&486&0.8518\\
      9,9&450\, 792&472\,392&0.9542\\
      5,5&7\,798\,100&7\,812\,500&0.9981\\
    \end{array}
    $$
    \caption{Decomposable polynomials of degree $\dg$ over $\mathbb{F}_{q}$.}
    \label{tab:numforsm}
  \end{table}
 
  When $q=2$, the values above only show that $\nu\geq 1/4$ for
  $\dg\geq 28$. However, a different and simple approach gives a
  better bound for $\dg=4a$ with an odd $a\geq 3$ over
  $\mathbb{F}_{2}$. We exploit the special fact that
  $x^{2}+x\in\mathbb{F}_{2}[x]$ is the only quadratic original
  polynomial that is not a square.

  Any $g\in\mathbb{F}_{2}[x]$ is uniquely determined by
  $f=g\circ(x^{2}+x)$, due to the uniqueness of the Taylor
  expansion. The number of original $g$ of degree $2 a$ and that are
  not a square is $2^{2a-1}-2^{a-1}$, and by composing with a linear
  polynomial on the left, we have $\# D^{+}_{\dg,\dg/2}=
  2^{2a}-2^{a}=2^{\dg/2}-2^{\dg/4}$.
  Similary, $(x^{2}+x)\circ h=(x^{2}+x)\circ h^{*}$ with $h\neq h^{*}$
  implies that $-1=h^{*}-h$, so that one of the two polynomials is not
  original. Thus $\gamma_{\dg,2}$ is also injective on the original
  polynomials, and $\#
  D_{\dg,2}^{+}=2^{\dg/2}-2^{\dg/4}$. Furthermore, \ref{cor:ffchar-3}
  says that
  $$
  t=\# (D_{\dg,2}^{+}\cap D_{\dg,\dg/2}^{+})\leq 2^{\dg/4+\lceil
    \dg/8\rceil+1} = 2^{3\dg/8+3/2}.
  $$ 

  The number of Frobenius compositions (that is, squares) of degree $\dg$ equals
  $\#D_{\dg}^{\varphi}=2^{2a}$, and $\alpha_{\dg}=2^{\dg/2+2}$.  It
  follows that
  \begin{align}
    \label{eq:prota}
    \# D_{\dg}&\geq \# D^{+}_{\dg,2}+\# D_{\dg,\dg/2}^{+}-t+\#
    D_{\dg}^{\varphi}\nonumber\\
    &\geq 2\cdot
    2^{\dg/2}(1-2^{-\dg/4})-2^{3\dg/8+ 3/2}+2^{\dg/2}\nonumber\\
    &=(\frac{3}{4}-2^{-\dg/8-1/2}-2^{-\dg/4-1})\alpha_{\dg},\\
    \nu&\geq\frac{3}{4}-2^{-5/2-1/2}-2^{-5-1}=\frac{39}{64}>0.6093>1/2\nonumber
  \end{align}
  for $\dg\geq 20$. Using \ref{tab:numforsm} for $\dg=12$, we find
  $\nu>1/2$ also for $q=2$, and hence for all values at leaf
  II.B.ii.b.$\alpha$. Now it only remains to prove $\nu\geq 1/2$ in
  \ref{cor:Fq-1}. The leaf II.B.ii.b.$\alpha$ has just been dealt
  with. Since $1-q^{-1}\geq 1/2$ for all $q$, the claim follows from
  the previous bounds at the leaves I.A, II.A.i, II.A.ii, and
  II.B.i.b. At II.B.i.a, we have shown $\nu\geq 1-3q^{-1}/2\geq 1/2$
  for $q\geq 3$; since $p\neq l$ and hence $p\geq 3$ at this leaf, the
  claim follows. Similarly, we have at II.B.ii.a that $q\geq p\geq 3$
  and $\nu\geq 1-\frac 1 2(q^{-1}+q^{-l+1}+q^{-p+1}-q^{-p})\geq
  1-q^{-1}-q^{-2}\geq 1/2$. Now remain the two leaves I.B and
  II.b.ii.b.$\beta$.

  At leaf I.B, we have $\dg =p^{2}$ and
$$
q^{2}\geq q+2\geq p+2,
$$
$$
\frac{1}{p+1}+2q^{-p}> q^{-2}(1+\frac{1}{p+1}).
$$
From \ref{ex:com} we find
$$
\nu\geq \frac{1}{2}(1+\frac{1}{p+1})(1-q^{-2})+q^{-p}\geq\frac 1 2.
$$
\ref{tab:numforsm} gives the exact values of $\nu$ for $p=2$ and
$q\leq 256$.

At the final leaf II.B.ii.b.$\beta$, we have $l=p$ and $p^{3}\mid\dg$. The
lower bound in \ref{tab:leaves} implies $\nu\geq 1/2$ for $q\geq
4$. When $q=3$, \ref{eq:numne} yields
$$
\nu\geq 1-\frac 1 3-\frac 1 {9}=\frac 5 9>\frac 1 2.
$$
For $q=2$, we have from \ref{eq:numneu}
$$
\nu\geq\frac 1 2 +\frac 1{24}-\frac{2^{-\dg/2-1}}{3}-2^{-\dg/8-1}.
$$
When $\dg\geq 32$, this shows $\nu\geq 1/2$. For the smaller values
$8,16,$ and $24$ of $\dg$, the data in \ref{tab:numforsm} are sufficient.
\qed
\end{proof}
Two features are worth noting. Firstly, our lower bounds are rather
pessimistic when $q=2$, yielding for $\dg=12$ that $\nu\geq 
47/384>0.1223$ by \ref{IIBiib1}, $\nu\geq 3/16= 0.1875$ from the special
argument, compared to $\nu=59/64>0.9218$ from our
experiments. Secondly, our lower bounds are strictly increasing in
$\dg$, while the experiments show a decrease in $\nu$ from $\dg=12$ to
$\dg=20$. Both features show that more work is needed to understand
the case $ p=l$ and $p^{2}\parallel \dg$, where the latter means that
$p^{2}\mid\dg$ and $p^{3}\nmid\dg$.

Much effort has been spent here in arriving at precise bounds, without
asymptotics or unspecified constants. We now derive some conclusions
about the asymptotic behavior.  There are two parameters: the field
size $q$ and the degree $\dg$. When $\dg$ is prime, then $\#
D_{\dg}=\alpha_{\dg}=0$, and prime values of $\dg$ are excepted in the
following. We consider the asymptotics in one parameter, where the
other one is fixed, and also the special situations where
$~gcd(q,n)=1$. Furthermore, we denote as ``$q,\dg\longrightarrow
\infty $'' the set of all infinite sequences of pairwise distinct
$(q,\dg)$. The cases $p^{2}\parallel\dg$ are the only ones where
\ref{tab:leaves} does not show that $\nu\longrightarrow 1$.
\begin{theorem}\label{thm:consid}
  Let $\nu_{q,\dg}=\# D_{\dg}/\alpha_{\dg}$ over $\mathbb{F}_{q}$. We
  only consider composite $\dg$.
  \begin{enumerate}
  \item\label{thm:consid-1} For any $q$, we have
    $$
    \underset{\dg\to\infty}{\lim\sup} ~ {\nu_{q,\dg}}=1,
    $$
    $$
    \lim_{\substack{\dg\to\infty \\ \gcd(q,\dg)=1}}{\nu_{q,\dg}}=1,
    $$
    \begin{align*}
      \frac 1 2 &\leq \nu_{q,\dg} \text{ for any }
        \dg,\\
      \frac{3-2q^{-1}}{4}&\leq \nu_{q,\dg} \text{ for any }\dg,\text{ if }
        q\geq 5.
    \end{align*}
  \item\label{thm:consid-2} Let $\dg$ be a composite integer and $l$
    its smallest prime divisor. Then
    $$
    \underset{q\to\infty}{\lim\sup} ~{\nu_{q,\dg}}=1,
    $$
    \begin{align*}
      \underset{{q\to\infty}}{\lim\inf} 
      ~{\nu_{q,\dg}}\begin{cases}\geq\frac 1 2
        (1+\frac{1}{l+1})\geq\frac 2 3 &\text{ if }\dg=l^{2},\\ 
        \geq\frac 1 4 (3+\frac{1}{l+1})\geq \frac 5 6 &\text{ if }
          l^{2}\parallel\dg \text{ and }\dg\neq l^{2},\\
        =1 &\text{ otherwise,}
      \end{cases}
    \end{align*}
    $$
    \lim_{\substack{q\to\infty\\ \gcd(q,\dg)=1}}{\nu_{q,\dg}}=1.
    $$
  \item\label{thm:consid-3} For any sequence $q,\dg \rightarrow
    \infty$, we have
    $$\frac 1 2 \leq \underset{q,\dg\to\infty}{\lim\inf} ~{\nu_{q,\dg}}\leq
    \underset{q,\dg\to\infty}{\lim\sup} 
    ~{\nu_{q,\dg}}=1, 
    $$
    $$
    \lim_{\substack{q,\dg\to\infty\\ \gcd(q,\dg)=1}}{\nu_{q,\dg}}=1.
    $$
  \end{enumerate}
\end{theorem}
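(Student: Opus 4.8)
The plan is to harvest the bounds already assembled in \ref{tab:leaves} and \ref{thm:multi}, together with \ref{cor:Fq}, and simply feed them into the three limiting regimes. All three parts rely on the same two observations: (i) the lower bound $\nu_{q,\dg}\geq 1/2$ of \whole\ref{cor:Fq-1} holds unconditionally, and $\nu_{q,\dg}\geq(3-2q^{-1})/4$ of \whole\ref{cor:Fq-2} holds when $q\geq5$ and $\dg\neq p^{2}$; (ii) away from the single exceptional branch $p=l$ with $p^{2}\parallel\dg$, every entry in the ``lower bound'' column of \ref{tab:leaves} has the shape $1-O(q^{-1})$ or $1-O(q^{-\dg/l^{2}})$, and in the tame case $\gcd(q,\dg)=1$ the relevant estimate is the much sharper $|\nu_{q,\dg}-1|\leq q^{-\dg/3l^{2}}$ of \ref{cor:l-3}.

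For \short\ref{thm:consid-1}, fix $q$. The two displayed inequalities for $\nu_{q,\dg}$ are verbatim \whole\ref{cor:Fq-1} and \whole\ref{cor:Fq-2}. For $\limsup_{\dg\to\infty}\nu_{q,\dg}=1$: take the sequence $\dg=l^{3}$ with $l$ a prime coprime to $p$ and tending to infinity; then \ref{thm:Estimate-2/2} gives $\nu_{q,\dg}=1-q^{-(l-1)^{2}}/2\to1$, and since $\nu_{q,\dg}\leq1+\beta_{\dg}\to1$ as well (by \ref{cor:l-2}, $\beta_{\dg}\leq q^{-\dg/3l^{2}}\to0$ because $l$ is bounded only when $\dg$ ranges over powers of a fixed prime, which we may exclude or handle directly), the $\limsup$ is exactly $1$. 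For the constrained limit $\gcd(q,\dg)=1$: here $p\nmid\dg$, so \ref{cor:l-3} gives $|\nu_{q,\dg}-1|\leq q^{-\dg/3l^{2}}$; since $l\leq\sqrt{\dg}$, the exponent $\dg/3l^{2}\geq1/3$ is bounded below but we need it to tend to infinity. When $\dg\to\infty$ with $\gcd(q,\dg)=1$, either $\dg$ has a bounded smallest prime factor $l$, in which case $\dg/3l^{2}\to\infty$, or $l\to\infty$; in the latter case $\dg$ itself tends to infinity and $\dg\geq l^{2}$ forces $\dg/3l^{2}\geq1/3$, but then $\dg=l^{2}$ or $\dg=l_{2}l$ with $l_{2}$ the second smallest factor, and one checks via \ref{thm:Estimate-2/2} and \ref{thm:Estimate-2/3} that $\nu_{q,\dg}\to1$ directly. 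Combining these cases yields the limit $1$.

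For \short\ref{thm:consid-2}, fix composite $\dg$ and let $q\to\infty$. Now $l$ and $\dg$ are constants, so every $O(q^{-1})$ and $O(q^{-c})$ term in \ref{tab:leaves} tends to $0$. Reading off the leaves: at I.A, II.A.i, II.A.ii, II.B.i.a, II.B.i.b, II.B.ii.a, and II.B.ii.b.$\beta$ the lower bound tends to $1$, and the upper bound $1+\beta_{\dg}$ does too, so $\nu_{q,\dg}\to1$ there. The two surviving leaves are I.B (where $\dg=l^{2}$, $p=l$) with lower bound $\tfrac12(1+\tfrac1{p+1})(1-q^{-2})+q^{-p}\to\tfrac12(1+\tfrac1{l+1})$, and II.B.ii.b.$\alpha$ (where $p=l$, $l^{2}\parallel\dg$, $\dg\neq l^{2}$) with lower bound from \ref{tab:leaves} tending to $\tfrac12\bigl(\tfrac32+\tfrac1{2l+2}\bigr)=\tfrac14\bigl(3+\tfrac1{l+1}\bigr)$. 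Since $\tfrac12(1+\tfrac1{l+1})\geq\tfrac23$ and $\tfrac14(3+\tfrac1{l+1})\geq\tfrac56$ for $l\geq2$, and since in the tame case $\gcd(q,\dg)=1$ the branch is forced away from these two (they require $p\mid\dg$), the constrained limit is again $1$ by \ref{cor:l-3}. The $\limsup$ statement follows because in every case the upper bound tends to $1$ and the lower bound is eventually positive; more simply, \ref{thm:Estimate-2/2} gives $\nu_{q,l^{3}}\to1$ if one also wants a witness when $\dg$ is a cube.

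For \short\ref{thm:consid-3}, the universal bound $\nu_{q,\dg}\geq1/2$ of \whole\ref{cor:Fq-1} gives $\liminf\geq1/2$ immediately along any sequence. For $\limsup=1$: the upper bound $\nu_{q,\dg}\leq1+\beta_{\dg}\leq1+q^{-\dg/3l^{2}}$ never exceeds $1+o(1)$ only if $\dg/l^{2}\to\infty$, which need not happen; but along the subsequence of cubes $\dg=l^{3}$ (which is infinite inside any infinite set of $(q,\dg)$ after passing to a further subsequence, or else $\dg$ stays bounded and $q\to\infty$ and we invoke part \short\ref{thm:consid-2}) we get $\nu\to1$, forcing $\limsup=1$. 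Actually the cleanest route is: in any infinite sequence $q,\dg\to\infty$ of distinct pairs, either infinitely many terms have $\gcd(q,\dg)=1$ — then that subsequence has $\nu\to1$ by the constrained limit below — or cofinitely many have $p\mid\dg$, and among those, if $p<l$ is impossible so $p=l$ and we are at leaves I.B, II.B.i.b, II.B.ii.a, II.B.ii.b.$\beta$, where the lower bounds are $\geq1/2$ and the leaf I.A/II.A.* bounds give values approaching $1$; picking a sub-subsequence realizing the cube case shows $\limsup=1$. Finally the constrained limit $\lim_{\gcd(q,\dg)=1}\nu_{q,\dg}=1$ is \ref{cor:l-3}: $|\nu_{q,\dg}-1|\leq q^{-\dg/3l^{2}}$, and as argued in part \short\ref{thm:consid-1} the exponent $\dg/3l^{2}$ tends to infinity along any sequence with $q,\dg\to\infty$ and $\gcd(q,\dg)=1$ once one disposes of the bounded-$\dg$ and $\dg\in\{l^{2},l_{2}l\}$ sub-cases by \ref{thm:Estimate-2/2}–\ref{thm:Estimate-2/3}.

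The main obstacle I anticipate is purely bookkeeping: making rigorous the claim that ``$\dg/3l^{2}\to\infty$'' along the relevant sequences. This is not automatic, since $l$ can be as large as $\sqrt{\dg}$, and one must separate the regime where the smallest prime factor $l$ stays bounded (clean, $\dg/l^{2}\to\infty$) from the regime $l\to\infty$ (where $\dg$ has few prime factors and one falls back on the exact formulas of \ref{thm:Estimate-2/2} and \ref{thm:Estimate-2/3} rather than on the error bound $\beta_{\dg}$). Everything else is a finite inspection of the nine leaves of \ref{fig:tree} and a direct transcription of \ref{cor:Fq}.
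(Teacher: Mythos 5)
Your plan is the right one and uses the paper's own toolkit (\ref{thm:Estimate}, \ref{cor:l-2}, \ref{cor:l-3}, \ref{tab:leaves}, \ref{cor:Fq}), but the execution of $\limsup_{\dg\to\infty}\nu_{q,\dg}\leq 1$ in part \short\ref{thm:consid-1} --- which is the technically nontrivial part of the whole proof --- contains a false claim that hides a genuine gap. You try to show $\beta_{\dg}\to 0$ via $\beta_{\dg}\leq q^{-\dg/3l^{2}}$ from \ref{cor:l-2} and say this works ``because $l$ is bounded only when $\dg$ ranges over powers of a fixed prime, which we may exclude or handle directly.'' That implication is simply false: $\dg=2l_{2}$ for increasing primes $l_{2}$ has $l=2$ bounded and is not a prime power, while the actually problematic subsequences are $\dg=l^{2}$ or $\dg=l l_{2}$ with $l\to\infty$ --- there $\dg/3l^{2}$ is bounded (it is $1/3$ or $l_{2}/3l$) and the exponential estimate $q^{-\dg/3l^{2}}$ does not vanish. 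The correct move is the one the paper takes in \ref{thm:Estimate}: for $\dg\in\{l^{2},l^{3},ll_{2}\}$ the quantity $\beta_{\dg}$ is \emph{defined} to be $0$ and $\nu\leq 1$ directly; for all other composite $\dg$ one must show $c=(\dg/ll_{2}-1)(l_{2}-l)$ is unbounded (the paper does this via a multi-case analysis yielding $\dg/ll_{2}\geq\dg^{1/4}$). Alternatively one could observe that outside $\{l^{2},ll_{2}\}$ one has $\dg\geq l^{3}$ and hence $\dg/3l^{2}\geq\max(l/3,\dg/3l^{2})\to\infty$ --- a fix simpler than the paper's --- but your proposal does not supply it; it only gestures.

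Two further inaccuracies are worth flagging. In part \short{3} you write ``if $p<l$ is impossible so $p=l$'' once $p\mid\dg$; this is wrong ($p\mid\dg$ only gives $p\geq l$, e.g.\ $\dg=6$, $p=3$), and the leaf list you give (which omits II.B.i.a and II.B.ii.b.$\alpha$ and wrongly includes the $p\neq l$ leaf II.B.ii.a) is inconsistent with it. Also, your remark that in part \short{3} ``$\nu\leq 1+q^{-\dg/3l^{2}}$ never exceeds $1+o(1)$ only if $\dg/l^{2}\to\infty$, which need not happen'' misjudges the situation: in part \short{3} one may pass to a subsequence where $q\to\infty$, and then $q^{-\dg/3l^{2}}\leq q^{-1/3}\to 0$ regardless of $\dg/l^{2}$; the remaining subsequence has $q$ bounded and reduces to part \short{1}, which is exactly what the paper does. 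Part \short{2} is essentially fine once one reads the leaf entries of \ref{tab:leaves}, though the intermediate numerical inequalities $\tfrac12(1+\tfrac1{l+1})\geq\tfrac23$ and $\tfrac14(3+\tfrac1{l+1})\geq\tfrac56$ hold only for $l=2$, not ``for $l\geq 2$'' as you assert.
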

\begin{proof} \short\ref{thm:consid-1} We start with an upper
  bound. The conclusions of the Main Theorem are too weak for our current
  purpose, and we have to resort to \ref{thm:Estimate}. For the
  special $\dg$ which are a square or a cube of primes, or a product
  of two distinct primes, \ref{thm:Estimate-1} says that
  $\nu_{q,\dg}\leq 1$. For the other values, we set $d=\dg/ll_{2}$,
  and the upper bound on the $\lim\sup$ follows if we show that
  $c=(d-1)(l_{2}-l)$ is unbounded as $\dg$ grows, since then
  $\beta_{\dg}=q^{-c}/(1-q^{-1})$ tends to zero, and $\nu_{q,\dg}\leq
  1+\beta_{\dg}$. Since $l_{2}-l\geq 1$, it is sufficient to show the
  unboundedness of $d$. When $\dg=l^{e}$ is a power of a prime, we may assume by
  the above that $e\geq 4$. Then $l_{2}=l^{2}$, $l\leq\dg^{1/4}$ and
  $d=l^{e-3}\geq l^{e/4}=\dg^{1/4}$ is unbounded.

  If $\dg=l^{e}l^{e_{+}}_{+}$ has exactly two prime factors $l<l_{+}$,
  we may assume that $e+e_{+}\geq 3$.  If $e=1$, then
  $l_{2}=l_{+}$, $ e_{+}\geq 2$, and $d=l_{+}^{e_{+}-1}\geq
  l_{+}^{(e_{+}+1)/3}>\dg^{1/3}$. We now assume that $e \geq 2$. Then
  \begin{align*}
    l_{2}=\begin{cases}
      l^{2}&\text{ if }l^{2}<l_{+},\\
      l_{+}&\text{ otherwise,}
    \end{cases}
  \end{align*}
  \begin{align}\label{primfacd}
    d =\begin{cases}
      \frac{\dg}{l^{3}}&\text{ if }l^{2}<l_{+},\\
      \frac{\dg}{ll_{+}}&\text{ otherwise.}
    \end{cases}
  \end{align}

  We first treat the case where $l^{2}< l_{+}$. If $e=2$, then
  $$
  d=l_{+}^{e_{+}}/l>l_{+}^{e_{+}-1/2}\geq
  l_{+}^{(1+e_{+})/4}>\dg^{1/4}.
  $$
If $e=3$, then
  $$
  d=l_{+}^{e_{+}}>l_{+}^{(e_{+}+3/2)/3}>(l^{2})^{1/2}l_{+}^{e_{+}/3}=\dg^{1/3}.
  $$
If $e\geq 4$, then $d=l^{e-3}l_{+}^{e_{+}}\geq
l^{e/4}l^{e_{+}}_{+}>\dg^{1/4}$. Next we deal with $l_{+}>l^{2}$. If
$e=1$, we have $e_{+}\geq 2$, and then
  $$
  d=l^{e_{+}-1}_{+}\geq l_{+}^{(e_{+}+1)/3}>\dg^{1/3}.
  $$
If $e_{+}=1$ we have $e\geq 2$, and then
  $$
  d=l^{e-1}\geq l^{(e+2)/4}>l^{e/4}l_{+}^{1/4}=\dg^{1/4}.
  $$
In the remaining case, where $e$, $e_{+}\geq 2$, we have
  $$
  d=l^{e-1}l_{+}^{e_{+}-1}\geq l^{e/2}l_{+}^{e/2}=\dg^{1/2}.
  $$

  In the last case, $\dg = l^{e}l_{+}^{e_{+}}l_{++}^{e_{++}}\cdots$
  has at least three distinct prime factors $l<l_{+}<l_{++}<\cdots$,
  and
  \begin{align*}
    d=\begin{cases}
      \frac{\dg}{l^{3}}&\text{if }e\geq 2\text{ and }l^{2}<l_{+},\\
      \frac{\dg}{ll_{+}}&\text{ otherwise.}
    \end{cases}
  \end{align*}

  If $e=e_{+}=1$, then $ll_{+}<\dg^{2/3}$ and
  $d\geq\dg^{1/3}$. Otherwise, we apply the previous argument to
  $\dg^{*}=l^{e}l_{e}^{e_{+}}=\dg/m$ and $d^{*}=d/m$, where
  $m=l_{++}^{e_{++}}\cdots=\dg l^{-e}l_{+}^{-e_{+}}$. Then $d^{*}$
  equals the value $d$ defined above for $n^{*}$, and
  $$
  d=d^{*}m\geq (n^{*})^{1/4}m>\dg^{1/4}.
  $$

  In all cases, $d$ is unbounded if $\dg$ is. Thus
  $\lim\sup_{\dg\to\infty}{\nu_{q,\dg}} \leq 1$, and
  \ref{thm:Estimate-2/2} for $\dg=l^{2}$ implies that
  $\lim\sup_{\dg\to\infty} \geq 1$.

  If we only consider $\dg$ with $\gcd(q, \dg)=1$, then
  \ref{thm:Estimate-2/3} says that
  $$
  \nu_{q,\dg}\geq 1-2q^{-\dg/l+l+\dg/l^{2}-1} \geq
  1-q^{-\dg/l+l+\dg/l^{2}}.
  $$
  When $\dg$ is the product of two prime numbers, then $\nu_{q,\dg}$
  tends to $1$ for these special $\dg$. We may now assume that $n$ has
  at least three prime factors. Then $\dg \geq l^{3}$, and
  \begin{align*}
    - \frac{\dg}{l}+l+\frac{\dg}{l^{2}} &= -
    \frac{\dg}{l}(1-\frac{1}{l})+l
    \leq -\frac{\dg}{2l}+l \leq -\frac{\dg}{2\dg^{1/3}}+\dg^{1/3}\\
    &= -\frac{n^{2/3}}{2}+ \dg^{1/3} \leq - n^{1/2}
  \end{align*}
  for $\dg \geq 512$, say. The second claim in
  \short\ref{thm:consid-1} follows. The other two inequalities are in
  the Main Theorem.

\short\ref{th:fifi-2} The first claim follows from \ref{cor:l-2},
since $\dg\geq l^{2}$ and hence $\nu_{q,\dg}\leq 1+q^{-1/3}$. For the
other claims, we consider two subsequences of $q$: $q=l^{e}$ with
$e\rightarrow \infty$, and $q$ with $~gcd(q,l)=1$; we denote the
latter as $q'$. For $\dg=l^{2}$, the lower bound follows from the
entry at I.B in \ref{tab:leaves}, and for $l^{2}\parallel \dg \neq
l^{2}$ from the entry at II.B.ii.b.$\alpha$. In all other cases, the
\ref{cor:Fq} guarantees that $\nu_{l^{e}, \dg}$ and $\nu_{q',\dg}$
tend to $1$; see also \ref{eq:numine}.

\short\ref{th:fifi-3} We take some infinite sequence of $(q,\dg)$ for
which $\nu_{q,\dg}$ tends to $s=\lim\sup$. If all $q$ occurring in the
sequence are bounded, then \short\ref{th:fifi-1} implies that $s\leq
1$. Otherwise, $\nu_{q,\dg}\leq 1+q^{-1/3}$ is sufficient. The same
case distinction yields the lower bound on the limit, using the Main
Theorem \short\ref{thm:Estimate-2/3}. The lower bound on $\lim\inf$
follows from \short\ref{thm:Estimate-1}.
\end{proof}
\begin{example}\label{ex:wehal}
Let $p^{2}\parallel\dg$ and $\dg\neq p^{2}$. We study $D_{\dg}$ over
$\mathbb{F}_{q}$, using the notation of (the proof of)
\ref{thm:Estimate}. We have $l=p<l_{2}\leq p^{2}$,
\begin{align*}
c=\frac{(\dg-ll_{2})(l_{2}-l)}{ll_{2}}\geq\frac{\dg-l(l+1)}{l(l+1)}\geq\frac{\dg}{2l^{2}}.
\end{align*}
With
$$
E_{2}=\{ e\in\mathbb{N}\colon e\mid\dg,l_{2}\leq e\leq \dg/l_{2} \},
$$
we have
\begin{align*}
\sum_{e\in E_{2}} \# D_{\dg,e}&\leq \sum_{e\in E_{2}} q^{u(e)}(1-q^{-1})\leq q^{u(l)}(1-q^{-1})\frac{2q^{-c}}{1-q^{-1}}\\
&=\frac{q^{-c}}{1-q^{-1}}\cdot \alpha_{\dg}\leq 2q^{-\dg/2l^{2}}\cdot \alpha_{\dg}.
\end{align*}
We let
\begin{align*}
\lambda_{q,\dg}&=\frac{\#
  D_{\dg,p}^{+}+\#D_{\dg,\dg/p}^{+}}{\alpha_{\dg}},\\
t&=\#(D_{\dg,p}^{+}\cap D_{\dg,\dg/p}^{+}).
\end{align*}
Then
\begin{align*}
\nu_{q,\dg} &=\frac{\#D_{\dg}}{\alpha_{\dg}}\leq
\lambda_{q,\dg}-\frac{t}{\alpha_{\dg}}+\frac{\#D_{\dg}^{\varphi}}{\alpha_{\dg}}+
\frac{\sum_{e\in E_{2}} \#D_{\dg,e}}{\alpha_{\dg}}\\
&\leq\lambda_{q,\dg}+\frac{q^{\dg/p+1}(1-q^{-1})}{\alpha_{\dg}}+2q^{-\dg/2l^{2}}=\lambda_{q,\dg}+ \frac{q^{-p+1}}{2}+2q^{-\dg/2p^{2}}.
\end{align*}
On the other hand, \ref{cor:ffchar-3} says that
\begin{align*}
t&\leqq^{\dg/p+p-\dg/p^{2}+\lfloor \dg/p^{3}\rfloor +1}(1-q^{-1}),\\
\nu_{q,\dg}&\geq\lambda_{q,\dg}-\frac{t}{\alpha_{\dg}}+\frac{\#D_{\dg}^{\varphi}}{\alpha_{\dg}}\geq\lambda_{q,\dg}-\frac{1}{2}q^{-\dg/p^{2}+\lfloor\dg/p^{3}\rfloor+1}+
\frac{q^{-p+1}}{2}.
\end{align*}
For $p\geq 3$ we have
$$
-\frac{\dg}{p^{2}}+\frac{\dg}{p^{3}}+1\leq -\frac{\dg}{2p^{2}},
$$
$$
\left|\nu_{q,\dg}-(\lambda_{q,\dg}+q^{-p+1})\right|\leq 2q^{-\dg/2p^{2}}.
$$
We have presented some bounds on $\lambda_{q,\dg}$, but they are not
sufficient to determine its value in general, not even
asymptotically. However, for $q=2$ we have from \ref{eq:prota}
\begin{align}\label{al:asym}
  \lambda_{q,\dg}&=\frac{2^{\dg/2+1}(1-2^{-\dg/4})}{2 \cdot 2^{\dg/2+2}}=
  \frac{1-2^{-\dg/4}}{2}, \nonumber\\
  \frac{3}{4}-2^{-\dg/8-1/2}-2^{\dg/4-1} &\leq
  \nu_{2,\dg}\leq\frac{3}{4}+2^{-\dg/8+1}-2 ^{-\dg/4-1}.  \qed     
\end{align}
\end{example}

We have seen that $\nu_{q,\dg}$ tends to $1$ unless
$p^{2}\parallel\dg$. \ref{ex:wehal} suggests to use a correction
factor $\gamma$ so that $\nu_{q,\dg}/\gamma$ tends to $1$ also in those cases.
\begin{conjecture}\label{con:forprim}
For any prime $p$ and power $q$ of $p$ there
  exist $\gamma_{p},\delta_{q}\in\mathbb{R}$ so that
\begin{align*}
\lim_{e\longrightarrow\infty}\nu_{p^{e},p^{2}}&=\gamma_{p},\\
\lim_{\substack{\dg\longrightarrow\infty p^{2}\parallel\dg}} \nu_{q,\dg}&=\delta_{q}.
\end{align*}
\end{conjecture}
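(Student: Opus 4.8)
The plan is to handle the two limits separately, in each case reducing to an \emph{exact} count of wild collisions rather than the one-sided estimates used so far in the paper.

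For $\gamma_p$ the target is $\#D_{p^2}$ over $\mathbb F_q$ with $q=p^e\to\infty$. By \ref{invariant} it suffices to count $D_{p^2}\cap P^0_{p^2}=\gamma^0_{p^2,p}(P^0_p\times P^0_p)$, whose cardinality is $q^{2p-2}$ minus the overcounting $\sum_f(k_f-1)$, where $k_f$ is the number of normal decompositions of $f$. At degree $p^2$ the algorithm \ref{algoWd} degenerates: with $\degOne=m=p$, $a=d=1$, $r=p$ one checks, as noted after its statement, that $i_0\in\mathbb N$ would force $\kappa=1$ and then $i_0<1$, so the step $E_3$ and the condition \ref{eq:pc} never occur, leaving only $E_1$, $E_2$ and, for $\kappa=p-1$, $E_4$. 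First I would dispose of the Frobenius part by \ref{lem:propdivi}: each $g^{*}(x^p)$ with $g^{*}\in P^0_p$ has exactly the two decompositions $(g^{*},x^p)$ and $(x^p,\varphi_1^{-1}(g^{*}))$, coinciding only for $g^{*}=x^p$, a deficit of $q^{p-1}-1$. For the non-Frobenius part I would use \ref{thm:kg}: for $\kappa\le p-2$ the decomposition is unique, and for $\kappa=p-1$ the number of decompositions is $\#T(u)$, so the $\kappa=p-1$ deficit is $\#G^{*}-\#\gamma(G^{*})=(q-1)q^{2p-4}(c_0-1/b)$ with $b=\gcd(q-1,p+1)$, using that the $G_i$-construction in the proof of \ref{th:decom} is a bijection onto $\gamma(G_i)$ and that $c_0+c_1+c_2+c_{z+1}=q-1$ from \ref{bluher}. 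Assembling, $\#D_{p^2}/\alpha_{p^2}\le 1-q^{-p+1}+q^{-2p+2}-(q-1)q^{-2}(c_0-1/b)$; since $c_0=qp/2(p+1)+O(1)$ by \ref{bluher}, this upper bound and the lower bound $\tfrac12(1-q^{-2})(1+\tfrac1{p+1})+q^{-p}$ of \ref{ex:com} both tend to $(p+2)/2(p+1)$, so $\gamma_p=(p+2)/2(p+1)$ (consistent with $\gamma_2=2/3$).

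For $\delta_q$ I would start from \ref{ex:wehal}: for $p=2$ the bounds \ref{al:asym} already give $\nu_{2,\dg}\to 3/4$, and for odd $p$ the estimate $|\nu_{q,\dg}-(\lambda_{q,\dg}+q^{-p+1})|\le 2q^{-\dg/2p^2}$ reduces the claim to convergence of $\lambda_{q,\dg}=(\#D^{+}_{\dg,p}+\#D^{+}_{\dg,\dg/p})/\alpha_{\dg}$ along $p^2\parallel\dg$. The term $\#D^{+}_{\dg,\dg/p}$ is the case $r=m$ of \ref{th:decom} (with $\degOne=\dg/p$, $m=r=p$); there $K=\varnothing$, so $E_3$ and \ref{eq:pc} never appear, the count is exact, and dividing by $\alpha_{\dg}$ the $q^{-\degOne}=q^{-\dg/p}$ pieces vanish while the rest is a fixed rational function of $q^{-1}$, so this term converges. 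The obstruction is $\#D^{+}_{\dg,p}$: here $r=p\neq m=\dg/p$, the set $K$ is nonempty whenever $\mu=\gcd(p-1,\dg/p)\ge 2$, and the algorithm returns ``failure'' at the $E_3$-step on every pair whose $g_\kappa$ violates \ref{eq:pc}, so \ref{th:decom} and \ref{cor:decom} give only lower bounds. The number of such failure pairs with a given $\kappa$ is $\tfrac{(q-1)^2}{z-1}q^{\kappa+m-3}$, the dominant $\kappa\in K$ is $\degOne-r^{*}$ with $r^{*}=(p-1)/\mu$, and after division by $\alpha_{\dg}$ this is $\Theta(q^{-r^{*}-1})$ with an exponent that genuinely depends on $\dg$ through $\mu$. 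So the crux is to determine \emph{exactly} how many decompositions each such $E_3$-failure polynomial actually has: if it is $z$, the deficit oscillates with $\mu$ and convergence of $\delta_q$ is in real doubt; if the extra $z-1$ candidate values of $h_{i_0}$ fail the final identity $f=g\circ h$ (step~8), the deficit is negligible and $\lambda_{q,\dg}$ converges to the same value for all $\mu$.

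The main obstacle, in both parts, is the structure of wild \emph{equal-degree} collisions $g\circ h=g^{*}\circ h^{*}$ with $\gcd(\deg g,\deg h)>1$: these lie outside Ritt's Second Theorem, and — as observed after \ref{cor:inj} — no general bound on their number is known, whereas everything above needs an exact one. Concretely I would analyze the iterated $E_3$-steps of \ref{algoWd} directly, aiming to prove (or refute) that for $\kappa\in K$ the $z$ solutions of the linearized equation \ref{eq:m-4} never simultaneously extend to genuine decompositions unless $\dg=p^2$; a successful such analysis would complete the $\mu$-independent evaluation of $\lim_{\dg}\#D^{+}_{\dg,p}/\alpha_{\dg}$ and, fed back into the $p=2$-style bookkeeping of \ref{ex:wehal}, should yield $\delta_q$ as a rational function of $q^{-1}$. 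I expect this step — pinning down the wild equal-degree collisions beyond the single Bluher-controlled family — to be the hard part, and it is presumably why the statement is left as a conjecture.
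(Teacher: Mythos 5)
This statement is explicitly a \emph{conjecture}; the paper neither proves it nor claims to, and the text following it only records that $\gamma_{2}=2/3$ is forced by \ref{ex:com} and $\delta_{2}=3/4$ by \ref{al:asym}. Your proposal is therefore a research program rather than a proof, which you honestly acknowledge at the end. You do correctly identify the actual obstruction --- the absence of an \emph{exact} count of wild equal-degree collisions --- and this is indeed why the statement is left open.

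However, the concrete derivation of $\gamma_{p}$ that you present is wrong, and the numerical data in \ref{tab:numforsm} already refute it. You claim $\gamma_{p}=(p+2)/(2(p+1))$, which for $p=3$ would be $5/8=0.625$; but the table gives $\nu_{3,9}=414/486\approx 0.8518$ and $\nu_{9,9}\approx 0.9542$, both well above $5/8$ and increasing in $q$. The error has two sources. First, the key step ``the $G_i$-construction in the proof of \ref{th:decom} is a bijection onto $\gamma(G_i)$'' is an overclaim: the paper only proves that the map from data tuples to $f$ is \emph{injective}, i.e.\ $\#\gamma_{\dg,\degOne}(G_{i})\geq (q-1)c_{i}q^{\degOne+m-4}$. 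For a given $f$ with $U(f)\in C_{i}$, the algorithm produces up to $i$ candidate values $s\in S(v,w)$ in step~5.a, but nothing guarantees that every such $s$ survives the final verification $g\circ h=f$ in step~8; generically most do not, so $k_{f}$ can be strictly less than $i$, which makes $\#\gamma(G^{*})$ strictly larger than your formula and the deficit strictly smaller. Consequently what you write as an upper bound on $\nu$ is at best a \emph{lower} bound (and in fact it coincides, after rearrangement, with the lower bound already obtained in \ref{ex:com}). Second, your deficit bookkeeping omits the pairs $(g,h)\in P^{0}_{p}\times P^{0}_{p}$ with $h_{p-1}=0$ but $h\neq x^{p}$, $g\neq x^{p}$; these are non-Frobenius, outside the algorithm's admissible set $G$, and their image is disjoint from $\gamma(G)\cup D^{\varphi}_{p^{2}}$ (one can see this from step~1 of the algorithm: the largest index $j$ with $f_{j}\neq 0$ and $p\nmid j$ is $\equiv -1\pmod p$ for $h_{p-1}\neq 0$, but not otherwise). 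For $p=2$ this set is empty, which is why the paper can pin down $\gamma_{2}=2/3$; for $p\geq 3$ it is not, and it is not covered by Bluher's theorem.

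The $\delta_{q}$ part of your sketch has no error as such, but it reduces the conjecture to exactly the unresolved question you name at the end: determining, for each $\kappa\in K$, how many of the $z$ solutions of the linearized equation \ref{eq:m-4} actually extend to genuine decompositions. Absent such an exact result, the conjecture remains open, and indeed the paper itself leaves it as an open problem in the final section.
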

If true, this would imply that $\#
D_{p^{2}}\sim\gamma_{p}\alpha_{p^{2}}$ over extensions
$\mathbb{F}_{q}$ of $\mathbb{F}_{p}$, and $\#
D_{\dg}\sim\delta_{q}\alpha_{\dg}$ for growing $\dg$ with
$p^{2}\parallel \dg$. \ref{ex:com} shows that the first part is true
for $p=2$ and $\gamma_{2}= 2/3$, and \ref{al:asym} that the second
part holds for $q=2$ and $\delta _{2}=3/4$.

\cite{boddeb09} state without proof that $\# D_n \approx \frac 3 4
\alpha_n$ over $\F_2$ for even $n\geq 6$. Assuming a standard meaning
of the $\approx$ symbol, this is false unless $4\parallel \dg$, in
which case it is proven by \ref{al:asym}.

\begin{example}
  \ref{thm:multi-1} exhibits several situations where
  $\#D_{n}\leq\alpha_{n}$. One might wonder whether this always
  happens. We show that this is not the case. \ref{tab:numforsm} gives
  an example. More generally, we take three primes
  $2<l_{1}<l_{2}<l_{3}$, $n=l_{1}l_{2}l_{3}$, and an odd $q$ with
  $~gcd(n,q)=1$.  For $i\leq3$, we set
  \begin{align*}
    B_{i} &= D_{n,l_{i}}\cup D_{n,n/l_{i}},
    \\
    S_{i} &=\left \lfloor \frac{n}{l_{i}^{2}} \right \rfloor,
    \\
    t_{i} &= \frac{1}{2}(2q^{s_{i}+3}+q^{4}-q^{3})(1-q^{-1}).
  \end{align*}
  Then
  \begin{align*}
    D_{n} &= B_{1} \cup B_{2} \cup B_{3},
    \\
    \# B_{i} &= 2q^{n/l_{i}+l_{i}}(1-q^{-1})-t_{i}.
  \end{align*}
  For a permutation $\pi\in S_{3}$, we set
  \begin{align*}
    C_{\pi} &= \gamma_{\pi}(P_{l_{\pi 1}}^{=}\times P_{l_{\pi 2}}^{0}
    \times P_{l_{\pi 3}}^{0}),
    \\
    C &= \bigcup_{\pi\in S_{3}}C_{\pi},
  \end{align*}
  where $\gamma_{\pi}$ is the composition map for three
  components. Then for any $\pi\in S_{3}$
  $$
  \# C_{\pi} = q^{l_{1}+l_{2}+l_{3}-1}(1-q^{-1}).
  $$
  Now let $i \ne j$ and $f=g \circ h = g^{*} \circ h^{*} \in B_{i}
  \cap B_{j}$, with $\Set{ \deg g, \deg h } = \Set{ l_{i}, n/l_{i} }$
  and $\Set{\deg g^{*}, \deg h^{*} } = \Set{ l_{j}, n/l_{j} }$. To
  simplify notation, suppose that $i=1$ and $j=2$. We refine both
  decompositions into complete ones. Then for $g \circ h$, the set of
  degrees is either $\Set{l_{1},l_{2}l_{3}}$ or
  $\Set{l_{1},l_{2},l_{3}}$, and for $g^{*} \circ h^{*}$ it is either
  $\Set{l_{2},l_{1}l_{3}}$ or $\Set{l_{1},l_{2},l_{3}}$. 
  This set of degrees is unique, so that it equals
  $\{l_{1},l_{2},l_{3}\}$. It follows that $f \in C$ and $B_{i} \cap
  B_{j} \subseteq C$. Thus
  \begin{align}
    \# D_{n}  &\geq \sum_{1 \leq i\leq 3} \# B_{i} - \# C \nonumber\\
   &\geq (1-q^{-1}) \sum_{1 \leq i\leq 3} \left( 2q^{n/l_{i}+l_{i}}
      - \frac{1}{2}(2q^{s_{i}+3}+q^{4}) \right) -
    6q^{l_{1}+l_{2}+l_{3}-1}
    \nonumber\\
    \label{eq:zupf17}
    &= (1-q^{-1}) \left( 2 \sum_{1 \leq i\leq 3} q^{n/l_{i}+l_{i}} -
      \sum_{1 \leq i\leq 3} q^{s_{i}+3} - \frac{3}{2}q^{4} -
      6q^{l_{1}+l_{2}+l_{3}-1} \right).
  \end{align}
  Now suppose further that
  $$
  l_{3} \leq 2+(l_{1}-1)(l_{2}-1), \quad 5 \leq l_{2} \leq l_{1}^{2},
  \quad q \geq 7.
  $$
  Then
  \begin{align*}
    l_{1}+l_{2}+l_{3}-1  &\leq
    l_{1}+l_{2}+1+(l_{1}-1)(l_{2}-1)\\
    & \quad\quad = l_{1}l_{2}+2,\\
    6q^{l_{1}+l_{2}+l_{3}-1}  &\leq 6q^{l_{1}l_{2}+2} \leq
    q^{l_{1}l_{2}+3},
    \\
    4l_{3}  &\leq 10 (l_{3}-1) \leq l_{1}l_{2}(l_{3}-1),
    \\
    \frac{l_{1}l_{2}}{l_{3}}+4 &\leq l_{1}l_{2} < l_{1}l_{2}+l_{3},
    \\
    q^{l_{1}l_{2}/l_{3}+3} + \frac{3}{2}q^{4} +
    6q^{l_{1}+l_{2}+l_{3}-1} &< q^{l_{1}l_{2}+l_{3}} \left( q^{-1}+
      \frac{3}{2}q^{4-l_{3}}+q^{3-l_{3}}
    \right)\\
    & \quad\quad < 2q^{l_{1}l_{2}+l_{3}},
    \\
    \frac{l_{2}l_{3}}{l_{1}} &\leq l_{1}l_{3},
    \\
    \frac{l_{1}l_{3}}{l_{2}} &<l_{1}l_{3},
    \\
    q^{l_{2}l_{3}/l_{1}+3} + q^{l_{1}l_{3}/l_{2}+3}  &< (q^{3-l_{2}} +
    q^{3-l_{2}}) q^{l_{1}l_{3}+l_{2}} < q^{l_{1}l_{3}+l_{2}}.
  \end{align*}
  Finally, \ref{eq:zupf17} implies that
  \begin{align*}
    \frac{\#D_{n}}{1-q^{-1}} &\geq \frac{\alpha_{n}}{1-q^{-1}} + 2q^{l_{1}l_{3}+l_{2}} + 2q^{l_{1}l_{2}+l_{3}} - \sum_{1 \leq i\leq3} q^{\left\lfloor n/l_{i}^{2} \right\rfloor +3} -\frac{3}{2}q^{4}-6q^{l_{1}+l_{2}+l_{3}-1}\\
    &> \frac{\alpha_{n}}{1-q^{-1}}.
  \end{align*}
  As a small example, we take $l_{1}=3$, $l_{2}=5$, $l_{3}=7$, $q=11$,
  so that $n=105$ and $\alpha_{105}=2q^{38}(1-q^{-1})$. The lower
  bound in \ref{eq:zupf17} evaluates to
  \begin{align*}
    \#D_{105} &\geq \alpha_{105}+(1-q^{-1})(2(q^{26}+q^{22})-
    (q^{14}+q^{7}+q^{5} + \frac{3}{2}q^{4}+6q^{15}))
    \\
     &> \alpha_{105}+2q^{26}(1-q^{-1}).
  \end{align*}
  The general bounds of \ref{thm:Estimate-1} 
  and \ref{lem:div-1} specialize to
  \begin{align*}
    \#D_{105} &\leq
    \alpha_{105}(1+ \frac{q^{-12}} {1-q^{-1}})= \alpha_{105}+2q^{26}.
  \end{align*}
  The closeness of these two estimates indicates a certain precision in our
  bounds.
\end{example}
\begin{remark}
  We claim that if $p \nmid \dg$, then
  $$
  \#D_{\dg} \geq \alpha_{\dg}(1-q^{-1}).
  $$
  By \ref{cor:l-3}, this is satisfied if $\dg \geq 3l^{2}$. So we now
  assume that $\dg < 3l^{2}$. Then $\dg/l < 3l$, and all prime factors
  of $\dg/l$ are at least $l$. It follows that either $\dg=8$ or
  $\dg/l = l_{2}$ is prime. If $l_{2}=l$, then $\#D_{\dg}=
  \alpha_{\dg}$, by \ref{thm:Estimate-2/2}. Otherwise we have $s =
  \lfloor \dg/l^{2} \rfloor = \lfloor l_{2}/l \rfloor \leq \lfloor
  (3l-1) /l \rfloor \leq 2$ and from \ref{thm:Estimate-3} that
  $$
  \#D_{\dg} \geq \alpha_{\dg} (1-\beta_{\dg}^{*}) \geq
  \alpha_{\dg}(1-q^{-l-l_{2}+5}).
  $$
  It is now sufficient to show
  $$
  l+l_{2} \geq 6.
  $$
  This holds unless $n \in \{4,6,9 \}$, so that only $\dg = 6$ needs
  to be further considered. We have $\beta_{6}^{*} =
  q^{-2-3}(q^{1+3}+q^{4}-q^{3})/2 \leq q^{-1}$, and the claim follows
  from \ref{thm:Estimate-3}.
\end{remark}
\begin{open}
  \begin{itemize}
  \item Some polynomials have more than a polynomial number of
    decompositions. Can we find them in time polynomial in the output
    size? Or even a ``description'' of them in time polynomial in the
    input size? If not: prove (by a reduction) that this is hard?
  \item In the case where $p=l$ and $p^{2} \parallel \dg$, can one tighten
    the gap between upper and lower bounds in the Main Theorem \ref{cor:Fq-1},
    maybe to within a factor $1+O(q^{-1})$?
  \item Can one simplify the arguments and reduce the number of cases,
    yet obtain results of a quality as in the \ref{cor:Fq}? The bounds
    in \ref{th:decom} are based on ``low level'' coefficient
    comparisons. Can these results be proved (or improved) by ``higher
    level'' methods?
  \end{itemize}
\end{open}

\section{Acknowledgments}

Many thanks go to Jaime Guti\'errez for alerting me to Umberto
Zannier's paper, to Henning Stichtenoth for discussions and for
pointing out Antonia Bluher's work, to Laila El Aimani for some
computations, and to Konstantin Ziegler for drawing the tree. I
appreciate Igor Shparlinski's comments on \ref{remark:twice},
pointing out a notational infelicity, and thank Umberto Zannier for
correcting a misunderstanding. I appreciate the discussions with
Arnaud Bodin, Pierre D\`ebes, and Salah Najib about the topic, and in
particular the challenges that their work \cite{boddeb09} posed.

This work was supported by the B-IT Foundation and the Land
Nordrhein-Westfalen.


\bibliographystyle{cc2}%
\bibliography{\jobname}

\end{document}